\newtheorem{theo}{Theorem}[chapter]
\newtheorem{thm}[theo]{Theorem}
\newtheorem{lem}[theo]{Lemma}
\newtheorem{cor}[theo]{Corollary}
\newtheorem{prop}[theo]{Proposition}
\newtheorem{example}[theo]{Example}
\newtheorem*{thmr1}{First Rigidity Theorem}
\newtheorem*{thmr2}{Second Rigidity Theorem}
\newtheorem*{thmlc}{Theorem on Local Charts for Dendritic Polynomials}
\newtheorem*{thmcd}{Theorem on Critically Defined Slices of Laminations}
\newtheorem*{lppm}{Local Pinched Polydisk Model for Dendritic Polynomials}
\theoremstyle{definition}
\newtheorem{dfn}[theo]{Definition}
\theoremstyle{remark}
\numberwithin{section}{chapter}
\numberwithin{equation}{section}
\newcommand{\pr}{\mathbb L\mathbb{P}}
\newcommand{\prnp}{\pr^{np}}
\newcommand{\fg}{\mathrm{FG}}
\newcommand{\di}{\ol{\mathrm{Di}}}
\def\R{\mathbb{R}}
\def\Z{\mathbb{Z}}
\def\Mc{\mathcal{M}}
\newcommand{\qcp}{\mathrm{QCP}}
\newcommand{\oc}{\ol{c}}
\newcommand{\oy}{\ol{y}}
\newcommand{\cp}{\mathcal C}
\newcommand{\cpd}{\mathcal{CPD}}
\newcommand{\cmd}{\mathcal{CMD}^{sim}}
\newcommand{\rc}{\mathcal{R}}
\newcommand{\zc}{\mathcal{Z}}
\newcommand{\nc}{\mathcal{N}}
\newcommand{\nin}{\notin}
\newcommand{\lp}{\mathcal{LP}}
\newcommand{\C}{\mathbb{C}}
\newcommand{\disk}{\mathbb{D}}
\newcommand{\cdisk}{\ol{\mathbb{D}}}
\newcommand{\la}{\lambda}
\newcommand{\ga}{\gamma}
\newcommand{\vp}{\varphi}
\newcommand{\ol}{\overline}
\newcommand{\ovc}{\ol{c}}
\newcommand{\sm}{\setminus}
\newcommand{\B}{\mathcal{B}}
\newcommand{\Tc}{\mathcal{T}}
\newcommand{\hPsi}{\widehat{\Psi}}
\newcommand{\bt}{\ol{t}}
\newcommand{\m}{\ol{m}}
\newcommand{\n}{\ol{n}}
\newcommand{\hell}{\hat{\ell}}
\newcommand{\qml}{\mathrm{QML}}
\newcommand{\bd}{\mathrm{Bd}}
\newcommand{\lam}{\mathcal{L}}
\newcommand{\happrox}{\widehat \approx}
\newcommand{\hlam}{\mathcal{\widehat L}}
\newcommand{\lamm}{\mathcal{L}^m}
\newcommand{\fqcp}{\mathcal{QCP}}
\newcommand{\ch}{\mathrm{CH}}
\newcommand{\si}{\sigma}
\newcommand{\ph}{\varphi}
\newcommand{\uc}{\mathbb{S}}
\newcommand{\g}{\mathfrak{g}}
\newcommand{\e}{\varepsilon}
\newcommand{\M}{\mathcal{M}}
\newcommand{\Ss}{\mathcal{S}}
\newcommand{\mD}{\mathcal{D}}
\newcommand{\Cc}{\mathcal{C}}
\newcommand{\Uc}{\mathcal{U}}
\def\L{\mathbb{L}}
\def\dc{\mathcal{D}}
\renewcommand\le{\leqslant}
\renewcommand\ge{\geqslant}
\def\0{\varnothing}
\begin{document}

\frontmatter

\title{Laminational models for some\\ spaces of polynomials of any degree}

\date{December 22, 2016; revised April 27, 2017 and May 12, 2017}

\author[A.~Blokh]{Alexander~Blokh}

\thanks{The first and the third named authors were partially
supported by NSF grant DMS--1201450}

\author[L.~Oversteegen]{Lex Oversteegen}

\author[R.~Ptacek]{Ross~Ptacek}

\author[V.~Timorin]{Vladlen~Timorin}

\thanks{The study has been funded by the Russian Academic Excellence Project '5-100'.}

\address[Alexander~Blokh and Lex~Oversteegen]
{Department of Mathematics\\ University of Alabama at Birmingham\\
Birmingham, AL 35294}

\address[Ross~Ptacek and Vladlen~Timorin]
{Faculty of Mathematics\\
Laboratory of Algebraic Geometry and its Applications\\
National Research University Higher School of Economics\\
6 Usacheva str., Moscow, Russia, 119048}


\email[Alexander~Blokh]{ablokh@math.uab.edu}
\email[Lex~Oversteegen]{overstee@uab.edu}
\email[Ross~Ptacek]{rptacek@uab.edu}
\email[Vladlen~Timorin]{vtimorin@hse.ru}

\subjclass[2010]{Primary 37F20; Secondary 37F10, 37F50}

\keywords{Complex dynamics; laminations; Mandelbrot set; Julia set}

\begin{abstract}
The so-called ``pinched disk'' model of the Mandelbrot set is due to
A.~Douady, J.~H.~Hubbard and W.~P.~Thurston. It can be described in
the language of geodesic laminations. The combinatorial model is the
quotient space of the unit disk under an equivalence relation that,
loosely speaking, ``pinches'' the disk in the plane (whence the name
of the model). The significance of the model lies in particular in
the fact that this quotient is planar and therefore can be easily
visualized. The conjecture that the Mandelbrot set is actually
homeomorphic to this model is equivalent to the celebrated MLC
conjecture stating that the Mandelbrot set is locally connected.

For parameter spaces of higher degree polynomials no combinatorial
model is known. One possible reason may be that the higher degree
analog of the MLC conjecture is known to be false. We investigate to
which extent a geodesic lamination is determined by the location of
its critical sets and when different choices of critical sets lead to
essentially the same lamination. This yields models of various
parameter spaces of laminations similar to the ``pinched disk'' model
of the Mandelbrot set.
\end{abstract}

\maketitle

\setcounter{page}{4}

\tableofcontents

\mainmatter

\chapter{Introduction}\label{s:intro}

The \textbf{parameter space} of complex degree $d$ polynomials is by
definition the space of affine conjugacy classes of these polynomials.
An important subset of the parameter space is the \emph{connectedness
locus}\index{connectedness locus} $\Mc_d$ consisting of classes of all
degree $d$ polynomials $P$, whose Julia sets $J(P)$ are connected.
General properties of the connectedness locus $\Mc_d$ of degree $d$
polynomials have been studied for quite some time. For instance, it is
known that $\Mc_d$ is a compact cellular set in the parameter space of
complex degree $d$ polynomials (this was proven in \cite{BrHu} in the
cubic case and in \cite{la89} for higher degrees, see also
\cite{bra86}; by definition, going back to Morton Brown \cite{bro60}, a
subset $X$  of a Euclidean space $\R^n$ is
\emph{cellular}\index{cellular set} if there exists a sequence $Q_n$ of
topological $n$-cells such that $Q_{n+1}\subset \mathrm{Int}(Q_n)$ and
$X=\cap Q_n$\,).

For $d=2$, the connectedness locus is the famous \emph{Mandelbrot
set}\index{Mandelbrot set} $\Mc_2$, which can be identified with the
set of complex numbers $c$ such that $0$ does not escape to infinity
under the iterations of the polynomial $P_c(z)=z^2+c$. The
identification is based on the fact that every quadratic polynomial is
affinely conjugate to $P_c$ for some $c\in\C$ as well as a classical
theorem of Fatou and Julia. The Mandelbrot set $\Mc_2$ has a
complicated self-similar structure (for instance, homeomorphic copies
of the Mandelbrot set are dense in the Mandelbrot set itself). A
crucial role in understanding its structure is played by the ``pinched
disk'' model by Adrien Douady, John Hamal Hubbard and William Thurston
\cite{dh82, hubbdoua85, thu85}. This model can be described as  a
\emph{geodesic lamination}\index{geodesic lamination} (see the index in
the back for the definitions of non-standard terms).

In this paper, we will partially generalize these results to the higher
degree case. We replace the notion of non-disjoint minors by linked or
essentially equal critical quadrilaterals  and show that in certain
cases two linked or essentially equal laminations must coincide. We
apply these results to construct models of some  spaces of laminations.

In what follows we assume basic knowledge of complex dynamics (a good
reference is John Milnor's book \cite{mil00}). Important developments
can be found in Curtis McMullen's book \cite{mcm94b}. We use standard
notation. However, we describe in detail less well known facts
concerning, e.g., combinatorial concepts (such as \emph{geodesic
laminations} developed by Thurston in \cite{thu85}, or
\emph{laminational equivalence relations}) that will serve as important
tools for us.

\section{Laminations}\label{ss:overview}

Laminations were introduced by Thurston in his paper \cite{thu85} and
have been used as a major tool in complex dynamics ever since.

We will write $\C$ for the plane of complex numbers, $\widehat\C$ for
the Riemann sphere, and $\disk=\{z\in\C\,:\, |z|<1\}$ for the open unit
disk. A \emph{laminational equivalence relation}\index{laminational
equivalence relation} is a closed equivalence relation $\sim$ on the
unit circle $\uc=\{z\in\C\,:\,|z|=1\}$, whose classes are finite sets,
such that the convex hulls of distinct classes are disjoint. A
laminational equivalence relation is ($\si_d$-)\emph{invariant} if the
map $\si_d:\uc\to\uc$, defined by $\si_d(z)= z^d$, takes classes to
classes, and the restriction of $\si_d$ to every class $\g$ can be
extended to an orientation preserving covering map $\tau$ of the circle
of some degree $k\le d$ so that $\g$ is a full preimage of $\tau(\g)$.

If a polynomial $P$ has a connected filled Julia set $K$, then, by the
Riemann mapping theorem, there exists a conformal map $\vp:\C\sm
\ol{\disk}\to \C\sm K$ so that $\vp\circ \si_d=P\circ \vp$. The image
of the radial segment $\{re^{i\theta} \mid r>1\}$ under $\vp$ is called
an \emph{external} ray of $K$ with argument $\theta$. If, in addition,
$J=\bd(K)$ is locally connected, then $\vp$ extends over $\uc$. In this
case, there is a laminational equivalence relation $\sim_P$ on $\uc$
which identifies pairs of angles if the corresponding external rays
land at the same point in $J$. The quotient $J_{\sim_P}=\uc/{\sim_P}$
is homeomorphic to $J$, and the self-mapping $f_{\sim_P}$ of
$J_{\sim_P}$ induced by $\si_d$ is topologically conjugate to
$P|_{J_P}$; the map $f_{\sim_P}$ and the set $J_{\sim_P}$ are called a
\emph{topological polynomial}\index{topological polynomial} and a
\emph{topological Julia set}\index{topological Julia set},
respectively.

Laminational equivalence relations can play a significant role even for
some polynomials whose connected Julia sets are not locally connected.
For these polynomials $\sim_P$ still can be defined, although
$P|_{J_P}$ and $f_{\sim_P}|_{J_{\sim_P}}$ are no longer conjugate.
However, they are semiconjugate by a \emph{monotone map}\index{monotone
map} (a continuous map, whose fibers are continua). A topological
polynomial and  topological Julia set can be defined for every
$\si_d$-invariant laminational equivalence relation even if it does not
correspond (in the above sense) to a complex polynomial.

With every laminational equivalence relation $\sim$, it is useful to
associate geometric objects defined below. We identify $\uc$ with
$\R/\Z$. For a pair of points $a$, $b\in\uc$, we will write $\ol{ab}$
for the \emph{chord} (a straight line segment in $\C$) connecting $a$
and $b$ (in particular, a chord is always contained in the closed unit
disk $\cdisk$). If $G$ is the convex hull $\ch(G')$ of some closed set
$G'\subset\uc$, then we write $\si_d(G)$ for the set $\ch(\si_d(G'))$.
The boundary of $G$ will be denoted by $\bd(G)$. If $A$ is a
$\sim$-class, then we call a chord $\ol{ab}$ in $\bd(\ch(A))$ a
\emph{leaf} of $\sim$. All points of $\uc$ are also called
(\emph{degenerate\emph{)} leaves}\index{leaf}. The family $\lam_\sim$
of all leaves of $\sim$ is called the \emph{$(\si_d$-$)$invariant
geodesic lamination generated by the relation $\sim$}\index{invariant
geodesic lamination!generated by a laminational equivalence}.

Let us explain the terminology. The set $\lam_\sim$ is called
\emph{invariant} for two reasons: for every non-degenerate leaf
$\ol{xy}\in \lam_\sim$ we have $\si_d(\ol{xy})\in \lam_\sim$, and, on
the other hand, there exist $d$ disjoint leaves in $\lam_\sim$ such
that their $\si_d$-images equal $\ol{xy}$. The set $\lam_\sim$ is
called \emph{geodesic} because the standard visual interpretation of
chords of $\lam_\sim$ uses \emph{geodesics} in the unit disk with
respect to the   Euclidean (or, equivalently, Poincar\'e) metric (see
Figure~\ref{f:rabbit} for an illustration). Denote by $\lam^+_\sim$ the
union of the unit circle and all the leaves in $\lam_\sim$. Then
$\lam^+_\sim$ is a subcontinuum of the closed unit disk $\cdisk$. In
general, collections of leaves with properties similar to those of
collections $\lam_\sim$ are also called \emph{invariant geodesic
laminations}. In fact, it is these collections that Thurston introduced
and studied in \cite{thu85}.

Let $\lam$ be an invariant geodesic lamination (for instance, we may
have $\lam=\lam_\sim$ for some invariant lamination $\sim$). The
closure in $\C$ of a non-empty component of $\disk\sm \lam^+$ is called
a \emph{gap}\index{gap} of $\lam$. \emph{Edges of a gap $G$} are
defined as leaves of $\lam$ on the boundary of $G$ and we call $G\cap
\uc$ the \emph{basis} of the gap  $G$. A gap is said to be \emph{finite
$($infinite$)$} if its basis is finite (infinite). Gaps of $\lam$ with
uncountable basis are called \emph{Fatou gaps}\index{gap!Fatou}.

The first application of geodesic laminations was in the quadratic case
\cite{thu85}. Let us discuss it in more detail.

\section{``Pinched disk'' model of the Mandelbrot set}

The  ``pinched disk'' model for $\Mc_2$ is constructed as follows, cf.
\cite{dou93, thu85}. We will identify $\uc$ with $\R/\Z$ by means of
the mapping taking an \textbf{angle} $\theta\in\R/\Z$ to the point
$e^{2\pi i\theta}\in\uc$. Under this identification, we have
$\si_2(\theta)=2\theta$.

If the Julia set $J(P_c)$ is locally connected, then, as was explained
above in Subsection~\ref{ss:overview}, Thurston associates to the
polynomial $P_c$ (and hence to the parameter value $c$) a laminational
equivalence relation $\sim_{P_c}$ and then the corresponding
$\si_2$-invariant geodesic lamination $\lam_{\sim_{P_c}}=\lam_c$. The
dynamics of $\si_2:\uc\to\uc$ descends to the quotient space, and the
induced dynamics $f_{\sim_{P_c}}:\uc/\sim_{P_c}\to \uc/\sim_{P_c}$ is
topologically conjugate to $P_c|_{J(P_c)}:J(P_c)\to J(P_c)$. A
$\si_2$-invariant geodesic lamination is also called a \emph{quadratic
invariant geodesic lamination}\index{invariant geodesic
lamination!quadratic}. The lamination $\lam_c$ is also called the
\emph{(quadratic invariant) geodesic lamination} of $P_c$. In what
follows when talking about quadratic invariant geodesic laminations we
often omit ``invariant'' or ``geodesic'' (indeed, we only deal with
\textbf{geodesic} laminations, and ``quadratic'' already assumes
``invariant'').

\begin{figure}
\includegraphics[height=5cm]{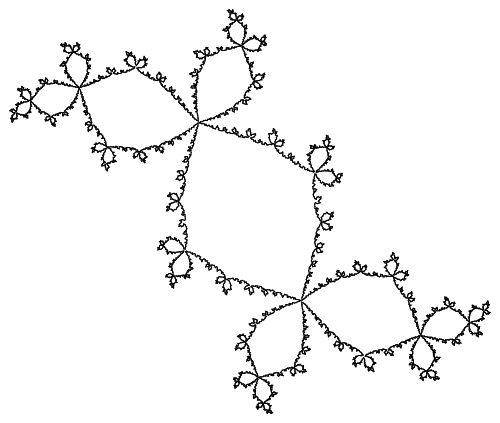}
\includegraphics[height=5cm]{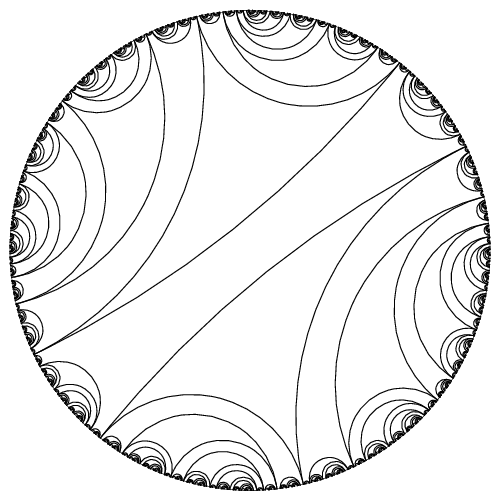}
{\caption{Left: Douady rabbit, the Julia set of the polynomial $f(z)=z^2-0.12..+0.74..i$
Right: its geodesic lamination.}\label{f:rabbit}}
\end{figure}

Thurston's geodesic laminations model the topological dynamics of
quadratic polynomials with locally connected Julia sets. So far, this
construction only provides topological models for individual quadratic
polynomials, and not even for all of them, since there are polynomials
$P_c$ such that $J(P_c)$ is connected but not locally connected;
however, we need to model the space of \textbf{all} polynomials $P_c$
with connected Julia sets. Metaphorically speaking, there are two
parallel worlds: the ``analytic'' world of complex polynomials and the
``combinatorial'' world of  geodesic laminations. Both worlds often
come close to each other: whenever we have a polynomial $P_c$ with
locally connected $J(P_c)$, then we have the corresponding invariant
geodesic lamination $\lam_c$. On the other hand, sometimes the two
worlds diverge. Still, a  model for $\Mc_2$ can be built within the
combinatorial world.

Since the space $C(\ol{\disk})$ of all subcontinua of the closed unit
disk with the Hausdorff metric is a continuum, it makes sense to
consider the closure  $\mathbb{L}_2$ of the family of all quadratic
geodesic laminations $\lam_c$ in $C(\ol{\disk})$, where $J(P_c)$ is
locally connected. Limit points of this family (called \emph{quadratic
geodesic limit laminations}\index{quadratic geodesic limit lamination})
do not immediately correspond to polynomials with connected Julia sets.
However, one can extend the correspondence between polynomials and
geodesic laminations to all polynomials. More precisely, one can
associate to each polynomial $P$ all Hausdorff limits of geodesic
laminations obtained by approximating $P$  by polynomials with locally
connected Julia sets.

The main property of the leaves of an invariant geodesic lamination is
that they are not \emph{linked}\index{leaves!linked}, that is, they do
not cross in $\disk$. Thurston gave a simple parameterization of a
quotient of $\mathbb{L}_2$. The idea is to take one particular leaf
from every quadratic limit geodesic lamination $\lam$, namely, the
leaf, called the \emph{minor}\index{minor} of $\lam$, whose endpoints
are the $\si_2$-images of the endpoints of a longest leaf of $\lam$
called a \emph{major}\index{major} of $\lam$ (it is easy to see that a
quadratic invariant geodesic lamination can have at most two longest
leaves, each of which is the rotation of the other leaf by one half of
the full angle around the center of $\disk$).

One of the main results of \cite{thu85} is that the minors of all
quadratic limit geodesic laminations are \textbf{pairwise unlinked} and
hence form a geodesic lamination called the \emph{quadratic minor
lamination}\index{quadratic minor lamination} $\mathrm{QML}$ (observe
that $\mathrm{QML}$ is not invariant). The geodesic lamination
$\mathrm{QML}$ generates a laminational equivalence relation
$\approx_\qml$ where two points $x, y$ of the unit circle are declared
to be $\approx_\qml$-equivalent if there exists a finite chain of
minors connecting $x$ and $y$ (the fact that $\sim_\qml$ indeed is a
laminational equivalence relation follows from \cite{thu85}). This
gives a conjectural model for the Mandelbrot set, in the sense that the
boundary $\bd(\Mc_2)$ of $\Mc_2$ is conjecturally homeomorphic to
$\uc/\approx_\qml$ (it is known that there exists a monotone map from
$\bd(\M_2)$ to $\uc/\approx_\qml$ and, hence,
$\uc/\approx_{\mathrm{QML}}$ is at least a monotone model of
$\bd(\M_2)$).

The leaves of $\qml$ can be described without referring to quadratic
geodesic limit laminations. To this end, let us denote by $|x-y|$, $x$,
$y\in \uc=\mathbb R/\mathbb Z$, the length of the shortest circle arc
with endpoints $x$ and $y$. Hence the length of a diameter is
$\frac12$. Denote by $\ol{ab}$ the chord with endpoints $a\in \uc$ and
$b\in \uc$. Consider the chord $\ol{ab}$ assuming that $\la=|a-b|\le
1/3$. Let $Q$ be the convex hull of the set $\si_2^{-1}(\{a,b\})$ in
the plane. \textbf{Assume that all four sides of $Q$ are unlinked with
all images $\ol{\si_2^n(a)\sigma_2^n(b)}$ ($n=0,1,2\dots$)} of
$\ol{ab}$  (this holds automatically if $\ol{ab}$ is a minor of a
quadratic invariant geodesic lamination).

The set $Q$ is called a \emph{critical
quadrilateral}\index{quadrilateral!critical} ($Q$ is a quadrilateral
that maps onto its image $\ol{ab}$ two-to-one). The set $Q$ has two
pairs of sides of equal length opposite to each other. Clearly, two
opposite sides are of lengths $\la/2\le 1/6$ and the other two are of
length $1/2-\la/2 \ge 1/3$. Denote by $\ell_1$ and $\ell_2$ the two
longer sides of $Q$ (so that the circle arcs ``behind'' $\ell_1$ and
$\ell_2$ are of length $1/2-\la/2\ge 1/3$). Then the strip  $S$, the
part of the unit disk $\disk$ located between $\ell_1$ and $\ell_2$, is
called the \emph{critical strip}\index{critical strip} (of $\ell_1$ or
$\ell_2$).

Comparing the lengths of various chords involved in the described
picture, we see that the points $a$ and $b$ do not belong to $S$;
indeed otherwise we would have had either that $\la<\la/2$ or that
$\la>1/2-\la/2>1/3$, a contradiction. In other words,
$\si_2(\ell_1)=\si_2(\ell_2)=\ol{ab}$ is disjoint from $S$ (it can be
contained in the boundary of $S$ if $a=\frac13$ and $b=\frac23$).
Similar considerations involving critical strips play an important role
in \cite{thu85} and, in particular, lead to the so-called Central Strip
Lemma (see Section \ref{ss:motiv}). This lemma yields that minors of
quadratic invariant geodesic laminations are pairwise unlinked. In the
paper \cite{chmmo15} the Central Strip Lemma is studied and extensions
of this lemma to the case of degree greater than two are obtained,
however the conclusions of these extensions are weaker than the
conclusion of the original Central Strip Lemma.

By \cite{thu85}, if, for every positive integer $n$, the chord
$\si_2^n(\ell_1)=\si_2^n(\ell_2)$ is disjoint from the interior of $S$,
then Thurston's \textbf{pullback construction} yields a quadratic
invariant geodesic lamination with the majors $\ell_1$ and $\ell_2$.
Therefore, the condition that for every positive integer $n$, the chord
$\si_2^n(\ell_1)=\si_2^n(\ell_2)$ is disjoint from the interior of $S$,
describes all chords $\ell_1$ and $\ell_2$ that are majors of quadratic
geodesic laminations. Clearly, this description does not depend on
quadratic invariant geodesic laminations.

Observe that if $a=b$, then $Q$ is a diameter of $\uc$. In this case,
$Q$ is trivially a major. However, if $a\ne b$, then the conditions
from the previous paragraph (that $\si_2^n(\ol{ab})$ is disjoint from
the interior of $S$) are non-trivial. An alternative --- and more
straightforward --- way of defining $\mathrm{QML}$ is by saying that
$\qml$ is formed by minors of all quadratic invariant geodesic
laminations, that is by chords $\si_2(\ell)$ taken for all majors
$\ell$ of all quadratic invariant geodesic laminations.

As was mentioned above, one of the main results of Thurston's from
\cite{thu85} is that $\qml$ is in fact a geodesic lamination itself (it
is not at all obvious that minors described above are pairwise
unlinked). Moreover, Thurston shows in \cite{thu85} that leaves of
$\qml$ can be broken into single (``stand alone'') leaves and finite
collections of leaves with each such collection being the boundary of a
geodesic polygon in $\disk$. One can then collapse all such leaves and
geodesic polygons to points thus defining the quotient space
$\ol{\disk}/\approx_\qml$, which serves as a combinatorial model for
$\Mc_2$, see \cite{dou93, sch09}. Denote the corresponding quotient map
by $\pi:\cdisk\to \cdisk/\approx_\qml$. Also, denote
$\ol{\disk}/\approx_\qml$ by $\Mc_2^{comb}$ reflecting the
combinatorial nature of the ``pinched disk'' model of $\Mc_2$.

The importance of results of \cite{thu85} lies, in particular, in the
fact that thanks to the minors being unlinked one can visualize
$\mathrm{QML}$ and, hence, a quotient of  the space $\mathbb{L}_2$
(distinct  quadratic invariant geodesic laminations may have the same
minor and our parameterization identifies such laminations). This in
turn allows for a visualization of $\ol{\disk}/\approx_{\mathrm{QML}}$
as the result of ``planar pinching'' of the unit disk which collapses
all the above described geodesic polygons formed by minors. More
precisely, by \cite{dave86}, there exists a homotopy $\ga: \C\times [0,
1]\to \C$ such that, for each $t\in [0, 1)$, the map $\ga_t:z\to \ga(z,
t)$ is an orientation preserving homeomorphism that shrinks every
geodesic polygon formed by minors more and more (as $t$ approaches $1$)
so that, for $t=1$, we do not have a homeomorphism, rather a
``pinching'' map $\ga_1:z\to \ga(z, 1)$ representing the quotient map
of $\ol{\disk}$ to $\ol{\disk}/\approx_\qml$.

Recall that a continuous mapping from one continuum to another
continuum is \emph{monotone}\index{monotone map} if the
\emph{fibers}\index{fiber of a map} (that is, preimages of points) are
connected. It is known \cite{sch09} that there exists a monotone map
$\pi:\M_2\to\M^{Comb}_2=\uc/QML$. The set $\Mc_2$ is locally connected
if and only if the fibers of $\pi$ are points, hence, $\pi$ gives the
desired homeomorphism between $\bd(\Mc_2)$ and $\uc/\approx_\qml$
provided that the MLC conjecture holds. In other words, the conjecture
that the boundary of $\Mc_2$ is homeomorphic to $\uc/\approx_\qml$ is
equivalent to the celebrated \textbf{MLC conjecture} claiming that the
Mandelbrot set is locally connected.

\section{Previous work}

The structure of the cubic connectedness locus $\Mc_3$ (or some parts
of it) has been studied by many authors. There are several approaches.
In some papers, higher degree connectedness loci are considered too. In
the rest of this subsection we briefly describe some relevant results.
Let us emphasize that our short overview is far from being complete.

Branner and Hubbard \cite{BrHu} initiated the study of $\Mc_3$, and
investigated the complement of  $\Mc_3$ in the full parameter space of
cubic polynomials. The complement is foliated by so-called
\emph{stretching rays}\index{stretching ray} that are in a sense
analogous to external rays of the Mandelbrot set. The combinatorics of
$\Mc_3$ is closely related to landing patterns of stretching rays.
However, we do not explore this connection here. A significant
complication is caused by the fact that there are non-landing
stretching rays. Landing properties of stretching rays in the parameter
space of real polynomials have been studied by Komori and Nakane
\cite{kona}. Of special interest is a certain subset of the complement
of $\Mc_3$ in the parameter space called the \textbf{shift locus} (see,
for example, \cite{BrHu, brhu92, dem12, dp11}).

Another approach to understanding parameter spaces of polynomials is
based on a new notion, due to Thurston, of core entropy (entropy on the
Hubbard tree of a polynomial) studied, for example, in \cite{thu14,
tio15, tio14, ds14}.

Lavaurs \cite{la89} proved that $\Mc_3$ is not locally connected.
Epstein and Yampolsky \cite{EY99} showed that the bifurcation locus in
the space of real cubic polynomials is not locally connected either.
This makes the problem of defining a combinatorial model of $\Mc_3$
that would admit visual interpretation very delicate. Buff and
Henriksen \cite{BH01} presented copies of quadratic Julia sets,
including not locally connected Julia sets, in slices of $\Mc_3$. By
McMullen \cite{mcm07}, slices of $\M_3$ contain lots of copies of
$\M_2$. In addition, Gauthier \cite{gau14} has shown that $\M_3$
contains copies of $\M_2\times \M_2$. In fact, the last two papers
contain more general results than what we mention here; we now confine
ourselves to the cubic case. Various spaces of cubic polynomials are
studied in \cite{zak99, bkm10}.

In his thesis, D. Faught \cite{Fau92} considered the slice $\mathcal A$
of $\Mc_3$ consisting of polynomials with a fixed critical point and
showed that $\mathcal A$ contains countably many homeomorphic copies of
$\Mc_2$ and is locally connected everywhere else. P. Roesch \cite{R06}
filled some gaps in Faught's arguments and generalized Faught's results
to higher degrees. Milnor \cite{M} gave a classification of hyperbolic
components in $\Mc_d$; however, this description does not involve
combinatorial tags. Schleicher \cite{sch04} constructed a geodesic
lamination modeling the space of
\emph{unicritical}\index{polynomial!unicritical} cubic polynomials,
that is, polynomials with a unique multiple critical point. We have
also heard of an unpublished old work of D. Ahmadi and M. Rees, in
which cubic geodesic laminations were studied, however we have not seen
it. Finally, a paper by J. Kiwi \cite{kiw05} studies the parameter
space of all central monic polynomials of arbitrary degree, focusing
upon the intersection of the connectedness locus and the closure of the
\emph{shift locus} (i.e. the set of all polynomials so that  \emph{all}
critical points escape). However, \cite{kiw05} does not deal with the
combinatorial structure of the connectedness locus.

\section{Overview of the method}

We now sketch the main tools developed in the present paper. The need
for them is justified by the fact that Thurston's tools used in the
construction of $\qml$ do not generalize to the cubic case. His tools
are based on the Central Strip Lemma stated in Section \ref{ss:motiv},
and include the No Wandering Triangles Theorem (also stated in Section
\ref{ss:motiv}). A straightforward extension of the Central Strip Lemma
as well as that of the No Wandering Triangles Theorem to the cubic case
fail (see a recent paper \cite{chmmo15} with possible extensions of the
Central Strip Lemma). As a consequence, cubic invariant geodesic
laminations may have wandering triangles (see \cite{bo08}). Thus, one
needs a different set of combinatorial tools. Such tools are developed
in the present paper and are based upon a principle which we call
\textbf{smart criticality}. Smart criticality works for geodesic
laminations of any degree.

Recall, that given a geodesic lamination $\lam$ in $\disk$, one defines
\emph{gaps}\index{gap} of $\lam$ as closures of components of
$\cdisk\sm \lam^+$ where $\lam^+\subset \cdisk$ is the union of all
leaves of $\lam$. The statement about  quadratic geodesic laminations
we are trying to generalize is the following: if the minors of two
quadratic geodesic laminations intersect in $\disk$, then they
coincide. However, although minors can also be defined for higher
degree laminations, they are not the right objects to consider because
they do not define geodesic laminations in a unique way. The sets that
essentially determine a given invariant geodesic lamination are in fact
its critical sets rather than their images. Thus, for the purpose of
characterizing invariant geodesic laminations we propose different
objects.

For a quadratic invariant geodesic lamination $\lam$, instead of its
non-de\-ge\-ne\-rate minor $\ol m$, we can consider the quadrilateral,
whose vertices are the four $\si_2$-preimages of the endpoints of $\ol
m$. Such a quadrilateral $Q$ is called a \emph{critical
quadrilateral}\index{quadrilateral!critical}. Note that $Q$ is not
necessarily a gap of $\lam$. Thus, $Q$ lies in some critical  gap of
$\lam$ or, if $\ol m$ is a point, coincides with the critical leaf of
$\lam$ which we see as a generalized critical quadrilateral. Similarly,
for a degree $d$ invariant geodesic lamination $\lam$, we can define
critical quadrilaterals as (possibly degenerate) quadrilaterals lying
in gaps or leaves of $\lam$ (opposite vertices of these quadrilaterals
must have the same $\si_d$-images). These critical quadrilaterals will
play the role of minors and will be used to tag higher degree geodesic
laminations.

The method of smart criticality helps to verify that, under suitable
assumptions, two linked leaves $\ell_1$, $\ell_2$ (i.e., leaves such
that $\ell_1\cap\ell_2\cap\disk\ne\0$) of \textbf{different} geodesic
laminations have linked images $\si_d^n(\ell_1)$, $\si_d^n(\ell_2)$,
for \textbf{all} $n$. One possible reason, for which $\si_d(\ell_1)$,
$\si_d(\ell_2)$ may be linked if $\ell_1, \ell_2$ are linked, is the
following: $\ell_1$ and $\ell_2$ are contained in a part of the unit
disk bounded by several circle arcs and such that these circle arcs map
forward under $\si_d$ so that the circular order among their points is
(non-strictly) preserved.

A typical reason for that phenomenon is that $\ell_1$ and $\ell_2$ are
disjoint from a \emph{full} collection of critical chords (here a
$\si_d$-\emph{critical chord}\index{chord!critical} is a chord of
$\disk$, whose endpoints map to the same point under $\si_d$, and a
\emph{full collection of critical chords}\index{full collection of
critical chords} is a collection of $d-1$ critical chords without
loops). Implementing this idea, we prove that $\si_d^n(\ell_1)$,
$\si_d^n(\ell_2)$ are linked for all $n$ by choosing, for every $n$, a
different and thus depending on $n$ full collection of critical chords
--- this is the meaning of ``smart'' as in  ``smart criticality''
above (alternatively, one could call this ``adjustable criticality'').

Smart criticality can be implemented in the following situation. Let
$\lam_1$ and $\lam_2$ be two invariant geodesic laminations. Suppose
that we can choose \emph{full}\index{full collection of critical
quadrilaterals} collections of critical quadrilaterals in $\lam_1$ and
$\lam_2$ (i.e., such collections that on the boundaries of components
of their complement the map $\si_d$ is one-to-one except perhaps for
boundary critical chords); then we say that $\lam_1$ and $\lam_2$ are
\emph{quadratically critical}\index{invariant geodesic laminations!
quadratically critical}. Critical quadrilaterals of a quadratically
critical $\si_d$-invariant geodesic lamination $\lam$ can be ordered;
if we fix that order we call the corresponding $(d-1)$-tuple of
critical quadrilaterals a \emph{quadratically critical
portrait}\index{quadratically critical portrait} of $\lam$, and $\lam$
is said to be \emph{marked}\index{invariant geodesic
laminations!marked}.

Suppose that two quadratically critical portraits $\qcp_1, \qcp_2$ are
such that equally numbered critical quadrilaterals in them either have
alternating vertices, or share a diagonal; then we say that $\qcp_1,
\qcp_2$ are \emph{linked}\index{quadratically critical portraits!linked
or essentially equal} (if at least one pair of corresponding critical
quadrilaterals with alternating vertices exists) or \emph{essentially
equal}\index{quadratically critical portraits!linked or essentially
equal} (if all pairs of corresponding quadrilaterals share a common
diagonal). Two marked invariant quadratically critical geodesic
laminations are said to be \emph{linked (essentially equal)} if their
quadratically critical portraits are linked (essentially
equal)\index{invariant geodesic laminations!linked or essentially
equal}.

In fact, being linked or essentially equal is slightly more general
than the property just stated; the precise statements can be found  in
Definition \ref{d:qclink1} and in \Cref{d:link-our-lams}. The main
result of the paper is that in a lot of cases linked or essentially
equal invariant geodesic laminations must coincide, or at least they
must share a significant common portion. This fact can be viewed as a
version of rigidity of critical data of invariant geodesic laminations.
It serves as a basis for the applications discussed in
Subsection~\ref{ss:main-app}.

To be more specific, we need to introduce a few notions. Suppose that
$\lam$ is a $\si_d$-invariant geodesic lamination. Then there are two
types of leaves of $\lam$. First, there are leaves $\ell$ of $\lam$
such that in any  neighborhood of $\ell$ there are uncountably many
leaves of $\lam$ (that is, if $\ell=\ol{ab}$, then, for every $\e>0$,
there are uncountably many leaves $\ol{xy}\in \lam$ such that $\mathrm
d(x, a)<\e$ and $\mathrm d(y, b)<\e$ where $\mathrm d(\cdot, \cdot)$ is
a distance between points on the unit circle $\uc$). The union of all
such leaves (perhaps only consisting of  the unit circle, but normally
much more significant), is itself an invariant geodesic lamination
denoted by $\lam^p$. Since  every leaf of $\lam^p$ is a limit of other
leaves of $\lam^p$ we call $\lam^p$ the \emph{perfect
part}\index{perfect part}\label{ppart} of $\lam$.

Another important part of $\lam$ is related to so-called periodic
Siegel gaps. Namely, an $n$-periodic Fatou gap $U$ of an invariant
geodesic lamination $\lam$ is said to be a periodic \emph{Siegel} gap
if $\si_d^n:\bd(U)\to \bd(U)$ is a degree one map monotonically
semiconjugate to an irrational rotation of the unit circle. It is easy
to see that edges of periodic Siegel gaps are isolated in $\lam$. The
closure of the union of the grand orbits of all periodic Siegel gaps of
an invariant geodesic lamination $\lam$ is denoted by $\lam^{Sie}$ and
is called the \emph{Siegel part}\index{Siegel part}\label{spart} of
$\lam$. It is not hard to see that the union of the perfect part and
the Siegel part of an invariant geodesic lamination is itself an
invariant geodesic lamination.

We can also consider pullbacks of periodic Fatou gaps $U$. If there is
an eventual non-periodic pullback $W$ of $U$ that maps forward by
$\si_d$ in a $k$-to-$1$ fashion with $k>1$, then $U$ is said to be of
\emph{capture} type. The terminology, due to Milnor \cite{miln93, M},
comes from the fact that in the case of complex polynomials the
periodic Fatou domain corresponding to $U$ \textbf{captures} a critical
point that belongs to the appropriate non-periodic  pullback Fatou
domain.

Our two main rigidity theorems show that the fact that two invariant
geodesic laminations are linked or essentially equal implies that the
laminations themselves are ``almost'' equal. Thus, we obtain a tool
allowing us to conclude that certain distinct geodesic laminations
cannot be linked/essentially equal. As the linkage/essential equality
of geodesic laminations is related to the mutual location of their
critical sets, out of this we choose appropriate tags of the critical
sets and draw conclusions about those tags being pairwise disjoint.
This in the end yields parameterization of the corresponding space of
geodesic laminations similar to Thurston's QML.

\begin{thmr1}\label{thrm1}
If two marked invariant quadratically critical geodesic laminations are
linked or essentially equal, then the unions of their perfect parts and
their Siegel parts are the same.
\end{thmr1}

To state the Second Rigidity Theorem we need the following definition.
Let $\sim$ be an invariant laminational equivalence relation and
$\lam_\sim$ be the geodesic lamination generated by $\sim$. Let $U$ be
an $n$-periodic Fatou gap of $\lam_\sim$ such that $\si_d^n:\bd(U)\to
\bd(U)$ has degree two. Then we call $U$ a \emph{quadratic Fatou
gap}\index{Fatou gap!quadratic}. If $U$ is quadratic, then there is a
unique edge $M$ (possibly degenerate) of $\bd(U)$ of period $n$; we
call $M$ a \emph{refixed}\index{refixed edge} edge of $U$. Let us also
denote by $M^*$ the unique edge of $U$ distinct from $M$ but with the
same image as $M$.

The convex hull of $M\cup M^*$ is said to be a \emph{legal critical
quadrilateral}\index{critical quadrilateral!legal}. There may exist a
finite gap $G$ sharing the leaf $M$ with $U$ and, accordingly, a finite
gap $G^*$ sharing the leaf $M^*$ with $U$ such that
$\si_d(G)=\si_d(G^*)$. Then \emph{in some cases} one can erase $M$ and
its entire grand orbit from $\lam_\sim$ and, possibly, replace it by a
different leaf contained in $G$ and its entire grand so that the new
geodesic lamination generates the same laminational equivalence
relation. In these cases one can insert in $U\cup G\cup G^*$ a critical
quadrilateral $Q$ with edges in $G$ and $G^*$ so that leaves from the
forward orbit of $Q$ do not cross each other. Thurston's pullback
construction implies that we can pull $Q$ back inside the grand orbit
of $U$ and add the thus constructed grand orbit of $Q$ to $\lam_\sim$.
We will call such quadrilaterals \emph{legal}\index{critical
quadrilateral!legal} too. Also, if a critical set $G$ of $\lam_\sim$ is
finite, then any critical quadrilateral inserted in $G$ and such that
sets from its forward orbit do not cross is called
\emph{legal}\index{critical quadrilateral!legal}.

Finally, suppose that an invariant geodesic lamination $\lam_\sim$ is
such that all critical sets of $\lam_{\sim_i}^p\cup
\lam_{\sim_i}^{Sie}$ are either finite sets or periodic quadratic Fatou
gaps. Then we say that $\sim$ (and $\lam_\sim$) are \emph{quadratically
almost perfect-Siegel non-capture}\index{invariant geodesic
lamination!quadratically almost perfect-Siegel non-capture}. A full
ordered collection of legal critical quadrilaterals inserted in
critical sets of a quadratically almost perfect-Siegel non-capture
geodesic laminations is said to be a \emph{legal quadratically critical
portrait}\index{quadratically critical portrait!legal} of $\sim$ if
chords from the forward orbits of these quadrilaterals are not linked
(these forward orbits are ``dynamically consistent''). If such a
portrait is chosen for $\lam$, then $\lam$ is said to be
\emph{marked}\index{invariant geodesic lamination!marked quadratically
almost perfect-Siegel non-capture}. Two marked quadratically almost
perfect-Siegel non-capture laminational equivalence relations are said
to be \emph{linked} (\emph{essentially equal}) if their legal
quadratically critical portraits are \emph{linked} (\emph{essentially
equal}).

\begin{thmr2}\label{thrm2}
If two marked invariant geodesic quadratically almost perfect-Siegel
non-capture laminations $\lam_{\sim_1}$ and $\lam_{\sim_2}$ are linked
or essentially equal, then they coincide (that is, $\sim_1=\sim_2$ and
$\lam_{\sim_1}=\lam_{\sim_2}$).
\end{thmr2}

\section{Main applications}\label{ss:main-app}

Questions concerning the existence of combinatorial models of the
connectedness loci $\Mc_d$ of degree $d$ polynomials arose soon after
Thurston's construction of a combinatorial model for $\Mc_2$ (see,
for\, example, \cite{thu85, mcm94}). The main aim of the present paper
is to generalize the ``pinched disk'' model onto some classes of
invariant geodesic laminations as well as polynomials. Inevitably, the
increase in the degree makes the problem more difficult. Thurston's
work was based on his Central Strip Lemma \cite[Lemma II.5.1]{thu85},
which implied his No Wandering Triangle Theorem and the transitivity of
the first return map of finite periodic polygons. However, the Central
Strip Lemma fails in degrees higher than two. Moreover, it is known
that in the cubic case wandering triangles exist \cite{bo04, bo08,
bco12, bco13} and that the first return map on a finite periodic
polygon is not necessarily transitive in higher degree cases
\cite{kiw02}. This shows that a new approach is necessary.

Furthermore, the connectedness locus $\Mc_3$ in the parameter space of
complex cubic polynomials is a four-dimensional set, which is known to
be non-locally connected \cite{la89}. Thus, it is hopeless to look for
a precise topological model of the boundary of $\Mc_3$ as a quotient of
a locally connected space (any quotient space of a locally connected
space is locally connected!). Yet another indication of the fact that a
new approach is needed is the fact that in the cubic case the so-called
combinatorial rigidity fails as shown by Henriksen in \cite{hen03}.

In the present paper we concentrate on the part of Thurston's work (see
\cite{thu85}) where it is shown that the family of quadratic invariant
geodesic lamination can be tagged by their minors, which, by
\cite{thu85}, are pairwise unlinked. This yields the ``pinched disk''
model QML for the Mandelbrot set. We prove similar results, which allow
us to describe various spaces of invariant geodesic laminations.

The first application can be found in Subsection~\ref{ss:loc-dendr}.
Consider the space of all polynomials with connected Julia sets such
that all their periodic points are repelling. Such polynomials exhibit
rich dynamics and have been actively studied before. In particular,
there is a nice association, due to Jan Kiwi \cite{kiwi97}, between
these polynomials and a certain class of invariant geodesic laminations
of the same degree. These invariant geodesic laminations
$\lam=\lam_\sim$ are generated by invariant laminational equivalence
relations $\sim$ that have the following property: the associated
topological Julia set $J_\sim$ is a \emph{dendrite}\index{dendrite}
(that is, a locally connected one-dimensional continuum that contains
no Jordan curves); equivalently, all gaps of $\lam_\sim$ must be
finite. Then the corresponding invariant geodesic lamination and the
corresponding invariant laminational equivalence are called
\emph{dendritic}.

Kiwi proves in \cite{kiwi97} that in this case for a given polynomial
$P$ of degree $d$ there exists an invariant laminational equivalence
relation $\sim_P$ such that the filled Julia set $J(P)$ of the
polynomial $P$ can be \textbf{monotonically} (recall that this means
that point-preimages are connected) mapped onto $J_{\sim_P}$. Moreover,
the monotone map $\psi_P:J(P)\to J_{\sim_P}$ in question semiconjugates
$P|_{J(P)}$ and the associated topological polynomial
$f_{\sim_P}:J_{\sim_P}\to J_{\sim_P}$ induced by $\si_d$ on the
topological Julia set $J_{\sim_P}=\uc/\sim_P$. Denote by $\vp_P$ the
quotient map $\vp_P:\uc\to \uc/\sim_P$.

Take a point $z\in J(P)$, project it by the map $\psi_P$ to a point
$\psi_P(z)$ of the topological Julia set $J_{\sim_P}$, lift the point
$\psi_P(z)$ to the corresponding $\vp_P$-fiber $\vp_P^{-1}(\psi_P(z))$,
and then to its convex hull $\ch(\vp_P^{-1}(\psi_P(z)))$ denoted by
$G_z$. Clearly, $G_z$ is a gap or (possibly degenerate) leaf of
$\lam_{\sim_P}$; loosely, $G_z$ is the laminational counterpart of the
point $z$. This geometric association is important for a  combinatorial
interpretation of the dynamics of $P$. In particular, each critical
point $c$ of $P$ is associated with the critical gap or leaf $G_c$ of
$\lam_{\sim_P}$ (all dendritic invariant geodesic laminations have
finite critical sets).

We call polynomials with connected Julia sets, all of whose cycles are
repelling, \emph{dendritic}\index{polynomial!dendritic}. Let us
emphasize that we do not mean that the Julia sets of dendritic
polynomials are dendrites themselves; rather our terminology is
justified because by \cite{kiwi97} the Julia sets of dendritic
polynomials can be mapped to a non-trivial dendrite under a monotone
map. In particular, the Julia set of a dendritic polynomial may be
non-locally connected and, hence, not a dendrite.

Dealing with polynomials, we specify the order of their critical points
and talk about \emph{$($critically$)$ marked
polynomials}\index{polynomial!critically marked}. In that we follow
Milnor \cite{mil12}. More precisely, a \emph{$($critically$)$ marked
polynomial} is a polynomial $P$ with an ordered collection $C(P)$ of
its critical points, so that every multiple critical point is repeated
several times according to its multiplicity (thus, $C(P)$ is a
$(d-1)$-tuple, where $d$ is the degree of $P$). Marked polynomials do
not have to be dendritic (in fact, the notion is used by Milnor and
Poirier for hyperbolic polynomials, that is, in the situation
diametrically opposite to that of dendritic polynomials). However, we
consider only dendritic marked polynomials. Thus, speaking of a marked
polynomial, we mean a pair $(P, C(P))$.

In what follows, $C(P)$ is called an \emph{ordered critical
collection}\index{collection!ordered critical} of $P$; normally we use
the notation $C(P)=(c_1,$ $\dots,$ $c_{d-1})$, where a multiple
critical point $c$ is repeated in $C(P)$ according to its multiplicity.
Since we want to reflect convergence of polynomials, we allow for the
same critical point of multiplicity $k$ to be repeated $k-1$ times not
in a row. For example, let a polynomial $P$ of degree $5$ have two
critical points $c$ and $d$ of multiplicity $3$ each. Then we can mark
$P$ with any ordered collection of points $c$ and $d$ as long as each
point is repeated twice, such as $(c, d, c, d)$, or $(d, d, c, c)$, or
$(c, c, d, d)$ etc.

Endow the family of all critically marked polynomials with the natural
topology that takes into account the order among critical points so
that marked polynomials $(P_i, C(P_i))$ converge to a marked polynomial
$(P, C(P))$ if and only if $P_i\to P$ and $C(P_i)\to C(P)$. Our aim is
to provide local laminational models for some dendritic polynomials of
arbitrary degree $d$. In other words, we suggest a class of marked
dendritic polynomials $(P, C(P))$ with the following property. There
exists a neighborhood $\mathcal U$ of $(P, C(P))$ and a continuous map
from $\mathcal U$ to a special laminational parameter space. The
definition of this map is based upon information on laminational
equivalence relations $\sim_Q$ defined by dendritic polynomials $Q\in
\mathcal U$. This approach is close to Thurston's original approach
which led to the proof of the existence of a monotone map from the
entire quadratic Mandelbrot set onto its laminational counterpart, the
``pinched disk'' model $\Mc_2^{comb}$. We implement this approach on
open subsets of the space of marked polynomials of arbitrary degree
$d$.

As polynomials $P$, we choose dendritic polynomials with the following
additional property: the invariant dendritic geodesic lamination
$\lam_{\sim_P}$ has $d-1$ pairwise distinct critical sets. We will call
such polynomials \emph{simple dendritic}\index{polynomial!simple
dendritic}. If $(P, C(P))$ is a critically marked simple dendritic
polynomial, then all critical points in $C(P)$ must be distinct.
However, the mere fact that $P$ is dendritic and has $d-1$ distinct
critical points is not sufficient to conclude that $(P, C(P))$ is a
simple dendritic polynomial. This is because distinct critical points
of $P$ may belong to the same fiber of $\psi_P$ resulting in some
critical sets of $\lam_{\sim_P}$ being of multiplicity greater than
two. One can show that the space of simple dendritic critically marked
polynomials is open in the space of all critically marked dendritic
polynomials.

Denote the space of all degree $d$ simple critically marked dendritic
polynomials by $\cmd_d$. Consider the \emph{ordered postcritical
collection}\index{collection!ordered postcritical}$(P(c_1),$ $\dots,$
$P(c_{d-1}))$. The sets $G_{c_i}, 1\le i\le d-1$ are critical sets of
the invariant geodesic lamination $\lam_{\sim_P}$ and the sets
$G_{P(c_i)}, 1\le i\le d-1$ are their $\si_d$-images. Define the
following two maps from $\cmd_d$ to the space of compact subsets of
$\cdisk^{d-1}$. First, it is the map $\hPsi_d$ defined as follows:

$$\hPsi_d(P)=G_{c_1}\times G_{c_2}\times \dots \times
G_{c_{d-1}}.$$

\noindent Second, it is the map $\Psi_d$, defined as follows:

$$\Psi_d(P)=G_{P(c_1)}\times G_{P(c_2)}\times \dots \times
G_{P(c_{d-1})}.$$

\noindent These maps associate to any marked simple dendritic
polynomial a compact subset of $\cdisk^{d-1}$ (moreover, this subset
itself is the product of convex hulls of certain $\sim_P$-classes).
Notice that each set $G_{c_j}$ maps onto its image two-to-one. We call
the set $\Psi_d(P)$ the \emph{postcritical tag}\index{tags of
critically marked dendritic polynomials!postcritical} of the critically
marked polynomial $(P, C(P))$.

\begin{thmlc} Suppose that $(P,$  $C(P))$ is
a marked simple dendritic polynomial of degree $d$. Then there is a
neighborhood $\mathcal U$ of $(P, C(P))$ in $\cmd_d$ such that for any
two polynomials $(Q,C(Q)), (R, C(R))\in \mathcal U$ with $\hPsi(Q)\ne
\hPsi(R)$ we have that their $\Psi_d$-images $\Psi_d(Q)$ and
$\Psi_d(R)$ are disjoint.
\end{thmlc}

The Theorem on Local Charts for Dendritic Polynomials implies the
following corollary, in which the notation of the Theorem is used.

\begin{lppm}
Consider the union of all postcritical tags of polynomials in $\mathcal
U$ in $\ol{\disk}^{d-1}$ and its quotient space obtained by collapsing
these tags to points. The constructed space is separable and metric.
Moreover, the map $\Psi_d$ viewed as a map from $\mathcal U$ to this
space is continuous.
\end{lppm}

The second application extends the results of \cite{bopt15a} and can be
found in Subsection~\ref{ss:d-2}. In \cite{bopt15a}, we studied the
space $\prnp_3(\ol{ab})$ of all cubic invariant geodesic laminations
generated by cubic invariant laminational equivalence relations $\sim$
such that for some fixed critical leaf $D=\ol{ab}$ with non-periodic
endpoints we have $a\sim b$, and there are no gaps of capture type. The
main result of \cite{bopt15a} is that this family of cubic invariant
geodesic laminations is modeled by a lamination.  This result resembles
the description of the combinatorial Mandelbrot set.

More specifically, to each cubic invariant geodesic lamination $\lam$
from  $\prnp_3(D)$ we associate its critical set $C$ whose criticality
``manifests'' itself inside the circle arc $(b, a)$ of length
$\frac23$. We show that either $C$ is finite, or $C$ is a periodic
Fatou gap of degree two and period $k$. Now, if $\lam\in \prnp_3(D)$
then a pair of sets $\qcp=(Q, D)$ is called a \emph{quadratically
critical portrait privileged for $\lam$} if and only if $Q\subset C$ is
a critical leaf or a collapsing quadrilateral (by a \emph{collapsing}
quadrilateral\index{quadrilateral!collapsing} we mean a quadrilateral
whose boundary maps two-to-one to a chord). In the case when $C$ is a
critical periodic Fatou gap of period $k$, we require that $Q$ be a
collapsing quadrilateral obtained as the convex hull of a (possibly
degenerate) edge $\ell$ of $C$ of period $k$ and another edge $\hell$
of $C$ such that $\si_3(\ell)=\si_3(\hell)$.

In \cite{bopt15}, we show that for each $\lam\in\prnp_3(D)$ there are
only finitely many privileged quadratically critical portraits. Let
$\Ss_D$ denote the collection of all privileged for $\lam$
quadratically critical portraits $(Q, D)$. To each such $(Q, D)$ we
associate its \emph{minor} (a chord or a point) $\si_3(Q)\subset
\cdisk$. For each such chord we identify its endpoints, extend this
identification by transitivity and define the corresponding equivalence
relation $\simeq_D$ on $\uc$. The main result of \cite{bopt15} is that
$\simeq_D$ is itself a laminational equivalence (non-invariant!) whose
quotient is a parameterization of $\prnp_3(D)$.

In Subsection~\ref{ss:d-2} of the present paper, we generalize the
results of the paper \cite{bopt15} onto the degree $d$ case. In order
to do so we introduce the appropriate space analogous to $\prnp_3(D)$.
Namely, fix a collection $\mathcal Y$ of $d-2$ pairwise disjoint
critical chords of $\si_d$ with non-periodic endpoints. Let $\mathbb
L(\mathcal Y)$ be the space of all invariant geodesic laminations
generated by laminational equivalence relations that are
\emph{compatible} with this collection in the sense that $\lam_\sim$
belongs to $\mathbb L(\mathcal Y)$ if and only if the endpoints of each
critical leaf from $\mathcal Y$ are $\sim$-equivalent. Moreover,
similar to the case of $\prnp_3(D)$ we also require that $\lam_\sim$
has no gaps of capture type. We prove in \Cref{l:ld-nonempty} that
$\mathbb L(\mathcal Y)$ is non-empty.

Let $\mathcal Y^+$ be the union of all critical leaves from $\mathcal
Y$. There exists a unique component $A(\mathcal Y)=A$ of $\cdisk\sm
\mathcal Y^+$ on whose boundary the map $\si_d$ is two-to-one except
for its critical boundary edges ($\si_d$ is one-to-one in the same
sense on all other components of $\cdisk\sm \mathcal Y^+$). Moreover,
in \Cref{l:crit-set} we show that for each $\lam\in \mathbb L(\mathcal
Y)$, there exists a unique critical set $C$ that contains a critical
chord $\oc\subset \ol{A}$. We then use the set $C$ to define the
\emph{minor set} of $\lam$. Namely, it is shown that $C$ is either
finite, or a periodic (of period, say, $n$) Fatou gap such that
$\si_d^n: \bd(U)\to\bd(U)$ is two-to-one. In the former case, set
$m(\lam)=\si_d(C)$. In the latter case, choose a maximal finite gap
whose vertices are fixed under $\si_d^n$ and share an edge with $U$,
and let $m(\lam)$ be its $\si_d$-image. The main result of
Subsection~\ref{ss:d-2} is the following theorem.

\begin{thmcd} There exists a non-invariant laminational
equivalence relation $\sim_{\mathcal Y}$ such that minor sets of
invariant geodesic laminations from $\mathbb L(\mathcal Y)$  are convex
hulls of classes of equivalence of $\sim_{\mathcal Y}$; this gives rise
to the quotient space $\uc/\sim_{\mathcal Y}=\Mc_{\mathcal Y}$ that
parameterizes $\mathbb L(\mathcal Y)$.
\end{thmcd}

Yet another application of the results of this paper will be contained
in a forthcoming paper by the authors where we construct a higher
dimensional lamination of a subset of $\cdisk\times \cdisk$ whose
quotient space is a combinatorial model for the space of all marked
cubic polynomials with connected Julia set that only have repelling
cycles \cite{bopt16, bopt17}.

\section{Organization of the paper}

In Section \ref{s:basicdef}, we introduce invariant geodesic
laminations. In \Cref{s:laeqre}, we discuss laminational equivalence
relations in detail. General properties of invariant geodesic
laminations are considered in \Cref{s:gpgeo}. In \Cref{s:qc-por}, we
introduce and study our major tool, quadratically critical portraits,
for invariant geodesic laminations. The most useful results, based upon
quadratically critical portraits, can be obtained for some special
types of invariant geodesic laminations investigated in \Cref{s:pandp}.
In \Cref{s:acclam}, we introduce another major tool, so-called
\emph{accordions}\index{accordions}, which are basically sets of linked
leaves of distinct invariant geodesic laminations. We first study
accordions by postulating certain properties of them related to the
orientation of leaves comprising these accordions. In \Cref{s:smart},
we develop the principle of smart criticality and show that owing to
this principle we can apply the results of \Cref{s:acclam} to
accordions of two linked or essentially equal invariant geodesic
laminations.  Arguments based upon smart criticality yield that
accordions of linked or essentially equal geodesic laminations behave
much like gaps of a single invariant geodesic lamination. This is
established in Section \ref{s:qcrit}, where the method of smart
criticality is developed. Finally, in Section \ref{l:appli}, we will
prove the Main Theorems.

\section{Acknowledgments}

The authors are indebted to the referee for thoroughly reading this
text and making a number of very useful and thoughtful suggestions
leading to significant improvements in the paper.

\chapter{Invariant laminations: general properties}\label{c:invlam}

\section{Invariant geodesic laminations}\label{s:basicdef}

In this section, we give basic definitions, list some known results
concerning (invariant) geodesic laminations, and establish some less
known facts about them.

\subsection{Basic definitions}\label{ss:bd}

We begin with simple geometry.

\begin{dfn}[Chords]\label{d:chord}
A \emph{chord}\index{chord} is a closed segment connecting two points
of the unit circle, not necessarily distinct. If the two points
coincide, then the chord connecting them is said to be
\emph{degenerate}\index{chord!degenerate}.
\end{dfn}

Let us now consider collections of chords.

\begin{dfn}[Solids of chord collections]\label{d:solid}
Let $\rc$ be a collection of chords. Then we set $\bigcup\rc=\rc^+$ and
call $\rc^+$ the \emph{solid of $\rc$}.
\end{dfn}

We are mostly interested in collections of chords with specific
properties.

\begin{dfn}[Geodesic laminations]\label{d:geolam}
A \emph{geodesic lamination}\index{geodesic lamination} is a collection
$\lam$ of (perhaps degenerate) chords called \emph{leaves}\index{leaf}
such that the leaves are pairwise disjoint in $\disk$ (that is, in the
open unit disk), $\lam^+$ is closed, and all points of $\uc$ are
elements of $\lam$. \emph{Gaps}\index{gap} of $\lam$ are defined as the
closures of the components of $\disk\sm\lam^+$. The solid $\lam^+$ is
called the \emph{solid of the geodesic lamination $\lam$}.
\end{dfn}

The notion of geodesic lamination is \textbf{static} in the sense that
no map is even considered with resect to $\lam$. In order to relate it
to the dynamics of the map $\si_d$, it is useful to extend $\si_d$ as
described below.

\begin{dfn}[Extensions of $\si_d$]\label{d:extesig}
Extend $\si_d$ over leaves of $\lam$ so that the restriction of the
extended $\si_d$ to every leaf is an affine map. This extension is
continuous on $\lam^+$ and well-defined (provided that $\lam$ is
given). By Thurston \cite{thu85}, define a canonical
\emph{barycentric}\index{barycentric extension of the map $\si_d$}
extension of the map $\si_d$ to the entire closed disk $\cdisk$.
Namely, after $\si_d$ is extended affinely over all leaves of an
invariant geodesic lamination $\lam$, extend it piecewise affinely over
the interiors of all gaps of $\lam$, using the barycentric subdivision.
We will use the same notation for both $\si_d$ and all its extensions.
\end{dfn}

Observe that while the extensions of $\si_d$ can be defined for any
geodesic lamination, they are really sensible only in the case of
$\si_d$-invariant geodesic laminations considered below; thus, when
talking about $\si_d$ on $\cdisk$, we always have some invariant
geodesic lamination in mind and we extend $\si_d$ using Thurston's
barycentric extension (see \cite{thu85} for details).

\subsection{Sibling invariant geodesic laminations}\label{ss:sibgeo}

Let us introduce the notion of a (sibling) \emph{$\si_d$-invariant
geodesic lamination}\index{geodesic lamination!sibling
$\si_d$-invariant}, which is a slight modification of the notion of an
invariant geodesic lamination introduced by Thurston \cite{thu85}; in
the case when $d$ is fixed, we will often write ``invariant'' instead
of ``$\si_d$-invariant'' without causing ambiguity.

\begin{dfn}[Invariant geodesic laminations \cite{bmov13}]\label{d:sibli}
A geodesic la\-mination $\lam$ is (sibling) \emph{($\si_d$)-invariant}
provided that:
\begin{enumerate}
\item for each $\ell\in\lam$, we have $\si_d(\ell)\in\lam$,
\item \label{2}for each $\ell\in\lam$ there exists $\ell^*\in\lam$ so
    that $\si_d(\ell^*)=\ell$.
\item \label{3} for each $\ell\in\lam$ such that $\si_d(\ell)$ is a
    non-degenerate leaf, there exist $d$ \textbf{pairwise disjoint}
 leaves $\ell_1$, $\dots$, $\ell_d$ in $\lam$ such that $\ell_1=\ell$
 and $\si_d(\ell_i)=\si_d(\ell)$ for all
  $i=2$, $\dots$, $d$.
\end{enumerate}
\end{dfn}

Observe that since leaves are chords, and chords are closed segments,
pairwise disjoint leaves in part (3) of the above definition cannot
intersect even on the unit circle (that is, they cannot even have
common endpoints). Notice also, that Definition~\ref{d:sibli} can be
given without condition (2); in that case we will talk about
\emph{forward (sibling) invariant geodesic lamination}.  In particular,
forward (sibling) invariant geodesic laminations may well contain
finitely many non-degenerate leaves (in that case we will call it
\emph{finite}).

We call the leaf $\ell^*$ in (\ref{2}) a \emph{pullback}\index{pullback
of a leaf} of $\ell$ and the leaves $\ell_2$, $\dots$, $\ell_d$ in
(\ref{3}) \emph{sibling leaves}\index{leaves!sibling} or just
\emph{siblings} of $\ell=\ell_1$. In a broad sense, a \emph{sibling of
$\ell$} is a leaf with the same image but distinct from $\ell$.
Definition~\ref{d:sibli} is slightly more restrictive than Thurston's
definition of an invariant geodesic lamination, which we give below. In
what follows, given a set $A$, we let $\ch(A)$ denote the convex hull
of $A$.

\begin{dfn}[Invariant geodesic laminations in the sense of Thurston]\label{d:inva-thurston}
A geodesic lamination is said to be \emph{invariant $($in the sense of
Thurston$)$}\index{geodesic lamination!invariant in the sense of
Thurston} if the following holds:

\begin{enumerate}

\item for each non-degenerate $\ell\in\lam$, we have
    $\si_d(\ell)\in\lam$;

\item $\lam$ is \emph{gap invariant}\index{geodesic lamination!gap
    invariant}: if $G$ is a gap of $\lam$ and $H=\ch(\si_d(G\cap
    \uc))$ is the convex hull of $\si_d(G\cap \uc)$, then $H$ is a
    point, a leaf of $\lam$, or a gap of $\lam$, and, in the latter
    case, the map $\si_d|_{\bd(G)}:\bd(G)\to \bd(H)$ of the boundary
    of $G$ onto the boundary of $H$ is a positively oriented
    composition of a monotone map and a covering map (in fact the set
    $H$ as above will be called the $\si_d$-image of $G$ and will be
    denoted by $\si_d(G)$ in what follows).

\item there are $d$ \textbf{pairwise disjoint} leaves $\ell^*\in\lam$
    such that $\si_d(\ell^*)=\ell$.

\end{enumerate}

If $\lam$ satisfies conditions (1) and (2) only, then $\lam$ is called
\emph{forward invariant $($in the sense of Thurston$)$}\index{geodesic
lamination!forward invariant in the sense of Thurston}.
\end{dfn}

The above quoted result of \cite{bmov13} claims that if $\lam$ is
sibling $\si_d$-invariant, then it is $\si_d$-invariant in the sense of
Thurston. From now on, by $(\si_d$-$)$invariant geodesic laminations,
we mean \emph{sibling} $\si_d$-invariant geodesic laminations and
consider \textbf{only} such invariant geodesic laminations.

The next definition is crucial for our investigation and shows in what
ways different chords can coexist.

\begin{dfn}[Linked chords]\label{d:linchor}
Two \textbf{distinct} chords of $\disk$ are
\emph{linked}\index{chords!linked} if they intersect inside $\disk$ (we
will also sometimes say that these chords \emph{cross each
other}\index{chords!crossing}). Otherwise two chords are said to be
\emph{unlinked}\index{chords!unlinked}.
\end{dfn}

\Cref{d:gapsedges} deals with gaps of geodesic laminations and their
edges.

\begin{dfn}[Gaps and their edges]\label{d:gapsedges}
A gap $G$ is said to be \emph{infinite $($finite,
uncountable$)$}\index{gap!infinite}\index{gap!finite}\index{gap!uncountable}
if $G\cap \uc$ is infinite (finite, uncountable). Uncountable gaps are
also called \emph{Fatou} gaps. For a closed convex set $H\subset \C$,
straight segments from $\bd(H)$ are called \emph{edges} of $H$.
\end{dfn}

The \emph{degree}\index{degree of a gap} of a gap or leaf $G$ is
defined as follows.

\begin{dfn}[Degree of a gap or leaf]\label{d:degree-gap}
Let $G$ be a gap or a leaf. If $\si_d(G)$ is degenerate (that is, if
$\si_d(G)$ is a singleton), then the \emph{degree of $G$} is the
cardinality of $G\cap \uc$. Suppose now that $\si_d(G)$ is not a
singleton. Consider $\si_d|_{\bd(G)}$. Then the \emph{degree of $G$}
equals the number of components in the preimage of a point $z\in
\si_d(\bd(G))$ under the map $\si_d|_{\bd(G)}$.
\end{dfn}

Note that we talk about the number of components rather than the number
of points since, say, an entire critical leaf is mapped to a single
point, thus the full preimage of this point is infinite.

We say that $\ell$ is a \emph{chord of a geodesic lamination
$\lam$}\index{chord!of a geodesic lamination $\lam$} if $\ell$ is a
chord of $\ol\disk$ unlinked with all leaves of $\lam$.

\begin{dfn}[Critical sets]\label{d:cristuff}  A
\emph{critical chord $($leaf$)$}\index{chord!critical}
\index{leaf!critical} $\ol{ab}$ of $\lam$ is a chord (leaf) of $\lam$
such that $\si_d(a)=\si_d(b)$. A gap is \emph{all-critical}
\index{gap!all-critical} if all its edges are critical. An all-critical
gap or a critical leaf (of $\lam$) is called an \emph{all-critical
set}\index{gap!all-critical}\index{set!all-critical} (of $\lam$). A gap
$G$ is said to be \emph{critical}\index{gap!critical} if the degree of
$G$ is greater than one. A \emph{critical set}\index{set!critical} is
either a critical leaf or a critical gap.
\end{dfn}

Observe that a gap $G$ may be such that $\si_d|_{\bd(G)}$ is not
one-to-one, yet $G$ is not critical in the above sense. More precisely,
a gap may have critical edges while not being critical. Indeed, let $G$
be a triangle with one critical edge and two non-critical edges. Let
$G\cap \uc=\{x, y, z\}$ where $\ol{xy}$ is critical. Then
$\si_d(G)=\ol{\si_d(y) \si_d(z)}$ is a leaf of $\lam$ and
$\si_d|_{\bd(G)}$ is not one-to-one, but $G$ is not critical because
the degree of $G$ is one.

Finally, we need to define a metric on the set of geodesic laminations.
Heuristically two laminations should be close if for every leaf in one
lamination there is a leaf in the other lamination that is close to it.

We will use the Hausdorff metric $H$ to define the required metric.
Given a compact metric space $X$ with metric (distance function)
$\rho$, let $2^X$ denote the set of all non-empty closed subsets of
$X$. Let $\text{Ball}_\rho(A,\e)$ denote the set of all points $x\in X$
so that $\rho(x,A)<\e$. Given $A$, $B\in 2^X$, the metric
$$
H_\rho(A,B)=\inf\{\e>0\mid  A\subset \text{Ball}_\rho(B,\e) \hbox{ and } B\subset \text{Ball}_\rho(A,\e)\}
$$
is called the \emph{Hausdorff metric}. It is well known that with this
metric $2^X$ is a compact metric space. Since every point of $\uc$ is a
degenerate leaf of a geodesic lamination $\lam$, the solid $\lam^+$ is
a compact (and connected) subset of $\ol{\disk}$. It is tempting to
define the distance between $\lam_1$ and $\lam_2$ as
$H_\rho(\lam_1^+,\lam_2^+)$, where $\rho$ is the usual Euclidean metric
on the closed unit disk. Unfortunately, if $d>2$, there exist distinct
geodesic laminations $\lam_1$, $\lam_2$ such that
$\lam_1^+=\lam_2^+=\ol{\disk}$ and, hence, $H_\rho(\lam_1,\lam_2)=0$.
For example, in the cubic case, the two laminations consisting of all
vertical and of all horizontal chords are two such laminations. Hence
we need two refine the choice of the metric.

Clearly, every element $\ell$ of $\lam$ is a compact set and, hence, a
point in $2^{\ol\disk}$. Thus each geodesic lamination is a closed
subset of $2^{\ol\disk}$. Let $H_H$ denote the Hausdorff metric on
$2^{\ol\disk}$. Then the required distance on the set of geodesic
laminations is $H_H(\lam_1,\lam_2)$. With this metric, the set of
geodesic laminations is a compact metric space.

\begin{thm}[Theorem 3.21 \cite{bmov13}]\label{t:sibliclos}
The family of sets $\lam^+$ of all invariant geodesic laminations
$\lam$ is closed in the Hausdorff metric $H_H$. In particular, this
family is compact.
\end{thm}

Theorem~\ref{t:sibliclos} allows us to give the following definition.

\begin{dfn}\label{d:lamiconverg}
Suppose that a sequence $\lam_i^+$ of solids of invariant geodesic
laminations converges to a compact set $T$. Then by \Cref{t:sibliclos}
there exists an invariant geodesic lamination $\lam$ such that
$T=\lam^+$ is its solid. In this case we say that geodesic laminations
$\lam_i$ \emph{converge to} $\lam$. Thus, from now on we will write
$\lam_i\to \lam$ if $\lam^+_i\to \lam^+$ in the Hausdorff metric
$H_H$..
\end{dfn}

Clearly, $\lam_i^+\to \lam^+$ implies that the collections of chords
$\lam_i$ converge to the collection of chords $\lam$ (that is, each
leaf of $\lam$ is the limit of a sequence of leaves from $\lam_i$, and
each converging sequence of leaves of $\lam_i$ converges to a leaf of
$\lam$).

\section{Laminational equivalence relations}\label{s:laeqre}

In this section, we discuss (invariant) laminational equivalence
relations and (invariant) geodesic laminations generated by them. The
relation between certain polynomials with connected Julia sets and
laminational equivalence relations is also discussed. Finally, we
introduce a few useful concepts, which we will rely upon in the rest of
the paper.

\subsection{Laminational equivalence relations and their relations to complex polynomials}

A lot of geodesic laminations naturally appear in the context of
invariant equivalence relations on $\uc$ satisfying special conditions.
We will call such equivalence relations
\emph{laminational}\index{laminational equivalence relation}.

\begin{dfn}[Laminational equivalence relations]\label{d:lam}
An equi\-va\-lence re\-la- \newline tion $\sim$ on the unit circle
$\uc$ is said to be \emph{laminational} if either $\uc$ is one
$\sim$-equivalence class (such laminational equivalence relations are
called \emph{degenerate}\index{equivalence relation, laminational
degenerate}), or the following holds:

\noindent (E1) the graph of $\sim$ is a closed subset of $\uc \times
\uc$;

\noindent (E2) the convex hulls of distinct equivalence classes are
disjoint;

\noindent (E3) each equivalence class of $\sim$ is finite.
\end{dfn}

As with geodesic laminations, the above definition is static. However
for us the most interesting case is the dynamical case described below.

\begin{dfn}[Laminational equivalence relations and dynamics]\label{d:si-inv-lam}
A laminational equivalence relation $\sim$ is called ($\si_d$-){\em
invariant} if:

\noindent (D1) $\sim$ is {\em forward invariant}: for a
$\sim$-equivalence class $\g$, the set $\si_d(\g)$ is a
$\sim$-equivalence class;

\noindent (D2) for any $\sim$-equivalence class $\g$, the map $\si_d:
\g\to \si_d(\g)$ extends to $\uc$ as an orientation preserving covering
map such that $\g$ is the full preimage of $\si_d(\g)$ under this
covering map.
\end{dfn}

For an invariant laminational equivalence relation $\sim$ consider the
\emph{topological Julia set}\index{Julia set!topological}
$\uc/\hspace{-5pt}\sim\,=J_\sim$ and the \emph{topological
polynomial}\index{polynomial!topological} $f_\sim:J_\sim\to J_\sim$
induced by $\si_d$. The quotient map $\pi_\sim:\uc\to
\uc/\hspace{-5pt}\sim=J_\sim$ semi-conjugates $\si_d$ with
$f_\sim|_{J_\sim}$. A laminational equivalence relation $\sim$
\textbf{canonically extends} over $\C$: non-trivial classes of the
extension are convex hulls of classes of $\sim$. By Moore's Theorem,
the quotient space $\C/\hspace{-5pt}\sim$ is homeomorphic to $\C$.

The quotient map $\pi_\sim:\uc\to \uc/\hspace{-5pt}\sim$ extends to the
plane with the only non-trivial point-preimages
(\emph{fibers}\index{fiber of a map}) being the convex hulls of
non-degenerate $\sim$-equivalence classes. With any fixed
identification between $\C/\sim$ and $\C$, one extends $f_\sim$ to a
branched-covering map $f_\sim:\C\to \C$ of degree $d$ called a
\emph{topological polynomial} too. The complement $K_\sim$ of the
unique unbounded component $U_\infty(J_\sim)$ of $\C\sm J_\sim$ is
called the \emph{filled topological Julia set}\index{Julia set!filled
topological}. The \emph{(canonical) geodesic lamination $\lam_\sim$
generated by $\sim$}\index{geodesic lamination!generated by a
laminational equivalence relation} is the collection of edges of convex
hulls of all $\sim$-equivalence classes and all points of $\uc$.

\begin{lem}[Theorem 3.21 \cite{bmov13}]\label{t:qsib}
Geodesic laminations $\lam_\sim$ generated by $\si_d$-invariant
laminational equivalence relations are sibling invariant. If a sequence
of sets $\lam_{\sim_i}^+$ converges to a compact set $T$, then there
exists a sibling invariant geodesic lamination $\lam$ such that
$T=\lam^+$.
\end{lem}

We would like to motivate the usage of laminational equivalence
relations by showing in what way they are related to polynomials. Let
$P:\C\to\C$ be a polynomial of degree $d\ge 2$, let $A_\infty$ be the
basin of attraction of infinity, and let $J(P)=\bd(A_\infty)$ be the
Julia set of $P$.  When $J(P)$ is connected, $A_\infty$ is simply
connected and conformally isomorphic to $\C\sm\cdisk$ by a unique
isomorphism $\phi:\C\sm\ol\disk\to A_\infty$ asymptotic to the identity
at $\infty$. By a theorem of B\"ottcher (see, e.g., \cite[Theorem
9.1]{mil00}), the map $\phi$ conjugates $P|_{A_\infty}$ with
$z^d|_{\C\sm\ol\disk}$. If $J(P)$ is locally connected, then $\phi$
extends continuously to a semiconjugacy $\ol\phi$ between
$\si_d=z\mapsto z^d|_\uc$ and $P|_{J(P)}$:

\begin{equation}\label{e:lam_conj}\begin{CD}
\uc @>{\si_d|_{\uc}}>> \uc \\
@V{\ol\phi}VV @V{\ol\phi}VV \\
J(P) @>{P|_{J(P)}}>> J(P) \\
\end{CD}\end{equation}

The \textit{laminational equivalence generated by}\index{laminational
equivalence relation!generated by a polynomial} $P$ is the equivalence
relation $\sim_P$ on $\uc$ whose classes are \emph{$\ol\phi$-fibers},
i.e. point-preimages under $\ol\phi$.  Call $J_{\sim_P}=\uc/\sim_P$ the
\textit{topological Julia set associated with the polynomial }$P$. The
map $f_{\sim_P}$, induced on $J_{\sim_P}$ by $\si_d$, will be called
the \textit{topological polynomial associated to the
polynomial}\index{polynomial!topological associated to a polynomial}
$P$. Evidently $P|_{J(P)}$ and $f_{\sim_P}|_{J_{\sim_P}}$ are
topologically conjugate. The collection $\lam_P$ of chords of
$\ol\disk$ that are edges of convex hulls of $\sim_P$ classes is called
the \textit{geodesic lamination generated by the polynomial
}\index{geodesic lamination!generated by a polynomial}$P$.

In fact, this connection between polynomials and appropriately chosen
topological polynomials can be extended onto a wider class of
polynomials with connected Julia sets. The first steps in this
direction were made in a nice paper by Jan Kiwi \cite{kiwi97}.

\begin{dfn}[Irrationally indifferent periodic points]\label{d:irr-ind}
Let $x$ be a periodic point of a polynomial $P$ of period $n$. Then $x$
is said to be \emph{irrationally indifferent}\index{periodic
point!irrationally indifferent} if the multiplier $(P^n)'(x)$ of $P$ at
$x$ is of the form $e^{2\pi i\theta}$ for some irrational $\theta$. If
there exists an open $P^n$-invariant neighborhood of $x$, on which
$P^n$ is conjugate to an irrational rotation of an open unit disk, then
$x$ is said to be a periodic \emph{Siegel} point \index{periodic
point!Siegel}. If such a neighborhood of $x$ does not exist, then $x$
is said to be a \emph{periodic Cremer point}\index{periodic
point!Cremer}.
\end{dfn}

We also need to introduce a few topological concepts. Observe that in
our definition of a laminational equivalence relation we require that
classes of equivalence be finite. However the definitions may be given
without this requirement; in these cases we will talk about
laminational equivalence relations \emph{possibly with infinite
classes}.

\begin{dfn}[Dendrites and dendritic laminations]\label{d:dendr}\index{invariant geodesic lamination!dendritic}
A locally connected continuum is said to be a \emph{dendrite} if it
contains no subsets homeomorphic to the unit circle. If $\sim$ is a
laminational equivalence relation on the unit circle $\uc$ such that
the quotient space $\uc/\sim$ is a dendrite, then we call $\sim$ and
the corresponding geodesic lamination $\lam_\sim$ \emph{dendritic}.
Observe that, in this case, every $\sim$-class is finite, and hence
every point $x$ of the quotient space $\uc/\sim$ is such that $\uc/\sim
\sm \{x\}$ consists of finitely many components (in that case $x$ is
said to be of \emph{finite order}). If, however, $\sim$ is a
laminational equivalence relation on the unit circle $\uc$ possibly
with infinite classes such that $\uc/\sim$ is a dendrite, then we call
$\sim$ and $\lam_\sim$ \emph{dendritic possibly with infinite classes}.
\end{dfn}

In what follows when we talk about a
\emph{preperiodic}\index{object!preperiodic} object (point, set etc) we
mean that it is not periodic but maps to a periodic object after some
(positive) number of iterations of the map. On the other hand when we
talk about a \emph{$($pre$)$periodic}\index{object!(pre)periodic}
object, we mean that it is either preperiodic or periodic. In
particular, when talking about (pre)periodic points we mean points that
have finite forward orbits. Now we can state one of the important
results proven in \cite{kiwi97}.

\begin{thm}
\label{t:kiwi-dendr} Suppose that a polynomial $P$ with connected Julia
set $J=J(P)$ has no Siegel or Cremer periodic points. Then there exist
a laminational equivalence relation $\sim_P$, the corresponding
topological polynomial $f_{\sim_P}:J_{\sim_P}\to J_{\sim_P}$ restricted
to the topological Julia set, and a monotone semiconjugacy $\ph_P:J\to
J_{\sim_P}$. The semiconjugacy $\ph_P$ is one-to-one on all
$($pre$)$periodic points of $P$ belonging to the Julia set. If all
periodic points of $P$ are repelling, then $J_{\sim_P}$ is a dendrite.
\end{thm}

In what follows, denote by $\dc$ the space of all polynomials with
connected Julia sets and only repelling periodic points. Let $\dc_d$ be
the space of all such polynomials of degree $d$.

Theorem~\ref{t:kiwi-dendr} was extended \cite{bco11} onto all
polynomials with connected Julia sets. Call a monotone map $\ph_P$ of a
connected polynomial Julia set $J(P)=J$ onto a locally connected
continuum $L$ the \emph{finest monotone map of $J(P)$ onto a locally
connected continuum} if, for any monotone $\psi:J\to J'$ with $J'$
locally connected, there is a monotone map $h$ with $\psi=h\circ
\ph_P$. Then it is proven in \cite{bco11} that the finest monotone map
on a connected polynomial Julia set semiconjugates $P|_{J(P)}$ to the
corresponding topological polynomial $f_{\sim_P}$ on its topological
Julia set $J_{\sim_P}$ generated by the laminational equivalence
relation possibly with infinite classes $\sim_P$. It follows that the
following diagram is commutative (recall that by $\pi_{\sim_P}$ we
denote the quotient map corresponding to the lamination $\sim_P$).

\[\label{eq:commdiag}
\dgARROWLENGTH=5em
\dgARROWPARTS=8
\begin{diagram}
\node{J(P)} \arrow{e,t}{P|_{J(P)}} \arrow{se,t}{\varphi_P}
    \node{J(P)} \arrow{se,b,1}{\varphi_P}
    \node{\mathbb S^1} \arrow{e,t}{\sigma_d} \arrow{sw,b,1}{\pi_{\sim_P}}
    \node{\mathbb S^1} \arrow{sw,t}{\pi_{\sim_P}}\\
\node[2]{J_{\sim_P}} \arrow{e,b}{f_{\sim_P}|_{J_{\sim_P}}} \node{J_{\sim_P}}
\end{diagram}
\]

\subsection{Other useful notions}\label{ss:oun}

Considering objects related to geodesic laminations, we do not have to
fix these geodesic laminations. Recall that, given two points $a$,
$b\in\uc$, we write $\ol{ab}$ for the chord connecting $a$ with $b$.

\begin{dfn}\label{d:restuff}
By a \emph{periodic gap or
leaf}\index{gap!periodic}\index{leaf!periodic}, we mean a gap or a leaf
$G$, for which there exists the least number $n$ (called the
\emph{period} of $G$) such that $\si_d^n(G)=G$. Then we call the map
$\si_d^n:G\to G$ the \emph{remap}\index{remap}. An edge (vertex) of
$G$, on which the remap is the identity, is said to be
\emph{refixed}\index{edge of a periodic gap!refixed}\index{vertex of a
periodic gap!refixed}.
\end{dfn}

Given points $a$, $b\in \uc$, denote by $(a,b)$ the \emph{positively
oriented}\index{arc!positively oriented} open arc from $a$ to $b$ (that
is, moving from $a$ to be $b$ within $(a, b)$ takes place in the
counterclockwise direction). For a closed set $G'\subset \uc$, we call
components of $\uc\sm G'$ \emph{holes}\index{hole of a closed set or
its convex hull} (of $G'$ or of the convex hull $G=\ch(G')$ of $G'$).
If $\ell=\ol{ab}$ is an edge of the convex hull $G=\ch(G')$ of $G'$,
then we let $H_G(\ell)$ denote the component of $\uc\sm\{a, b\}$
disjoint from $G'$ and call it the hole of $G$ \emph{behind $\ell$} (it
is only unique if $G'$ contains at least three points). The
\emph{relative interior}\index{relative interior!of a gap} of a gap is
its interior in the plane; the \emph{relative interior}\index{relative
interior!of a segment} of a segment is the segment minus its endpoints.

\begin{dfn}\label{d:lamset}
If $A\subset \uc$ is a closed set and all the sets $\ch(\si_d^i(A))$
are pairwise disjoint, then $A$ is called
\emph{wandering}\index{wandering closed set}. If there exists $n\ge 1$
such that the sets $\ch(\si_d^i(A))$ with $i=0$, $\ldots$, $n-1$ have
pairwise disjoint relative interiors while $\si_d^n(A)=A$, then $A$ is
called \emph{periodic}\index{set!periodic} of period $n$. If there
exists a minimal $m>0$ such that all $\ch(\si_d^i(A))$ with $0\le i\le
m+n-1$ have pairwise disjoint relative interiors and $\si_d^m(A)$ is
periodic of period $n$, then we call $A$
\emph{preperiodic}\index{set!preperiodic} of period $n$ and preperiod
$m$. A set is called \emph{$($pre$)$periodic}\index{set!(pre)periodic}
if it is periodic or preperiodic. If $A$ is wandering, periodic or
preperiodic, and, for every $i\ge 0$ and every hole $(a, b)$ of
$\si_d^i(A)$, either $\si_d(a)=\si_d(b)$, or the positively oriented
arc $(\si_d(a),\si_d(b))$ is a hole of $\si_d^{i+1}(A)$, then we call
$A$ (and $\ch(A)$) a \emph{{\rm(}$\si_d${\rm )}-laminational
set}\index{set!laminational}. We call $\ch(A)$ \emph{finite} if $A$ is
finite. A {\em {\rm(}$\si_d$-{\rm)}stand alone gap}\index{gap!stand
alone} is defined as a laminational set with non-empty interior in the
plane.
\end{dfn}

Recall that when talking about a Jordan curve $K$ that encloses a
simply connected domain $W$ in the plane, by the \emph{positive
direction}\index{positive direction on a Jordan curve} on $K$ one means
the counterclockwise direction with respect to $W$, i.e., the direction
of a particle moving along $K$ so that $W$ remains on its left. When
considering a Jordan curve $K$ in the plane we \textbf{always} do so
with positive direction on it. In particulary, we consider the boundary
of a gap with positive direction on it. Accordingly, denote by $<$ the
\textbf{positive} (counterclockwise) circular order on $\uc=\mathbb
R/\mathbb Z$ induced by the usual order of $\mathbb R$. Note that this
order is only meaningful for sets of cardinality at least three. For
example, we say that $x<y<z$ provided that moving from $x$ in the
positive direction along $\uc$ we meet $y$ before meeting $z$.

\begin{dfn}[Order preserving maps of the circle] Let $X\subset \uc$ be a set with at
least three points. We call $\si_d$ \emph{order preserving on
$X$}\index{order preservation on a set under the map $\si_d$} if
$\si_d|_X$ is one-to-one and, for every triple $x$, $y$, $z\in X$ with
$x<y<z$, we have $\si_d(x)<\si_d(y)<\si_d(z)$.
\end{dfn}

Finally, we discuss in this section \emph{proper invariant geodesic
laminations}.

\begin{dfn}[Proper invariant geodesic lamination]\label{d:properlam}\index{invariant geodesic lamination!proper}
Two leaves with a common endpoint $v$ and the same image are said to
form a \emph{critical wedge} (the point $v$ the is said to be its
vertex). An invariant geodesic lamination $\lam$ is \emph{proper} if it
contains no critical leaf with a periodic endpoint and no critical
wedge with periodic vertex.
\end{dfn}

Given an invariant geodesic lamination $\lam$, define an equivalence
relation $\approx_\lam$ by declaring that $x \approx_\lam z$ if and
only if there exists a \emph{finite} concatenation of leaves of $\lam$
connecting $x$ and $z$.

\begin{thm}[Theorem 4.9 \cite{bmov13}]\label{t:nowander}
Let $\lam$ be a proper Thurston invariant lamination. Then
$\approx_\lam$ is a nonempty invariant laminational equivalence
relation.
\end{thm}

\section{General properties of invariant geodesic
laminations}\label{s:gpgeo}

Some results of this section are taken from \cite{bmov13}.

\begin{lem}[Lemma 3.7 \cite{bmov13}] \label{l:3.7}
If $\ol{ab}$ and $\ol{ac}$ are two leaves of an invariant geodesic
lamination $\lam$ such that $\si_d(a), \si_d(b)$ and $\si_d(c)$ are all
distinct points, then the order among points $a$, $b$, $c$ is preserved
under $\si_d$.
\end{lem}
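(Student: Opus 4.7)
The plan is to argue by contradiction. Suppose, without loss of generality, that $a$, $b$, $c$ appear in counterclockwise order on $\uc$ and, contrary to the desired conclusion, that $\si_d(a)$, $\si_d(c)$, $\si_d(b)$ appear in counterclockwise order, so that $\si_d$ reverses the cyclic order of the three points. I would first write $b - a = m_b/d + r_b$ and $c - a = m_c/d + r_c$ with integers $m_b, m_c \in \{0, 1, \ldots, d-1\}$ and residues $r_b, r_c \in (0, 1/d)$; the positivity of $r_b$ and $r_c$ follows from the distinctness of the three images. A direct computation then shows that the reversed-order hypothesis is equivalent to the pair of inequalities $m_b < m_c$ and $r_b > r_c$.

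Next, I would count the preimages of $\si_d(a)$ and of $\si_d(c)$ lying in the open counterclockwise arc $(a, b)$. The preimages of $\si_d(a)$ in $(a, b)$ are precisely $a + k/d$ for $1 \le k \le m_b$, giving $m_b$ of them. The preimages of $\si_d(c)$ in $(a, b)$ are precisely $a + i/d + r_c$ for $0 \le i \le m_b$, giving $m_b + 1$ of them; the extra preimage appears exactly because $r_b > r_c$, so that $a + m_b/d + r_c < b$. None of these $m_b + 1$ preimages equals $c$, which lies outside $(a, b)$.

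Finally, I would invoke sibling invariance (Definition~\ref{d:sibli}(\ref{3})) applied to $\ol{ac}$: since $\si_d(a) \ne \si_d(c)$, the image $\ol{\si_d(a)\si_d(c)}$ is a non-degenerate leaf of $\lam$, so there exist $d$ pairwise disjoint siblings in $\lam$ mapping to it, one of which is $\ol{ac}$ itself. The remaining $d-1$ siblings form a matching between the preimages of $\si_d(a)$ distinct from $a$ and the preimages of $\si_d(c)$ distinct from $c$. Since $\ol{ab} \in \lam$, no sibling may cross $\ol{ab}$, so any sibling with an endpoint in $(a, b)$ must have both endpoints in $(a, b)$. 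The counts from the previous step then force a perfect matching between $m_b$ preimages of $\si_d(a)$ and $m_b + 1$ preimages of $\si_d(c)$ inside $(a, b)$, which is impossible by pigeonhole. The contradiction completes the argument; the main obstacle is the sharp count producing the extra preimage of $\si_d(c)$ in $(a, b)$, which is exactly where the reversed-order hypothesis is used in a sharp way.
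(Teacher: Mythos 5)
The paper does not prove this lemma; it is quoted verbatim from \cite{bmov12} (Lemma 3.7 there), so there is no internal proof to compare against. Your argument is correct and complete as a self-contained derivation from Definition~\ref{d:sibli}(\ref{3}). The arithmetic is right: writing $b-a=m_b/d+r_b$ and $c-a=m_c/d+r_c$ with $r_b,r_c\in(0,1/d)$, order reversal is exactly $r_b>r_c$ (which, given $b-a<c-a$, forces $m_b<m_c$), and this yields $m_b$ preimages of $\si_d(a)$ but $m_b+1$ preimages of $\si_d(c)$ in the open arc $(a,b)$. The sibling collection of $\ol{ac}$ then gives an injective matching between preimages of $\si_d(c)$ in $(a,b)$ and preimages of $\si_d(a)$ in $(a,b)$, and pigeonhole finishes it. One small point worth making explicit: to conclude that a sibling with one endpoint in $(a,b)$ has its \emph{other} endpoint in the open arc $(a,b)$ (rather than merely in the closed arc $[a,b]$), you must rule out $a$ and $b$ as endpoints of the siblings $\ell_2,\dots,\ell_d$; this follows because $a$ is already used by $\ell_1=\ol{ac}$ and the siblings are pairwise disjoint, while $b$ is not a preimage of $\si_d(a)$ or $\si_d(c)$ since $\si_d(b)$ is distinct from both. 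With that sentence added the proof is airtight, and it is in the same spirit as the original argument in \cite{bmov12}, which likewise rests on the existence of $d$ pairwise disjoint siblings and their distribution among complementary arcs.
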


We prove a few corollaries of Lemma~\ref{l:3.7}

\begin{lem}\label{l:sameperiod1}
If $\lam$ is an invariant geodesic lamination, $\ell=\ol{ab}$ is a leaf
of $\lam$, and $a$ is periodic of period $n$, then $b$ is
$($pre$)$periodic of period $n$.
\end{lem}

\begin{proof}
Assume that, while the point $a$ is of period $n$, the point $b$ is not
$\si_d^n$-fixed. Then, by Lemma \ref{l:3.7}, either the circular order
among the points $b_i=\si_d^{ni}(b)$ is the same as the order of
subscripts or $b_i=b_{i+1}$ for some $i$. In the former case $b_i$
converge to some limit point, a contradiction with the expansion
property of $\si^n_d$. Hence for some (minimal) $i$ we have
$b_i=b_{i+1}$. It follows that the period $m$ of $b_i$ cannot be less
than $n$ as otherwise we can consider $\si_d^m$ which fixes $b_i$ and
does not fix $a$ yielding the same contradiction with Lemma
\ref{l:3.7}.
\end{proof}

We will need the following elementary lemma. The notion of a
\emph{$($pre$)$\-cri\-tical}\index{object!(pre)critical} object is
similar to the notion of a (pre)periodic object; thus, a (pre)critical
point is either a precritical point, or a critical point.

\begin{lem}\label{l:concat}
If $x\in \uc$, and the chords $\ol{\si^i_d(x)\si_d^{i+1}(x)}$, $i=0$,
$1$, $\dots$ are pairwise unlinked, then the point $x$, and therefore
the leaf $\ol{x\si_d(x)}=\ell$, are $($pre$)$periodic.
\end{lem}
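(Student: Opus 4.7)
My plan is to argue by contradiction: assume $x$ is not (pre)periodic, so that the points $p_i = \si_d^i(x)$ are pairwise distinct. Then the closed collection $\ol{\{c_i: i \ge 0\}} \cup \uc$ of pairwise unlinked chords is a geolamination containing every $c_i = \ol{p_i p_{i+1}}$ as a leaf, which makes Lemma~\ref{l:3.7} available for consecutive pairs.

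I first apply Lemma~\ref{l:3.7} to the pair $c_i, c_{i+1}$, which share the vertex $p_{i+1}$ and whose other endpoints map under $\si_d$ to pairwise distinct points (by the non-preperiodicity assumption). This yields that the circular order of $(p_{i+1}, p_i, p_{i+2})$ coincides with that of $(p_{i+2}, p_{i+1}, p_{i+3})$; swapping the first two entries shows that the orientation of the triple $(p_i, p_{i+1}, p_{i+2})$ is independent of $i$. The two cases being analogous, I assume this common orientation is counterclockwise.

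Next I lift the orbit to $\mathbb{R}$ by choosing $\theta_{i+1} \in (\theta_i, \theta_i + 1)$ with $\theta_{i+1} \equiv d\theta_i \pmod{1}$, and set $a_i = \theta_{i+1} - \theta_i \in (0, 1)$. The CCW assumption translates to $\theta_i < \theta_{i+1} < \theta_{i+2} < \theta_i + 1$, so in particular $p_{i+2}$ lies in the CCW arc from $p_{i+1}$ to $p_i$ along $\uc$. Since $c_j$ is unlinked with $c_i$ and shares no endpoint with it for $j > i+1$, a trapping argument shows that every later $p_j$ with $j \ge i+2$ also lies in that same CCW arc, so the lift $\theta_j$ must belong to $\bigcup_{n \ge 0}(\theta_{i+1} + n, \theta_i + 1 + n)$.

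The crux of the proof is the inductive claim $a_i + a_{i+1} + \dots + a_{i+k-1} < 1$ for all $i \ge 0$ and $k \ge 1$. In the inductive step, applying the hypothesis at index $i+1$ gives $\theta_{i+k+1} < \theta_{i+1} + 1$, while the trapping forces $\theta_{i+k+1}$ into the union above; together these pin down $n = 0$ and yield $\theta_{i+k+1} < \theta_i + 1$, which is the desired inequality. Consequently the increasing sequence $\theta_i$ is bounded above by $\theta_0 + 1$, so it converges to some $\theta_\infty$; then $a_i \to 0$, forcing $p_\infty = \theta_\infty \bmod 1$ to be a fixed point of $\si_d$. But $\si_d$ expands arc length uniformly by factor $d$ near any fixed point, so the convergence $p_i \to p_\infty$ with $p_i \ne p_\infty$ is impossible, producing the required contradiction. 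The step I expect to need the most care is the trapping combined with the induction, which is precisely where the pairwise unlinkedness of all $c_i, c_j$ (beyond the consecutive pairs used in Lemma~\ref{l:3.7}) becomes essential.
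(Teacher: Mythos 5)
Your overall architecture --- the trapping argument, the lift to $\mathbb{R}$, the induction $a_i+\dots+a_{i+k-1}<1$, and the expansion contradiction at the limiting fixed point --- is sound, and it supplies precisely the details behind the paper's one-line appeal to ``topological considerations'' (the paper's own proof just splits off the (pre)critical case and then asserts that the concatenated unlinked leaves $\si_d^n(\ell)$ converge to a $\si_d$-invariant leaf or point, which expansion forbids). The genuine gap is in your very first step, where you invoke Lemma~\ref{l:3.7} for the ad hoc collection $\ol{\{c_i\}}\cup\uc$. That collection is a geolamination only in the weak, non-dynamical sense of Subsection~\ref{ss:bd}; it is not sibling $\si_d$-invariant (it has no siblings and no pullbacks), whereas Lemma~\ref{l:3.7} is a statement about invariant geolaminations (the paper stipulates after Theorem~\ref{t:siimgap} that all geolaminations considered are $\si_d$-invariant, and the proof in \cite{bmov12} uses the sibling collections of both leaves). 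For a bare pair of unlinked chords the conclusion of Lemma~\ref{l:3.7} is simply false: take $d=2$, $a=0$, $b=0.3$, $c=0.6$; then $\ol{ab}$, $\ol{ac}$ are unlinked, the images $\si_2(a)=0$, $\si_2(b)=0.6$, $\si_2(c)=0.2$ are distinct, yet the circular order of $(a,b,c)$ is reversed. So your key claim --- that the orientation of the triples $(p_i,p_{i+1},p_{i+2})$ is independent of $i$ --- is unsupported, and everything downstream (the inequality $\theta_{i+2}<\theta_i+1$, the monotone bounded lift, the convergence) rests on it.

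Note also that the trapping argument by itself does not rescue orientation consistency: it confines $p_{i+3}$ to the arc of $\uc\sm\{p_{i+1},p_{i+2}\}$ lying inside the arc $(p_{i+1},p_i)$ that contains the tail of the orbit, but both subarcs $(p_{i+1},p_{i+2})$ and $(p_{i+2},p_i)$ are still available, and the first of these corresponds exactly to an orientation flip (equivalently $a_{i+1}+a_{i+2}>1$), after which your induction no longer starts. To close the gap you would need either to prove directly that such a flip is impossible for an actual $\si_d$-orbit whose chords are \emph{all} pairwise unlinked, or to first embed the chords $c_i$ into a genuinely sibling $\si_d$-invariant geolamination before applying Lemma~\ref{l:3.7}; neither is done in the proposal.
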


\begin{proof}
The sequence of leaves from the lemma is the $\si_d$-orbit of $\ell$,
in which consecutive images are concatenated and no two leaves are
linked. If, for some $i$, the leaf
$\ol{\si^i_d(x)\si_d^{i+1}(x)}=\si^i_d(\ell)$ is critical, then
$\si^{i+1}_d(\ell)=\{\si_d^{i+1}(x)\}$ is a $\si_d$-fixed point, which
proves the claim in this case. Assume now that the leaf $\ell$ is not
(pre)critical. If the point $x$ is not (pre)periodic, then, by
topological considerations, leaves $\si_d^n(\ell)$ must converge to a
limit leaf or a limit point. Clearly, this limit set is
$\si_d$-invariant. However, the map $\si_d$ is expanding, a
contradiction.
\end{proof}

Lemma~\ref{l:concat} easily implies Lemma~\ref{l:conconv}.

\begin{lem}\label{l:conconv}
Let $\lam$ be a geodesic lamination. Then the following holds.

\begin{enumerate}

\item If $\ell$ is a leaf of $\lam$ and, for some $n>0$, the leaf
    $\si_d^n(\ell)$ is concatenated to $\ell$, then $\ell$ is
    $($pre$)$periodic.

\item If $\ell$ has a $($pre$)$periodic endpoint, then $\ell$ is
    $($pre$)$periodic.

\item If two leaves $\ell_1$, $\ell_2$ from geodesic laminations
    $\lam_1$, $\lam_2$ share the same $($pre$)$\-pe\-rio\-dic
    endpoint, then they are $($pre$)$periodic with the same eventual
    period of their endpoints.

\end{enumerate}

\end{lem}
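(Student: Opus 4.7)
The three parts are interrelated, and my plan is to prove (2) first and then derive (1) and (3) from it.

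For (2), by replacing $\ell$ with a suitable forward iterate I may assume $a$ is $\si_d$-periodic of some period $p$; set $g=\si_d^p$ and $c_k=g^k(b)$, where $b$ is the other endpoint of $\ell$. If $b$ is already (pre)periodic there is nothing to show, so I assume its $\si_d$-orbit is infinite and disjoint from that of $a$. The iterates $g^k(\ell)=\ol{a c_k}$ are then leaves of $\lam$ sharing the endpoint $a$. Applying Lemma~\ref{l:3.7} $p$ times in succession to the pair $\ol{ac_k}, \ol{ac_j}$ shows that $g$ preserves the circular order of $\{a,c_k,c_j\}$ for all $k\ne j$. Parameterizing the arc $\uc\sm\{a\}$ linearly, this order-preservation forces the sequence $\{c_k\}$ to be strictly monotone, hence convergent to some $c_\infty$, which by continuity must be a $g$-fixed point. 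But $g$ is expanding on $\uc$ with derivative $d^p>1$, so it pushes nearby points away from $c_\infty$; this contradicts $c_k\to c_\infty$ unless $c_k=c_\infty$ for some $k$, i.e.\ unless $b$ is (pre)periodic after all. Both endpoints of $\ell$ are then (pre)periodic, so $\ell$ itself is.

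For (1), after noting that $\si_d^n(\ell)=\ell$ makes $\ell$ periodic, I may assume $\ell$ and $\si_d^n(\ell)$ share exactly one endpoint, say $a$. Either $\si_d^n(a)=a$, in which case $a$ is $\si_d$-periodic and (2) finishes, or $\si_d^n(b)=a$. In the latter case a short induction gives $\si_d^{kn}(\ell)=\ol{\si_d^{(k-1)n}(a)\,\si_d^{kn}(a)}$ for $k\ge 1$, exhibiting the chords $\ol{(\si_d^n)^i(a)\,(\si_d^n)^{i+1}(a)}$ as leaves of $\lam$, hence pairwise unlinked. Applying Lemma~\ref{l:concat} to $\si_d^n$ in place of $\si_d$ (its proof uses only the expansion of the map on $\uc$ and extends verbatim) yields that $a$ is $\si_d^n$-preperiodic, and (2) finishes. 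For (3), part (2) gives both $\ell_i$ (pre)periodic, and the common endpoint $a$ has some eventual period $p_a$ under $\si_d$. Choosing $m_i$ large enough that $\si_d^{m_i}(\ell_i)$ is periodic, its endpoints $\si_d^{m_i}(a)$ and $\si_d^{m_i}(b_i)$ are both $\si_d$-periodic, and Lemma~\ref{l:sameperiod1} forces them to share the same period, which must be $p_a$; hence each $b_i$ has eventual period $p_a$.

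The main obstacle is the monotonicity step in (2): upgrading the one-step order preservation supplied by Lemma~\ref{l:3.7} to the conclusion that the orbit $\{c_k\}$ moves monotonically along $\uc\sm\{a\}$. Once monotonicity is in hand, the expansion of $\si_d^p$ at its fixed limit point closes the argument in a standard way; the only additional bookkeeping is to verify that Lemma~\ref{l:concat}, stated for $\si_d$, applies equally to $\si_d^n$, which it does because the proof relies solely on the expansion of the underlying map on $\uc$.
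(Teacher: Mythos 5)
Your proof is correct and uses essentially the same ingredients as the paper's: Lemma~\ref{l:3.7} plus expansion of $\si_d$ for the case of a fixed common endpoint, Lemma~\ref{l:concat} for the case where $\si_d^n$ shifts one endpoint to the other, and Lemma~\ref{l:sameperiod1} for part (3). The only difference is cosmetic — you prove (2) first and deduce (1), while the paper proves (1) directly (splitting into the same two cases) and deduces (2).
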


\begin{proof}
Let $\ell=\ol{uv}$. First, assume that $\si_d^n(u)=u$. Then the
statement (1) follows from Lemma \ref{l:sameperiod1}. Second, assume
that $\si^n_d(u)=v$. Then the statement (1) follows from
Lemma~\ref{l:concat}. The statements (2) and (3) follow from (1) and
Lemma~\ref{l:sameperiod1}. \end{proof}

A similar conclusion can be made for edges of periodic gaps.

\begin{lem}\label{l:cripe}
Suppose that $G$ is a gap of a geodesic lamination. Then, for every
edge $\ell$ of $G$, there exists an integer $k$ such that the length of
the hole $H_{\si_d^k(G)}(\si_d^k(\ell))$ exceeds $\frac{1}{d+1}$.
Moreover, suppose that a gap $G$ is periodic of period $m$. Then, for
every edge $\ell$ of $G$ which is not $($pre$)$\-cri\-ti\-cal, there
exists an edge $\ell_*$ of $G$ from the orbit of $\ell$ such that the
length of the hole $H_{G}(\ell_*)$ exceeds $\frac{1}{d^m+1}$. In
particular, any edge of a periodic gap is $($pre$)$pe\-ri\-odic or
$($pre$)$\-cri\-ti\-cal, and any periodic gap can have at most finitely
many non-degenerate periodic edges.
\end{lem}

\begin{proof}
To prove the first statement of the lemma, observe that the length
$s_n$ of the hole $H_{\si_d^n(G)}(\si_d^n(\ell))$ of $\si_d^n(G)$
behind the leaf $\si_d^n(\ell)$ grows with $n$ as long as $s_n$ stays
sufficiently small. In fact, it is easy to see that the correct bound
on $s_n$ is that $s_n<\frac{1}{d+1}$. Indeed, suppose that
$s_n<\frac{1}{d+1}$. Then the restriction
$\si_d|_{H_{\si_d^n(G)}(\si_d^n(\ell))}$ is one-to-one and the hole
$\si_d(H_{\si_d^n(G)}(\si_d^n(\ell)))$ is of length $ds_n>s_n$.
Clearly, this implies that for some $k$ the length of the hole
$H_{\si_d^k(G)}(\si_d^k(ell))$ will exceed $\frac{1}{d+1}$ as desired.

Now, suppose that $G$ is periodic of period $m$ and $\ell$ is not
(pre)critical. Then $G$ is $\si_d^m$-invariant, and the second claim of
the lemma follows from the first one. Observe that for any edge $\hell$
of $G$ such that $|H_G(\hell)|=s$ it is impossible that
$\frac1{d^m+1}\le s<\frac1{d^m}$ as in that case the arc $T$
complementary to the arc $\si^m_d(H_G(\hell))$ is of length $1-d^ms<s$,
a contradiction (all the vertices of $G$ must belong to $\ol{T}$ and
hence $\ol{T}$ must contain $H_G(\hell)$, a contradiction). The
remaining claims of the lemma now easily follow.
\end{proof}

Given $v\in \uc$, let $E(v)$ be the closure of the set $\{u\,|\,
\ol{uv}\in\lam\}$.

\begin{lem}\label{l:cones}
If $v$ is not $($pre$)$periodic, then $E(v)$ is at most finite. If $v$
is $($pre$)$periodic, then $E(v)$ is at most countable.
\end{lem}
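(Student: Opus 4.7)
The plan is to handle the two clauses separately: the (pre)periodic case reduces quickly to a counting argument via Lemma~\ref{l:conconv}, while the non-(pre)periodic case requires a dynamical expansion argument.

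For part \emph{(b)}, suppose $v$ is (pre)periodic with eventual period $p$. By Lemma~\ref{l:conconv}(2) every leaf $\ol{uv}\in\lam$ is (pre)periodic, and by Lemma~\ref{l:conconv}(3) its other endpoint $u$ has eventual period equal to $p$. The set of points in $\uc$ that are (pre)periodic with eventual period $p$ is the countable union, over pre-period $k\ge 0$, of the $\si_d^{-k}$-preimages of the finitely many period-$p$ points of $\si_d^p$, and hence is countable. So $E(v)$ is at most countable.

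For part \emph{(a)}, assume $v$ is not (pre)periodic and, for contradiction, that $E(v)$ is infinite. Since $\lam^+$ is closed, $E(v)\cup\{v\}$ is closed in $\uc$, so $E(v)$ has an accumulation point. First I would reduce to the case that $v$ itself is an accumulation point of $E(v)$: if the accumulation point is some $u^*\ne v$, then $\ol{u^*v}$ is a limit leaf, and the contrapositive of Lemma~\ref{l:conconv}(2) ensures $u^*$ is not (pre)periodic either. If $\ol{u^*v}$ is (pre)critical, some forward iterate collapses it, and the image leaves now accumulate at the image of $v$ itself, reducing to the self-accumulating case at $\si_d^k(v)$. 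Having reduced, pick $u_n\in E(v)\setminus\{v\}$ with $u_n\to v$ monotonically, say from the positive side with $\epsilon_n=u_n-v\to 0^+$.

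Now exploit the expansion of $\si_d$: choose the smallest $k_n$ with $d^{k_n}\epsilon_n\ge\tfrac{1}{2d}$. For $k<k_n$ the arc $[v,u_n]$ is mapped injectively by $\si_d^k$ and Lemma~\ref{l:3.7} preserves the cyclic order $v<u_n$, so $\si_d^{k_n}(\ell_n)$ is a leaf at $\si_d^{k_n}(v)$ of length in $\bigl[\tfrac{1}{2d},\tfrac{1}{2}\bigr)$. Extracting a convergent subsequence with $\si_d^{k_n}(v)\to v^\dagger$ and $\si_d^{k_n}(u_n)\to w^\dagger$, closedness of $\lam^+$ produces a non-degenerate leaf $m^*=\ol{v^\dagger w^\dagger}\in\lam$; the tails $\{\ell_m:m\gg n\}$ simultaneously produce an accumulation of leaves at $v^\dagger$ itself. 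The main obstacle, and the heart of the argument, is to convert this compound structure into a contradiction: the intended mechanism is to apply Lemma~\ref{l:conconv}(1) to a carefully chosen pair in the forward orbit generated by this construction, forcing a self-concatenation of leaves in the orbit of $v$ and therefore (pre)periodicity of $v$, contradicting the hypothesis.
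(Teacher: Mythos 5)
Your treatment of the second claim is correct and is precisely what the paper intends: by Lemma~\ref{l:conconv}(2) every leaf with the (pre)periodic endpoint $v$ is itself (pre)periodic, so every $u\in E(v)$ is a (pre)periodic point of $\si_d$, and the set of such points is countable. (The refinement about equal eventual periods via Lemma~\ref{l:conconv}(3) is true but not needed.)

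The first claim is where the problem lies. The paper does not argue it at all --- it cites \cite[Lemma 4.7]{bmov12} --- and your attempted direct proof stops exactly where the contradiction has to be produced. After the expansion step you obtain, for each $n$, a leaf $\si_d^{k_n}(\ell_n)$ of length in $\bigl[\tfrac{1}{2d},\tfrac12\bigr)$ with endpoint $\si_d^{k_n}(v)$, and in the limit a non-degenerate leaf $m^*$. Nothing in this configuration is yet contradictory: a non-(pre)periodic point can have its orbit pass arbitrarily close to a fixed long leaf infinitely often, and the mere existence of $m^*$ does not force any leaf to meet its own forward image. Your closing sentence names the mechanism you hope for (``apply Lemma~\ref{l:conconv}(1) to a carefully chosen pair \ldots forcing a self-concatenation'') but does not exhibit the pair; Lemma~\ref{l:conconv}(1) needs a leaf $\ell$ with some $\si_d^n(\ell)$ literally concatenated to $\ell$, and you have not shown that the leaves $\si_d^{k_n}(\ell_n)$ --- which sit at the pairwise \emph{distinct} points $\si_d^{k_n}(v)$, precisely because $v$ is not (pre)periodic --- ever share an endpoint with one another or with an earlier image. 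That is the heart of the lemma and it is missing. A secondary gap: your reduction to the case where $E(v)$ accumulates at $v$ itself only handles the sub-case where the limit leaf $\ol{u^*v}$ is (pre)critical; if $u^*\ne v$ and $\ol{u^*v}$ is not (pre)critical, no reduction is carried out and the subsequent construction (which starts from $u_n\to v$) does not apply. As written, part (a) should either be referred to \cite[Lemma 4.7]{bmov12}, as the paper does, or the expansion argument must actually be closed, e.g.\ by tracking the thin wedges between consecutive leaves at $v$ and using the finiteness of the critical data to bound how many of them can grow to definite size while remaining pairwise unlinked.
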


\begin{proof}
The first claim is proven in \cite[Lemma 4.7]{bmov13}. The second claim
follows from Lemma~\ref{l:conconv} as by that lemma both vertices of
any leaf with an endpoint $v$ must be preperiodic.
\end{proof}

Properties of individual wandering polygons were studied in
\cite{kiw02}; properties of collections of wandering polygons were
studied in \cite{bl02}; their existence was established in \cite{bo08}.
The most detailed results on wandering polygons and their collections
are due to Childers \cite{chi07}.

Let us describe the entire $\si_d$-orbit of a finite periodic
laminational set.

\begin{prop}\label{p:forconcat}
Let $T$ be a $\si_d$-periodic finite laminational set and $X$ be the
union of the forward images of\, $T$. Then, for every connected
component $R$ of $X$, there is an $m$-tuple of points
$a_0<a_1<\dots<a_{m-1}<a_m=a_0$ in $\uc$ such that $R$ consists of
eventual images of $T$ containing $\ol{a_ia_{i+1}}$ for $i=0$,
$\ldots$, $m-1$. If $m>1$, then the remap of $R$ is a transitive
combinatorial rotation on the collection of all images of $T$ in $R$.
\end{prop}

Loosely speaking, one can say that, under the appropriate power of
$\si_d$, the set $T$ ``rotates'' around the convex hull of  $\{a_0,
\dots, a_{m-1}\}$. Note that the case $m=1$ is possible. In this case,
$R$ consists of several images of $T$ sharing a common vertex $a_0$,
there is a natural cyclic order among the images of $T$, and the remap
of $R$ is a cyclic permutation of these images, not necessarily a
combinatorial rotation. In particular, it may happen that there is a
unique image of $T$ containing $a_0$; in this case we deal with more
standard dynamics where the sets $T, \si_d(T), \dots, \si_d^{k-1}(T)$
are pairwise disjoint while $\si_d^k(T)=T$.

\begin{proof}
Set $T_k=\si_d^k(T)$. Let $k$ be the smallest positive integer such
that $T_k$ intersects $T_0$; we may suppose that $T_k\ne T_0$. There is
a vertex $a_0$ of $T_0$ such that $a_1=\si_d^k(a_0)$ is also a vertex
of $T_0$. Clearly, both $a_1$ and $a_2=\si_d^k(a_1)$ are vertices of
$T_k$. Set $a_i=\si_d^{ki}(a_0)$. Then we have $a_m=a_0$ for some
minimal $m>0$. Let $Q$ be the convex hull of the points $a_0$, $\dots$,
$a_{m-1}$. Then $Q$ is a convex polygon, or a chord, or a point. If
$m>1$, then $a_i$ and $a_{i+1}$ are the endpoints of the same edge of
$Q$ (otherwise some edges of the polygons $T_{ki}$ would cross in
$\disk$). Set $R=\cup_{i=0}^{m-1}T_{ki}$. If $m=1$, then the sets
$T_{ki}$ share the vertex $a_0$. If $m=2$ then $Q$ is a leaf that flips
under the action of $\si_d^k$. Finally, if $m>2$, then it follows from
the fact that the boundary of $Q$ is a simple closed curve that every
chord $\ol{a_ia_{i+1}}$ is an edge of $T_{ki}, i=0, \dots, m-1$ shared
with $Q$ and the sets $T_{ki}$ are disjoint from the interior of $Q$.

Since the case $m=2$ is straightforward, let us assume now that $m>2$.
Notice that, by the construction, the map $\si_d^k$ sends each set
$T_{ki}$ to the  set $T_{k(i+1)}$ adjacent to $T_{ki}$, and
$\si_d^k(\ol{a_ia_{i+1}})=\ol{a_{i+1}a_{i+2}}$. Now, let $s$ be the
least integer such that $\si_d^s(\ol{a_0a_1})=\ol{a_ja_{j+1}}$ is an
edge of $Q$ for some $j$. Evidently, the number $s$ does not have to be
equal to $k$. Still, it follows that
$\si_d^s(\ol{a_1a_2})=\ol{a_{j+1}a_{j+2}},$ $\dots,$
$\si_d^s(\ol{a_{m-1}a_0})=\ol{a_{j-1}a_j}$. Thus, the map $\si_d^s|_R$
is a combinatorial rotation. Moreover, the choice of $s$ and the fact
that $\ol{a_1a_2}=\si_d^k(\ol{a_0a_1})$ imply that the $\si_d^s$-orbit
of $\ol{a_0a_1}$ is the collection of all edges of $Q$, i.e. that
$\si_d^s$ is transitive on the collection of all images of $T$ forming
$R$.

It remains to prove that $R$ is disjoint from $R_j=\si_d^j(R)$ for
$j<s$. By way of contradiction suppose that $R_j$ intersects some
$T_{si}$. Note that the ``shape'' of the set $R_j$ mimics that of $R$:
the set $R_j$ consists of $m$ sets that are $\si_d^j$-images of sets
$T_{ki}, i=0, \dots, m-1$ adjacent to a convex polygon $\si_d^j(Q)$ in
the same way the sets $T_{ki}, i=0, \dots, m-1$ are adjacent to $Q$.

Let us show that sets $Q,$ $\si_d(Q),$ $\dots,$ $\si_d^{s-1}(Q)$ have
at most a vertex in common and that each set $R_j$ is contained in one
component of $\cdisk\sm Q$. Indeed, consider the set $\si_d^j(Q)$. If
all images of $T$ adjacent to $\si_d^j(Q)$ are distinct from the images
of $T$ adjacent to $Q$, then all the images of $T$ adjacent to
$\si_d^j(Q)$ are contained in the same component of $\cdisk\sm Q$.
Hence $\si_d^j(Q)$ can have at most a common vertex with $Q$. Moreover,
suppose that a set $\si_d^j(T)$, adjacent to $\si_d^j(Q), j<s$, in fact
coincides with set $\si_d^{is}(T)$ adjacent to $Q$. Then by the choice
of $s$ it follows that the edge $\si_d^j(\ol{a_0a_1})$ cannot coincide
with the edge of $\si_d^j(T)=\si_d^{is}(T)$ shared by this set and $Q$.
Since $\si_d^j(Q)$ is adjacent to $\si_d^j(T)$ along the edge
$\si_d^j(\ol{a_0a_1})$, it follows again that $\si_d^j(Q)$ and $Q$
cannot have more than one vertex in common, and that $R_j$ is contained
in the corresponding component of $\cdisk\sm Q$.

Now, suppose that $x\in R_j\cap R$. Assume that $x$ belongs to a
component $A$ of $\cdisk\sm Q$. Then on the one hand $\si_d^s$ sends
$x$ to a component $B$ of $\cdisk\sm Q$ distinct from $A$. On the other
hand, by the previous paragraph the entire $R_j$ must be contained in
$A$ which implies that $\si_d^s(x)$ must belong to $A$, a
contradiction.
\end{proof}

It is well-known (see \cite{kiw02}) that any infinite gap $G$ of an
invariant geodesic lamination $\lam$ is (pre)periodic. By a
\emph{vertex}\index{vertex of a gap or leaf} of a gap or\, leaf $G$ we
mean any point of $G\cap\uc$.

\begin{lem}\label{l:perinf}
Let $G$ be a periodic gap of period $n$ and set $K=\bd(G)$. Then
$\si_d^n|_K$ is the composition of a covering map and a monotone map of
$K$. If $\si_d^n|_K$ is of degree one, then either statement {\rm (1)}
or statement {\rm (2)} below holds.

\begin{enumerate}
\item The gap $G$ has at most countably many vertices, only finitely
    many of which are periodic, and no edge is preperiodic. All
    non-periodic edges of $G$ are $($pre$)$critical and isolated.

\item The map $\si_d^n|_K$ is monotonically semiconjugate to an
    irrational circle rotation so that each fiber of this
    semiconjugacy is a finite concatenation of $($pre$)$critical
    edges of $G$.
\end{enumerate}

\end{lem}

\begin{proof}
The first claim of the lemma holds since by \cite{bmov13} sibling
invariant geodesic laminations are invariant in the sense of Thurston
(see the beginning of Subsection~\ref{ss:sibgeo}). Consider now the
case when $G$ is of degree one. Then it follows that no edge of $G$ can
have two preimages under $\si_d^n$. In particular, $G$ has no
preperiodic edges. All other claims in the statement (1) of the lemma
follow from Lemma~\ref{l:cripe}. Observe that a critical edge $\ell$ of
$G$ must be isolated because by definition of an invariant geodesic
lamination (more precisely, because geodesic invariant laminations are
gap-invariant) there is another gap of $\lam$ sharing the edge $\ell$
with $\lam$ and mapping onto $\si_d(G)$ under $\si_d$.

In the statement (2) of the lemma we will prove only the very last
claim. Denote by $\ph$ the semiconjugacy from (2). Let $T\subset K$ be
a fiber of $\ph$. By Lemma~\ref{l:cripe} all edges of $G$ are
(pre)critical. Hence if $T$ contains infinitely many edges, then the
forward images of $T$ will hit critical leaves of $\si_d^n$ infinitely
many times as $T$ cannot collapse under a finite power of $\si_d^n$.
This would imply that an irrational circle rotation has periodic
points, a contradiction that completes the proof.
\end{proof}

We can now recall the notion of a \emph{periodic Siegel gap}; we will
also introduce a useful notion of the \emph{skeleton}\index{skeleton of
an infinite gap} of an infinite gap.

\begin{dfn}\label{d:siegel}
Let $G$ be an infinite gap of a geodesic lamination $\lam$. If $G\cap
\uc$ is at most countable, then we say that the \emph{skeleton} of $G$
is empty. Otherwise the \emph{skeleton} of $G$ is defined as the convex
hull of the maximal Cantor subset of $G\cap \uc$. Periodic infinite
gaps $G$ of geodesic laminations such that the remap on the boundary of
$G$ is monotonically semiconjugate to an irrational rotation are said
to be \emph{$($periodic$)$ Siegel gaps}.
\end{dfn}

Observe that the skeleton of a periodic Siegel gap is non-empty.
Observe also that edges of the skeleton of a periodic Siegel gap $G$ do
not have to be edges of $G$ itself.

By \cite{bl02}, if $\sim$ is an invariant laminational equivalence
relation possibly with infinite classes then there are no countable
infinite gaps of $\sim$, and the skeleton of a Siegel gap $G$ coincides
with $G$. In other words, in this case infinite gaps of $\sim$ are
either periodic Siegel gaps or periodic Fatou gaps of degree greater
than one, or their preimages.

It is known that periodic Siegel gaps must have critical edges that are
isolated. Therefore, both countable and Siegel gaps must have isolated
edges. Let us investigate other properties of periodic Siegel gaps.

\begin{lem}\label{l:sieg-str}
Suppose that $\lam$ is a geodesic lamination. Let $G$ be a periodic
Siegel gap of $\lam$ of period $n$. Let $H$ be the skeleton of $G$ and
$\ell=\ol{ab}$ be an edge of $H$. Consider the union of \emph{all}
finite concatenations of leaves of $\lam$ coming out of $a$ or $b$ and
let $L$ be the convex hull of all such leaves. Then $L$ is a finite
polyhedron so that every two vertices can be connected by a chain of
leaves from $\lam$ and $\ell$ is an edge of $L$. Moreover, for a
minimal $m>0$, the $\si_d^m$-image of $L$ is a singleton in $H$ that is
a limit from both sides of points of $H$. The semiconjugacy $\varphi$
between $\si_d^n$ and the corresponding irrational rotation can be
extended onto the union of $G$ and all sets $L$ by collapsing each set
$L$ to the point $\varphi(a)=\varphi(b)$. Moreover, at each edge
$\ell_*=\ol{a_*b_*}$  of $L$, there is an infinite gap of $\lam$ that
has on its boundary a finite concatenation of leaves of $\lam$
connecting $a_*$ and $b_*$ and that maps onto $G$ under $\si_d^m$.
\end{lem}

\begin{proof} It follows from Lemma~\ref{l:perinf} that for every edge
$\ell=\ol{ab}$ of $H$ there exists $m$ such that
$\si_d^m(a)=\si_d^m(b)=x$ is a point of the Cantor set $H\cap \uc$ that
is a limit point of $H\cap\uc$ from both sides. This implies that there
are no leaves of $\lam$ coming out of $x$. Therefore, any finite
concatenation of leaves of $\lam$ coming out of $a$ or $b$ maps to $x$
under $\si_d^m$ and must be contained in the appropriate finite polygon
mapped to $x$ under $\si_d^m$. This implies the first two claims of the
lemma. The existence of infinite gaps at edges of $L$ follows now from
the definition of an invariant geodesic lamination.
\end{proof}

Observe that, by Lemma~\ref{l:sieg-str}, the $\si_d^{m-1}$-image of $L$
is an all-critical gap. We will need Lemma~\ref{l:sieg-str} in what
follows, in particular, when we study Siegel gaps of two linked
geodesic laminations.

\begin{dfn}\label{d:exte-sieg}
In what follows sets $L$ defined in Lemma~\ref{l:sieg-str} are said to
be \emph{decorations} of $G$. The union of $G$ with all its decorations
is said to be the \emph{extension of $G$}.
\end{dfn}

In particular, Lemma~\ref{l:sieg-str} shows that the semiconjugacy
$\varphi$ can be defined on the extension of the corresponding Siegel
gap. Then the fibers of $\varphi$ (i.e., point-preimages under
$\varphi$) are either decorations of $G$ or single points of the set
$G\cap \uc$ that are limit points of $G\cap \uc$ from both sides.

Lemma~\ref{l:perinf} implies Corollary~\ref{c:noinf}.

\begin{cor}\label{c:noinf}
Suppose that $G$ is a periodic gap of an invariant geodesic lamination
$\lam$, whose remap has degree one. Then at most countably many
pairwise unlinked leaves of other invariant geodesic laminations can be
located inside $G$.
\end{cor}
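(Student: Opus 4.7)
\emph{Proof plan.} My plan is to split according to the two cases of Lemma~\ref{l:perinf}. In the first alternative, the gap $G$ has only countably many vertices on $\uc$; since any leaf located inside $G$ has both endpoints in $G\cap\uc$, there are at most countably many such chords, and the conclusion is immediate.

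In the second alternative, let $n$ denote the period of $G$ and let $\varphi\colon\bd(G)\to\uc$ denote the monotone semiconjugacy from $\si_d^n|_{\bd(G)}$ to an irrational rotation $\rho$, whose fibers are finite concatenations of $(\mathrm{pre})$critical edges of $G$. I first observe that the set $V$ of vertices of $G$ lying in non-trivial fibers of $\varphi$ is countable: indeed, $G$ has only countably many edges (to each edge one associates an arc of $\uc$ of positive length lying outside $G$, and these arcs are pairwise disjoint), hence the collection of $(\mathrm{pre})$critical edges of $G$ is countable, and so are their endpoints. I then claim that if $\ell$ is a leaf of a $\si_d$-invariant geolamination $\lam'\ne\lam$ and $\ell$ lies inside $G$, then both endpoints of $\ell$ must belong to a single fiber of $\varphi$. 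Granted this, $\ell$ has both endpoints in $V$, and since fibers are finite, at most countably many such chords exist overall.

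To establish the claim, I argue by contradiction. Suppose $c=\varphi(\ell)$ is a non-degenerate chord. Since $\si_d^n(G)=G$ and $\lam'$ is $\si_d^n$-invariant, the $\si_d^n$-orbit of $\ell$ consists of pairwise unlinked leaves of $\lam'$ inside $G$, and under $\varphi$ this orbit projects to the $\rho$-orbit of $c$. A short computation shows that for any non-degenerate chord $c$ and any irrational rotation $\rho$ with angle $\theta$, the set of $k$ for which $\rho^k(c)$ is linked with $c$ has positive density: if $c$ has short-arc length $\alpha\in(0,\tfrac12]$, the linkage condition amounts to $k\theta\pmod 1$ belonging to an arc of length $2\alpha$, which is hit by density of $\{k\theta\bmod 1\}$. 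Since $\varphi$ is monotone and the four endpoints involved project to four distinct points of the target circle, $\varphi$ preserves their circular order, so linked projections lift to linked chords. Hence $\si_d^{kn}(\ell)$ crosses $\ell$, contradicting that they are leaves of the same geolamination $\lam'$. The main obstacle is this final transfer from the rotation picture back to $\bd(G)$, which is handled by the monotonicity of $\varphi$.
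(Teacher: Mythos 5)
Your proof is correct and follows essentially the same route as the paper: split into the two cases of Lemma~\ref{l:perinf}, and in the irrational-rotation case use the semiconjugacy $\varphi$ to force any leaf inside $G$ to have both endpoints in a single (finite) fiber, since otherwise its $\si_d^{n}$-orbit would eventually self-intersect. The only difference is that you make explicit, via the density of the rotation orbit and monotonicity of $\varphi$, the step the paper states without detail (``$\ell$ will eventually cross itself''), which is a welcome elaboration rather than a divergence.
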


We say that a chord is located \emph{inside}\index{chord!inside a gap}
$G$ if it is a subset of $G$ and intersects the interior of $G$.

\begin{proof}
Any chord located inside $G$ has its endpoints at vertices of $G$.
Since in case (1) of Lemma~\ref{l:perinf} there are countably many
vertices of $G$, we may assume that case (2) of Lemma~\ref{l:perinf}
holds. Applying the semiconjugacy $\varphi$ from this lemma, we see
that if a leaf $\ell$ is located in $G$ and its endpoints do not map to
the same point by $\varphi$, then an iterated image of $\ell$ will
eventually cross $\ell$. If there are uncountably many leaves of
geodesic laminations inside $G$, then among them there must exist a
leaf $\ell$ with endpoints in distinct fibers of $\ph$. By the above,
some forward images of $\ell$ cross each other, a contradiction.
\end{proof}

\chapter{Special types of invariant laminations}\label{c:spety}

\section{Invariant geodesic laminations with quadratically critical
portraits}\label{s:qc-por}

Here we define invariant geodesic laminations with quadratically
critical portraits and discuss linked or essentially equal invariant
geodesic laminations with quadratically critical portraits. First we
motivate our approach and study families of collections of quadratic
quadrilaterals with certain natural properties. Then we discuss
properties of families of invariant geodesic laminations, for which the
corresponding collections of critical quadrilaterals can be defined.

\subsection{Collections of critical quadrilaterals and their
properties}\label{ss:coll-quad}

Thurston defines the \emph{minor}\index{minor} $m$ of a
$\si_2$-invariant geodesic lamination $\lam$ as the image of a longest
leaf $M$ of $\lam$. Any longest leaf of $\lam$ is said to be a
\emph{major}\index{major} of $\lam$. If $m$ is non-degenerate, then
$\lam$ has two disjoint majors, both mapping to $m$; if $m$ is
degenerate, then $\lam$ has a unique major that is a critical leaf. In
the quadratic case, the majors are uniquely determined by the minor.
Thus, a quadratic invariant geodesic lamination is essentially defined
by its minor. Even though, in the cubic case, one could define majors
and minors similarly, unlike in the quadratic case, these ``minors'' do
not uniquely determine the corresponding majors.

The simplest way to see that is to consider distinct pairs of critical
leaves with the same images. More precisely, choose an all-critical
triangle $\Delta_1$ with non-periodic vertices so that the common image
$x_1$ of the vertices of $\Delta_1$ is periodic (alternatively, has a
dense orbit in $\uc$). Choose a different all-critical triangle
$\Delta_2$ with similar properties. Now, choose an edge $\ol{c}$ of
$\Delta_1$. Clearly, there is a unique edge $\ol{d}$ of $\Delta_2$
disjoint from $\ol{c}$. Under the assumptions made about $\Delta_1$ and
$\Delta_2$ it is easy to see that the two critical leaves $\ol{c}$ and
$\ol{d}$ have so-called \textbf{aperiodic kneadings} as defined by Kiwi
in \cite{kiwi97}. Therefore, by \cite{kiwi97}, these critical leaves
generate the corresponding cubic invariant geodesic lamination. Any
other similar choice of critical edges of $\Delta_1$ and $\Delta_2$
gives rise to a cubic invariant geodesic lamination too; clearly, these
two invariant geodesic laminations are very different even though they
have the same images of their critical leaves, that is, the same minors
(see Figure~\ref{fig:2all-cri}).
Thus, in the cubic case we should be concerned with critical sets, not
only their images.

\begin{figure}
  \includegraphics[height=6cm]{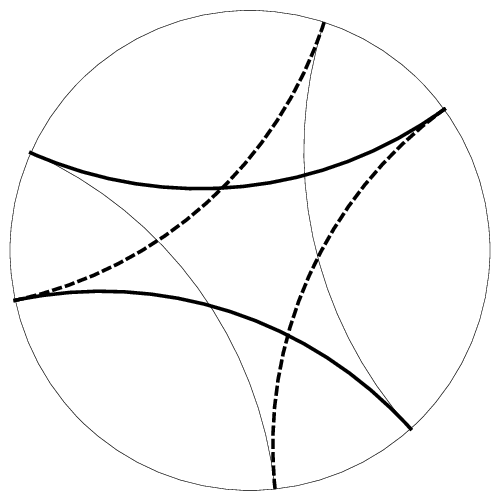}
  {\caption{Two all-critical triangles in the cubic case. Taking similar
  outlined edges (thick and thick, thin and thin, or dashed and dashed),
  one from each of the two triangles, generates three different laminations with the same pair of minors.}\label{fig:2all-cri}}
\end{figure}

We study how ordered collections of critical sets of invariant geodesic
laminations are located with respect to each other. The fact that
critical sets may have different degrees complicates such study. So, it
is natural to adjust our invariant geodesic laminations to make sure
that the associated critical sets of two invariant geodesic laminations
are of the same type. As associated critical sets we choose
(generalized) \emph{critical quadrilaterals}.

\begin{dfn}\label{d:qll}
A (generalized) \emph{critical
quadrilateral}\index{quadrilateral!critical (generalized)} $Q$ is the
circularly ordered 4-tuple $[a_0,a_1,a_2,a_3]$ of marked points $a_0\le
a_1\le a_2\le a_3\le a_0$ in $\uc$ so that $\ol{a_0a_2}$ and
$\ol{a_1a_3}$ are critical chords (called \emph{spikes}\index{spike});
here critical quadrilaterals $[a_0,a_1,a_2,a_3]$, $[a_1,a_2,a_3,a_0]$,
$[a_2,a_3,a_0,a_1]$ and $[a_3,a_0,a_1,a_2]$ are viewed as equal.
\end{dfn}

We want to comment upon our notation. By $(X_1, \dots, X_k)$, we always
mean a $k$-tuple, that is, an \textbf{ordered} collection of elements
$X_1$, $\dots$, $X_k$. On the other hand, by $\{X_1, \dots, X_k\}$ we
mean a collection of elements $X_1, \dots, X_k$ with no fixed order.
Since, for critical quadrilaterals, we need to emphasize the
\textbf{circular} order among its vertices, we choose the notation
$[a_0,a_1, a_2, a_3]$ distinct from either of the two just described
notations.

For brevity, we will often use the expression ``critical
quadrilateral'' when talking about the convex hull of a critical
quadrilateral. Clearly, if all vertices of a critical quadrilateral are
distinct or if its convex hull is a critical leaf, then the
quadrilateral is uniquely defined by its convex hull. However, if the
convex hull of a critical quadrilateral is a triangle, this is no
longer true. Indeed, let $T=\ch(a, b, c)$ be an all-critical triangle.
Then $[a,a,b,c]$ is a critical quadrilateral, but so are $[a,b,b,c]$
and $[a,b,c,c]$.

A \emph{collapsing quadrilateral}\index{quadrilateral!collapsing} is a
critical quadrilateral, whose $\si_d$-image is a leaf. A critical
quadrilateral $Q$ has two intersecting spikes and is either a
collapsing quadrilateral, or a critical leaf, or an all-critical
triangle, or an all-critical quadrilateral. If all its vertices are
pairwise distinct, then we call $Q$ \emph{non-degenerate}, otherwise
$Q$ is called \emph{degenerate}\index{quadrilateral!(non)-degenerate}.
Vertices $a_0$ and $a_2$ ($a_1$ and $a_3$) are called
\emph{opposite}\index{vertices!opposite}. Considering invariant
geodesic laminations, all of whose critical sets are critical
quadrilaterals, is not very restrictive: we can ``tune'' a given
invariant geodesic lamination by inserting new leaves into its critical
sets in order to construct a new invariant geodesic lamination with all
critical sets being critical quadrilaterals.

\begin{lem}\label{l:qls}
The family of all critical quadrilaterals is closed. The family of all
critical quadrilaterals that are critical sets of invariant geodesic
laminations is closed too.
\end{lem}

\begin{proof}
The first claim follows easily because if a sequence of critical
quadrilaterals converges then the limit is again a critical
quadrilateral. The second one follows from Theorem~\ref{t:sibliclos}
and the fact that if $\lam_i\to \lam$, then the critical quadrilaterals
of invariant geodesic laminations $\lam_i$ converge to critical
quadrilaterals that are critical sets of $\lam$.
\end{proof}

In the quadratic case, we have less variety of critical quadrilaterals:
only collapsing quadrilaterals and critical leaves. As mentioned above,
each quadratic invariant geodesic lamination $\lam$ either\, already
has a critical quadrilateral, or\, can be tuned to have one. The latter
can be done in several ways if $\lam$ has a finite critical set (on
which $\si_2$ acts in a two-to-one fashion). If, however, $\lam$ does
not have a finite critical set, then its critical set must be a
periodic Fatou gap $U$ of degree two. It follows from \cite{thu85} that
it has a unique refixed edge $M$; then one can tune $\lam$ by inserting
into $U$ the quadrilateral that is the convex hull of $M$ and its
sibling leaf.

Thurston's parameterization \cite{thu85} can be viewed as associating
to every quadratic invariant geodesic lamination $\lam$ with critical
quadrilateral $Q$ its minor $m$. It is easy to see that $m$ is the
$\si_2$-image of $Q$ and that $Q=\si_2^{-1}(m)$ is the full
$\si_2$-preimage of $m$. We would like to translate some crucial
results of Thurston's into the language of critical quadrilaterals of
quadratic invariant geodesic laminations.

To this end, observe, that, by the above, two minors cross if and only
if their full pullbacks (which are collapsing quadrilaterals coinciding
with convex hulls of pairs of majors) have a rather specific mutual
location: their vertices alternate on the circle. A major result of
Thurston's from \cite{thu85} is that \textbf{minors of different
quadratic invariant geodesic laminations are unlinked}; in the language
of critical quadrilaterals this can be restated as follows:
\textbf{critical quadrilaterals of distinct quadratic invariant
geodesic laminations cannot have vertices that alternate on the
circle}. All this motivates Definition~\ref{d:strolin}.

\begin{dfn}[cf with \cite{thu85}]\label{d:strolin}
Let $A$ and $B$ be two quadrilaterals. Say that $A$ and $B$ are
\emph{strongly linked}\index{strongly linked quadrilaterals} if the
vertices of $A$ and $B$ can be numbered so that the following holds:
$$a_0\le b_0\le a_1\le b_1\le a_2\le b_2\le a_3\le b_3\le a_0$$
where $a_i$, $0\le i\le 3$, are vertices of $A$ and $b_i$, $0\le i\le
3$ are vertices of $B$. Equivalently, $A$ and $B$ are \emph{strongly
linked} if no hole of either quadrilateral contains more than one
vertex of the other one.
\end{dfn}

Strong linkage is a closed condition: if two variable critical
quadrilaterals are strongly linked and converge, then they must
converge to two strongly linked critical quadrilaterals. An obvious
case of strong linkage is between two non-degenerate critical
quadrilaterals, whose vertices alternate on the circle so that all the
inequalities in Definition~\ref{d:strolin} are strict. Yet even if both
critical quadrilaterals are non-degenerate, some inequalities may be
non-strict, which means that some vertices of both quadrilaterals may
coincide.

For example, two coinciding critical leaves can be viewed as strongly
linked critical quadrilaterals. Otherwise, an all-critical triangle $A$
with vertices $x$, $y$, $z$ and its edge $B=\ol{yz}$ can be viewed as
strongly linked quadrilaterals if the vertices are chosen as follows:
$a_0=x, a_1=a_2=y, a_3=z$ and $b_0=b_1=y, b_2=b_3=z$. Observe that if a
critical quadrilateral $Q$ is a critical leaf or has all vertices
distinct, then $Q$ as a critical quadrilateral has a well-defined
assignment of vertices; the only ambiguous case is when $Q$ is an
all-critical triangle.

If an ordered collection of a few chords can be concatenated to form a
Jordan curve, or if there are two identical chords, then we say that
they form a \emph{loop}\index{loop of chords}. In particular, one chord
does not form a loop while two equal chords do. If an ordered
collection of chords $(\ell_1, \dots, \ell_k)$ contains no chords
forming a loop, then we call it a \emph{no loop collection}\index{no
loop collection of chords}.

\begin{lem}\label{l:noloop}
The family of no loop collections of critical chords is closed.
\end{lem}

\begin{proof}
Suppose that there is a sequence of no loop collections of critical
chords $\nc^i=(\ell_1^i, \dots, \ell_s^i)$ such that $\nc^i\to
\nc=(\ell_1, \dots, \ell_s)$. Clearly, all chords $\ell_i$ are
critical. We need to show that $\nc$ is a no loop collection. By way of
contradiction assume that, say, chords $\ell_1=\ol{a_1a_2}, \dots,
\ell_k=\ol{a_ka_1}$ form a loop $\widehat \nc$, in which the order of
points $a_1, \dots, a_k$ is positive. We claim that $\widehat \nc$
cannot be the limit of no loop collections of critical chords,
contradicting the convergence assumption that $\nc^i\to \nc$. This
follows from the fact that if $G'\subset\uc$ is a union of finitely
many sufficiently \emph{small} circle arcs such that all
\emph{straight} edges in the boundary of the convex hull $G=\ch(G')$
are critical, then in fact all circle arcs in $G'$ are degenerate, so
that $G$ is a finite polygon.

A more formal proof follows. Consider chords $\ell^i_1=\ol{b^i_1d^i_1},
\dots, \ell^i_k=\ol{b^i_kd^i_k}$ such that points $b^i_j$ converge to
$a_j$ and points $d^i_j$ converge to $a_{j+1}$ ($j+1$ is understood
here and in the rest of the argument modulo $k$) as $i\to \infty$. Then
for a well-defined collection of integers $m_1, \dots, m_k$ we have
that $a_{j+1}=a_j+m_j \cdot \frac{1}d$ and $d^i_j=b^i_j+m_j\cdot
\frac{1}d$. Moreover, since $\widehat \nc$ is a loop, then
$m_1+\dots+m_k=d$. Now, since $\nc^i$ is a no loop collection and all
leaves in $\nc^i$ are unlinked, for each $1\le j\le k$ we have
$d^i_j=b^i_j+m_j\cdot \frac{1}{d}\le b^i_{j+1}$ and there exists at
least one $1\le j\le k$ such that $d^i_j<b^i_{j+1}$. Since by the above
$m_1+\dots+m_k=d$ it follows that after we follow $k$ chords
$\ell^i_1=\ol{b^i_1d^i_1}, \dots, \ell^i_k=\ol{b^i_kd^i_k}$ along the
circle considering $b_j^i$ as the initial point of $\ell^i_j$ and
$d_j^i$ as the terminal point of $\ell^i_j$ we see that the terminal
point $d^i_k$ of $\ell^i_k$ is located slightly beyond the initial
point $b^i_1$ of $\ell_1^i$ which implies that $\ell^i_k$ crosses
$\ell_1^i$, a contradiction.
\end{proof}

We will need the following definition.

\begin{dfn}[Full collections and complete samples of spikes]\label{d:fulco}
Call a no loop collection of $d-1$ pairwise unlinked critical chords a
\emph{full collection (of critical chords)}\index{full collection of
critical chords}. Given a collection $\mathcal Q$ of $d-1$ distinct
critical quadrilaterals of an invariant  geodesic lamination $\lam$, we
choose one spike in each of them and call this collection of $d-1$
critical chords a \emph{complete sample of spikes (of $\mathcal
Q$)}\index{complete sample of spikes}.
\end{dfn}

Now we are ready to investigate invariant geodesic laminations for
which the appropriate collections of critical quadrilaterals can be
defined.

\subsection{Quadratically critical invariant geodesic laminations}\label{ss:qua-cri-lam}

Suppose that $\lam$ is the invariant geodesic lamination generated by a
laminational equivalence relation all of whose critical sets are
critical quadrilaterals. Then any complete sample of spikes is a full
collection because in this case distinct critical sets are disjoint.
Observe that, by Lemma~\ref{l:noloop}, full collections of critical
chords form a closed family. It follows that the fact that complete
samples of spikes form a full collection survives limit transition
(unlike pairwise disjointness). This inspires another definition.

\begin{dfn}[Quadratic criticality]\label{d:quacintro}
Let $(\lam, \qcp)$ be an invariant  geodesic lamination with a
$(d-1)$-tuple $\qcp$ of critical quadrilaterals that are gaps or leaves
of $\lam$ such that any complete sample of spikes is a full collection.
Then $\qcp$ is called a \emph{quadratically critical portrait for
$\lam$}\index{quadratically critical portrait} while the pair $(\lam,
\qcp)$ is called an \emph{invariant geodesic lamination with
quadratically\, critical portrait}\index{invariant geodesic
lamination!with quadratically critical portrait} (if the appropriate
invariant geodesic lamination $\lam$ for $\qcp$ exists but is not
emphasized we simply call $\qcp$ a \emph{quadratically critical
portrait}\index{quadratically critical portrait}). The space of all
quadratically critical portraits is denoted by $\fqcp_d$. The family of
all invariant geodesic laminations with quadratically critical
portraits is denoted by $\L\fqcp_d$.
\end{dfn}

Observe that any full collection of critical chords is a quadratically
critical portrait. Notice also, that if $C$ is a complementary
component of a full collection of critical chords in $\disk$, then
$\si_d$ is one-to-one on the boundary of $C$ except for critical chords
contained in the boundary of $C$. Therefore, if $\lam$ admits a
quadratically critical portrait then there are no gaps of $\lam$ of
degree greater than one that are different from critical quadrilaterals
from this quadratically critical portrait. In particular, $\lam$ cannot
have infinite gaps of degree greater than one.

\begin{cor}\label{c:qcpoclose}
The spaces $\fqcp_d$ and $\L\fqcp_d$ are compact.
\end{cor}

\begin{proof}
Let $(\lam^i, \qcp^i)\to (\lam, \cp)$, where convergence as always is
understood in the Hausdorff ($H_H$) sense. By Theorem~\ref{t:sibliclos}
and Lemma~\ref{l:qls}, here in the limit we have an invariant geodesic
lamination $\lam$ and an ordered collection $\cp$ of $d-1$ critical
quadrilaterals. Let $\cp=(C_j)^{d-1}_{j=1}$ be these limit critical
quadrilaterals. Choose a collection of spikes $\ell_j, j=1, \dots, d-1$
of quadrilaterals of\, $\cp$. Suppose that there is a loop formed by
some of these spikes. By construction, there exist collections of
spikes from quadratically critical portraits $\qcp^i$ converging to
$(\ell_1, \dots, \ell_{d-1})$. Since by definition these are full
collections of critical chords, this contradicts Lemma~\ref{l:noloop}.
Hence $(\ell_1, \dots, \ell_{d-1})$ is a full collection of critical
chords too. That implies that $\cp$ is a quadratically critical
portrait for $\lam$ and proves that $\fqcp_d$ and $\L\lp_d$ are compact
spaces.
\end{proof}

The following lemma describes invariant geodesic laminations admitting
a quadratically critical portrait. Recall that by a \emph{collapsing
quadrilateral}\index{quadrilateral!collapsing} we mean a critical
quadrilateral that maps to a non-degenerate leaf.

\begin{lem}\label{l:qcpoexi}
An invariant  geodesic lamination $\lam$ has a quadratically critical
portrait if and only if all its critical sets are collapsing
quadrilaterals or all-critical sets.
\end{lem}

\begin{proof}
If $\lam$ has a quadratically critical portrait, then the claim of the
lemma follows by definition. Assume that the critical sets of $\lam$
are collapsing quadrilaterals and all-critical sets. Then $\lam$ may
have several critical leaves (some of them are edges of all-critical
gaps, some are edges of other gaps, some are not edges of any gaps at
all). Choose a no loop collection of critical leaves of $\lam$ which is
maximal by cardinality. Add to them the collapsing quadrilaterals of
$\lam$. Include all selected sets in the family of pairwise distinct
sets $\cp=(C_1, \dots, C_m)$ consisting of critical leaves and
collapsing quadrilaterals.

We claim that $\cp$ is a quadratically critical portrait. To this end
we need to show that $m=d-1$ and that any collection $\nc$ of spikes of
sets from $\cp$ is a no loop collection. First of all, let us show that
any such collection $\nc$ contains no loops. Indeed, suppose that $\nc$
contains a loop $\ell_1\in C_1$, $\dots$, $\ell_r\in C_r$. By
construction there must be a collapsing quadrilateral among sets $C_1,
\dots, C_r$. We may assume that, say, $C_1=[a, x, b, y]$ is a
collapsing quadrilateral and $\ell_1=\ol{ab}$ is contained in the
interior of $C_1$ except for points $a$ and $b$. The spikes $\ell_2,
\dots, \ell_r$ form a chain of concatenated critical chords which has,
say, $b$ as its initial point and $a$ as its terminal point. Since
these spikes come from sets $C_2, \dots, C_r$ distinct from $C_1$, they
have to pass through either $x$ or $y$ as a vertex, a contradiction
with $C_1$ being collapsing. Thus, $\nc$ contains no loops, which
implies that the number $m$ of chords in $\nc$ is at most $d-1$.

Assume now that $m<d-1$ and show that this leads to a contradiction.
Indeed, if $m<d-1$, then we can find a component $U$ of $\disk\sm
\nc^+$ with boundary including some circle arcs such that $\si_d$ on
the boundary of $U$ is $k$-to-$1$ or higher with $k>1$ (images of
critical edges of $U$ may have more than $k$ preimages). We claim that
there exists a critical chord $\ell$ of $\lam$ inside $U$ that connects
points in $\bd(U)$ not connected by a chain of critical edges in
$\bd(U)$. Observe that an arc on $\bd(U)$ may include several critical
chords from the collection $\nc$. Consider all open arcs $A\subset
\bd(U)$ such that $\si_d$ is non-monotone on $A$, and the endpoints of
$A$ are connected by a leaf of $\lam$. Call such open arcs and the
corresponding closed arcs \emph{non-monotone}. Non-monotone arcs exist;
indeed, by the assumptions there exist leaves $\ell$ of $\lam$ inside
$U$, and at least one of the two arcs in the boundary of $U$ that
connect the endpoints of $\ell$ must be non-monotone.

The intersection of a decreasing sequence of non-monotone arcs is a
closed arc $A_0$ with endpoints connected with a leaf $\ell_0\in \lam$
such that either $\ell_0$ is the desired critical leaf of $\lam$ (the
leaf $\ell_0$ cannot connect two points otherwise connected by a chain
of critical edges from $\bd(U)$ as this would contradict the fact that
arcs approaching $A_0$ are non-monotone), or $A_0$ is still
non-monotone. Thus, it will be enough to show that if $A_0$ is a closed
non-monotone arc which is minimal by inclusion, then there exists the
desired critical chord of $\lam$.

Clearly, $A_0\cup \ell_0$ is a Jordan curve enclosing a Jordan disk
$T$, and $A_0$ is not a union of spikes. If $\ell_0$ is not critical,
then, by the assumption of minimality of $A_0$, the leaf $\ell_0$
cannot be approached by leaves of $\lam$ from within $T$, thus $\ell_0$
is an edge of a gap $G\subset T$. Take a component $W$ of $T\sm G$ that
shares an edge $\m$ with $G$. Then, by minimality of $A_0$, either
$\bd(W)$ collapses to a point or $\bd(W)$ maps in a monotone fashion to
the hole of $\si_d(G)$ located ``behind'' $\si_d(\m)$ united with
$\si_d(\m)$. This implies that $G$ is critical as otherwise the quoted
properties of components $W$ of $T\sm G$ and the fact that $\si_d$ maps
$G$ onto $\si_d(G)$ in a one-to-one fashion show that $\si_d|_{A_0}$ is
(non-strictly) monotone, a contradiction. The gap $G$ cannot be
all-critical, since $\ell_0$ is an edge of $G$. Therefore, $G$ is a
collapsing quadrilateral, which contradicts our choice of $\Cc$.
\end{proof}

Observe that there might exist several quadratically critical portraits
for an invariant geodesic lamination $\lam$ from Lemma~\ref{l:qcpoexi}.
For example, consider a $\si_4$-invariant geodesic lamination $\lam$
with two all-critical triangles $\Delta_1=\ch(a,b,c), \Delta_2=\ch(a,c,
d)$ sharing an edge $\ell=\ol{ac}$. The proof of Lemma~\ref{l:qcpoexi}
leads to a quadratically critical portrait consisting of any three
edges of $\Delta_1$, $\Delta_2$ not equal to $\ell$ in some order
(recall that for each critical leaf its structure as a quadrilateral is
unique). However it is easy to check that the collection $([a,b,b,c]$,
$[a,a,c,c]$, $[a,c,d, d])$ is a quadratically critical portrait too.
Notice that, in the definition of a complete sample of spikes, we do
not allow to use more than one spike from each critical set, hence the
pair of coinciding spikes in $[a,a,c,c]$ does not form a loop of
spikes.

\begin{figure}
  \includegraphics[width=10cm]{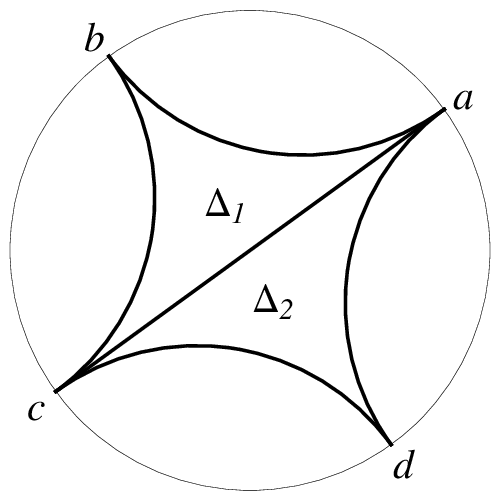}
  \caption{This figure illustrates the case with two all-critical triangles
  $\Delta_1$ and $\Delta_2$ described in the text.}
  \label{f:qc-portrait}
\end{figure}

Given a quadratically critical portrait $\qcp$, any complete sample of
spikes is a full collection of critical chords. If $\qcp$ includes sets
that are not leaves, then there are several complete samples of spikes
as the choice of spikes is ambiguous. This is important for
Section~\ref{s:smart}, where we introduce and study the so-called
\textbf{smart criticality} and its applications to \textbf{linked
invariant geodesic laminations with quadratically critical portraits}
introduced below. First we need a technical definition.

\begin{dfn}\label{d:compat}\index{cluster!critical}
A \emph{critical cluster} of $\lam$ is a convex subset of $\ol\disk$
which is maximal by inclusion, whose boundary is a union of critical
leaves of $\lam$.
\end{dfn}

A critical leaf disjoint from all other leaves is itself a critical
cluster. Consider also the example discussed after
Lemma~\ref{l:qcpoexi}. There, a $\si_4$-invariant geodesic lamination
$\lam$ has two all-critical triangles sharing a critical edge; the
union of these triangles is a critical cluster of $\lam$.

\begin{dfn}[Linked invariant geodesic laminations]\label{d:qclink1}\index{invariant geodesic laminations!linked or essentially equal}
Let $\lam_1$ and $\lam_2$ be geola\-mi\-na\-tions with quadratically
critical portraits $\qcp_1=(C^i_1)_{i=1}^{d-1}$ and
$\qcp_2=(C^i_2)_{i=1}^{d-1}$ and a number $0\le k\le d-1$ such that:

\begin{enumerate}
\item for each $j>k$ the sets $C^j_1$ and $C^j_2$ are contained in a
    common critical cluster of $\lam_1$ and $\lam_2$ (in what follows
    these clusters will be called \emph{special critical
    clusters}\index{clusters, special critical} and leaves contained
    in them will be called \emph{special critical
    leaves}\index{leaf!special critical}).
\item for every $i$ with $1\le i\le k$, the sets $C^i_1$ and $C^i_2$
    are either strongly linked critical quadrilaterals or share a
    spike.
\end{enumerate}

\noindent Then we use the following terminology:

\begin{enumerate}

\item[(a)] if in {\rm (1)} for every $i$ with $1\le i\le k$, the
    quadrilaterals $C^i_1$ and $C^i_2$ share a spike, we say that
    $\qcp_1$ and $\qcp_2$, (as well as $(\lam_1, \qcp_1)$ and
    $(\lam_2, \qcp_2)$) are \emph{essentially
    equal})\index{essentially equal},

\item[(b)] if in {\rm (1)} there exists $i$ with $1\le i\le k$ such
    that the quadrilaterals $C^i_1$ and $C^i_2$ are strongly linked
    and do not share a spike, we say that $\qcp_1$ and $\qcp_2$ (as
    well as $(\lam_1, \qcp_1)$ and $(\lam_2, \qcp_2)$) are
    \emph{linked}\index{linked}.

\end{enumerate}

\noindent The critical sets $C^i_1$ and $C_2^i$, $1\le i\le d-1$ are
called \emph{associated $($critical sets of invariant geodesic
laminations with quadratically critical portraits $(\lam_1, \qcp_1)$
and $(\lam_2, \qcp_2))$}\index{critical set}.
\end{dfn}

\section{Some special types of invariant geodesic
laminations}\label{s:pandp} Below, we discuss perfect invariant
geodesic laminations and dendritic invariant geodesic laminations.

\subsection{Perfect invariant geodesic laminations}\label{ss:pergeo}

The following is a natural basic definition.

\begin{dfn}\label{d:perfect} An invariant geodesic lamination $\lam$ is
said to be \emph{perfect}\index{invariant geodesic lamination!perfect}
if no leaf of $\lam$ is isolated. Given \textbf{any} invariant geodesic
lamination $\lam$, we can consider it with the Hausdorff metric;
clearly this makes $\lam$ a compact metric space. Define the
\emph{perfect part} of $\lam$\index{perfect part} as the maximal
perfect subset $\lam^p$ of $\lam$.
\end{dfn}

Since points of $\uc$, considered as degenerate leaves, belong to
$\lam$, it follows from Definition~\ref{d:perfect} that $\lam^p$ must
contain all singletons of $\uc$. Lemma~\ref{l:perfect-sub-lam} easily
follows from the definitions. Recall that by a \emph{collapsing
polygon}\index{polygon!collapsing} we mean a critical polygon that maps
onto a non-degenerate leaf. In other words, if $G$ is a collapsing
polygon, then all its edges map to the image leaf.

\begin{lem}\label{l:perfect-sub-lam}
The collection $\lam^p$ is an invariant perfect geodesic lamination.
For every $\ell\in \lam^p$ and every neighborhood $U$ of $\ell$, there
exist uncountably many leaves of $\lam^p$ in $U$.
\end{lem}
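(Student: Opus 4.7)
The plan is to establish the two assertions of the lemma separately. First, that $\lam^c$ is an invariant perfect geolamination: I would verify the (static) geolamination axioms and then the three sibling-invariance clauses from Definition~\ref{d:sibli} via a transfinite Cantor--Bendixson induction on $\lam^*$. Second, that every neighborhood of every leaf of $\lam^c$ contains uncountably many leaves: I would run the classical Cantor-scheme construction inside $\lam^c$.

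The geolamination axioms for $\lam^c$ are essentially automatic. Perfectness gives closedness; pairwise non-linking of leaves in $\disk$ is inherited from $\lam$; and $\uc \subset \lam^c$ because the circle of degenerate leaves is itself a perfect subset of $\lam^*$ and therefore lies in the maximal perfect subset.

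For sibling invariance I would set $\lam_0 = \lam$, define $\lam_{\alpha+1}$ as the set of non-isolated points of $\lam_\alpha$, and $\lam_\lambda = \bigcap_{\alpha < \lambda} \lam_\alpha$ at limit ordinals; this transfinite derivation stabilizes at $\lam^c$ at a countable ordinal. I show each $\lam_\alpha$ is $\si_d$-invariant by transfinite induction. At successor stages the key geometric fact is that $\si_d$ acts as a $d$-fold covering on $\uc$ and hence is locally injective on chord space: if $\ell_n \to \ell$ with $\ell_n \ne \ell$, then $\si_d(\ell_n) \ne \si_d(\ell)$ for all large $n$, which immediately yields forward invariance. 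For the pullback and sibling clauses, given $\ell \in \lam_{\alpha+1}$ non-degenerate with $\ell_n \to \ell$ in $\lam_\alpha$, I apply property (3) of Definition~\ref{d:sibli} inside $\lam_\alpha$ to each $\ell_n$, obtaining $d$ pairwise disjoint siblings $\ell_n^{(1)} = \ell_n, \dots, \ell_n^{(d)}$ in $\lam_\alpha$. By compactness of $\lam^*$, along a subsequence they converge to chords $\mu^{(1)} = \ell, \mu^{(2)}, \dots, \mu^{(d)}$, all of which are pullbacks of $\si_d(\ell)$. A short case analysis using local injectivity of $\si_d$ and the non-degeneracy of $\si_d(\ell)$ rules out that any two $\mu^{(i)}, \mu^{(j)}$ share an endpoint, so they are in fact pairwise disjoint. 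Each $\mu^{(i)}$ is a limit of distinct leaves of $\lam_\alpha$, hence lies in $\lam_{\alpha+1}$, yielding both property (2) and property (3) there. Degenerate leaves are handled trivially via $\uc \subset \lam^c$. At limit ordinals $\lambda$, I exploit that any leaf of $\lam$ has only finitely many chord-pullbacks (at most $d^2$): the decreasing family of admissible $d$-tuples of disjoint siblings across $\lam_\alpha$, $\alpha < \lambda$, must stabilize below $\lambda$, producing the required siblings in $\lam_\lambda$.

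For the uncountability claim, given $\ell \in \lam^c$ and an open neighborhood $U$ of $\ell$ in $\lam^*$, I would inductively build a binary tree of closed subsets $B_s$ indexed by $s \in \{0,1\}^{<\omega}$ with $B_\emptyset \subset U$, with $B_{s0}$ and $B_{s1}$ disjoint closed subsets of $B_s$ of diameters tending to zero, and with each $B_s \cap \lam^c$ nonempty. This is possible at every step because $\lam^c$ has no isolated points, so any relatively open subset of $\lam^c$ contains at least two distinct points. The induced injection $\{0,1\}^\omega \hookrightarrow \lam^c \cap U$ then yields uncountably many leaves of $\lam^c$ in $U$. The only genuinely delicate point in the whole proposal is the invariance step, specifically the disjointness of the limit pullbacks $\mu^{(i)}$ at successor stages and the stabilization of $d$-tuples at limit stages; both are driven by the local injectivity of $\si_d$ on $\uc$ and the finiteness of the set of chord-pullbacks of a fixed leaf.
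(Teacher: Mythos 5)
The paper does not actually prove this lemma --- it is explicitly ``left to the reader'' with the remark that the first part is easy and the second is a well-known topological fact --- so there is no argument of the authors to compare yours against; what you have written is a correct filling-in of the details. The Cantor-scheme argument for the uncountability statement is the standard one, and your verification of sibling invariance of the Cantor--Bendixson derivatives is sound: local injectivity of $\si_d$ on chords gives forward invariance and (together with non-degeneracy of $\si_d(\ell)$) rules out the limit siblings $\mu^{(i)}$, $\mu^{(j)}$ sharing an endpoint, since the nearby endpoints of the disjoint chords $\ell_n^{(i)}$, $\ell_n^{(j)}$ would then have distinct images that are both endpoints of the single chord $\si_d(\ell_n)$, forcing $\si_d(\ell)$ to degenerate; and the finiteness of the set of chords mapping onto a fixed chord does make the $d$-tuples stabilize at limit ordinals. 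One small repair is needed: the successor-stage construction you describe (limits of sibling $d$-tuples of the approximating leaves $\ell_n$) establishes clause (3) of Definition~\ref{d:sibli} but not clause (2), because the resulting $\mu^{(i)}$ have the \emph{same image} as $\ell$ rather than mapping \emph{onto} $\ell$. Clause (2) requires the parallel argument: pull each $\ell_n$ back inside $\lam_\alpha$ to $\ell_n'$ with $\si_d(\ell_n')=\ell_n$, extract a convergent subsequence $\ell_n'\to\mu$, and observe $\ell_n'\ne\mu$ for large $n$ since $\si_d(\ell_n')=\ell_n\ne\ell=\si_d(\mu)$; the same stabilization device then handles clause (2) at limit ordinals. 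With that addition the proof is complete.
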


\begin{proof}
The fact that no leaf of $\lam^p$ is isolated follows immediately. To
see that $\lam^p$ is invariant, notice that by definition only edges of
collapsing quadrilaterals or their sibling leaves can have ambiguous
collections of $d$ pairwise disjoint sibling leaves. Indeed, if a leaf
$\ell$ has more than $d-1$ sibling leaves, then two sibling leaves of
$\ell$ must have a common vertex. This implies the claimed. It follows
that there are at most finitely many leaves for which the choice of a
collection of pairwise disjoint sibling leaves is ambiguous.

Now, let $\ell=\ol{xy}$ be a non-critical leaf of $\lam^p$. Choose a
sequence of leaves $\ell_i=\ol{x_iy_i}$ of $\lam^p$ such that $x_i\to
x, y_i\to y$ and for every $i$ there are exactly $d$ leaves in $\lam$
(including $\ell_i$) with the $\si_d$-image
$\ol{\si_d(x_i)\si_d(y_i)}$. Moreover, their images are of length close
to that of $\si_d(\ell)$ and therefore are bounded away from zero. This
easily implies that the distance between any two endpoints of any two
leaves from the collection of all sibling leaves of $\ell_i$ is bounded
away from zero too. Therefore, the limits of these leaves form a
collection of $d$ leaves of $\lam^p$ (including $\ell$) with the same
image $\si_d(\ell)$. By definition, this shows that $\lam^p$ is
invariant, as desired.
\end{proof}

While the existence of the perfect part $\lam^p$ of $\lam$ is thus
established, the actual construction of it in the dynamical case is not
obvious at all. The process of finding $\lam^p$ was described in detail
in \cite{bopt10}. In what follows we will need a few facts and concepts
established in \cite{bopt10}.

\begin{dfn}[Supergaps \cite{bopt10}] Consider a (periodic) infinite gap $U$ of
$\lam^p$. Then $U$ is said to be a (periodic) \emph{super-gap} of
$\lam$.
\end{dfn}

Clearly, the part of the invariant geodesic lamination $\lam$ contained
in the orbit of $U$ consists of no more than countably many
non-degenerate leaves.

Suppose that $\sim$ is an invariant laminational equivalence relation
and $\lam_\sim$ is the corresponding invariant geodesic lamination. Let
$\lam^0_\sim=\lam_\sim$ and define $\lam^k_\sim$ inductively by
removing all isolated leaves from $\lam^{k-1}_\sim$ (one may call each
step in this process a \emph{countable cleaning}). It is proven in
\cite{bopt10} that after finitely many countable cleaning steps we will
obtain the invariant perfect geodesic lamination $\lam^p_\sim$. This
result is then used in proving the following lemma.

\begin{lem}[Lemma 3.2 \cite{bopt10}]\label{l:leafinsgap}
If $\sim$ is a laminational equivalence relation, then the following
holds.

\begin{enumerate}

\item Every leaf of $\lam_\sim$ inside a super-gap $G$ of $\sim$ is
    (pre)periodic or (pre)critical; every edge of a super-gap is
    (pre)periodic.

\item Every edge of any gap $H$ of $\lam^p_\sim$ is  not isolated in
    $\lam^p_\sim$ from outside of $H$; all gaps of $\lam^p_\sim$ are
    pairwise disjoint. Moreover, gaps of $\lam^p_\sim$ are disjoint
    from leaves that are not their edges.

\item There are no infinite concatenations of leaves in
    $\lam^p_\sim$. Moreover, the invariant geodesic lamination
    $\lam^p_\sim$ is generated by a laminational equivalence relation
    $\sim^p$ except that there may be the following leaves of
    $\lam_{\sim^p}$ which by definition should not be included in
    $\lam^p_\sim$: it is possible that one edge of certain finite gap
    of $\sim^p$ is a leaf passing inside an infinite gap of
    $\lam^p_\sim$.

\item Any periodic Siegel gap is a proper subset of its super-gap.

\end{enumerate}

\end{lem}

The next lemma specifies some properties of any perfect invariant
geodesic lamination $\lam^p$.

\begin{lem}\label{l:lamp}
If $\lam^p$ is a perfect invariant geodesic lamination, then at most
two leaves of $\lam^p$ share an endpoint. Moreover, any leaf of
$\lam^p$ is a limit of uncountably many leaves of $\lam^p$ disjoint
from $\ell$. If a leaf $\ell$ is critical, then $\si_d(\ell)$ is a
point separated from the rest of the circle by images of those leaves
converging to $\ell$ so that $\ell$ is either disjoint from all other
leaves or gaps of $\lam^p$ or is an edge of an all-critical gap of
$\lam^p$ disjoint from all other leaves or gaps of $\lam^p$.
\end{lem}

\begin{proof}
Suppose that there are more than two leaves of $\lam^p$ coming out of
the same point. Then, since, by Lemma~\ref{l:cones}, there are at most
countably many leaves of $\lam^p$ sharing an endpoint, $\lam^p$ has
isolated leaves, a contradiction. This implies that any leaf of
$\lam^p$ is a limit of an uncountably many leaves of $\lam^p$ disjoint
from $\ell$. The rest of the lemma easily follows.
\end{proof}

Clearly, \Cref{l:lamp} implies the following corollary.

\begin{cor}\label{c:cridisj}
Let $\lam$ be a perfect invariant geodesic lamination. Then the
critical sets of $\lam$ are pairwise disjoint and are either
all-critical sets, or critical sets mapping exactly $k$-to-$1$, $k>1$,
onto their images.
\end{cor}

In this paper, we study, in particular, perfect invariant geodesic
laminations and dendritic invariant geodesic laminations, which are a
particular case of perfect ones. By \Cref{c:cridisj}, all such geodesic
laminations have critical sets with certain natural properties. To
avoid unnecessary complications, we will consider invariant geodesic
laminations with similar properties even if we do not necessarily
assume that they are perfect.

\begin{dfn}\label{d:regula}
Let $\lam$ be an invariant geodesic lamination. Suppose that their
critical sets are pairwise disjoint except for the case when a critical
leaf is a boundary edge of an all-critical set. Then we say that $\lam$
is \emph{regular}\index{invariant geodesic lamination!regular}.
\end{dfn}

If an invariant geodesic lamination $\lam$ is regular, then all its
critical leaves are boundary leaves of all-critical sets. In
particular, if $C$ is a critical set of $\lam$ which is not an
all-critical set then it maps onto its image in exactly $k$-to-$1$
fashion. By \Cref{c:cridisj}, perfect invariant geodesic laminations
are regular. However, it is easy to give examples of regular invariant
geodesic laminations that are not perfect. Indeed, all quadratic
invariant geodesic laminations corresponding to parabolic quadratic
polynomials from the main cardioid are regular while it is well-known
that they have only countably many non-degenerate leaves. Therefore,
these laminations are not perfect, and the perfect part in any such
lamination is the set of all points of $\uc$.

We will use quadratically critical portraits to parameterize (``tag'')
certain classes of regular invariant geodesic laminations. An obstacle
to this is the fact that an invariant  geodesic lamination $\lam$ with
a $k$-to-$1$ critical set such that $k>2$ does not admit a
quadratically critical portrait. However, using Lemma~\ref{l:qcpoexi},
it is easy to see that in this case one can insert critical
quadrilaterals in critical sets of higher degree in order to ``tune''
$\lam$ into an invariant  geodesic lamination with a quadratically
critical portrait. This motivates the following definition.

\begin{dfn}\label{d:cripatt1}
Let $\lam$ be a regular invariant geodesic lamination with pairwise
disjoint critical sets (gaps or leaves) $D_1$, $\dots$, $D_k$. Let
$\lam\subset \lam_1$ and $\qcp=(E_1,\dots,E_{d-1})$ be a quadratically
critical portrait for $\lam_1$. Clearly, there is a unique
$(d-1)$-tuple $\zc$ $=$ $(C_1,\dots,C_{d-1})$ such that for every $1\le
i\le d-1$ we have $E_i\subset C_i$ and there is $1\le j(i)\le k$ with
$C_i=D_{j(i)}$. Then $\zc$ is called the \emph{critical pattern of
\,$\qcp$ in $\lam$}\index{critical pattern}; we will also say that
$\qcp$ \emph{generates} $\zc$. Observe that each $D_{j(i)}$ is repeated
in $\zc$ exactly $m_{j(i)}-1$ times, where $m_{j(i)}$ is the degree of
$D_{j(i)}$.

In general, given a regular invariant geodesic lamination $\lam$ with
(pairwise disjoint) critical sets $D_1$, $\dots$, $D_k$, by an
\emph{invariant geodesic lamination with critical
pattern}\index{invariant geodesic lamination!with critical pattern} we
mean a pair $(\lam, \zc)$, where $\zc$ $=$ $(C_1,\dots,C_{d-1})$ is a
$(d-1)$-tuple of sets such that every $C_i$ coincides with some $D_j$,
and every $D_j$ is repeated in $\zc$ exactly $m_j-1$ times, where $m_j$
is the degree of $D_j$. Then $\zc$ is called a \emph{critical pattern
for $\lam$}\index{critical pattern}.
\end{dfn}

Let us show that critical patterns from the second part of
\Cref{d:cripatt1} are always generated by quadratically critical
portraits.

\begin{lem}\label{l:non-vac}
Given a regular invariant geodesic lamination with a critical pattern
$(\lam, \zc)$, where $\lam$ has (pairwise disjoint) critical sets
$D_1$, $\dots$, $D_k$ of degrees $m_1$, $\dots$, $m_k$ respectively,
there exists a full collection of critical chords of sets $D_1$,
$\dots$, $D_k$ that generates $\zc$.
\end{lem}

Recall that a full collection of critical chords is a collection of
$d-1$ pairwise unlinked critical chords with no loops. As was noticed
before, a full collection can be viewed as a quadratically critical
portrait. Observe also that $\sum^{k}_{i=1}m_i-1=d-1$.

\begin{proof}
If $D_i$ is an all-critical set, then we can simply choose any $m_i-1$
of its (critical) edges. If the critical set $D_i$ is not all-critical,
then we can still choose $m_i$ points of $D_i\cap \uc$ with the same
image, take the convex hull of this collection of points, and, finally,
choose $m_i-1$ edges of this convex hull. Putting together the
collections of critical chords just constructed, we will create a
desired full collection of critical chords. It is easy to see now that
one can order them so that they generate $\zc$ as desired. It remains
to apply Thurston's pullback construction and this way construct the
geodesic lamination with the critical sets just chosen as required.
\end{proof}

Observe that the choice of a full collection that generates a given
critical pattern as explained above is far from unique. Notice also
that by changing the order of the critical sets in which they show in a
critical pattern, various critical patterns for the same invariant
geodesic lamination can be obtained.

\subsection{Dendritic invariant geodesic laminations with critical
patterns}\label{ss:fulden} Be\-low we introduce a useful notation.
Recall that in Definition~\ref{d:dendr} we define dendritic laminations
(in which case we do not allow for infinite classes of the
corresponding laminational equivalence relations) and dendritic
laminations possibly with infinite classes (in which case we do allow
for infinite classes). Observe that by Kiwi \cite{kiw02} an infinite
class of a lamination possibly with infinite classes must be
(pre)periodic.

\begin{dfn}\label{d:dendrilam}
The family of all dendritic invariant geodesic laminations is denoted
by $\L\mathcal D_d$. The family of all dendritic invariant geodesic
laminations possibly with infinite classes is denoted by $\L\mathcal
D_d^\infty$. The space of all dendritic invariant geodesic laminations
with critical patterns is denoted by $\L\cpd_d$. The space of all
dendritic invariant geodesic laminations possibly with infinite classes
and critical patterns is denoted by $\L\cpd_d^\infty$.
\end{dfn}

Observe that if $\lam=\lam_\sim$ is a dendritic invariant geodesic
lamination then all its gaps are finite (by definition) and correspond
to $\sim$-equivalence classes. The situation is a little more
complicated if $\lam=\lam_\sim$ is a dendritic invariant geodesic
lamination possibly with infinite classes. Lemma~\ref{l:perfectd} deals
with these cases.

\begin{lem}\label{l:perfectd}
Dendritic invariant geodesic laminations $\lam$ possibly with infinite
clas\-ses are perfect. On the other hand, every perfect geodesic
lamination can be viewed as a dendritic invariant lamination possibly
with infinite classes.
\end{lem}

\begin{proof}
Let $\ell$ be a leaf of $\lam$. By way of contradiction, suppose that
$\ell$ is isolated. Then $\ell$ is a common edge of two gaps. Denote
these gaps by $G$ and $H$. Since $\lam$ is dendritic, it follows that
in fact $\lam=\lam_\sim$ is generated by a dendritic laminational
equivalence relation $\sim$ possibly with infinite classes. On the
other hand, both $G$ and $H$ must be $\sim$-classes because $\sim$ is
dendritic. This shows that $G$ and $H$ must be forming one $\sim$-class
and therefore, by definition of the geodesic lamination generated by a
laminational equivalence relation, we see that $\ell$ cannot be a leaf
of $\lam_\sim$, a contradiction.

Now, suppose that $\lam$ is a perfect invariant geodesic lamination.
Then, by \Cref{l:lamp}, gaps of $\lam$ are pairwise disjoint. Hence the
set $\lam^+$ can be partitioned into pairwise disjoint leaves or gaps.
Declaring these sets as classes of equivalence of $\sim$ we see that
$\lam$ is generated by $\sim$ in the usual sense. Moreover, the fact
that the corresponding quotient map collapses all gaps of $\lam$ to
points implies that the corresponding quotient space is a dendrite.
This completes the proof.
\end{proof}

Since Siegel gaps and countable gaps have isolated edges, it follows
that the only gaps of a perfect geodesic lamination $\lam$ are either
finite gaps, or periodic Fatou gaps of degree greater than one, or
their preimages. As we noticed in the proof of \Cref{l:perfectd}, the
fact that $\lam$ is perfect implies that no two gaps of $\lam$ can
intersect. In particular, all Fatou gaps of $\lam$ are disjoint from
other gaps, both finite and Fatou. Moreover, Fatou gaps of a perfect
geodesic lamination have no critical edges as by the properties of
invariant geodesic laminations such edges would be isolated.

Thus, by \Cref{l:cripe}, if $U$ is a periodic Fatou gap of an invariant
perfect geodesic lamination $\lam$, then there are finitely many
periodic edges of $U$ and all other edges are their preimages. Observe
that if a perfect geodesic lamination $\lam$ has some Fatou gaps, then
$\lam$ can be generated by several laminational equivalence relations
depending on whether the corresponding quotient map collapses certain
grand orbits of Fatou gaps. In particular, all these gaps must be
collapsed under the quotient map in the case when the corresponding
quotient space is a dendrite.

Strong conclusions about the topology of the Julia sets of
non-re\-nor\-ma\-li\-zable polynomials $P\in \dc$ follow from
\cite{kvs06}. Building upon earlier results by Jeremy Kahn and Misha
Lyubich \cite{kl09a, kl09b} and by Oleg Kozlovskii, Weixiao Shen and
Sebastian van Strien \cite{ksvs07a, ksvs07b}, Kozlovskii and van Strien
generalized results of Artur Avila, Kahn, Lyubich and Shen
\cite{akls09} and proved in \cite{kvs06} that if all periodic points of
$P$ are repelling, and $P$ is non-renormalizable, then $J(P)$ is
locally connected; moreover, by \cite{kvs06}, two such polynomials that
are topologically conjugate are in fact quasi-conformally conjugate.
Thus, in this case $f_{\sim_P}|_{J_{\sim_P}}$ is a precise model of
$P|_{J(P)}$. Finally, for a given dendritic laminational equivalence
relation $\sim$, it follows from another result of Jan Kiwi
\cite{kiw05} that there exists a polynomial $P$ with $\sim=\sim_P$.
This polynomial does not have to have a locally connected Julia set.
Thus, by \cite{kiw05} associating polynomials from $\dc$ with their
laminational equivalence relations $\sim_P$ and invariant geodesic
laminations $\lam_P=\lam_{\sim_P}$, one maps polynomials from $\dc_d$
\textbf{onto} $\L\dc_d$.

To study the association of polynomials with their\, invariant geodesic
laminations, we need Lemma~\ref{l:gm} (it is stated as a lemma in
\cite{gm93} but goes back to Douady and Hubbard \cite{hubbdoua85}).

\begin{lem}[\cite{gm93, hubbdoua85}]\label{l:gm}
Let $P$ be a polynomial of degree $d>1$, and let $R$ be an external ray
of $P$ landing at an iterated preimage $y$ of a repelling periodic
point $x$. We write $n$ for the minimal non-negative integer such that
$P^n(y)=x$. Then, for every polynomial $P^*$ of degree $d$ that is
sufficiently close to $P$, the external ray $R^*$ of $P^*$ with the
same argument as $R$ lands at a point $y^*$ that is close to $y$ and
such that $x^*=P^{*n}(y)$ is a repelling periodic point of $P^*$ close
to $x$.
\end{lem}

In what follows we need a result from \cite{bl02} that deals with
subcontinua of topological Julia sets.

\begin{thm}[No Wandering Continua \cite{bl02}]\label{t:nwc}
Let $\sim$ be a laminational equivalence relation possibly with
infinite classes and $f_\sim:J_\sim\to J_\sim$ be the corresponding
topological polynomial. Then for any non-degenerate continuum $K\subset
J_\sim$ there exist $0\le l<m$ such that $f_\sim^l(K)\cap
f_\sim^m(K)\ne \0$.
\end{thm}

We will also need the following result, which combines Theorem 7.2.6 of
\cite{bfmot10} and a part of Theorem 7.2.7 of \cite{bfmot10}.

\begin{thm}[Theorems 7.2.6 and 7.2.7 of \cite{bfmot10}]\label{t:726}
Let $D\subset J_\sim$ be a dendrite such that $f_\sim(D)\subset D$.
Then $f_\sim$ has infinitely many periodic cutpoints in $D$.
\end{thm}

We are ready to prove the following lemma. Similar to the previously
introduced terminology, by a precritical point we mean a non-critical
point that eventually maps to a critical point while by a (pre)critical
point we mean a critical or precritical point.

\begin{lem}\label{l:condense}
Suppose that $\sim$ is a dendritic laminational equivalence relation
possibly with infinite classes. Then the following holds.

\begin{enumerate}

\item Each subcontinuum of $J_\sim$ contains a $($pre$)$periodic
    non-$($pre$)$critical point.

\item Each subcontinuum of $J_\sim$ contains a $($pre$)$critical
    point.

\item Each leaf of $\lam_\sim$ can be approximated by
    $($pre$)$periodic leaves that will never map to a critical set of
    $\lam_\sim$.

\end{enumerate}

\end{lem}

\begin{proof}
(1) Consider the topological polynomial $f_\sim$. Choose a continuum
$I\subset J_\sim$. Assume that the sets $I$ and $f^{k}_\sim(I)$ are
non-disjoint. Consider the union $T$ of all iterated $f^k_\sim$-images
of $I$ (this union is connected) and take its closure $\ol T$. Then
$\ol T\subset J_\sim$ is an $f^k_\sim$-invariant dendrite. By
\Cref{t:726}, there are infinitely many periodic cutpoints in $\ol T$.
Since $f_\sim^k$ has only finitely many critical points, there are
infinitely many periodic non-(pre)critical cutpoints in $\ol T$. Since
$T$ is connected and dense in $\ol T$, it follows that $T$ contains
periodic non-(pre)critical points. Hence $I$ contains (pre)periodic
non-(pre)critical points as desired.

Now, suppose that $K\subset J_\sim$ is any subcontinuum of $J_\sim$.
Then, by \Cref{t:nwc}, there exist an eventual image $I$ of $K$ such
that, for some $k>0$, the sets $I$ and $f^k_\sim(I)$ are non-disjoint.
By the previous paragraph, it follows that $I$, and therefore $K$,
contains (pre)periodic points as desired.

(2) The arguments are similar to those in the proof of the statement
(1). Suppose that a continuum $I\subset J_\sim$ does not contain
(pre)critical points of $f_\sim$. Then its forward images do not
contain critical points. By \Cref{t:nwc} we may assume that $I$ and
$f_\sim^k(I)$ are non-disjoint. Consider the union $T$ of all
$f^k_\sim$-images of $I$ (this union is connected) and take its closure
$K$. Then $K\subset J_\sim$ is an $f^k_\sim$-invariant dendrite. Since
by the construction $K$ and its images can only contain critical points
of $f_\sim$ as its endpoints, it follows that $f^k_\sim|_K$ is
one-to-one. By \Cref{t:726} there are infinitely many periodic
cutpoints of $K$. Hence we can find two periodic points $x, y\in K$
such that $f_\sim^n(x)=x, f_\sim^n(y)=y$ and $f_\sim^n|_[x, y]$ is
one-to-one where by $[x, y]$ we denote the unique arc inside $J_\sim$
with endpoints $x$ and $y$. Moreover, we may assume that there are no
$f^n_\sim$-fixed points in $(x, y)$. This implies that either $x$ or
$y$ attracts points of $[x, y]$ close to it, a contradiction.

(3) Let $\ell$ be a leaf of $\lam_\sim$. Since by \Cref{l:perfectd} the
invariant geodesic lamination $\lam_\sim$ is perfect, we can find a
side of  $\ell$ from which this leaf is non-isolated. Applying the
quotient map, we can find an arc in $J_\sim$ and its preimage under the
quotient map that is a connected union of a family of pairwise disjoint
leaves and gaps of $\lam_\sim$ (they are convex hulls of
$\sim$-equivalence classes) including $\ell$. These convex hulls of
$\sim$-equivalence classes are approaching $\ell$ from the side from
which $\ell$ is not isolated. By (1) there are (pre)periodic leaves or
gaps in this family; moreover, we can choose them so that they never
map to a critical set of $\lam_\sim$. Since by the previous results all
edges of a (pre)periodic gap of $\lam$ are (pre)periodic themselves, it
follows that $\ell$ can be approximated (from the above chosen side) by
(pre)periodic leaves that never map to a critical set of $\lam_\sim$ as
desired.
\end{proof}

In the dendritic case, the connection between critical patterns and
invariant geodesic laminations can be studied using results of Jan Kiwi
\cite{kiwi97}. One of the results that can be easily deduced from
\cite{kiwi97} is the following theorem. We provide a sketch of an
alternative geometric proof here.

\begin{thm}[cf \cite{kiwi97}]\label{t:fromkiwi}
If $\lam$ is a dendritic invariant geodesic lamination and $\lam'$ is
an invariant geodesic lamination such that $\lam$ and $\lam'$ share a
collection of $d-1$ critical chords with no loops among them, then
$\lam'\supset \lam$ and $\lam'\sm \lam$ consists of at most countably
many leaves inserted in certain gaps of $\lam$.
\end{thm}

\begin{proof}
Denote by $\sim$ a laminational equivalence relation generating $\lam$.
The critical chords shared by $\lam$ and $\lam'$ define $d-1$
complementary components to the closed unit disk $\ol{\disk}$. Clearly,
the closure $\ol{A}$ of each such component $A$ intersected with $\uc$
maps (under $\si_d$) onto the entire circle $\uc$ in a one-to-one order
preserving fashion (except for the endpoints). The boundary of $\ol{A}$
consists of circle arcs and concatenations of critical chords.

This allows one to consider pullbacks of chords into each such set
$\ol{A}$. Indeed, given a chord $\ell$ and a set $\ol{A}$ as above, we
can consider a set of all points in $\ol{A}\cap \uc$ that map to the
endpoints of $\ell$. Generically, either endpoint will have exactly one
preimage there. However if exactly one endpoint of $\ell$ equals the
image of a boundary critical chord of $\ol{A}$ (or of a concatenation
of boundary chords of $A$) then $\ell$ will have two preimages in
$\ol{A}$. Finally, if both endpoints of $\ell$ are images of boundary
concatenations of critical chords of $\ol{A}$, then we choose two
preimages of $\ell$ that are disjoint (it is easy to see that such
choice is unique).

The fact that the critical chords are shared by $\lam$ and $\lam'$ and
the definition of a invariant geodesic lamination imply that all
pullback chords constructed like that, except possibly for finitely
many chords, are shared by $\lam'$ and $\lam$. Therefore, limits of
these pullback chords are leaves of both $\lam$ and $\lam'$. As follows
from \cite{thu85}, these limits form an invariant geodesic lamination
$\lam''$. Moreover, by \Cref{l:condense}, each subcontinuum of $J_\sim$
contains (pre)critical points, which implies that $\lam''=\lam$. Since
all gaps of $\lam$ are finite, it follows that $\lam'\sm \lam$ consists
of at most countably many leaves inserted in certain gaps of $\lam$.
\end{proof}

Critical patterns were introduced in \Cref{d:cripatt1}. We are ready to
consider critical patterns of quadratically critical portraits in
dendritic geodesic laminations. This notion is closely related to that
of \textbf{critically marked (dendritic) polynomial}, which was
introduced in the Introduction as we discussed there the Theorem on
Local Charts for Dendritic Polynomials (in that we follow Milnor
\cite{miln93, M}). Recall that the space of all dendritic invariant
geodesic laminations with critical patterns is denoted by $\L\cpd_d$.

\begin{dfn}\label{d:psi}
To each marked dendritic polynomial $(P, C(P))$ of degree $d$ we
associate the corresponding dendritic invariant geodesic lamination
with  critical pattern $(\lam_{\sim_P}, \zc(P, C(P)))$ by defining
$\zc(P, C(P))=\zc$ as the ordered collection of convex hulls of
$\sim_P$-classes associated to critical points of $P$ in the order they
appear in $C(P)$; in the notation from the Introduction $C(P)=(c_1,
\dots, c_{d-1})$ and $\zc=(G_{c_1}, \dots, G_{c_{d-1}})$. Also, define
the map $\hPsi_d$ so that $\hPsi_d(P, C(P))=G_{c_1}\times \dots \times
G_{c_{d-1}}.$
\end{dfn}

Suppose that a sequence of regular invariant geodesic laminations with
critical patterns $(\lam^i, \zc^i)$ converges in the Hausdorff sense.
Then, by Theorem~\ref{t:sibliclos}, the limit $\lam^\infty$ of
$\si_d$-invariant geodesic laminations $\lam^i$ is itself a
$\si_d$-invariant geodesic lamination. Moreover, then critical patterns
$\zc^i$ converge to the limit collection of $d-1$ critical sets of
$\lam^\infty$. We are interested in the case when the $\si_d$-invariant
geodesic lamination $\lam^\infty$ is in a sense compatible with a
dendritic $\si_d$-invariant geodesic lamination.

\begin{lem}\label{l:uppers}
Suppose that a sequence of regular invariant geodesic laminations with
critical patterns $(\lam^i, \zc^i)$ converges in the sense of the
Hausdorff metric $H_H$ to an invariant  geodesic lamination
$\lam^\infty$ with a collection of limit critical sets $C_1, \dots,
C_{d-1}$ and there exists a dendritic invariant geodesic lamination
$\lam$ with a critical pattern $\zc=(Z_1, \dots, Z_{d-1})$ such that
$C_i\subset Z_i, 1\le i\le d-1$. Then $\lam^\infty\supset \lam$.
\end{lem}

\begin{proof}
By \Cref{l:non-vac}, for every $i$, we can choose a full collection
$F^i=(\oc^i_1, \dots, \oc^i_d)$ of critical chords that generates
$\zc^i$. By \Cref{l:noloop}, we may assume that these full collections
converge to a full collection $F=(\oc_1,$ $\dots,$ $\oc_d)$ as $i$
tends to infinity. Clearly, elements of $F$ are critical chords
compatible with $\lam^\infty$. On the other hand, by the assumptions
they are compatible with dendritic invariant geodesic lamination
$\lam$. Therefore by \Cref{t:fromkiwi} $\lam^\infty\supset \lam$ as
desired.
\end{proof}

For an integer $m>0$, we use a partial order by inclusion among
$m$-tuples: $(A_1, \dots, A_m)\succ (B_1, \dots, B_m)$ (or $(B_1,
\dots, B_m)\prec (A_1, \dots, A_m)$) if and only if $A_i\supset B_i$
for all $i=1$, $\dots$, $m$. Thus $m$-tuples and $k$-tuples with $m\ne
k$ are always incomparable. Lemma~\ref{l:uppers} says that if critical
patterns of regular invariant geodesic laminations converge
\textbf{into} a critical pattern of a dendritic invariant geodesic
lamination $\lam$, then the corresponding regular invariant geodesic
laminations themselves converge \textbf{over} $\lam$.

\begin{dfn}\label{d:uppers}\index{upper semicontinuous}
Let $F$ be a map from a topological space $A$ to the space $2^B$
of compact subsets of a compactum $B$. Then $F$ is said to be
\emph{upper semicontinuous}\index{upper semicontinuous!map} if $x_i\to
x$ in $A$ implies that the limit of every convergent subsequence
$y_{i_k}\in F(x_{i_k})$ belongs to $F(x)$. Equivalently, for any
neighborhood $\mathcal U$ of $F(x)$ there exists a neighborhood $V$ of
$x$ such that $F(y)\subset \mathcal U$ if $y\in V$.
\end{dfn}

The fact that $F$ is upper semicontinuous does not necessarily mean
that sets $F(x_i)$ must converge in the Hausdorff sense whenever
$x_i\to x$. However all existing Hausdorff limits of subsequences of
the sets $F(x_i)$ are contained in the set $F(x)$ as long as $x_i\to
x$.

Corollary~\ref{c:crista} easily follows from Lemmas~\ref{l:gm},
~\ref{l:condense} and ~\ref{l:uppers}.

\begin{cor}\label{c:crista}
Suppose that a sequence $(P_i, C(P_i))$ of critically marked dendritic
polynomials converges to a critically marked dendritic polynomial $(P,
C(P))$. Consider invariant geodesic laminations with critical patterns
$(\lam_{\sim_{P_i}}, \zc(P_i, C(P_i)))$ and $(\lam_{\sim_P}, \zc(P,
C(P)))$. If $(\lam_{\sim_{P_i}}, \zc(P_i, C(P_i)))$ converge in the
sense of the Hausdorff metric to $(\lam^\infty,\zc_\infty)$, then
$\lam^\infty\supset \lam_{\sim_P}$ and $\zc_\infty\prec \zc(P, C(P))$.
In particular, the map $\hPsi_d$ is upper semicontinuous.
\end{cor}

By Corollary~\ref{c:crista}, critical sets of dendritic invariant
geodesic laminations $\lam_{\sim_P}$ associated with polynomials $P\in
\dc_d$ cannot explode under perturbation of $P$ (they may implode
though). Provided that a geometric (visual) way to parameterize
$\L\cpd_d$ is given, the map $\hPsi_d$ may yield the corresponding
parameterization of the space of all dendritic critically marked
polynomials and gives an important application of our tools. This
justifies the introduction and studying dendritic geodesic laminations
with critical patterns, which are natural counterparts of critically
marked dendritic polynomials \cite{mil12}.

\section{Accordions of invariant geodesic laminations}\label{s:acclam}

In the Introduction, we mentioned that some of Thurston's tools from
\cite{thu85} fail in the cubic case. This motivates us to develop new
tools (so-called \emph{accordions}\index{accordion}), which basically
track linked leaves from different invariant geodesic laminations. In
this section, we study accordions in detail. In Sections~\ref{s:acclam}
- \ref{s:qcrit}, we assume that $\lam_1$, $\lam_2$ are
$\si_d$-invariant geodesic laminations, and $\ell_1$, $\ell_2$ are
leaves of $\lam_1$, $\lam_2$, respectively.

\subsection{Motivation}\label{ss:motiv}
For a quadratic invariant geodesic lamination $\lam$ and a leaf $\ell$
of $\lam$ that is not a diameter, let $\ell'$ be the sibling of $\ell$.
Denote by $C(\ell)$ the open strip of $\cdisk$ between $\ell$ and
$\ell'$ and by $L(\ell)$ the length of the shorter component of $\uc\sm
\ell$. Suppose that $\frac13\le L(\ell)$ and notice that by the
assumptions we always have $L(\ell)<\frac12$. Denote by $k$ the
smallest number such that $\si_2^k(\ell)\subset C(\ell)$ except perhaps
for the endpoints (depending on the dynamics of $\ell$, the number $k$
is not necessarily defined, so the forthcoming conclusions should be
understood conditionally). The Central Strip Lemma (Lemma II.5.1 of
\cite{thu85}) claims that provided the number $k$ \textbf{is} defined,
we have $\si_2^k(\ell)$ separates $\ell$ and $\ell'$. In particular, if
$\ell=M$ is a \emph{major}\index{major}, that is, a longest leaf of
some quadratic invariant geodesic lamination, then an eventual image of
$M$ cannot enter $C(M)$.

Let us list Thurston's results for which the Central Strip Lemma is
crucial. A \emph{$\si_2$-wandering
triangle}\index{triangle!($\si_2$-)wandering} is a triangle with
vertices $a$, $b$, $c$ on $\uc$ such that the convex hull $T_n$ of
$\si_2^n(a)$, $\si_2^n(b)$, $\si_2^n(c)$ is a non-degenerate triangle
for every $n=0$, $1$, $\dots$, and all these triangles are pairwise
disjoint.

\begin{thm}[No Wandering Triangle Theorem \cite{thu85}]\label{t:nwt}
Wan\-de\-ring trian\-gles for $\si_2$ do not exist.
\end{thm}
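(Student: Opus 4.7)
The plan is a proof by contradiction using the Central Strip Lemma (Lemma~\ref{l:2.5.1}) as the essential technical ingredient. Suppose $T$ were a wandering triangle for $\si_2$, with orbit $T_n = \ch(\si_2^n(\{a,b,c\}))$. Since the $T_n$ are pairwise disjoint non-degenerate convex subsets of $\cdisk$, their Euclidean areas sum to at most $\pi$ and hence tend to $0$. Consequently $\mathrm{diam}(T_n)\to 0$, and the three arcs of $\uc$ cut off by the vertices of $T_n$ move into the ``thin'' regime (one very short arc, the others bounded away from $0$).

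At each sufficiently large $n$ I would isolate an edge $\ell_n$ of $T_n$ whose shorter complementary arc has length $L(\ell_n)\in[\tfrac13,\tfrac12)$. Such an edge can always be arranged by passing to an appropriate later iterate: the three complementary arc lengths sum to $1$, and if the longest exceeds $\tfrac12$, the remaining arcs can be enlarged into the critical window by a few applications of $\si_2$. Treat $\ell_n$ as the major of a quadratic invariant geolamination (pulling $\ell_n$ back to build the sibling $\ell_n'$ and the central strip $C(\ell_n)$). Lemma~\ref{l:2.5.1} then forbids $\si_2^j(\ell_n)$ from lying inside $C(\ell_n)$ until the first moment $k$ at which $\si_2^k(\ell_n)$ separates $\ell_n$ from $\ell_n'$.

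The key exploitation of this constraint is combinatorial: since $T_{n+j}$ must be disjoint from $T_n$, and since one of its edges is $\si_2^j(\ell_n)$, the Central Strip Lemma pins down on which side of $\ell_n$ the vertices of $T_{n+j}$ may lie. Iterating this restriction with suitably chosen $\ell_n$ produces a forced itinerary for the ``thinnest'' arc of $T_n$ under $\si_2$. The shrinking of $\mathrm{diam}(T_n)$, however, means that this thin arc cannot recycle indefinitely without collision: either two distinct $T_m$, $T_n$ are eventually forced to intersect, or some $T_n$ collapses to a degenerate triangle, each contradicting the wandering hypothesis.

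The principal obstacle is the bookkeeping in selecting the sequence $\ell_n$ across levels so that successive applications of the Central Strip Lemma chain together consistently, together with a case analysis for when $L(\ell_n)$ drifts out of $[\tfrac13,\tfrac12)$ and must be replaced by a different edge at a later iterate. This accounting, combined with the observation that accumulation points of the vertex orbit $\{\si_2^n(a)\}$ on $\uc$ are dense enough to populate the regions prescribed by the Central Strip Lemma, supplies the quantitative contradiction.
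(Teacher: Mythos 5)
The paper does not prove Theorem~\ref{t:nwt}; it is quoted from Thurston \cite{thu85}, so your proposal can only be measured against Thurston's argument, whose overall strategy you have correctly identified: disjointness forces the total area of the $T_n$ to be finite, and the Central Strip Lemma~\ref{l:2.5.1} is the engine that turns this into a contradiction. However, two of your steps do not hold up. First, $\mathrm{area}(T_n)\to 0$ does \emph{not} imply $\mathrm{diam}(T_n)\to 0$: a triangle with two vertices close together on $\uc$ and the third far away has arbitrarily small area but diameter close to $2$. The area estimate only forces the shortest of the three complementary arcs to tend to $0$; the triangles may stay long and thin, and your own description (``one very short arc, the others bounded away from $0$'') depicts exactly such a triangle, contradicting your diameter claim. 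Since your final ``quantitative contradiction'' is supposed to come from the shrinking of $\mathrm{diam}(T_n)$, this is not a cosmetic slip --- handling the long thin triangles is precisely the difficulty of the theorem.

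Second, the combinatorial heart of the argument is absent. Saying that the Central Strip Lemma ``pins down on which side of $\ell_n$ the vertices of $T_{n+j}$ may lie'' and that iterating this ``produces a forced itinerary'' is a statement of intent, not a proof; your closing paragraph concedes that the ``bookkeeping'' needed to chain the strips together is an unresolved ``obstacle,'' but that bookkeeping \emph{is} the theorem. In Thurston's argument one selects the times at which the longest side of $T_n$ falls in the window $[\frac13,\frac12)$ and sets a new record for length, shows that at the next such record time the whole triangle must lie inside the central strip of the previous record side, and then uses the separation conclusion of Lemma~\ref{l:2.5.1} to force two triangles of the orbit to be linked. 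None of this selection, nor the attendant case analysis, appears in your sketch. A smaller but real point: to invoke Lemma~\ref{l:2.5.1} you must realize $\ell_n$ as a leaf of an invariant quadratic geolamination with a disjoint sibling $\ell_n'$; for an abstract wandering triangle this requires first constructing such a geolamination from the orbit of $T$, which you assert rather than carry out. As it stands, the proposal is a plausible outline of the known proof with one false reduction and its decisive steps missing.
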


Theorem~\ref{t:gaptrans} stated below follows from the Central Strip
Lemma and is due to Thurston \cite{thu85} for $d=2$. For arbitrary $d$,
it is due to Jan Kiwi \cite{kiw02} who used different tools. Observe
that by definition a gap must have at least three vertices (this
trivial observation is important for the last claim of the theorem
dealing with the quadratic case).

\begin{thm}[\cite{thu85, kiw02}]\label{t:gaptrans}
If $A$ is a finite $\si_d$-periodic gap of period $k$, then either $A$
is a $d$-gon, and $\si^k_d$ fixes all vertices of $A$, or there are at
most $d-1$ orbits of vertices of $A$ under $\si_d^k$. Thus, for $d=2$,
the remap is transitive on the vertices of any finite periodic gap.
\end{thm}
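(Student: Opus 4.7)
The plan is to reduce to the case of a fixed gap and then analyse the induced vertex dynamics. First I would replace $\si_d$ with the remap $\sigma := \si_d^k$, so that $A$ becomes $\sigma$-fixed; since $\si_d$-periodic orbits of vertices of $A$ correspond bijectively to $\sigma$-periodic orbits among $V := A \cap \uc$, this preserves the orbit count. Theorem~\ref{t:siimgap} ensures that $\sigma|_{\partial A}$ is a positively oriented composition of a monotone map and a covering map of some degree $e \le d$, and the forced equality $\ch(\sigma(V)) = \ch(V) = A$, together with the fact that $V$ is itself the vertex set of $\ch(V)$ (since $V\subset\uc$), implies $\sigma(V) = V$. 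Hence $\sigma|_V$ is a permutation, and its cycles are exactly the periodic orbits we wish to count.

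The main combinatorial step is to bound the number of cycles of this permutation. The tool I would use is the hole-length identity: with the holes of $A$ enumerated cyclically as $H_0, \dots, H_{n-1}$ with arc lengths summing to $1$, the expansion of $\sigma$ gives $d^k|H_i| = |\sigma(H_i)| + w_i$ for nonnegative integers $w_i$ (the wrapping numbers), and summing around $\uc$ yields $\sum_i w_i = d^k - 1$. A short argument shows that each $\sigma$-orbit of holes must contain at least one index with $w_i \ge 1$ (otherwise iterating the identity around the orbit forces $|H_{i_0}| = 0$). Hence the number of orbits $g$ is crudely bounded by $g \le d^k - 1$.

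The hardest step, and the real content of the theorem, is the refinement of this crude bound to $g \le d - 1$ outside the exceptional case. For $d = 2$ this is what Thurston's Central Strip Lemma~\ref{l:2.5.1} buys: two distinct periodic vertex orbits on a non-$2$-gon fixed gap would allow one to exhibit a forward image of a proper major inside its own central strip, contradicting the lemma. For general $d$ one invokes Kiwi's argument \cite{kiw02}, exploiting that the total critical co-degree of $\lam$ equals $d-1$ and showing that each distinct vertex orbit of $A$ consumes one unit of this global critical budget; in particular $g$ distinct orbits require $g$ independent critical chords in the surrounding geolamination. Finally, the $d$-gon exception arises as the boundary equality case $w_i \equiv 1$, $\sigma|_V = \mathrm{id}$, $n = d$, in which all $d$ vertices are simultaneously fixed and one has exactly $d$ singleton orbits; any deviation from this strict configuration yields $g \le d - 1$.
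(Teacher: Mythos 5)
This theorem is quoted in the paper as an external result (Thurston for $d=2$ via the Central Strip Lemma, Kiwi \cite{kiw02} for general $d$); the paper gives no proof of it, so there is nothing internal to compare against. Judged on its own terms, your proposal has a genuine gap: everything you actually prove (passing to the remap, $\sigma|_V$ being a cyclic-order-preserving permutation, the hole-length identity $\sum_i w_i = d^k-1$, and the observation that every orbit of holes carries positive weight) only yields the crude bound $g\le d^k-1$. The entire content of the theorem is the passage from a bound depending on $k$ to the uniform bound $d-1$, and at exactly that point you write ``one invokes Kiwi's argument'' and ``this is what the Central Strip Lemma buys'' without carrying either argument out. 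Deferring the essential step to the very sources being cited is not a proof. Moreover, the one sentence of substance you offer for the refinement --- that each vertex orbit ``consumes one unit'' of the critical budget of ``the surrounding geolamination'' --- is not available as stated: the theorem applies to stand-alone periodic gaps (the paper explicitly notes this after Definition~\ref{d:lamset}), where no ambient geolamination with $d-1$ critical chords is given. One must manufacture the critical chords from the combinatorics of the holes of the orbit of $A$ and, crucially, show that chords attached to distinct vertex orbits lie in distinct grand orbits; neither step is addressed.

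There is also a concrete inconsistency in your treatment of the exceptional case. You characterize the $d$-gon exception by ``$w_i\equiv 1$, $\sigma|_V=\mathrm{id}$, $n=d$,'' but with $n$ holes each of weight $1$ your own identity gives $\sum_i w_i=n=d^k-1$, and $d=d^k-1$ has no solutions with $d\ge 2$, $k\ge 1$. So the equality case of your counting identity does not single out the $d$-gon with fixed vertices, and the dichotomy in the statement cannot be extracted from the weight count alone. (A smaller loose end: you assume the return map on $\partial A$ has degree one when concluding that $\sigma|_V$ is a permutation; Theorem~\ref{t:siimgap} allows a covering of degree $e>1$ composed with a monotone map, and the case of a critical periodic gap, possibly with collapsing edges, needs to be ruled out or handled separately.)
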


\begin{figure}
  \includegraphics[width=10cm]{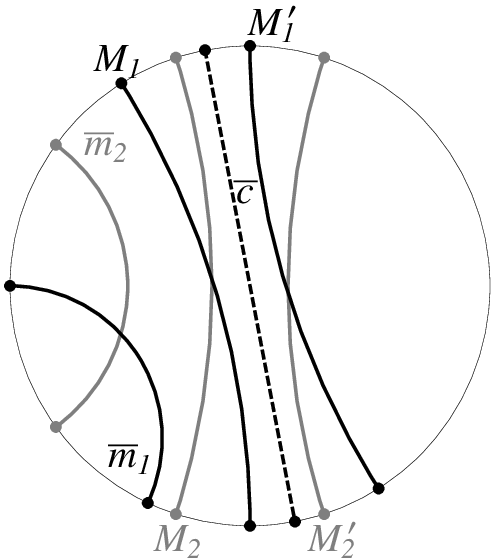}
  \caption{This figure illustrates Thurston's proof that quadratic minors are unlinked.
  The Central Strip Lemma forces orbits of both minors to not cross $\ovc$.}
  \label{f:central-strip}
\end{figure}

Another crucial result of Thurston is that minors of distinct quadratic
invariant geodesic laminations are disjoint in $\disk$. A sketch of the
argument follows. Let $\m_1$ and $\m_2$ be the minors of two invariant
geodesic laminations $\lam_1\ne \lam_2$ that cross in $\disk$. Let
$M_1$, $M'_1$ and $M_2$, $M'_2$ be the two pairs of corresponding
majors. We may assume that $M_1$, $M_2$ cross in $\disk$ and $M'_1$,
$M'_2$ cross in $\disk$, but $(M_1\cup M_2)\cap (M'_1\cup M'_2)=\0$
(see Figure \ref{f:central-strip}) so that there is a diameter $\ovc$
with strictly preperiodic endpoints separating $M_1\cup M_2$ from
$M_1'\cup M_2'$. Thurston shows that there is a unique invariant
geodesic lamination $\lam$, with only finite gaps, whose major is
$\ovc$. By the Central Strip Lemma, forward images of $\m_1$, $\m_2$ do
not intersect $\ovc$. Hence $\m_1\cup \m_2$ is contained in a finite
gap $G$ of $\lam$. By the No Wandering Triangle Theorem, $G$ is
eventually periodic. By Theorem~\ref{t:gaptrans}, some images of $\m_1$
intersect inside $\disk$, a contradiction.

Examples indicate that statements analogous to the Central Strip Lemma
fail in the cubic case. Indeed, Figure~\ref{f:ex-intro} shows a leaf
$M=\ol{\frac{342}{728}\frac{579}{728}}$ of period $6$ under $\si_3$ and
its $\si_3$-orbit together with the leaf $M'$ (which has the same image
as $M$ forming together with $M$ a narrower ``critical strip'' $S_n$)
and the leaf $N'$ (which has the same image as $N=(\si_3)^4(M)$ forming
together with $N$ a wider ``critical strip'' $S_w$). Observe that
$\si_3(M)\subset S_w$, which shows that the Central Strip Lemma does
not hold in the cubic case (orbits of periodic leaves may give rise to
``critical strips'' containing some elements of these orbits of
leaves). This apparently makes a direct extension of the arguments from
the previous paragraph impossible leaving the issue of whether and how
minors of cubic invariant geodesic laminations can be linked
unresolved.

\begin{figure}
\includegraphics[width=5.2cm]{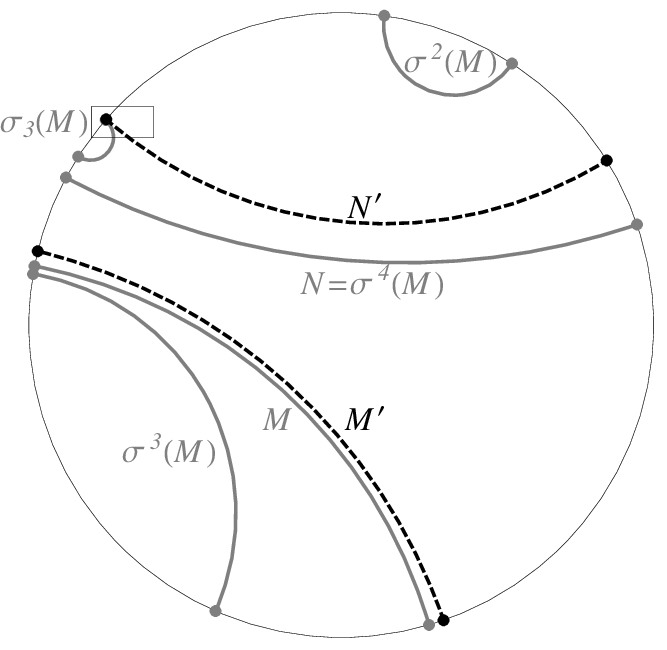}
\hspace{2cm}
\includegraphics[width=5.2cm]{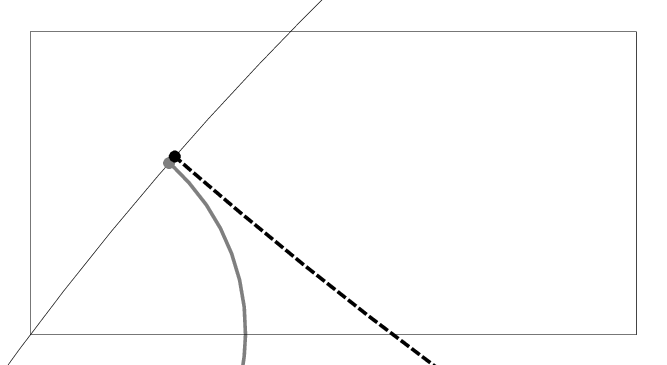}
{\caption{This figure shows that the Central Strip Lemma fails in the
cubic case. Its left part has a fragment in which two endpoints of
leaves are located very close to each other. Its right part is
the zoomed-in version of the fragment indicating that the periodic
points do not coincide.}\label{f:ex-intro}}
\end{figure}

Another consequence of the failure of the Central Strip Lemma in the
cubic case is the failure of the No Wandering Triangle Theorem (a
counterexample was given in \cite{bo08}; in fact, it was shown in
\cite{bco12, bco13} that there exists a large set of dendritic
invariant geodesic laminations with wandering triangles). Properties of
wandering polygons were studied in \cite{kiw02, bl02, chi07}.

\subsection{Properties of accordions}\label{ss:propacc}
We now give a definition of accordions.

\begin{dfn}\label{d:accord}
Let $A_{\lam_2}(\ell_1)$ be the collection of leaves of $\lam_2$ linked
with $\ell_1$, together with $\ell_1$. Let $A_{\ell_2}(\ell_1)$ be the
collection of leaves from the forward orbit of $\ell_2$ that are linked
with $\ell_1$, together with $\ell_1$. The sets defined above are
called \emph{accordions $($of $\ell_1)$}\index{accordion!of a leaf}
while $\ell_1$ is called the \emph{axis}\index{axis of an accordion}
(of the appropriate accordion). Sometimes we will also use
$A_{\lam_2}(\ell_1)$ and $A_{\ell_2}(\ell_1)$ to mean the union of the
leaves constituting these accordions.
\end{dfn}

In general, accordions do not behave nicely under $\si_d$ as linked
leaves may have unlinked images. To avoid these problems, for the rest
of this section, we will impose the following conditions on accordions.

\begin{dfn}\label{d:opacc}
A leaf $\ell_1$ is said to \emph{have order preserving accordions with
respect to $\lam_2$ $($respectively, to a leaf
$\ell_2)$}\index{accordions!order preserving} if $A_{\lam_2}(\ell_1)\ne
\{\ell_1\}$ (respectively, $A_{\ell_2}(\ell_1)\ne \{\ell_1\}$), and,
for each $k\ge 0$, the map $\si_d$ restricted to
$A_{\lam_2}(\si_d^k(\ell_1))\cap\uc$ (respectively, to
$A_{\ell_2}(\si_d^k(\ell_1))\cap\uc$) is order preserving (in
particular, it is one-to-one). Say that $\ell_1$ and $\ell_2$ have
\emph{mutually order preserving accordions}\index{accordions!mutually
order preserving} if $\ell_1$ has order preserving accordions with
respect to $\ell_2$, and vice versa (in particular, $\ell_1$ and
$\ell_2$ are not precritical).
\end{dfn}

Though fairly strong, these conditions naturally arise in the study of
linked or essentially equal invariant geodesic laminations. In
Section~\ref{s:qcrit}, we show that they are often satisfied by pairs
of linked leaves of linked or essentially equal invariant geodesic
laminations (Lemma~\ref{l:indepcrit}) so that there are at most
countably many pairs of linked leaves that do not have mutually order
preserving accordions. If invariant geodesic laminations are perfect,
this will imply that every accordion consisting of more than one leaf
contains a pair of leaves with mutually order preserving accordions.
Understanding the rigid dynamics of such pairs is crucial to our main
results.

\begin{prop}\label{p:accimage}
If $\si_d$ is order preserving on an accordion $A$ with axis $\ell_1$
and $\ell\in A$, $\ell\ne\ell_1$, then $\si_d(\ell)$ and
$\si_d(\ell_1)$ are linked. In particular, if $\ell_1$ has order
preserving accordions with respect to $\ell_2$ then $\si_d^k(\ell)\in
A_{\ell_2}(\si_d^k(\ell_1))$ for every $\ell\in A_{\ell_2}(\ell_1)$,
$\ell\ne\ell_1$, and every $k\ge 0$.
\end{prop}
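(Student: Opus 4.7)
The plan is to first verify the geometric statement directly from the definition of ``linked'' and then bootstrap via a short induction on $k$ for the ``in particular'' claim. For the first part, write $\ell_1=\ol{ab}$ and $\ell=\ol{cd}$. Since $\ell\in A\sm\{\ell_1\}$ and $A$ consists of $\ell_1$ together with leaves that cross $\ell_1$ inside $\disk$, the chord $\ell$ is linked with $\ell_1$; in particular the four endpoints are distinct and, after a cyclic relabelling, satisfy $a<c<b<d$. By hypothesis $\si_d$ is order preserving on $A\cap\uc$ and therefore on the four-point subset $\{a,b,c,d\}$; hence $\si_d(a)<\si_d(c)<\si_d(b)<\si_d(d)$ in circular order, which is exactly the alternating pattern forcing the chords $\si_d(\ell_1)=\ol{\si_d(a)\si_d(b)}$ and $\si_d(\ell)=\ol{\si_d(c)\si_d(d)}$ to be linked.

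For the ``in particular'' statement I would induct on $k$. The base case $k=0$ is just the hypothesis $\ell\in A_{\ell_2}(\ell_1)$. Assume $\si_d^k(\ell)\in A_{\ell_2}(\si_d^k(\ell_1))$; by definition $\si_d^k(\ell)$ lies in the forward orbit of $\ell_2$ and is linked with $\si_d^k(\ell_1)$, so in particular $\si_d^k(\ell)\ne\si_d^k(\ell_1)$. Since $\ell_1$ has order preserving accordions with respect to $\ell_2$, Definition~\ref{d:opacc} says that $\si_d$ is order preserving on $A_{\ell_2}(\si_d^k(\ell_1))\cap\uc$. Applying the first part of the proposition to the accordion $A_{\ell_2}(\si_d^k(\ell_1))$ with axis $\si_d^k(\ell_1)$ and leaf $\si_d^k(\ell)$, we conclude that $\si_d^{k+1}(\ell)$ and $\si_d^{k+1}(\ell_1)$ are linked. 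Combined with the obvious fact that $\si_d^{k+1}(\ell)$ still belongs to the forward orbit of $\ell_2$, this gives $\si_d^{k+1}(\ell)\in A_{\ell_2}(\si_d^{k+1}(\ell_1))$, completing the inductive step.

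I do not anticipate any real obstacle here: the statement is essentially a direct unpacking of the definitions. The only point that deserves explicit mention is the elementary equivalence between two chords $\ol{ab}$, $\ol{cd}$ being linked and the four endpoints occurring in the alternating cyclic order $a<c<b<d$; once this is observed, order preservation on the four endpoints transports ``linked'' forward under $\si_d$. A minor but automatic check in the induction is that $\si_d^k(\ell)\ne\si_d^k(\ell_1)$, which is forced by their being linked at stage $k$ and is needed in order to apply the first half of the proposition at stage $k+1$.
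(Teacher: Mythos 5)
Your argument is correct: the paper leaves this proposition to the reader precisely because it is the direct unpacking of the definitions that you give, namely that strict preservation of the cyclic order on the four (necessarily distinct, by the one-to-one clause in the definition of order preserving) endpoints transports the alternating pattern, hence linkedness, forward under $\si_d$, and the induction on $k$ then follows from Definition~\ref{d:opacc}. No issues.
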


\begin{proof} The proof of Proposition~\ref{p:accimage} immediately follows from
the definitions and is left to the reader.
\end{proof}

We now explore more closely the orbits of leaves from
Definition~\ref{d:opacc}.

\begin{prop}\label{p:sameori}
Suppose that $\ell_1$ and $\ell_2$ are linked, $\ell_1$ has order
preserving accordions with respect to $\ell_2$, and $\si_d^k(\ell_2)\in
A_{\ell_2}(\ell_1)$ for some $k>0$. In this case, if $\ell_2=\ol{xy}$,
then either $\ell_1$ separates $x$ from $\si_d^k(x)$ and $y$ from
$\si_d^k(y)$, or $\ell_2$ has $\si_d^k$-fixed endpoints.
\end{prop}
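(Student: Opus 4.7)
The plan is to analyze the cyclic positions of the endpoints of $\si_d^k(\ell_2)$ relative to $\ell_1$ and then iterate. Write $\ell_1=\ol{ab}$ and orient $\uc$ so that the cyclic order is $a,x,b,y$ (using that $\ell_1,\ell_2$ are linked). Set $x_n=\si_d^{nk}(x)$, $y_n=\si_d^{nk}(y)$, $a_n=\si_d^{nk}(a)$, $b_n=\si_d^{nk}(b)$. Since $\si_d^k(\ell_2)=\ol{x_1y_1}$ is linked with $\ell_1$, exactly one of the following holds: either $x_1\in(b,a)$ and $y_1\in(a,b)$ (\emph{Case A}, in which $\ell_1$ separates $x$ from $x_1$ and $y$ from $y_1$, giving the first conclusion), or $x_1\in(a,b)$ and $y_1\in(b,a)$ (\emph{Case B}, the same-side case). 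Assume Case B; the aim is to prove $x=x_1$ and $y=y_1$.

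In Case B, the leaves $\ell_2$ and $\si_d^k(\ell_2)$ are both in $\lam_2$, hence unlinked; combined with the same-side positions, the six-point cyclic order on $\uc$ must be (up to a symmetric mirror case treated identically) $a,x,x_1,b,y_1,y$. Now iterate. Proposition~\ref{p:accimage} applied to $\ell_2\in A_{\ell_2}(\ell_1)$ shows that $\si_d^{nk}(\ell_2)$ is linked with $\si_d^{nk}(\ell_1)$ for each $n\ge 0$, while applied to $\si_d^k(\ell_2)\in A_{\ell_2}(\ell_1)$ it also shows that $\si_d^{nk}(\ell_2)$ is linked with $\si_d^{(n-1)k}(\ell_1)$ for each $n\ge 1$. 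Consequently, both $\ol{x_ny_n}$ and $\ol{x_{n+1}y_{n+1}}$ lie in $A_{\ell_2}(\si_d^{nk}(\ell_1))$. The order-preservation hypothesis on each successor accordion $A_{\ell_2}(\si_d^{jk}(\ell_1))\cap\uc$ yields, by composition of $k$ such maps, that $\si_d^k$ preserves cyclic order on $\{a_n,x_n,x_{n+1},b_n,y_{n+1},y_n\}$. Induction on $n$ then shows that the cyclic order $a_n,x_n,x_{n+1},b_n,y_{n+1},y_n$ persists at every step.

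This propagation says exactly that the leaves $\ol{x_ny_n}\in\lam_2$ nest, with $\ol{x_{n+1}y_{n+1}}$ lying in the closed half-disk bounded by $\ol{x_ny_n}$ and the arc from $x_n$ to $y_n$ through $b_n$. By compactness of $\cdisk$ and closedness of $\lam_2$, these leaves converge in the Hausdorff metric to a limit chord (possibly degenerate) $\ol{x^*y^*}\in\lam_2$, so $x_n\to x^*$ and $y_n\to y^*$; by continuity of $\si_d$, $\si_d^k(x^*)=x^*$. Suppose now, for contradiction, that $x\ne x_1$. Order preservation entails injectivity of $\si_d$ on each accordion, so $\si_d^k$ is injective on each $A_{\ell_2}(\si_d^{nk}(\ell_1))\cap\uc$; since $\{x_n,x_{n+1}\}$ lies in this accordion, induction from $x_0\ne x_1$ gives $x_n\ne x_{n+1}$ for every $n$, which in particular forces $x_n\ne x^*$ for every $n$. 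But $\si_d^k$ is locally expanding by the factor $d^k>1$ near $x^*$, so no orbit of $\si_d^k$ can converge to $x^*$ unless it equals $x^*$ from some step on; this contradicts $x_n\to x^*$ with $x_n\ne x^*$ for all $n$. Hence $x=x_1$, and symmetrically $y=y_1$.

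The chief technical point is the inductive propagation of the six-point cyclic pattern in paragraph two: one must carefully track which accordions successive iterates $\si_d^{nk}(\ell_2)$ belong to, and repeatedly invoke Proposition~\ref{p:accimage} in both of its applicable forms together with the order-preservation hypothesis on every successor accordion. Once the nested picture is in place, the concluding clash between local expansion of $\si_d^k$ at its fixed point and the strict inequalities $x_n\ne x^*$ is essentially routine.
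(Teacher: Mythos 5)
Your proof is correct and follows essentially the same route as the paper's: in the non-separating case you set up the same nested, circularly ordered sequence of iterates of $\ell_2$ by applying Proposition~\ref{p:accimage} and order preservation on the accordions $A_{\ell_2}(\si_d^{j}(\ell_1))$, and you reach the same contradiction between convergence of that monotone sequence to a $\si_d^k$-fixed point and the expansion of $\si_d^k$. The only cosmetic differences are that you track the six-point cyclic pattern including $a_n,b_n$ where the paper tracks just the four endpoints $x_n,y_n$, and that you phrase the final contradiction via local expansion at the limit point $x^*$.
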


\begin{proof}
Suppose that $\ell_2$ is not $\si_d^k$-fixed. Denote by $x_0=x,y_0=y$
the endpoints of $\ell_2$; set $x_i=\si_d^{ik}(x_0)$,
$y_i=\si_d^{ik}(y_0)$ and $A_t=A_{\ell_2}(\si_d^t(\ell_1))$, where
$t=0$, $1$, $\ldots$. If $\ell_1$ does not separate $x_0$ and $x_1$,
then either $x_0\le x_1<y_1\le y_0<x_0$ or $x_0<y_0\le y_1<x_1\le x_0$.
We may assume the latter (cf. Figure \ref{f:sameori}).

Since $\si_d^k$ is order preserving on $A_0\cap\uc$, then $x_0<y_0\le
y_1\le y_2<x_2 \le x_1 \le x_0$ while the leaves $\ol{x_1y_1}$ and
$\ol{x_2y_2}$ belong to the accordion $A_k$ so that the above
inequalities can be iterated. Inductively we see that

$$x_0< y_0 \le \dots \le y_{m-1}\le y_m<x_m\le x_{m-1}\le \dots \le x_0.$$

\noindent All leaves $\ol{x_i y_i}$ are pairwise distinct as otherwise
there exists $n$ such that $\ol{x_{n-1} y_{n-1}}\ne \ol{x_n
y_n}=\ol{x_{n+1} y_{n+1}}$ contradicting $\si_d^k$ being order
preserving on $A_{k(n-1)}$. Hence the leaves $\ol{x_i y_i}$ converge to
a $\si_d^k$-fixed point or leaf, contradicting the expansion property
of $\si_d^k$.
\end{proof}

\begin{figure}[H]
\includegraphics[width=7cm]{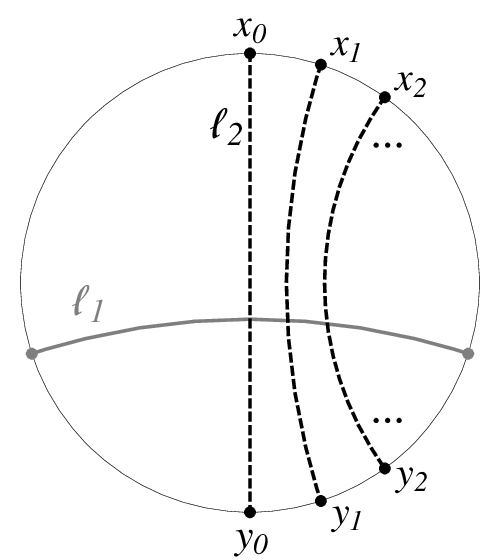}
{\caption{This figure illustrates Proposition~\ref{p:sameori}.
Although in the figure $\ol{x_2y_2}$ is linked with $\ell_1$, the
argument does not assume this. In this and forthcoming figures,
leaves marked in the same fashion belong to the same grand orbits of
leaves. }\label{f:sameori}}
\end{figure}

In what follows, we often use one of the endpoints of a leaf as the
subscript in the notation for this leaf.

\begin{lem}\label{l:sameperiod}
If $\ell_a=\ol{ab}$ and $\ell_x=\ol{xy}$, where $a<x<b<y$, are linked
leaves with mutually order preserving accordions, and $a$, $b$ are of
period $k$, then $x$, $y$ are also of period $k$.
\end{lem}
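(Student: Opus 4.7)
The plan is to first show that $\si_d^k$ fixes both $x$ and $y$, and then use mutual order preservation to force the period of $x,y$ to be exactly $k$. Set $f=\si_d^k$; since $a$ and $b$ have period $k$, $f(\ell_a)=\ell_a$. Because $\ell_a$ and $\ell_x$ are linked and $\ell_a$ has order preserving accordions with respect to $\ell_x$, an inductive application of Proposition~\ref{p:accimage} (iterated $k$ times) gives $f^i(\ell_x)\in A_{\ell_x}(\ell_a)$ for every $i\ge 0$. Applying Proposition~\ref{p:sameori} with $\ell_1=\ell_a$ and $\ell_2=\ell_x$ at iterate $k$, one of two alternatives must hold: either $\ell_a$ separates $x$ from $f(x)$ and $y$ from $f(y)$, or $f(\ell_x)=\ell_x$.

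The next step is to rule out the first alternative using cyclic order. Because $\ell_a$ has order preserving accordions with respect to $\ell_x$, the map $\si_d$ preserves cyclic order on $A_{\ell_x}(\si_d^i(\ell_a))\cap\uc$ for each $i$; chaining these preservations along the orbit and using $\si_d^k(\ell_a)=\ell_a$ shows that $f$ preserves cyclic order on $A_{\ell_x}(\ell_a)\cap\uc$, and in particular on $\{a,b,x,y,f(x),f(y)\}$. Since $a<x<b<y$ cyclically and $f(a)=a$, $f(b)=b$, order preservation forces $f(x)\in(a,b)$ and $f(y)\in(b,a)$, which contradicts the separation alternative. Hence $f(\ell_x)=\ell_x$, i.e., $\{f(x),f(y)\}=\{x,y\}$. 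The same cyclic order constraint prevents $f$ from interchanging $x$ and $y$ (the resulting four-tuple $(a,y,b,x)$ would be in the wrong cyclic order), so $f(x)=x$ and $f(y)=y$. Therefore $x$ and $y$ have period dividing $k$, and by Lemma~\ref{l:sameperiod1} they share a common period $m\mid k$.

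For the reverse divisibility, I would exchange the roles of $\ell_a$ and $\ell_x$. By mutual order preservation, $\ell_x$ also has order preserving accordions with respect to $\ell_a$, so the identical argument, now with periodic axis $\ell_x$, iterate $g=\si_d^m$, and the other leaf $\ell_a$, gives $g(a)=a$ and $g(b)=b$. This yields $k\mid m$, and combined with $m\mid k$ we conclude $m=k$. The delicate step is the upgrade from order preservation of $\si_d$ on a single accordion to order preservation of the iterate $f=\si_d^k$ on $A_{\ell_x}(\ell_a)\cap\uc$; everything else is a direct verification against Propositions~\ref{p:accimage} and \ref{p:sameori} and the definition of order preservation.
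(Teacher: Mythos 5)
Your proof is correct and follows essentially the same route as the paper's: rule out the separation alternative of Proposition~\ref{p:sameori} via iterated order preservation (using that $\si_d^k$ fixes $a$ and $b$), conclude that $x$ and $y$ are $\si_d^k$-fixed, and then get the reverse divisibility by exchanging the roles of $\ell_a$ and $\ell_x$. The paper states this more tersely; your extra care in verifying the hypothesis $\si_d^k(\ell_x)\in A_{\ell_x}(\ell_a)$ via Proposition~\ref{p:accimage} and in excluding the flip is consistent with its argument.
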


\begin{proof}
By the order preservation, $\si_d^k(x)$ is not separated from $x$ by
$\ell_a$. It follows from Proposition~\ref{p:sameori} that
$x=\si^k_d(x)$, $y=\si^k_d(y)$. Since, by Lemma~\ref{l:sameperiod1},
the points $x$ and $y$ have the same period (say, $m$), then $m$
divides $k$. Similarly, $k$ divides $m$. Hence $k=m$.
\end{proof}

We will mostly use the following corollary of the above results.

\begin{cor}\label{c:nothreelink}
Suppose that $\ell_a=\ol{ab}$ and $\ell_x=\ol{xy}$ with $x<a<y<b$ are
linked leaves. If $\ell_a$ and $\ell_x$ have mutually order preserving
accordions, then there are the following possibilities for
$A=A_{\ell_x}(\ell_a)$.

\begin{enumerate}

\item $A=\{\ell_a, \ell_x\}$ and no forward image of $\ell_x$ crosses
    $\ell_a$.

\item $A=\{\ell_a, \ell_x\}$, the points $a$, $b$, $x$, $y$ are of
    period $2j$ for some $j$, $\si_d^j(x)=y, \si_d^j(y)=x$, and
    either $\si^j_d(a)=b$, $\si^j_d(b)=a$, or $\si_d^j(\ell_a)\ne
    \ell_a$, and $\ell_x$ separates the points $a$, $\si_d^j(b)$ from
    the points $b$, $\si_d^j(a)$.

\item $A=\{\ell_a, \ell_x\}$, the points $a$, $b$, $x$, $y$ are of
    the same period, $x$, $y$ have distinct orbits, and $a$, $b$ have
    distinct orbits.

\item There exists $i>0$ such that $A=\{\ell_a, \ell_x,
    \si^i_d(\ell_x)\}$ and either $x<a<y\le
    \si_d^i(x)<b<\si_d^i(y)\le x$ or $x\le \si^i_d(y)<a<\si^i_d(x)\le
    y<b$, as shown in Figure \ref{fig:3link}.
\end{enumerate}

\end{cor}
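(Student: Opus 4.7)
My plan is to split on the structure of the accordion $A = A_{\ell_x}(\ell_a)$ — specifically on whether $|A|=2$ or $|A|\ge 3$ — and to combine Proposition~\ref{p:sameori} with order preservation and the fact that two leaves of $\lam_2$ cannot cross in $\disk$.

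\textbf{The case $A=\{\ell_a,\ell_x\}$.} If no iterate $\si_d^j(\ell_x)$ with $j\ge 1$ lies in $A$, case (1) holds directly. Otherwise, pick $j$ minimal; since $A$ has no other leaves, necessarily $\si_d^j(\ell_x)=\ell_x$. Proposition~\ref{p:sameori} then forces $\ell_x$ to have $\si_d^j$-fixed endpoints — the separation alternative would exhibit a new leaf in $A$, contradicting $|A|=2$. Hence $\si_d^j$ either swaps $x$ and $y$ or fixes both. In the swap situation we obtain case (2): order preservation on $\{a,b,x,y\}$ combined with the requirement that $\si_d^j(\ell_a)\in\lam_1$ be unlinked with $\ell_a\in\lam_1$ forces $\si_d^j(a)$ and $\si_d^j(b)$ into one of the two listed positions, and Proposition~\ref{p:sameori} applied with $k=2j$ yields $\si_d^{2j}(a)=a$ and $\si_d^{2j}(b)=b$. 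In the fix-both situation, by Lemma~\ref{l:sameperiod1} the points $x,y$ have common period $j$; applying Proposition~\ref{p:accimage} to the fixed leaf $\ell_x$ shows that $\si_d^j(\ell_a)$ must be linked with $\ell_x$ and unlinked with $\ell_a$, which together with order preservation forces $a,b$ to be periodic of the same period. A common orbit for either pair would produce via Proposition~\ref{p:forconcat} a polygon with additional edges crossing $\ell_a$, contradicting $|A|=2$; this gives case (3).

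\textbf{The case $|A|\ge 3$.} Let $i\ge 1$ be minimal with $\si_d^i(\ell_x)\in A$ and $\si_d^i(\ell_x)\ne\ell_x$. Proposition~\ref{p:sameori} forces $\ell_a$ to separate $x$ from $\si_d^i(x)$ and $y$ from $\si_d^i(y)$, so $\si_d^i(x)\in(a,b)$ and $\si_d^i(y)\in(b,a)$. Since $\ell_x$ and $\si_d^i(\ell_x)$ are both leaves of $\lam_2$ and therefore unlinked, the endpoints $\si_d^i(x),\si_d^i(y)$ must additionally lie on the same side of $\ell_x$; this leaves exactly the two cyclic arrangements claimed in case (4). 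To finish we must rule out a second new leaf $\si_d^m(\ell_x)\in A$ with $m\notin\{0,i\}$ and $\si_d^m(\ell_x)\notin\{\ell_x,\si_d^i(\ell_x)\}$. Any such leaf satisfies the same configuration constraints, and non-crossing in $\lam_2$ forces $\si_d^m(\ell_x)$ to be nested with $\si_d^i(\ell_x)$ inside the strip between $\ell_x$ and the relevant side of $\ell_a$. Iterating $\si_d^i$ (which is order preserving on $A\cap\uc$ by the iterated form of the hypothesis together with Proposition~\ref{p:accimage}) on this nested pair produces an infinite monotone sequence of pairwise unlinked leaves in the strip, all linked with $\ell_a$, which must converge to a $\si_d$-invariant chord; this contradicts the expansion of $\si_d$ by the same mechanism used at the end of the proof of Proposition~\ref{p:sameori}.

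The heaviest book-keeping step is the last one — verifying that iterating $\si_d^i$ on the three-leaf configuration truly produces a genuinely monotone sequence of new leaves in $A$ that does not cycle after finitely many steps. Once this is in hand, the $|A|=2$ analysis reduces to straightforward combinations of Lemma~\ref{l:sameperiod1}, Proposition~\ref{p:accimage}, Proposition~\ref{p:forconcat}, and order preservation applied to a small finite set of points.
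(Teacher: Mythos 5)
Your overall skeleton (split on $|A|=2$ versus $|A|\ge 3$, then combine Proposition~\ref{p:sameori} with order preservation and non-crossing within one geolamination) is the same as the paper's, but two steps do not go through as written.

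In the $|A|=2$ branch, your dichotomy ``$\si_d^j$ swaps $x,y$ or fixes both'' is not the dichotomy the statement needs, namely ``$x,y$ lie in one periodic orbit or in two.'' If $x$ and $y$ lie in a common orbit of period $k$ with $y=\si_d^s(x)$ and $k\ne 2s$, then the first return of the leaf $\ol{xy}$ to the accordion fixes both endpoints, so you are in your ``fix-both'' branch and must still prove that this cannot happen in order to assert case (3). Your exclusion via Proposition~\ref{p:forconcat} --- that a common orbit would force an edge of the resulting polygon to cross $\ell_a$ --- fails, because the concatenation of images of $\ell_x$ running from $y$ around to $x$ can pass through the points $b$ and $a$ themselves (i.e., $a$ and $b$ can be vertices of that polygon), in which case no edge crosses $\ell_a$. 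The paper closes exactly this loophole with an additional order-preservation computation: from $y<\si_d^j(y)\le b<\si_d^j(b)\le x$ it deduces $y<\si_d^j(a)<\si_d^j(y)\le b<\si_d^j(b)\le x<a$, so that $\si_d^j(\ell_a)$ would cross $\ell_a$, which is impossible for two leaves of $\lam_1$. Without an argument of this kind your conclusion that $x,y$ have distinct orbits in case (3) is unproved. (A smaller issue in the same branch: deriving $\si_d^{2j}(a)=a$ from Proposition~\ref{p:sameori} ``with $k=2j$'' applies that proposition with the roles of the two leaves reversed, which the one-sided hypothesis of the corollary does not license; the paper obtains the periodicity of $a,b$ from Lemma~\ref{l:sameperiod} instead.)

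In the $|A|\ge 3$ branch, your exclusion of a third crossing image is both incorrectly premised and too vague. A further image $\si_d^m(\ell_x)$ crossing $\ell_a$ need not be nested with $\si_d^i(\ell_x)$ in one strip: it can realize the \emph{other} of the two admissible arrangements (endpoints in $[x,a)$ and $(a,y]$ rather than in $[y,b)$ and $(b,x]$) and still be unlinked with both earlier leaves, so no nested pair is produced. Moreover, even for a genuinely nested pair, iterating $\si_d^i$ yields leaves linked with $\si_d^{ki}(\ell_a)$, not with $\ell_a$, so no infinite family of leaves of $A$ inside the original strip arises. The clean argument --- the one the paper uses --- is pairwise: Proposition~\ref{p:sameori}, applied to any two distinct images of $\ell_x$ that cross $\ell_a$ (taking the earlier one as the base leaf), forces their ``$x$-endpoints'' to lie on opposite sides of $\ell_a$, and this is impossible for three pairwise distinct leaves. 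Replacing your limiting argument by this pairwise separation argument, and supplying the missing same-orbit exclusion above, would bring your proof in line with the paper's.
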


\begin{proof}
Three \textbf{distinct} images of $\ell_x$ cannot cross $\ell_a$ as if
they do, then it is impossible for the separation required in
Proposition~\ref{p:sameori} to occur for all of the pairs of images of
$\ell_x$. Hence at most two images of $\ell_x$ cross $\ell_a$.

If two distinct leaves from the orbit of $\ell_x$ cross $\ell_a$, then,
by Proposition~\ref{p:sameori} and the order preservation, case (4)
holds. Thus we can assume that $A=\{\ell_a, \ell_x\}$. If no forward
image of $\ell_x$ is linked with $\ell_a$, then we have case (1).

In all remaining cases we have $\si_d^k(\ell_x)=\ell_x$ for some $k>0$.
By Lemma~\ref{l:sameperiod1}, points $x$ and $y$ are of the same
period. Suppose that $x$, $y$ belong to the same periodic orbit. Choose
the least $j$ such that $\si^j_d(x)=y$.

Let us show that then $\si^j_d(y)=x$. Indeed, assume that
$\si^j_d(y)\ne x$. Since by the assumption the only leaf from the
forward orbit of $\ell_x$, linked with $\ell_a$, is $\ell_x$, we may
assume (for the sake of definiteness) that $y<\si^j_d(y)\le b$. Then a
finite concatenation of further $\si^j_d$-images of $\ell_x$ will
connect $y$ with $x$. Again, since $A=\{\ell_a, \ell_x\}$, one of their
endpoints will coincide with $b$. Thus, $y<\si^j_d(y)\le
b<\si_d^j(b)\le x$, see Figure \ref{f:nothreelinka2i}. Let us now apply
$\si_d^j$ to $A$; by the order preservation $y<\si^j_d(a)<\si^j_d(y)\le
b<\si^j_d(b)\le x<a$. Hence, $\si^j_d(\ell_a)$ is linked with $\ell_a$,
a contradiction.

\begin{figure}
\includegraphics[width=10cm]{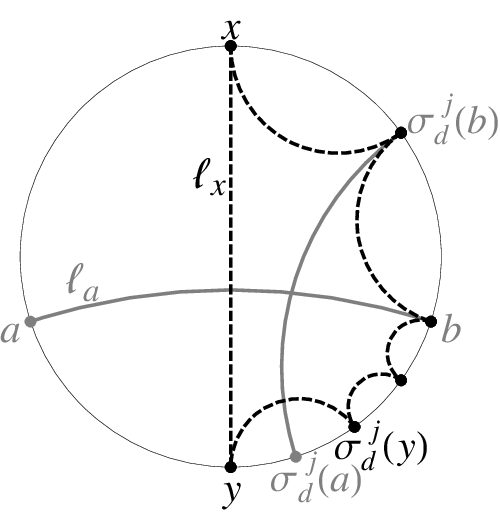}
{\caption{This figure illustrates the proof of
Corollary~\ref{c:nothreelink}.}\label{f:nothreelinka2i}}
\end{figure}

Thus, $\si^j_d(y)=x$ (that is, $\si_d^j$ flips $\ell_x$ onto itself),
$k=j$, the points $x$ and $y$ are of period $2j$ and, by
Lemma~\ref{l:sameperiod}, the points $a$ and $b$ are also of period
$2j$. If $\si_d^j(a)=b$, then $\si_d^j(b)=a$, and if $\si_d^j(b)=a$,
then $\si_d^j(a)=b$ (since both points have period $2j$). Now, if
$\si_d^j(a)\ne b$ and $\si_d^j(b)\ne a$, then, by the order
preservation, $\ell_x$ separates the points $a$, $\si_d^j(b)$ from the
points $b$, $\si_d^j(a)$. So, case (2) holds.

Assume that $x$ and $y$ belong to distinct periodic orbits of period
$k$. By Lemma~\ref{l:sameperiod}, the points $a$, $b$ are of period
$k$. Let points $a$ and $b$ have the same orbit. Then, if $k=2i$ and
$\si_d^i$ flips $\ell_a$ onto itself, it would follow from the order
preservation that $\si_d^i(\ell_x)$ is linked with $\ell_a$. Since
$\ell_x$ is the unique leaf from the orbit of $\ell_x$ linked with
$\ell_a$ this would imply that $\si_d^i$ flips $\ell_x$ onto itself, a
contradiction with $x, y$ having disjoint orbits. Hence we may assume
that, for some $j$ and $m>2$, we have that $\si_d^j(a)=b$, $jm=k$, and
a concatenation of leaves $\ell_a$, $\si_d^j(\ell_a)$, $\dots$,
$\si_d^{j(m-1)}(\ell_a)$ forms a polygon $P$.

If one of these leaves distinct from $\ell_a$ (say,
$\si_d^{js}(\ell_a)$) is linked with $\ell_x$, we can apply the map
$\si_d^{j(m-s)}$ to $\si_d^{js}(\ell_a)$ and $\ell_x$; by order
preservation we will see then that $\ell_a$ and
$\si_d^{j(m-s)}(\ell_x)\ne \ell_x$ are linked, a contradiction with the
assumption that $A=\{\ell_a, \ell_x\}$. If none of the leaves
$\si_d^j(\ell_a)$, $\dots$, $\si_d^{j(m-1)}(\ell_a)$ is linked with
$\ell_x$, then $P$ has an endpoint of $\ell_x$ as one of its vertices.
As in the argument given above, we can then apply $\si_d^j$ to $A$ and
observe that, by the order preservation, the $\si_d^j$-image of
$\ell_x$ is forced to be linked with $\ell_x$, a contradiction. Hence
$a$ and $b$ have disjoint orbits, and case (3) holds.
\end{proof}

\begin{figure}
\includegraphics[width=5.2cm]{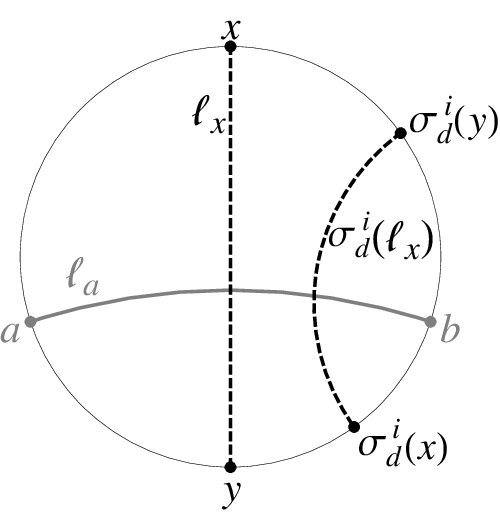}
\hspace{2cm}
\includegraphics[width=5.2cm]{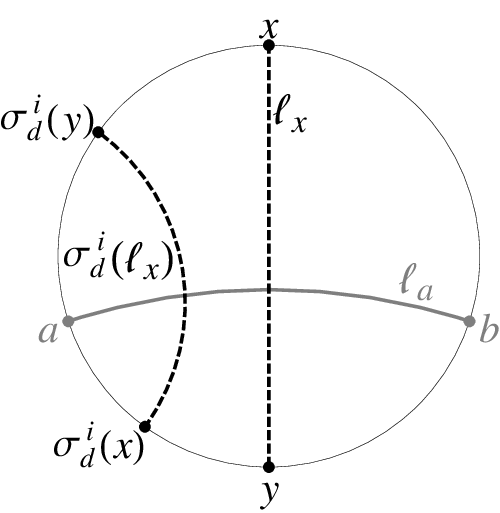}
{\caption{This figure shows two cases listed in
Corollary~\ref{c:nothreelink}, part (4).}\label{fig:3link}}
\end{figure}

\subsection{Accordions are (pre-)periodic or wandering}\label{ss:accwape}
Here we prove Theorem~\ref{t:compgap}, which is the main result of
Section~\ref{s:acclam}.

\begin{dfn}\label{d:positor}
A finite sequence of points $x_0, \dots, x_{k-1}\in \uc$ is
\emph{positively ordered}\index{sequence of points!positively ordered}
if $x_0<x_1<\dots<x_{k-1}<x_0$. If the inequality is reversed, then we
say that points $x_0$, $\dots$, $x_{k-1}\in \uc$ are \emph{negatively
ordered}\index{sequence of points!negatively ordered}. A sequence
$y_0$, $y_1$, $\dots$ is said to be \emph{positively circularly
ordered}\index{sequence of points!positively circularly ordered} if it
is either positively ordered or there exists $k$ such that $y_i=y_{i
\mod k}$ and $y_0<y_1<\dots<y_{k-1}<y_0$. Similarly we define sequences
that are \emph{negatively circularly ordered}\index{sequence of
points!negatively circularly ordered}.
\end{dfn}

A positively (negatively) \textbf{circularly} ordered sequence that is
not positively (negatively) ordered is a sequence, whose points repeat
themselves after the initial collection of points that are positively
(negatively) ordered.

\begin{dfn}\label{d:leavineq}
Suppose that the chords $\bt_1,$ $\dots,$ $\bt_n$ are edges of the
closure $Q$ of a single component of $\disk\sm \bigcup \bt_i$. For each
$i$, let $m_i$ be the midpoint of the hole $H_Q(\bt_i)$. We write
$\bt_1<\bt_2<\dots<\bt_n$ if the points $m_i$ form a positively ordered
set and call the chords $\bt_1$, $\dots$, $\bt_n$ \emph{positively
ordered}\index{chords!positively ordered}. If the points $m_i$ are
positively circularly ordered, then we say that $\bt_1,$ $\dots,$
$\bt_n$ are \emph{positively circularly
ordered}\index{chords!positively circularly ordered}. \emph{Negatively
ordered} and \emph{negatively circularly ordered} chords are defined
similarly.\index{chords!negatively ordered} \index{chords!negatively
circularly ordered}
\end{dfn}

Lemma~\ref{l:linkstruct} is used in the main result of this section.

\begin{lem}\label{l:linkstruct}
If $\ell_a$ and $\ell_x$ are linked, have mutually order preserving
accordions, and $\si_d^k(\ell_x)\in A_{\ell_x}(\ell_a)$ for some $k>0$,
then, for every $j>0$, the leaves $\si_d^{ki}(\ell_x)$, $i=0$, $\dots$,
$j$, are circularly ordered, and $\ell_a$, $\ell_x$ are periodic with
endpoints of the same period.
\end{lem}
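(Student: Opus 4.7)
The plan is to apply Proposition~\ref{p:sameori} to the pair $(\ell_a,\ell_x)$ and then propagate the resulting dichotomy through iteration using Proposition~\ref{p:accimage} and the mutual order preservation of the accordions. First I would invoke Proposition~\ref{p:sameori} with $\ell_1=\ell_a$ and $\ell_2=\ell_x$; its hypotheses (linkedness, order preservation of the accordion, and $\si_d^k(\ell_x)\in A_{\ell_x}(\ell_a)$) are exactly what is given. This produces two alternatives: either (I) $\ell_x$ has $\si_d^k$-fixed endpoints, or (II) $\ell_a$ separates $x$ from $x_1=\si_d^k(x)$ and $y$ from $y_1=\si_d^k(y)$.

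In case~(I) every iterate $\si_d^{ki}(\ell_x)$ coincides with $\ell_x$, so the (constant) sequence of leaves is trivially circularly ordered. By Lemma~\ref{l:sameperiod1} the endpoints $x,y$ share a common period $p$ dividing $k$. Since the mutual order preservation is symmetric in $\ell_a$ and $\ell_x$, I can rerun the proof of Lemma~\ref{l:sameperiod} with the roles reversed: from $\si_d^p(\ell_x)=\ell_x$ and Proposition~\ref{p:accimage} one gets $\si_d^p(\ell_a)\in A_{\ell_a}(\ell_x)$, and order preservation on the accordion endpoints together with Proposition~\ref{p:sameori} (now applied with $\ell_1=\ell_x$, $\ell_2=\ell_a$) forces $\ell_a$ to be $\si_d^p$-fixed. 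Hence $a,b$ also have period $p$.

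Case~(II) is the substantive part. Iterating Proposition~\ref{p:accimage} gives $\si_d^{ki}(\ell_x)\in A_{\ell_x}(\si_d^{ki}(\ell_a))$ for every $i\ge 0$, and order preservation applies on each of these accordions. Applying Proposition~\ref{p:sameori} inductively along this chain of accordions, at each stage~$i$ I get a new ``fixed or separation'' dichotomy. The ``fixed'' branch at some stage means $\si_d^{k(i+1)}(\ell_x)=\si_d^{ki}(\ell_x)$; by Lemma~\ref{l:sameperiod1} and a pullback via Lemma~\ref{l:conconv}(2), this makes $\ell_x$ itself periodic under a power of $\si_d^k$, returning us to case~(I). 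If instead the ``separation'' branch persists for all $i$, the endpoints $x_i=\si_d^{ki}(x)$, $y_i=\si_d^{ki}(y)$ lie on strictly alternating sides of the successive axes and, by the order preservation on the accordion endpoints, form monotone sequences on $\uc$; the monotone-convergence argument from the proof of Proposition~\ref{p:sameori}, together with Lemma~\ref{l:concat} and the expansivity of $\si_d$, then forces convergence to a fixed point or fixed leaf of some power of $\si_d$, which is a contradiction. Hence $\si_d^{kn}(\ell_x)=\ell_x$ for some $n\ge 1$, and Proposition~\ref{p:forconcat} identifies the orbit $\{\si_d^{ki}(\ell_x):0\le i<n\}$ as the edge cycle of a rotating polygon or a single fixed leaf---precisely a circularly ordered configuration in the sense of Definition~\ref{d:leavineq}.

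The main obstacle is the execution of case~(II). Proposition~\ref{p:accimage} shifts the axis of the relevant accordion from $\ell_a$ to $\si_d^{ki}(\ell_a)$ at each step, so the cyclic-order preservation granted by the OPA hypothesis operates on a different endpoint set at every stage. Stitching these local monotonicity statements into one coherent global monotone picture on $\uc$, and extracting from it the contradiction with expansivity unless the orbit closes up, is the technically delicate step; once periodicity is in hand, the same-period conclusion for $\ell_a$'s endpoints follows exactly as in case~(I) via the symmetric version of Lemma~\ref{l:sameperiod}.
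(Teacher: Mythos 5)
Your skeleton (apply Proposition~\ref{p:sameori}, split into a ``fixed'' and a ``separation'' branch, iterate along the shifted accordions, and conclude by a monotonicity-versus-expansion argument) is the same strategy the paper follows, but the step you flag as ``technically delicate'' is precisely the content of the lemma, and your proposal does not actually carry it out. The inference ``separation persists for all $i$ $\Rightarrow$ the $x_i$ form a monotone sequence on $\uc$ $\Rightarrow$ they converge unless the orbit closes up'' has a genuine hole: separation at stage $i$ is a statement relative to the \emph{moving} axis $\si_d^{ki}(\ell_a)$, and order preservation on the accordion $A_{\ell_x}(\si_d^{ki}(\ell_a))$ only constrains the relative position of the handful of points in that accordion. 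Nothing in what you cite prevents $x_{i+1}$ from ``lapping,'' i.e.\ landing back inside an arc the sequence has already traversed, in which case there is no convergent monotone sequence and no contradiction with expansion. The paper traps the sequence in the arc $(y_0,x_0)$ by an explicit induction establishing the interleaved circular order $x_0<a_0<y_0\le x_1<b_0\le a_1<\dots\le x_m<b_{m-1}<a_m<y_m\le x_0$, together with its Claim~A: later images of $\ell_a$ and $\ell_x$ cannot cross the already-placed leaves, because images of one leaf are pairwise unlinked leaves of a single geolamination and because Corollary~\ref{c:nothreelink} caps at two the number of distinct images of $\ell_x$ that may cross any image of $\ell_a$. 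You never invoke Corollary~\ref{c:nothreelink}, yet it is doing the essential work both in trapping the sequence and in forcing closure \emph{at $\ell_x$ itself} (the subcase $x_0<b_m\le a_0$ in the paper, where a third crossing would otherwise occur).

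Two further points fail as written. First, in your ``fixed branch at stage $i>0$'' you conclude that $\ell_x$ is \emph{periodic}; but $\si_d^{k(i+1)}(\ell_x)=\si_d^{ki}(\ell_x)$ with $i>0$ plus Lemma~\ref{l:conconv}(2) only yields that $\ell_x$ is (pre)periodic, whereas the lemma asserts genuine periodicity. Ruling out strict preperiodicity again requires the circular-order bookkeeping (once $y_i\le x_{i+1}$ and $x_i<y_i$ are established, $x_{i+1}=x_i$ is impossible before the orbit returns to $x_0$), so this branch of your argument is circular. Second, the closing appeal to Proposition~\ref{p:forconcat} does not deliver the stated conclusion when the $\si_d^k$-images of $\ell_x$ are pairwise disjoint: in that case each component of the union of the orbit is a single leaf and the proposition says nothing about their mutual circular position in the sense of Definition~\ref{d:leavineq}. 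The circular ordering of the leaves $\si_d^{ki}(\ell_x)$ must be read off from the induction itself, as the paper does; it is an output of the trapping argument, not a consequence of periodicity.
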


\begin{figure}
\includegraphics[width=10cm]{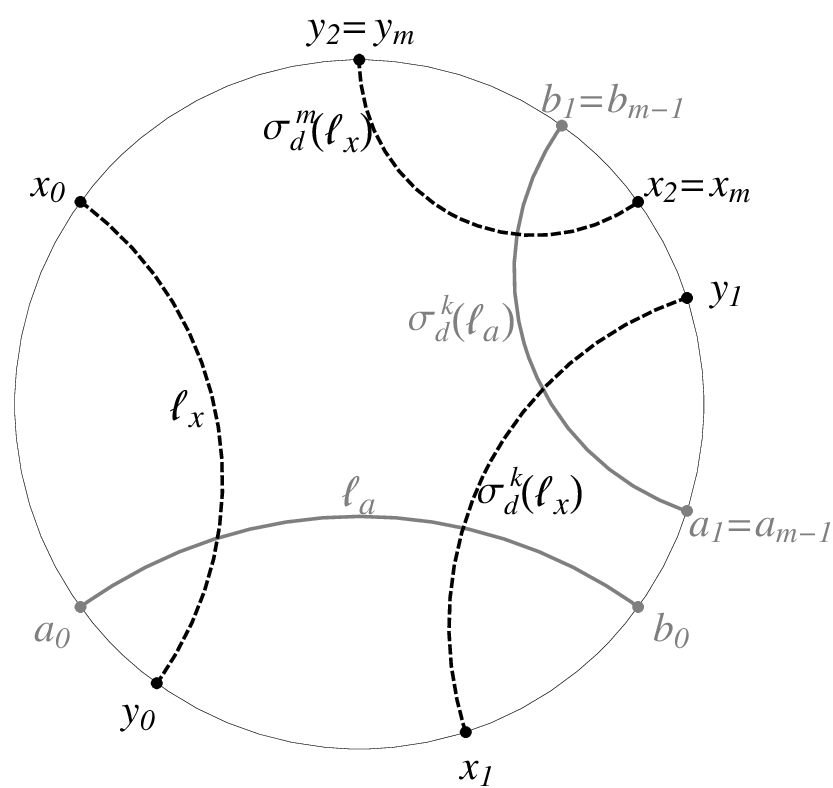}
{\caption{This figure illustrates Lemma~\ref{l:linkstruct}. Images
of $\ell_a$ cannot cross other images of $\ell_a$, neither can they
cross images of $\ell_x$ that are already linked with two images of
$\ell_a$ (by Corollary~\ref{c:nothreelink}). Similar claims hold for
$\ell_x$.}\label{f:linkstruct}}
\end{figure}

\begin{proof}
By Lemma~\ref{l:sameperiod}, we may assume that case (4) of
Corollary~\ref{c:nothreelink} holds (and so $\si_d^k(\ell_x)\ne
\ell_x$). Set $B=\{\ell_a, \ell_x\}$, $\ell_a=\ol{ab}, \ell_x=\ol{xy}$
and let $a_i$, $b_i$, $x_i$, $y_i$ denote the $\si_d^{ik}$-images of
$a$, $b$, $x$, $y$, respectively ($i\ge 0$). We may assume that the
first possibility from case (4) holds and $x_0<a_0<y_0\le
x_1<b_0<y_1\le x_0$ (see the left part of Figure \ref{fig:3link} and
Figure \ref{f:linkstruct}). By the assumption of mutually order
preserving accordions applied to $B$, we have $x_i<a_i<y_i\le
x_{i+1}<b_i<y_{i+1}\le x_i$ ($i\ge 0$), in particular $x_1<a_1<y_1$.

There are two cases depending on the location of $a_1$. Consider one of
them as the other one can be considered similarly. Namely, assume that
$b_0<a_1<y_1$ and proceed by induction for $m$ steps observing that

$$x_0<a_0<y_0\le x_1<b_0\le a_1<\dots\le x_m<b_{m-1}<a_m<y_m\le x_0.$$

\noindent Thus, the first $m$ iterated $\si_d^k$-images of $\ell_x$ are
circularly ordered and alternately linked with the first $m-1$ iterated
images of $\ell_a$ under $\si_d^k$ (see Figure \ref{f:linkstruct}). In
the rest of the proof, we exploit the following fact.

\smallskip

\noindent\textbf{Claim A.} \emph{Further images of $\ell_a$ or $\ell_x$
distinct from the already existing ones cannot cross the leaves
$\ell_a, \si_d^k(\ell_x), \dots, \si_d^{k(m-1)}(\ell_a),
\si_d^{km}(\ell_x)$ because either it would mean that leaves from the
same invariant geodesic lamination are linked, or it would contradict
Corollary~\ref{c:nothreelink}.}

\smallskip

By Claim A, we have $b_m\in (y_m, a_0]$. Consider possible locations of
$b_m$.

(1) If $x_0<b_m\le a_0$, then $\ol{a_mb_m}$ is linked with
$\ol{x_my_m},$ $\ol{x_{m+1}y_{m+1}}$ and $\ol{x_0y_0}$, which, by
Corollary~\ref{c:nothreelink}, implies that
$\ol{x_{m+1}y_{m+1}}=\ol{x_0y_0}$, and we are done (observe that, in
this case, by Lemma~\ref{l:sameperiod}, points $a_0, b_0$ are periodic
of the same period as $x_0, y_0$).

(2) The case $x_0=b_m$ is impossible because if $x_0=b_m$, then, by the
order preservation and by Claim A, the leaf
$\ol{x_{m+1}y_{m+1}}=\si_d^{k(m+1)}(\ell_x)$ is forced to be linked
with $\ell_a$, a contradiction.

(3) Otherwise we have $y_m<b_m<x_0$ and hence, by the order
preservation, $y_m\le x_{m+1}<b_m$. Then, by Claim A and because images
of $\ell_x$ do not cross, $b_m<y_{m+1}\le x_0$. Suppose that
$y_{m+1}=x_0$ while $y_0\ne x_1$. Applying $\si_d^k$ to leaves
$\ol{x_{m+1}x_0}$ and $\ol{x_0y_0}$ and using Claim A we see that
$y_0\le x_{m+2}<x_1$. However, the order preservation then implies that
$\ol{a_{m+1}b_{m+1}}$ crosses both $\ol{x_{m+1}x_0}$ and
$\ol{x_{m+2}x_1}$ and therefore crosses $\ell_a$ itself, a
contradiction. Hence the situation when $y_{m+1}$ coincides with $x_0$
can only happen if $y_0=x_1$. It follows that then
$\si^k_d(\ol{x_{m+1}y_{m+1}})=\ol{x_0y_0}$, and we are done (as before,
we need to rely on Lemma~\ref{l:sameperiod} here).

Otherwise $b_m<y_{m+1}<x_0$ and the arguments can be repeated as leaves
$\si_d^{ki}(\ell_x), i=0, \dots, m+1$ are circularly ordered. Thus,
either $\ell_x$ is periodic, $\ol{x_ny_n}=\ol{x_0y_0}$ for some $n$,
and all leaves in the $\si^k_d$-orbit of $\ell_x$ are circularly
ordered, or the leaves $\ol{x_iy_i}$ converge monotonically to a point
of $\uc$. The latter is impossible since $\si_d^k$ is expanding.  By
Lemma~\ref{l:sameperiod}, the leaf $\ell_a$ is periodic and its
endpoints have the same period as the endpoints of $\ell_x$.
\end{proof}

Theorem~\ref{t:compgap} is the main result of this section.

\begin{thm}\label{t:compgap}
Consider linked chords $\ell_a=\ol{ab}$, $\ell_x=\ol{xy}$ with mutually
order preserving accordions, and set $B=\ch(\ell_a,\ell_x)$. Suppose
that not all forward images of $B$ have pairwise disjoint interiors.
Then there exists a finite periodic stand alone gap $Q$ such that all
vertices of $Q$ are in the forward orbit of $\si^r_d(B)$ for some
minimal $r$, they belong to two, three, or four distinct periodic
orbits of the same period, and the remap of $Q\cap\uc$ is not the
identity unless $Q=\si^r_d(B)$ is a quadrilateral.
\end{thm}

\begin{proof}
We may assume that there are two forward images of $B$ with
non-disjoint interiors. Choose the least $r$ such that the interior of
$\si^r_d(B)$ intersects some forward images of $B$. We may assume that
$r=0$ and, for some (minimal) $k>0$, the interior of the set
$\si^k_d(B)$ intersects the interior of $B$ so that $\si_d^k(\ell_x)\in
A_{\ell_x}(\ell_a)$. We write $x_i$, $y_i$ for the endpoints of
$\si_d^{ik}(\ell_x)$, and $a_i$, $b_i$ for the endpoints of
$\si_d^{ik}(\ell_a)$.

By Lemma~\ref{l:linkstruct} applied to both leaves, by the assumption
of mutually order preserving accordions, and because leaves in the
forward orbits of $\ell_a, \ell_x$ are pairwise unlinked, we may assume
without loss of generality that, for some $m\ge 1$,
$$x_0<a_0<y_0\le x_1<b_0\le a_1< \dots \le x_m<b_{m-1}\le a_m<y_m<b_m$$
and $x_m=x_0$, $y_m=y_0$, $a_m=a_0$, $b_m=b_0$, that is, we have the
situation shown in Figure \ref{f:linkstruct}. Thus, for every $i=0$,
$\dots$, $k-1$, there is a loop $L_i$ of alternately linked
$\si_d^k$-images of $\si_d^i(\ell_a)$ and $\si_d^i(\ell_x)$. If the
$\si_d^k$-images of $\si_d^i(\ell_a)$ are concatenated to each other,
then their endpoints belong to the same periodic orbit, otherwise they
belong to two distinct periodic orbits.

A similar claim holds for $\si_d^k$-images of $\si_d^i(\ell_x)$. Thus,
the endpoints of $B$ belong to two, three or four distinct periodic
orbits of the same  period (the latter follows by
Corollary~\ref{c:nothreelink} and Lemma~\ref{l:linkstruct}). Set
$\ch(L_i)=T_i$ and consider some cases.

\begin{figure}
\includegraphics[width=10cm]{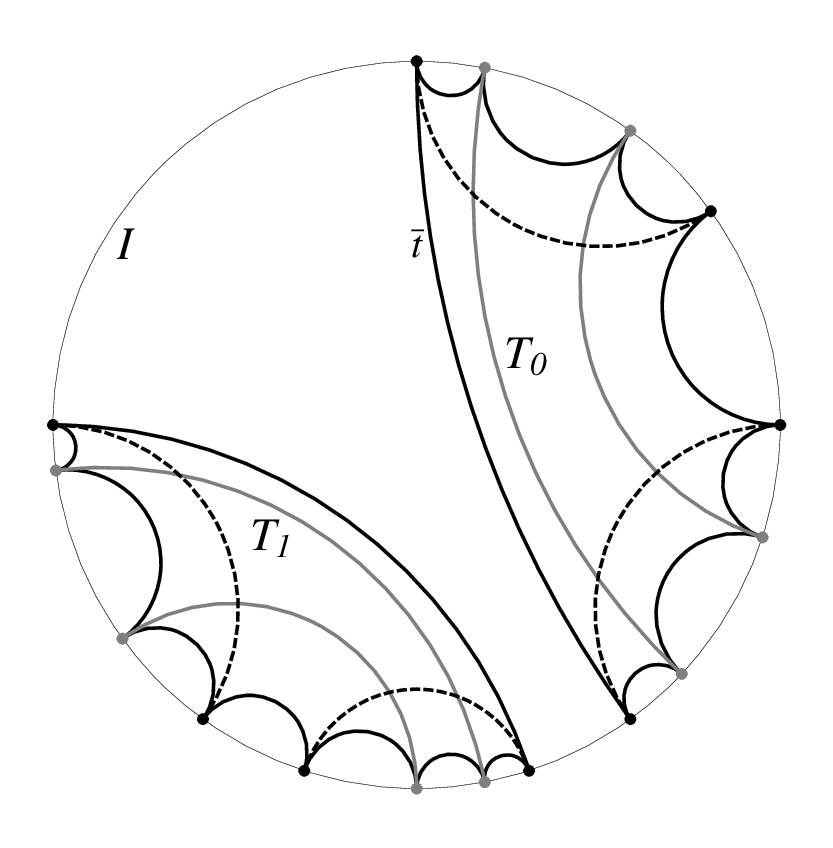}
{\caption{This figure illustrates the proof of
Theorem~\ref{t:compgap} in the case $m>1$.}\label{f:compgap1}}
\end{figure}

(1) Let $m>1$ (this includes the ``flipping'' case from part (2) of
Corollary~\ref{c:nothreelink}). Let us show that the sets $T_i$ either
coincide or are disjoint. Every image $\hell$ of $\ell_a$ in $L_i$
crosses two images of $\ell_x$ in $L_i$ (if $m=2$ and $\ell_x$ is
``flipped'' by $\si^k_d$, we still consider $\ell_x$ and
$\si^k_d(\ell_x)$ as distinct leaves). By
Corollary~\ref{c:nothreelink}, no other image of $\ell_x$ crosses
$\hell$.

Suppose that interiors of $T_i$ and $T_j$ intersect. Let $\bt$ be an
edge of $T_i$ and $I=H_{T_i}(\bt)$ be the corresponding hole of $T_i$.
Then the union of two or three images of $\ell_a$ or $\ell_x$ from
$L_i$ \textbf{separates} $I$ from $\uc\sm I$ in $\cdisk$ (meaning that
any curve connecting $I$ with $\uc\sm I$ must intersect the union of
these two or three images of $\ell_a$ or $\ell_x$, see Figure
\ref{f:compgap1}). Hence if there are vertices of $T_j$ in $I$
\textbf{and} in $\uc\sm I$ then there is a leaf of $L_j$ crossing
leaves of $L_i$, a contradiction with the above and
Corollary~\ref{c:nothreelink}.

Thus, the only way $T_i\ne T_j$ can intersect is if they share a vertex
or an edge. We claim that this is impossible. Indeed, $T_i\ne T_j$
cannot share a vertex as otherwise this vertex must be
$\si_d^k$-invariant while all vertices of any $T_r$ map to other
vertices (sets $T_r$ ``rotate'' under $\si_d^k$). Finally, if $T_i$ and
$T_j$ share an edge $\ell$ then the same argument shows that $\si_d^k$
cannot fix the endpoints of $\ell$, hence it ``flips'' under $\si_d^k$.
However this is impossible as each set $T_r$ has at least four vertices
and its edges ``rotate'' under $\si_d^k$.

So, the component $Q_i$ of $X=\bigcup^{k-1}_{i=0} T_i$ containing
$\si_d^i(\ell_a)$ is $T_i$. By Lemma~\ref{l:linkstruct}, the map
$\si_d|_{T_i\cap\uc}$ is order preserving or reversing. As $\si_d$
preserves order on any single accordion, $\si_d|_{T_i\cap\uc}$ is order
preserving. The result now follows; note that the first return map on
$Q$ is not the identity map.

(2) Let $m=1$. This corresponds to part (3) of
Corollary~\ref{c:nothreelink}: both $\ell_a$ and $\ell_x$ have
endpoints of minimal period $k$, and the orbit of $\ell_a$ ($\ell_x$)
consists of $k$ pairwise disjoint leaves. Note that $T_0$ is a
quadrilateral, and the first return map on $T_0$ is the identity map.
Consider the case when not all sets $T_i$ are pairwise disjoint. Note
that, by the above, $T_0$ is a periodic stand alone gap satisfying the
assumptions of Proposition \ref{p:forconcat}. It follows that every
component of the union of $T_i$ is a concatenation of gaps sharing
edges with the same polygon. See Figure \ref{f:compgap2}, in which the
polygon is a triangle.

\begin{figure}
\includegraphics[width=10cm]{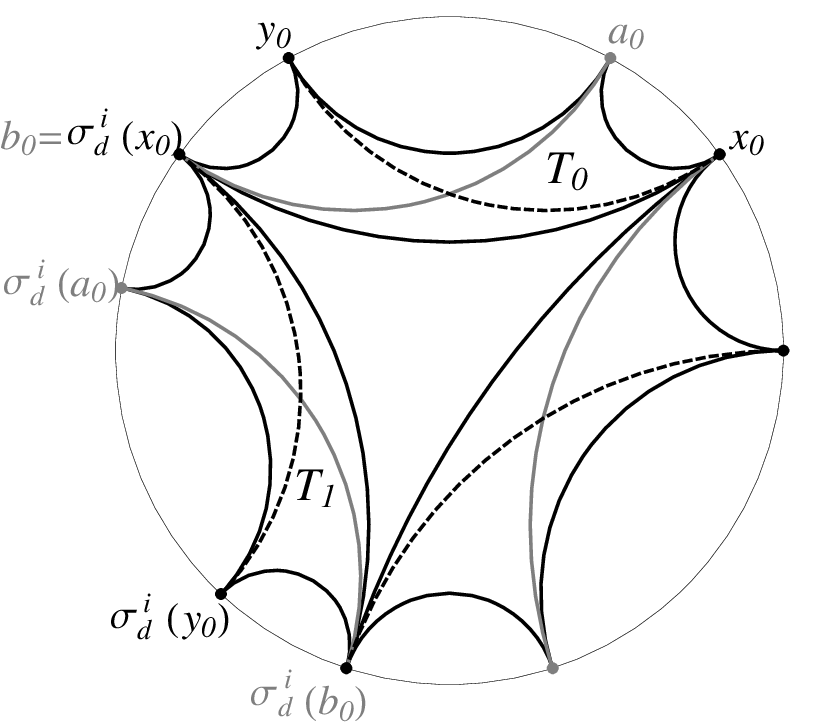}
{\caption{This figure illustrates the proof of Theorem~\ref{t:compgap}
in the case $m=1$.}\label{f:compgap2}}
\end{figure}
\end{proof}

For a leaf $\ell_1\in \lam_1$, let $\B_{\lam_2}(\ell_1)$ be the
collection of all leaves $\ell_2\in \lam_2$ that are linked with
$\ell_1$ and have mutually order preserving accordions with $\ell_1$.
Observe that if $\ell_1$ is (pre)critical, then
$\B_{\lam_2}(\ell_1)=\0$ by Definition~\ref{d:opacc}. Similarly, no
leaf from $\B_{\lam_2}(\ell_1)$ is (pre)critical.

\begin{cor}\label{c:finaccord}
The collection $\B_{\lam_2}(\ell_1)$ is finite.
\end{cor}

\begin{proof}
Suppose first that $\ell_1$ is not (pre)periodic. Let us show that the
convex hull $B$ of $\ell_1$ and leaves $\n_1, \dots, \n_s$ from
$\B_{\lam_2}(\ell_1)$ is wandering. By Theorem~\ref{t:compgap}, for
each $i$, the set $B_i=\ch(\ell_1, \n_i)$ is wandering (because
$\ell_1$ is not (pre)periodic). This implies that if $i\ne j$ then
$\si^i_d(\ell_1)$ and $\si_d^j(\n_t)$ are disjoint (otherwise
$\si_d^i(B_t)$ and $\si_d^j(B_t)$ are non-disjoint). Moreover,
$\si_d^i(\ell_1)$ and $\si_d^j(\ell_1)$ are disjoint as otherwise, by
Lemma~\ref{l:conconv}, the leaf $\ell_1$ is (pre)periodic. Therefore
$\si_d^j(\ell_1)$ is disjoint from $\si_d^i(B)$.

Suppose that $\si_d^i(B)$ and $\si_d^j(B)$ are non-disjoint. By the
just proven then, say, $\si_d^j(\n_1)$ is non-disjoint from
$\si_d^i(B)$. Again by the just proven $\si_d^j(\n_1)$ is disjoint from
$\si_d^i(\ell_1)$. Hence the only possible intersection is between
$\si_d^j(\n_1)$ and, say, $\si_d^i(\n_2)$. Moreover, since
$\si_d^j(\ell_1)$ is disjoint from $\si_d^i(B)$, then $\si_d^j(\n_1)\ne
\si_d^i(\n_2)$ and, moreover, as distinct leaves of the same invariant
geodesic lamination, the leaves $\si_d^j(\n_1), \si_d^i(\n_2)$ cannot
cross. Hence the only way $\si_d^j(\n_1)$ and $\si_d^i(\n_2)$ are
non-disjoint is that $\si_d^j(\n_1)$ and $\si_d^i(\n_2)$ are
concatenated.

Assume that $\si^t_d(\n_2)$ is concatenated with $\n_1$ at an endpoint
$x$ of $\n_1$. Clearly, $x$ is a common vertex of $B$ and of
$\si^t_d(B)$. Hence $\si^t_d(x)$ is a common vertex of $\si^t_d(B)$ and
$\si^{2t}_d(B)$, etc. Connect points $x$, $\si^t_d(x),$
$\si^{2t}_d(x),$ $\dots$ with consecutive chords $\m_0,$ $\m_1,$
$\dots$. These chords are pairwise unlinked because, as it follows from
the above, the sets $\si_d^r(B)$, $r=0$, $1$, $\dots$ have pairwise
disjoint interiors. Hence, by Lemma~\ref{l:concat}, the point $x$ is
(pre)periodic, a contradiction with the fact that all sets
$B_i=\ch(\ell_1, \n_i)$ are wandering. Thus, $B$ is wandering. Hence,
by \cite{kiw02}, the collection $\B_{\lam_2}(\ell_1)$ is finite. In
fact, \cite{kiw02} implies a nice upper bound on the number of vertices
of $B$. Indeed, it is proven in \cite{kiw02} that a wandering
non-(pre)critical gap of a lamination has at most $d$ vertices; in
particular, $B$ has at most $d$ vertices (notice that by the
assumptions any power of $\si_d$ in $B$ is one-to-one).

Suppose now that $\ell_1$ is periodic. Then by Theorem~\ref{t:compgap}
any leaf of $\B_{\lam_2}(\ell_1)$ is periodic with the same periods of
endpoints. This implies that in this case the collection
$\B_{\lam_2}(\ell_1)$ is finite. Finally, if $k>0$ is the minimal
number such that $\si_d^k(\ell_1)$ is periodic and $\ell_2\in
\B_{\lam_2}(\ell_1)$ then $\si_d^k(\ell_2)$ is linked with
$\si_d^k(\ell_1)$, which implies that $\ell_2$ is a $\si_d^k$-preimage
of one of finitely many leaves from $\B_{\lam_2}(\si_d^k(\ell_1))$.
Thus, in this case $\B_{\lam_2}(\ell_1)$ is finite too.
\end{proof}

\section{Smart criticality}\label{s:smart}

\textbf{Throughout this section, we assume that $\lam_1$ and $\lam_2$
are linked or essentially equal geodesic invariant laminations with
quadratically critical portraits, see Definition \ref{d:qclink1}.} Our
aim in Section~\ref{s:smart} is to introduce \emph{smart criticality},
a principle that allows one to use a flexible choice of critical chords
of $\lam_1$ and $\lam_2$ in order to treat certain sets of linked
leaves of $\lam_1$ and $\lam_2$ as if they were sets of one invariant
geodesic lamination. However, first we need simple claims dealing with
critical clusters and special critical leaves; these claims follow from
the definitions almost immediately. Critical clusters are defined in
\ref{d:compat} and special critical leaves and special critical
clusters in \ref{d:qclink1}.

\begin{lem}\label{l:special}
Suppose that $\ell_1$ is a special critical leaf of $\lam_1$. Then the
only leaves of $\lam_2$ it can be linked with are special critical
leaves of $\lam_2$. Moreover these leaves have the same image as
$\ell_1$. Otherwise $\ell_1$ may have a common endpoint with some
leaves of $\lam_2$, in which case its forward images are endpoints of
the corresponding images of these leaves.
\end{lem}

\begin{proof} By definition, if $\ell_1$ is a special critical leaf
then $\ell_1\subset C$ where $C$ is a critical cluster common for both
$\lam_1$ and $\lam_2$. Since edges of $C$ are leaves of $\lam_2$, it
follows that the only leaves of $\lam_2$ that are linked with $\ell_1$
are chords of $C$ connecting vertices of $C$. This implies the first
claim of the lemma. The second claim is left to the reader.
\end{proof}

In the next several lemmas we study the dynamics of a leaf $\ell_1$ of
$\lam_1$ assuming that $\ell_1$ is not a special critical leaf of
$\lam_1$.

\begin{lem}\label{l:accorder}
If $\ell_1\in\lam_1$ is not a special critical leaf, then each critical
set $C$ of\, $\qcp_2$ has a spike $\ol{c}$ unlinked with $\ell_1$;
these spikes form a full collection\, $\mathcal E$ of spikes of
$\lam_2$ unlinked with $\ell_1$. If\, an\, endpoint $x$ of $\ell_1$ is
neither a vertex of a special critical cluster nor a common vertex of
associated critical quadrilaterals of the invariant geodesic
laminations $\lam_1$ and $\lam_2$, then $\mathcal E$ can be chosen so
that $x$ is not an endpoint of a spike from $\mathcal E$.
\end{lem}

\begin{proof}
Since $\ell_1$ is not a special critical leaf, spikes of $\lam_2$ from
special critical clusters are unlinked with $\ell_1$. Otherwise take a
pair of associated critical quadrilaterals $A\in \lam_1, B\in \lam_2$
with vertices alternating non-strictly on $\uc$

$$a_0\le b_0\le a_1\le b_1\le a_2\le b_2\le a_3\le b_3\le a_0$$

\noindent and observe, that $\ell_1$ is contained, say, in $[a_0, a_1]$
and hence is unlinked with the spike $\ol{b_1b_3}$ of $B$.

The second claim follows because by the assumptions, as we choose a
spike from a critical quadrilateral of $\qcp_2$, we can always choose
it to avoid $x$. This completes the proof.
\end{proof}


We apply Lemma~\ref{l:accorder} to studying accordions. Denote by
$\mathcal E_{\lam_2}(\ell_1)$ a full collection of spikes from
Lemma~\ref{l:accorder}.

\begin{cor}\label{c:accorder}
If $\ell_1=\ol{ab}\in\lam_1$ is not a special critical leaf, then
$A=A_{\lam_2}(\ell_1)$ is contained in the closure of a component of
$\disk\sm \mathcal E_{\lam_2}(\ell_1)^+$, and $\si_d|_{A\cap\uc}$ is
$($non-strictly$)$ monotone. Let $\ell_2=\ol{xy}\in \lam_2$ and
$\ell_1\cap \ell_2\ne \0$. Then:

\begin{enumerate}

\item if $\ell_1$ and $\ell_2$ are concatenated at a point $x$ that
    is neither a vertex of a special critical cluster nor a common
    vertex of associated critical quadrilaterals of our invariant
    geodesic laminations, then $\si_d$ is (non-strictly) monotone on
    $\ell_1\cup \ell_2$;

\item if $\ell_2$ crosses $\ell_1$, then, for each $i$, we have
    $\si_d^i(\ell_1)\cap\si_d^i(\ell_2)\ne \0$, and one the following
    holds:

\begin{enumerate}

\item $\si_d^i(\ell_1)=\si_d^i(\ell_2)$ is a point or a leaf shared
    by $\lam_1, \lam_2$;

\item $\si_d^i(\ell_1), \si_d^i(\ell_2)$ share an endpoint;

\item $\si_d^i(\ell_1), \si_d^i(\ell_2)$ are linked and have the
    same order of endpoints as $\ell_1, \ell_2$;

\end{enumerate}

\item points $a$, $b$, $x$, $y$ are either all
    $($pre$)$\-pe\-ri\-odic of the same eventual period, or are all
    not $($pre$)$periodic.

\end{enumerate}

\end{cor}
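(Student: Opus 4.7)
My plan is to derive all clauses from Lemma~\ref{l:accorder} by localizing $A=A_{\lam_2}(\ell_1)$ inside a single region of $\ol{\disk}$ cut out by a full collection of unlinked critical chords. Since $\ell_1$ is non-critical, Lemma~\ref{l:accorder} supplies a full collection $\mathcal{A}$ of $d-1$ critical chords of $\lam_2$ all unlinked with $\ell_1$; these chords partition $\ol{\disk}$ into $d$ closed regions, one of which (call it $R$) contains $\ell_1$. Every leaf of $A$ other than $\ell_1$ is a leaf of $\lam_2$ and hence unlinked with every chord in $\mathcal{A}$: if such a chord is itself a leaf of $\lam_2$ the claim is immediate, and if it is a diagonal of a collapsing \ql{} $Q$ of $\lam_2$ then a leaf of $\lam_2$ straddling the diagonal would also cross an edge of $Q$. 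Thus $A\subseteq R$, so $A\cap\uc\subseteq R\cap\uc$, and its complement in $\uc$ contains the $d-1$ pairwise disjoint open arcs cut off by the chords of $\mathcal{A}$, each of length at least $\tfrac1d$ since the two endpoints of any critical chord of $\si_d$ differ by a positive multiple of $\tfrac1d$. Monotonicity of $\si_d|_{A\cap\uc}$ then follows, because $\si_d$ maps $R\cap\uc$ to $\uc$ preserving circular order.

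For part (1), I will invoke the sharper form of Lemma~\ref{l:accorder}: since $x$ is not an endpoint of any shared critical chord, $\mathcal{A}$ may be chosen so that $x$ is not an endpoint of any chord in $\mathcal{A}$. The leaf $\ell_2\in\lam_2$ concatenated to $\ell_1$ at $x$ then sits entirely in $R$ by the same non-crossing argument, so $\ell_1\cup\ell_2\subseteq R$ and the monotonicity of $\si_d|_{R\cap\uc}$ yields monotonicity on the three endpoints. For part (2), the base case is the first paragraph applied to the linked pair $\ell_1,\ell_2$: the four endpoints $a,x,b,y$ lie in $R\cap\uc$ in cyclic order, so by monotonicity their $\si_d$-images appear in the same weak cyclic order, producing exactly one of the cases (a)--(c). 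Inductively, in case (c) at step $i$ the leaf $\si_d^i(\ell_1)$ is still non-critical and still crossed by $\si_d^i(\ell_2)$, so the argument reapplies; in cases (a) and (b) the intersection property is preserved trivially under further iteration.

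For part (3), the dichotomy is a direct consequence of Theorem~\ref{t:compgap} applied to $B=\ch(\ell_1\cup\ell_2)$, once we know that $\ell_1$ and $\ell_2$ have mutually order preserving accordions in the sense of Definition~\ref{d:opacc}; this mutual order preservation is exactly what the argument of the first paragraph delivers, by symmetry between $\lam_1$ and $\lam_2$ and by applying it to every forward iterate. Case (1) of Theorem~\ref{t:compgap} then yields that none of $a,b,x,y$ is (pre)periodic, while case (2) yields that all four are (pre)periodic of the same eventual period. If instead $\ell_1$ and $\ell_2$ share an endpoint rather than cross, Lemma~\ref{l:conconv}(3) gives the conclusion directly. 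The main obstacle I anticipate is verifying the mutual order preservation uniformly at every forward iterate, which will require reapplying Lemma~\ref{l:accorder} to the iterated leaves and carefully handling the possibly degenerate case in which some iterate of $\ell_1$ or $\ell_2$ becomes critical.
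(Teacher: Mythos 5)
Your overall strategy coincides with the paper's: localize $A_{\lam_2}(\ell_1)$ inside the closure of the component of $\disk\sm\mathcal A^+$ containing $\ell_1$, where $\mathcal A$ is the full collection of critical chords supplied by Lemma~\ref{l:accorder}; deduce (non-strict) monotonicity; run an induction for (2); and invoke Theorem~\ref{t:compgap} for (3). The main claim and part (1) are handled correctly. However, the two degenerate situations you set aside as ``anticipated obstacles'' are genuine gaps rather than routine verifications, and the argument does not go through without them.

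First, in part (2) you assert that in case (c) the leaf $\si_d^i(\ell_1)$ is ``still non-critical.'' This is unjustified: a forward image of a non-critical leaf may well be critical, and then Lemma~\ref{l:accorder} no longer applies to it. The paper closes this case with Lemma~\ref{l:samecrit}: if $\si_d^i(\ell_1)$ is critical and linked with $\si_d^i(\ell_2)$, both leaves lie in one of the shared all-critical polygons, hence both are critical with the same point image, which lands you in case (a) at the next step.

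Second, and more seriously, in part (3) you claim that mutual order preservation of accordions ``is exactly what the argument of the first paragraph delivers.'' It is not. That argument yields only non-strict monotonicity, whereas Definition~\ref{d:opacc} requires $\si_d$ to be \emph{order preserving} (in particular one-to-one) on the accordion at every iterate. Injectivity fails whenever forward images of $\ell_1$ and $\ell_2$ collide, i.e., come to share an endpoint at some step $i>0$ even though the leaves cross at step $0$ --- precisely the phenomenon recorded in case (2)(b) and in Lemma~\ref{l:accorder2}. In that situation Theorem~\ref{t:compgap} is not applicable to $\ch(\ell_1,\ell_2)$. The paper's proof therefore splits into two cases: if some images of $\ell_1$ and $\ell_2$ collide at a point $z$, then Lemma~\ref{l:conconv} propagates the (pre)periodicity (or non-(pre)periodicity) of $z$ to all four endpoints; only when no collision ever occurs do $\ell_1$ and $\ell_2$ have mutually order preserving accordions, and only then is Theorem~\ref{t:compgap} invoked. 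You need this case split, or an equivalent device, for (3) to be correct.
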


\begin{proof}
Set $\mathcal E=\mathcal E_{\lam_2}(\ell_1)$. If $\ell_1$ coincides
with one of spikes from $\mathcal E$, then the claim follows (observe
that then by definition $A=\ell_1$ as spikes of sets of $\lam_2$ do not
cross leaves of $\lam_2$). Otherwise there exists a unique
complementary component $Y$ of $\mathcal E^+$ with $\ell_1\subset Y$
(except perhaps for the endpoints). The fact that each leaf of $\lam_2$
is unlinked with spikes from $\mathcal E$ implies that
$A_{\lam_2}(\ell_1)\subset \ol{Y}$. This proves the main claim of the
lemma.

(1) By Lemma~\ref{l:accorder}, the collection $\mathcal E$ can be
chosen so that $x$ is not an endpoint of a chord from $\mathcal E$. The
construction of $Y$ then implies that $\si_d$ is monotone on
$\ell_1\cup \ell_2$.

(2) We use induction. By Definition~\ref{d:qclink1}, if a critical leaf
$\n_1\in \lam_1$ crosses a leaf $\m_2\in \lam_2$ and comes from a
special critical cluster (see \Cref{d:compat}), then both $\n_1$ and
$\m_2$ come from a special critical cluster and have the same image.
Thus we may assume that neither $\si^i_d(\ell_1)$ nor $\si^i_d(\ell_2)$
are from a special critical cluster. We may also assume that
$\si^i_d(\ell_1)$ and $\si^i_d(\ell_2)$ do not share an endpoint as
otherwise the claim is obvious. Hence it remains to consider the case
when $\si^i_d(\ell_1)$ and $\si^i_d(\ell_2)$ are linked and are not
special critical leaves. Then by the main claim either their images are
linked or at least they share an endpoint.

(3) By Lemma~\ref{l:conconv}, if an endpoint of a leaf of an invariant
geodesic lamination is (pre)\-pe\-ri\-odic, then so is the other
endpoint of the leaf. Consider two cases. Suppose first that an image
of $\ell_1$ and an image of $\ell_2$ ``collide'' (that is, have a
common endpoint $z$). By the above, if $z$ is (pre)periodic, then all
endpoints of our leaves are, and if $z$ is not (pre)periodic, then all
endpoints of our leaves are not (pre)periodic. Suppose now that no two
images of $\ell_1, \ell_2$ collide. Then it follows that $\ell_1$ and
$\ell_2$ have mutually order preserving accordions, and the claim
follows from Theorem~\ref{t:compgap}.
\end{proof}

Lemma~\ref{l:accorder} and Corollary~\ref{c:accorder} implement smart
criticality. Indeed, given an invariant  geodesic lamination $\lam$, a
finite gap or leaf $G$ of it is such that the set $G\cap \uc$ (loosely)
consists of points whose orbits avoid critical sets of $\lam$. It
follows that any power of the map is order preserving on $G\cap \uc$.
It turns out that we can treat sets $X$ formed by linked leaves of two
linked or essentially equal invariant geodesic laminations similarly by
varying our choice of the full collection of spikes at each step so
that the orbit of $X$ avoids \textbf{that particular} full collection
of spikes at \textbf{that particular} step (thus \textbf{smart}
criticality). Therefore, similarly to the case of one invariant
geodesic lamination, any power of the map is order preserving on $X$.
This allows one to treat such sets $X$ almost as sets of one invariant
geodesic lamination.

Combining Corollary~\ref{c:accorder} and Corollary~\ref{l:special} we
obtain Corollary~\ref{c:intersect}.

\begin{cor}\label{c:intersect} Suppose that $\ell_1\in \lam_1, \ell_2\in
\lam_2$; moreover, let $\ell_1$ and $\ell_2$ be non-disjoint. Then
$\si^n_d(\ell_1)$ and $\si^n_d(\ell_2)$ are non-disjoint for any $n\ge
0$.
\end{cor}

The proof of Corollary \ref{c:intersect} is left to the reader.

The purpose of our investigation is to see how much two linked (or
essentially equal) geodesic laminations can differ. In other words, we
study the rigidity of geodesic laminations with respect to their
quadratically critical portraits (we consider quadratically critical
portraits as critical data associated with the corresponding geodesic
lamination). In fact, we can already discuss the extent to which
geodesic laminations $\lam_1$ and $\lam_2$ differ in the particular
case of periodic Siegel gaps. Recall the notions of the skeleton,
decorations and the extension of an infinite gap $G$ introduced in
Definitions \ref{d:siegel} and \ref{d:exte-sieg}. The skeleton of $G$
is the convex hull of the maximal Cantor subset of $G\cap\uc$.
Decorations of $G$ are the convex hulls of maximal connected unions of
leaves attached to edges of $G$. The extension of $G$ is the union of
$G$ and all its decorations. Recall also that, by Lemma
\ref{l:sieg-str}, for every edge $\ell$ of a decoration of $G$, there
is a gap in the grand orbit of $G$ that has $\ell$ on its boundary.

\begin{lem}\label{l:same-sieg}
Gaps from the grand orbits of periodic Siegel gaps of $\lam_1$ and
$\lam_2$ can be paired up so that gaps in the same pair have the same
skeletons and the same decorations.
\end{lem}

\begin{proof}
Let $G$ be a periodic Siegel gap of $\lam_1$ of period $n$. Let $H$ be
the skeleton of $G$. Below when talking about fibers we mean fibers
(point-preimages) of the semiconjugacy between $\si_d^n$ restricted
onto the extension of $G$ and the corresponding irrational rotation.
Suppose that $\ell_2$ is a leaf of $\lam_2$ that intersects the
extension of $G$. If $\ell_2$ intersects two distinct fibers of the
extension of $G$, then, by Corollary~\ref{c:intersect}, the
$\si_d^n$-images of $\ell_2$ will keep intersecting the
$\si_d^n$-images of these fibers. The fact that $\si_d^n$ restricted
onto the extension of $G$ is semiconjugate to an irrational rotation
implies then that some images of $\ell_2$ are linked with each other, a
contradiction.

Thus, $\ell_2$ and all its images intersect exactly one fiber. For
geometric reasons this is equivalent to the fact that no leaf of
$\lam_2$ intersects the interior of $H$. Therefore, there exists a gap
$G_2$ of $\lam_2$ that contains $H$. Since $\lam_2$ is a quadratically
critical geodesic lamination, it cannot have infinite gaps of degree
two. It follows that $G_2$ is also a periodic Siegel gap of period $n$
with the same skeleton $H$ as $G$.

Consider a gap $T$ of $\lam_1$ such that for some minimal $m\ge 0$ we
have that $\si_d^m(T)=G$. The properties of geodesic laminations, the
fact that $\lam_1$ has no infinite critical gaps, and the fact that
$H\cap \uc$ is a Cantor set imply that $T$ maps onto $G$ with degree
one and the maximal Cantor subset $S$ of $T\cap \uc$ maps onto $H\cap
\uc$ one-to-one except, perhaps, for the endpoints $a, b$ of
$\si_d^m$-critical chords such that $(a, b)$ is a complementary arc of
$S$ (clearly, there are at most finitely many such pairs of points $a,
b$). Each hole $(a, b)$ of $S$ corresponds to a finite concatenation of
leaves of $T$ connecting $a$ and $b$ with endpoints in $[a, b]$.
Properties of geodesic laminations imply that there are maximal
connected finite concatenations of leaves growing from $a$ and $b$, and
their unions map onto the corresponding decorations of $G$.

Now, if there exists a leaf $\ell_2$ of $\lam_2$ that intersects the
convex hull of $S$, then by Corollary~\ref{c:intersect} the leaf
$\si_d^m(\ell_2)$ connects two distinct fibers on the boundary of $G$,
a contradiction. Therefore there exists a gap $T_2$ of $\lam_2$ that
contains $S$ in $T_2\cap \uc$ as its maximal Cantor subset (observe
that the arguments can be repeated in the opposite direction, which
shows that $S$ is a common maximal Cantor set in $T\cap \uc$ and in
$T_2\cap \uc$). It follows that $\si_d^m$ maps $T_2$ onto $G_2$ with
degree one. Since this argument does not depend on the choice of $T$
and the corresponding choice of $m$, we see that the grand orbit of the
gap $G$ and the grand orbit of the gap $G_2$ consist of pairs of
infinite gaps that share the same skeleton (because they share maximal
Cantor subsets of their intersections with the unit circle).

The description of the dynamics of extensions of periodic Siegel gaps
implies that given a decoration $A$ of a periodic Siegel gap $Q$, we
see a finite collection of eventual preimages of $Q$ attached to this
decoration so that the following holds: the convex hull of $A$ has
finitely many edges at each of which a skeleton of the corresponding
preimage of $Q$ is attached.  Since by the above the family of
skeletons of gaps from the grand orbits of periodic Siegel disks is the
same for both $\lam_1$ and $\lam_2$, we conclude that the family of
convex hulls of decorations of these gaps
is also the same. 
\end{proof}

Let us now continue studying orbits of pairs of non-disjoint leaves of
geodesic laminations $\lam_1$ ad $\lam_2$. Lemma~\ref{l:accorder2}
describes how $\si_d$ can be \textbf{non-strictly} monotone on
$A\cap\uc$ taken from Corollary~\ref{c:accorder}. A concatenation $\rc$
of spikes of an invariant  geodesic lamination $\lam$ such that the
endpoints of its chords are monotonically ordered on the circle will be
called a \emph{chain of spikes $($of $\lam)$}\index{chain of spikes}.
Recall that for a collection of chords of $\disk$ such as $\rc$ we use
$\rc^+$ to denote $\bigcup\rc$.

\begin{lem}\label{l:accorder2}
Suppose that $\ell_a=\ol{ab}\in \lam_1$ and $\ell_x=\ol{xy}\in \lam_2$,
where $a<x<b\le y<a$ $($see Figure \ref{f:accorder2}$)$ and, if $b=y$,
then $b$ is neither a vertex of a special critical cluster nor a common
vertex of associated critical quadrilaterals of our invariant geodesic
laminations. Suppose also that $\si_d(a)=\si_d(x)$. Then either both
$\ell_a$, $\ell_x$ are contained inside the same special critical
cluster, or there are chains of spikes $\rc_1$ of $\lam_1$ and $\rc_2$
of $\lam_2$ connecting $a$ with $x$. If one of the leaves $\ell_a$,
$\ell_x$ is not critical, then we may assume that
$\rc_1^+\cap\uc\subset [a, x]$ and that $\rc_2^+\cap\uc\subset [a, x]$.
In any case, the points $a$ and $x$ belong to the critical sets of both
laminations.
\end{lem}

Recall that, according to our terminology, a chord is contained
\emph{inside}\index{chord!inside a special critical cluster} $S$ if it
is a subset of $S$ intersecting the interior of $S$.

\begin{proof}
First assume that one of the leaves $\ell_a, \ell_x$ (say, $\ell_a$) is
a special critical leaf. Then both $a$ and $b$ are vertices of a
special critical cluster. By the assumptions, this implies that $b\ne
y$ and hence $\ell_a$ and $\ell_x$ are linked and are inside a special
critical cluster. Assume from now on that neither $\ell_a$ nor $\ell_x$
is a special critical leaf.

\begin{figure}
\includegraphics[width=10cm]{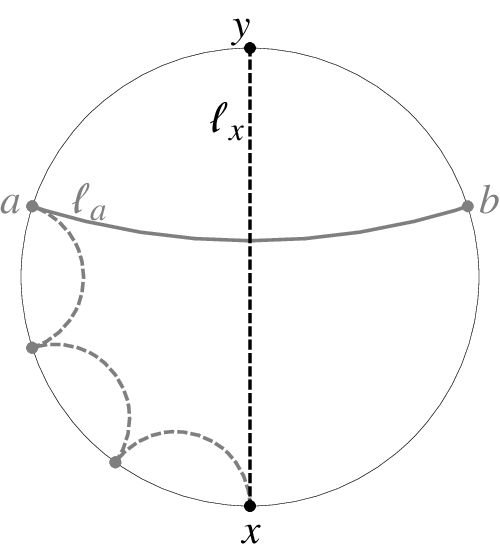}
{\caption{This figure illustrates
Lemma~\ref{l:accorder2}. Here the leaves $\ell_a,\ell_x$
collapse around a chain of spikes shown as dashed grey geodesics.}\label{f:accorder2}}
\end{figure}

By Lemma~\ref{l:accorder}, choose a full collection $\mathcal A_2$ of
spikes of $\lam_2$ unlinked with $\ell_a$ and a full collection
$\mathcal A_1$ of spikes of $\lam_1$ unlinked with $\ell_x$. By the
assumptions and Lemma~\ref{l:accorder}, we may choose these collections
so that if $b=y$, then $b=y\nin \mathcal A_1^+\cup \mathcal A_2^+$.
Thus in any case the point $\ell_a\cap \ell_x=w\in \disk$ does not
belong to $\mathcal A_1^+\cup \mathcal A_2^+$.

It follows that there is a well-defined component $Y$ of $\cdisk\sm
[\mathcal A_1^+\cup \mathcal A_2^+]$ containing $\ell_a\cup \ell_x$
except perhaps for the endpoints. Since $\si_d(a)=\si_d(x)$, there is a
chain of spikes $\rc_2\subset \mathcal A_2$ of $\lam_2$ and a chain of
spikes $\rc_1\subset \mathcal A_1$ of $\lam_1$ connecting $a$ and $x$.
In particular, $a\in \mathcal A_1,$ $x\in \mathcal A_2,$ and both $a$
and $x$ must belong to the critical sets of both laminations.

Suppose that, say, $\rc_1^+\cap\uc\subset [x, a]$. Since all spikes are
critical chords that cross neither $\ell_a$ nor $\ell_x$, this implies
that both $\ell_a$ and $\ell_x$ are critical. Therefore, if at least
one of the leaves $\ell_a$, $\ell_x$ is not critical, then we may
assume that $\rc_1^+\cap\uc\subset [a, x]$ and that
$\rc_2^+\cap\uc\subset [a, x]$.
\end{proof}

The assumptions of Lemma~\ref{l:accorder2} automatically hold if leaves
$\ell_a$, $\ell_x$ are linked and one of them (say, $\ell_a$) is
critical; in this case, by Corollary~\ref{c:accorder}, the point
$\si_d(\ell_a)$ is an endpoint of $\si_d(\ell_x)$, and, renaming the
points, we may assume that $\si_d(a)=\si_d(x)$.

\begin{dfn}\label{d:accorder2}
Non-disjoint leaves $\ell_1\ne \ell_2$ are said to \emph{collapse
around chains of spikes}\index{leaves!collapsing around chains of
spikes} if there are two chains of spikes, one in each of the two
invariant geodesic laminations, connecting two adjacent endpoints of
$\ell_1$, $\ell_2$ as in Lemma~\ref{l:accorder2}.
\end{dfn}

Smart criticality allows one to treat accordions as gaps of one
invariant geodesic lamination provided images of leaves do not collapse
around chains of spikes.

\begin{lem}\label{l:indepcrit}
Let $\ell_1$, $\ell_2$ be linked leaves from $\lam_1$, $\lam_2$ such
that there is no $t$ with $\si_d^t(\ell_1)$, $\si_d^t(\ell_2)$
collapsing around chains of spikes (in particular, this holds if the
endpoints of $\si_d^t(\ell_1)$ are disjoint from the endpoints of
$\si_d^t(\ell_2)$ for all $t$). Then there exists an $N$ such that the
$\si_d^N$-images of $\ell_1$, $\ell_2$ are linked and have mutually
order preserving accordions. Conclusions of Theorem~\ref{t:compgap}
hold for $\ell_1$, $\ell_2$, and $B=\ch(\ell_1, \ell_2)$ is either
wandering or $($pre$)$periodic so that $\ell_1$, $\ell_2$ are
$($pre$)$periodic of the same eventual period of endpoints.
\end{lem}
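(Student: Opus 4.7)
The plan is to first establish that the pair $(\si_d^i(\m^1), \si_d^i(\m^2))$ remains linked with the same cyclic order of endpoints for every $i\ge 0$, then bootstrap this to mutually order preserving accordions for a sufficiently large iterate $N$, and finally invoke Theorem~\ref{t:compgap}.

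First, I would iterate Corollary~\ref{c:accorder}(2) on the pair $(\m^1, \m^2)$. At each step $i$, exactly one of three cases holds: either $\si_d^i(\m^1)=\si_d^i(\m^2)$ reduces to a point or a shared leaf, or the two leaves share an endpoint, or they are linked with the same order of endpoints as the original pair. The claim is that the third case holds for every $i\ge 0$. Indeed, if $i$ were the smallest index where the first or second case occurred, then at step $i-1$ the linked pair $(\si_d^{i-1}(\m^1), \si_d^{i-1}(\m^2))$ would have two endpoints lying on the same side of the link identified by $\si_d$. Applying Lemma~\ref{l:accorder2} with $(\ell_a, \ell_x)=(\si_d^{i-1}(\m^1), \si_d^{i-1}(\m^2))$ would then produce critical chains in both $\lam_1$ and $\lam_2$ witnessing that the pair collapses around chains of critical chords at time $t=i-1$, contradicting the hypothesis.

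Next, I would promote this pairwise order preservation to mutually order preserving accordions. By Corollary~\ref{c:accorder}, $\si_d$ is already non-strictly monotone on each accordion basis $A_{\m^2}(\si_d^k(\m^1))\cap\uc$ and its symmetric counterpart, so the remaining issue is strict injectivity. Any failure of strictness identifies two endpoints of leaves in the accordion under $\si_d$. The preceding paragraph rules out the case where both endpoints lie on the main pair. The remaining case, where two distinct iterates $\si_d^j(\m^2), \si_d^{j'}(\m^2)$ linked with $\si_d^k(\m^1)$ produce the collision, is controlled by the full collection of critical chords of $\lam_2$ unlinked with $\si_d^k(\m^1)$ furnished by Lemma~\ref{l:accorder}, and, via a pullback argument in the spirit of Lemma~\ref{l:accorder2}, can be traced to a collapsing configuration of the main pair at some earlier time. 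Since the hypothesis forbids all such times, and since the relevant critical chord chains are finite in number in either geolamination, for all sufficiently large $N$ no such collision can occur, and both $\si_d^N(\m^1)$ and $\si_d^N(\m^2)$ have mutually order preserving accordions.

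Finally, with these accordions secured from step $N$ onward, Theorem~\ref{t:compgap} applies directly to $\si_d^N(\m^1)$ and $\si_d^N(\m^2)$, delivering either pairwise disjointness of all forward images of $\ch(\si_d^N(\m^1), \si_d^N(\m^2))$, or an eventually (pre)periodic structure as described there. Since only the finitely many initial iterates $\si_d^i(B)$ with $0\le i<N$ separate $B$ from $\si_d^N(B)$, the same dichotomy passes to $B=\ch(\m^1,\m^2)$: either $B$ is wandering, or $B$ is (pre)periodic. In the latter case Corollary~\ref{c:accorder}(3) ensures that $\m^1$ and $\m^2$ have (pre)periodic endpoints of the same eventual period, completing the proof. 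The main obstacle is the middle step: elevating control of the main pair to control of the entire accordion, which will require careful bookkeeping of the finitely many critical chord chains responsible for all potential endpoint collisions.
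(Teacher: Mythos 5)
Your first and third paragraphs are sound and follow essentially the paper's route: the no-collapse hypothesis together with Lemma~\ref{l:accorder2} and Lemma~\ref{l:samecrit} forces $\si_d^i(\m^1)$ and $\si_d^i(\m^2)$ to stay linked and non-critical for every $i$, and once mutually order preserving accordions are secured for some $N$, Theorem~\ref{t:compgap} plus Corollary~\ref{c:accorder}(3) yield the stated dichotomy for $B$ and the periodicity claim.

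The gap is in your middle step. You assert that a collision produced by two \emph{distinct} iterates $\si_d^{j}(\m^2)$, $\si_d^{j'}(\m^2)$ inside an accordion ``can be traced to a collapsing configuration of the main pair at some earlier time'' and is therefore excluded by the hypothesis. This is not so: the hypothesis (via Definition~\ref{d:accorder2}) only forbids the \emph{simultaneous} images $\si_d^t(\m^1)$, $\si_d^t(\m^2)$ from collapsing around critical chains, i.e.\ it concerns adjacent endpoints of the two leaves of the pair at the same time $t$. An identification such as $\si_d^{i+1}(x)=\si_d^{j+1}(y)$ with $i\ne j$, or $\si_d^{i+1}(a)=\si_d^{j+1}(x)$ with $i\ne j$, is a different phenomenon that the hypothesis does not touch, and the finiteness of critical chains in the geolaminations gives no bound on the times at which such identifications can occur. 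The paper closes this hole by a case split. By Corollary~\ref{c:accorder}(3) the four endpoints are either all (pre)periodic of the same eventual period or all non-(pre)periodic. In the first case choose $N$ so that both leaves are actually periodic; then every endpoint occurring in the accordions is periodic, and two distinct periodic points cannot have the same image (on a periodic orbit $\si_d$ acts as a cyclic permutation), so injectivity is automatic. In the second case, a ``collision'' $\si_d^i(u)=\si_d^j(v)$ between the orbits of two endpoints can happen for at most one pair $(i,j)$ (two such pairs would make $u$ and $v$ (pre)periodic); since there are only finitely many pairs of endpoints, there are finitely many collisions altogether, and taking $N$ beyond all of them makes every power of $\si_d$ injective on the accordions. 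You need one of these arguments (or an equivalent) to upgrade the non-strict monotonicity supplied by Corollary~\ref{c:accorder} to genuine order preservation on the accordions.
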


\begin{proof}
By way of contradiction, suppose that there exists the minimal $t$ such
that $\si^{t+1}_d(\ell_1)$ is not linked with $\si^{t+1}_d(\ell_2)$.
Then $\si_d^t(\ell_1)$ crosses $\si_d^t(\ell_2)$ while their images
have a common endpoint. Hence Lemma~\ref{l:accorder2}, applied to
$\si_d^t(\ell_1)$ and $\si_d^t(\ell_2)$, implies that
$\si_d^t(\ell_1)$, $\si_d^t(\ell_2)$ collapse around a chain of spikes,
a contradiction. Thus, $\si_d^t(\ell_1)$ and $\si_d^t(\ell_2)$ cross
for any $t\ge 0$. In particular, no image of either $\ell_1$ or
$\ell_2$ is ever critical.

By Lemma~\ref{l:conconv}, choose $N$ so that leaves
$\si_d^N(\ell_1)=\ol{ab}$ and $\si_d^N(\ell_2)=\ol{xy}$ are periodic or
have no (pre)\-pe\-ri\-odic endpoints. If $\ol{ab}$ and $\ol{xy}$ are
periodic, then no collapse around chains of critical leaves on
\textbf{any} images of $\ol{ab}, \ol{xy}$ is possible (for
set-theoretic reasons). Hence $\si_d^N(\ell_1), \si_d^N(\ell_2)$ are
linked and have mutually order preserving accordions as desired.

Suppose now that our leaves have non-(pre)\-pe\-ri\-odic endpoints.
Evidently, the set $E$ of all endpoints of all possible chains of
spikes is finite. Thus, there exists an $N$ such that if $n\ge N$, then
$\si_d^n(a)$ is disjoint from $E$ as otherwise by the pigeonhole
principle $a$ would have to be (pre)periodic. The same holds for $b$,
$x$ and $y$, so we may assume that, for $n\ge N$, no endpoint of
$\si_d^n(\ell_1)$ or $\si_d^n(\ell_2)$ is in  $E$. Hence, the
$\si_d^N$-images of $\ell_1$, $\ell_2$ are linked and have mutually
order preserving accordions.
\end{proof}

\section{Linked quadratically critical invariant geodesic laminations}\label{s:qcrit}

The main results of Section \ref{s:qcrit} are based on the principle of
Smart Criticality and the results describing the dynamics of
accordions. Basically, we are studying two linked or essentially equal
invariant geodesic laminations with quadratically critical portraits
and establish the extent to which they must resemble each other.
Therefore our results can be viewed as rigidity results of certain
subsets (or certain dynamical properties) of geodesic invariant
laminations with respect to their linked perturbations. For instance,
we show that two linked or essentially equal invariant geodesic
laminations with quadratically critical portraits have the same
\emph{perfect parts} (see Definition~\ref{d:perfect}). We also show
that two linked or essentially equal invariant geodesic laminations
with quadratically critical portraits have the same \emph{Siegel parts}
defined below (see also page~\pageref{spart}).

\begin{dfn}\label{d:siegel-part}
The closure of the union of the grand orbits of all periodic Siegel
gaps of an invariant geodesic lamination $\lam$ is denoted by
$\lam^{Sie}$ and is called the \emph{Siegel part}\index{Siegel part} of
$\lam$.
\end{dfn}

However the relations between the remaining parts of two linked or
essentially equal invariant geodesic laminations are less rigid. We
study them in the next section concentrating upon the case when
invariant geodesic laminations are generated by invariant laminational
equivalence relations.

In this section, we will always assume that the invariant geodesic
laminations with quadratically critical portraits $(\lam_1,\qcp_1)$ and
$(\lam_2,\qcp_2)$ are linked or essentially equal.

The next lemma studies cardinalities of certain collections of leaves.

\begin{lem}\label{l:countable}
Suppose that invariant geodesic laminations $(\lam_1,\qcp_1)$ and
$(\lam_2,\qcp_2)$ with quadratically critical portraits are linked or
essentially equal. The set $\Tc$ of all leaves of $\lam_2$ non-disjoint
from a leaf $\ell_1$ of $\lam_1$ is at most countable. Thus, if $\ell$
is a leaf on which uncountably many leaves of one of the geodesic
laminations $\lam_1$ or $ \lam_2$ accumulate then $\ell$ is unlinked
with any leaf of the other geodesic lamination.
\end{lem}

\begin{proof}
If $\ell_1$ has (pre)periodic endpoints, then, by
Corollary~\ref{c:accorder}, any leaf of $\lam_2$ non-disjoint from
$\ell_1$ has (pre)periodic endpoints implying the first claim of the
lemma in this case. Let $\ell_1$ have no (pre)periodic endpoints. Then,
by Corollary~\ref{c:accorder}, leaves of $\lam_2$ non-disjoint from
$\ell_1$ have no (pre)\-pe\-ri\-odic endpoints. By Lemma~\ref{l:cones},
for every eventual image $x$ of an endpoint of $\ell_1$ there are
finitely many leaves with endpoint $x$. Hence the set of all leaves of
$\lam_2$ with endpoint whose orbit collides with the orbit of an
endpoint of $\ell_1$ is countable. If we remove them from $\Tc$, then
we obtain a new collection $\Tc'$ of leaves; by \Cref{l:indepcrit},
they have mutually order preserving accordions with $\ell_1$. By
Corollary~\ref{c:finaccord}, the collection $\Tc'$ is finite. This
completes the proof of the first claim of the lemma. The second claim
follows immediately. \end{proof}

Let $\qcp$ be a quadratically critical portrait of an invariant
geodesic lamination $\lam$. Since, by Corollary~\ref{c:cridisj},
distinct critical sets of the perfect part $\lam^p$ are disjoint, each
critical set of $\lam$ is contained in a unique critical set of
$\lam^p$. Hence $\qcp$ generates the \textbf{critical pattern
$\zc(\qcp)$ of $\qcp$ in $\lam^p$}, and so each invariant geodesic
lamination with critical portrait $(\lam, \qcp)$ gives rise to the
perfect invariant geodesic lamination with critical pattern $(\lam^p,
\zc(\qcp))$.

\begin{dfn}[Perfect-Siegel part]\label{d:pspart}\index{perfect-Siegel part}
The union $\lam^{pS}$ of the perfect and the Siegel parts of an
invariant geodesic lamination $\lam$ is called the \emph{perfect-Siegel
part} of $\lam$ (it is easy to see $\lam^{pS}$ is a geodesic
lamination).
\end{dfn}

In fact, $\lam^{pS}$ is a proper geodesic lamination (see
\Cref{d:properlam}) because critical leaves with periodic endpoints or
critical wedges with periodic vertices are impossible in the
perfect-Siegel part of $\lam$ (hence, they are not present in
$\lam^{pS}$). Hence $\lam^{pS}$ induces the corresponding laminational
equivalence relation $\approx_{\lam^{pS}}$, which in turn defines its
geodesic lamination $\lam_{\approx_{\lam^{pS}}}$.

Periodic Fatou gaps of $\lam^{pS}$ and $\lam_{\approx_{\lam^{pS}}}$ may
differ. Indeed, let $U$ be a periodic Fatou gap of $\lam^{pS}$ of
degree greater than one. There may exist a finite chain of edges of
$U$. Since $U$ is a gap of $\lam^{pS}$, these edges must be
non-isolated from the outside of $U$. It follows that they all are
(pre)periodic. On the other hand, by definition the initial and the
terminal points of this chain of edges are connected by a leaf of
$\lam_{\approx_{\lam^{pS}}}$ that is \textbf{not} a leaf of
$\lam^{pS}$.

With respect to the Siegel parts, the geodesic lamination
$\lam_{\approx_{\lam^{pS}}}$ contains convex hulls of the skeletons of
the periodic Siegel gaps and their pullbacks and convex hulls of
decorations attached to these gaps and their pullbacks while the
geodesic lamination $\lam^{pS}$ may contain finite chains of leaves
inside the decorations and their pullbacks, but contains no leaves
inside convex hulls of the periodic Siegel gaps and their pullbacks.
These are the only two types of differences between $\lam^{pS}$ and
$\lam_{\approx_{\lam^{pS}}}$. Observe that if the original two geodesic
laminations are generated by laminational equivalence relations, the
latter phenomenon (concerning the Siegel parts) is impossible because
by definition there is no erasing of leaves related to it.

\begin{dfn}\label{d:per-sie-lam} A laminational equivalence relation
$\sim$ is said to be \emph{perfect-Siegel} if
$\sim=\approx_{\lam_{\sim}^{pS}}$.
\end{dfn}

Some conditions immediately imply that for an equivalence $\sim$ its
perfect-Siegel part generates the equivalence relation
$\approx_{\lam_{\sim}^{pS}}$ that coincides with $\sim$.

\begin{lem}\label{l:per-sie-lam}
If all critical sets of $\sim$ are finite, then $\sim$ is a
perfect-Siegel equivalence relation.
\end{lem}

\begin{proof}
If $\sim\ne \approx_{\lam_{\sim}^{pS}}$, then there must exist a
periodic Fatou gap $U$ of $\approx_{\lam_{\sim}^{pS}}$ of degree $k>1$
such that all its edges are leaves of $\lam_\sim$ and there is a
countable non-empty set of leaves of $\lam_\sim$ inside $U$; moreover,
$U$ contains no Siegel gaps of $\sim$. However then we can consider
$\lam_\sim$ restricted on $U$, and by the assumptions it would follow
that there are no infinite gaps of $\lam_\sim$ inside $U$ at all. It is
well-known that in this case $U/\sim$ is a dendrite and $\lam_\sim$
must have uncountably many leaves inside $U$, a contradiction.
\end{proof}

Theorem~\ref{t:noesli} studies the perfect-Siegel parts of invariant
geodesic laminations with quadratically critical portraits. Recall that
for brevity we call a gap $G$ uncountable (countable, finite) if $G\cap
S^1$ is uncountable (countable, finite).

\begin{thm}\label{t:noesli}
If $(\lam_1, \qcp_1)$ and $(\lam_2, \qcp_2)$ are invariant geodesic
laminations with quadratically critical portraits that are linked or
essentially equal, then we have the following equality: $$(\lam^p_1,
\zc(\qcp_1))=(\lam^p_2, \zc(\qcp_2)).$$ Also, the Siegel parts
$\lam^{Sie}_1$ of $\lam_1$ and $\lam^{Sie}_2$ of $\lam_2$ coincide, and
so $\lam^{pS}_1=\lam^{pS}_2$.
\end{thm}

\begin{proof} By way of contradiction, assume that
$\lam^p_1\not\subset \lam^p_2$; then $\lam^p_1\not\subset \lam_2$, and
there exists a leaf $\ell^p_1\in \lam^p_1\sm \lam_2$. Then, by
Lemma~\ref{l:countable}, the leaf $\ell^p_1$ is inside a gap $G$ of
$\lam_2$. Since $\lam^p_1$ is perfect, from at least one side all
one-sided neighborhoods of $\ell^p_1$ contain uncountably many leaves
of $\lam^p_1$. Hence $G$ is uncountable (if $G$ is finite or countable,
then there must exist edges of $G$ that cross leaves of $\lam^p_1$, a
contradiction as above). We claim that this is impossible. Indeed, by
\cite{kiw02} $G$ is (pre)periodic. Hence we may assume that $G$ is
periodic and still contains uncountably many leaves from $\lam^p_1$.
Since our geodesic laminations have quadratically critical portraits,
it follows that $G$ is a Siegel gap. This contradicts
Corollary~\ref{c:noinf}. Finally, the claim of the lemma dealing with
Siegel parts of geodesic laminations $\lam_1$ and $\lam_2$ follows from
\Cref{l:same-sieg}.
\end{proof}

Jan Kiwi showed in \cite{kiwi97} that if all critical sets of an
invariant  geodesic lamination $\lam$ are critical leaves with
\textbf{aperiodic kneading}, then its perfect part $\lam^p$ is
completely determined by these critical leaves (he also showed that
this defines the corresponding laminational equivalence relation $\sim$
such that $\lam^p=\lam_\sim$ and that $\sim$ is dendritic). Our results
are related to Kiwi's. Indeed, by Theorem~\ref{t:noesli}, if $\lam$ is
an invariant  geodesic lamination with a quadratically critical
portrait $\qcp$, then $\lam^p\subset \lam$ is completely defined by
$\qcp$; in other words, if there is another invariant geodesic
lamination $\hlam$ with the same quadratically critical portrait
$\qcp$, then still $\hlam^p=\lam^p$.

Theorem~\ref{t:noesli} takes the issue of how critical data impacts the
perfect part of an invariant geodesic lamination further as it
considers the dependence of the perfect parts upon critical data while
relaxing the conditions on critical sets and allowing for ``linked
perturbation'' of the critical data. Therefore, Theorem~\ref{t:noesli}
could be viewed as a rigidity result: ``linked perturbation'' of
critical data does not change the perfect invariant geodesic
lamination.

\section{Invariant geodesic laminations generated by laminational
equivalence relations}\label{s:geo-equi}

In the previous section, we investigated the relation between two given
quadratically critical invariant geodesic laminations that are linked
or essentially equal. However in general geodesic laminations are not
quadratically critical. Thus we need to develop tools allowing us to
adjust arbitrary geodesic laminations to produce quadratically critical
ones. In this section we do so first without any restrictions upon the
degree of the map and its Fatou gaps, and then adding some restrictions
to obtain more precise results.

\subsection{Linked invariant geodesic
laminations of any degree}\label{ss:general-link} The above analysis
justifies the next definition.

\begin{dfn}\label{d:linoreq}
Let $\lam_1$ and $\lam_2$ be invariant geodesic laminations. Suppose
that there are invariant geodesic laminations with quadratically
critical portraits $(\lamm_1, \qcp_1)$, $(\lamm_2, \qcp_2)$ such that
$\lam_1\subset \lamm_1$, $\lam_2\subset \lamm_2$.
Then we say that the  modifications $\lamm_1$ of $\lam_1$ and $\lamm_2$
of $\lam_2$ are \emph{induced}\index{modifications of geodesic
laminations!induced} by quadratically critical portraits $\qcp_1$ and
$\qcp_2$, respectively. If this can be done so that $(\lamm_1, \qcp_1)$
and $(\lamm_2, \qcp_2)$ are linked or essentially equal, then we say
that $\lam_1$ and $\lam_2$ are \emph{intrinsically linked}
(\emph{essentially equal}, respectively)\index{invariant geodesic
laminations!intrinsically linked or essentially equal}.
\end{dfn}

Two invariant geodesic laminations are intrinsically linked or
essentially equal if and only if we can ``tune'' them into two
quadratically critical geodesic laminations by inserting into their
critical sets  critical quadrilaterals in a dynamically consistent way
so that the thus constructed quadratically critical portraits of the
two geodesic laminations are linked/essentially equal.

However arbitrary quadratically critical modifications of invariant
geodesic laminations may yield a significant increase of the
corresponding perfect-Siegel parts of these invariant geodesic
laminations. Thus, in order to implement our results we need to agree
upon the way arbitrary invariant geodesic laminations should be
modified (tuned) into geodesic laminations with quadratically critical
portraits. This can only be done by inserting critical quadrilaterals
into critical sets of given geodesic laminations. Moreover, ideally
this quadratically critical tuning should not increase the size of the
original geometric lamination too much.

Recall that laminational equivalence relations $\sim$ appear in complex
dynamics in a very natural way (see Section~\ref{s:laeqre}). For many
polynomials $P$ with connected Julia sets, they give rise to
semiconjugacies between $P|_{J(P)}$ and the topological polynomial
$f_{\sim_P}:\uc/\sim_P\to \uc/\sim_P$ with a special choice of
$\sim_P$.

Therefore, from the point of view of complex dynamics, laminational
equivalence relations are of main interest. We want to use our tools to
study them, in particular to study the mutual location of their
critical sets. Thus, we need to develop methods of quadratically
critical tuning for invariant geodesic laminations generated by
laminational equivalences.

\begin{dfn}[Invariant non-capture geodesic
lamination]\label{d:non-capture} Con\-si\-der an invariant geodesic
lamination $\lam_\sim$ generated by an invariant laminational
equivalence relation $\sim$. Suppose that there are no
\textbf{preperiodic} Fatou gaps that map onto their image $k$-to-$1$
with $k>1$. Then both $\sim$ and $\lam_\sim$ are said to be of
\emph{non-capture} type. Otherwise $\sim$ and $\lam_\sim$ are said to
be of \emph{capture} type. Any periodic Fatou gap $U$ with a
preperiodic pullback that maps forward $k$-to-$1$, where $k>1$, is
itself said to be of \emph{capture type}.
\end{dfn}

The reason for our  interest in invariant non-capture geodesic
laminations is the following. An invariant geodesic lamination
$\lam_\sim$ of capture type will have at least one preperiodic Fatou
gap $U$ that maps onto its image in a $k$-to-$1$ fashion. This allows
for a variety of ways critical quadrilaterals can be inserted in $U$.
It is therefore impossible to associate with $\lam_\sim$ a unique (or
finitely many) quadratically critical portrait(s).

Similarly, there exists an ambiguity related to the issue of how
critical quadrilaterals can be inserted into \textbf{periodic} Fatou
gaps of degree greater than one. We will tackle this issue later on,
however first we want to simplify the picture and consider the case
with no  periodic Fatou gaps of degree $k>1$. An easier version here is
that of two dendritic geodesic laminations $\lam_{\sim_1}$ and
$\lam_{\sim_2}$ generated by laminational equivalence relations
$\sim_1$ and $\sim_2$. However, by \Cref{t:noesli}, we can work with a
wider class of geodesic laminations.

\begin{lem}[Laminations with finite critical sets]\label{l:fin-cri-set}
Let $\lam_\sim$ be an invariant geodesic lamination. Then all critical
sets of $\lam_\sim$ are finite if and only if $\lam_\sim$ has no Fatou
gaps of degree greater than one. Moreover, then
$\lam_\sim=\lam_\sim^{pS}$.
\end{lem}

\begin{proof} The first claim of the lemma is left to the reader.
To prove the last claim of the lemma, recall that by \cite{bopt10} the
perfect part $\lam_\sim^p$ of $\lam_\sim$ is itself an invariant
geodesic lamination with some (possibly empty) collection of periodic
Fatou gaps of degree greater than one (so-called \emph{super-gaps} of
$\lam_\sim$) and their pairwise disjoint preimages. By the assumption,
$\lam_\sim$ can have neither periodic Fatou gaps of degree greater than
one nor Siegel gaps of capture type. Therefore, if $U$ is a periodic
super-gap of $\lam_\sim$, then $U$ lies entirely in the Siegel part of
$\lam_\sim$. Clearly, pullbacks of the intersection of $U$ and the
Siegel part of $\lam_\sim$ fill up pullbacks of periodic super-gaps of
$\lam_\sim$. We conclude that $\lam_\sim=\lam^{pS}_\sim$.
\end{proof}

Observe that if $\lam_\sim$ is an invariant geodesic lamination
generated by a laminational equivalence relation $\sim$ such that all
critical sets of $\lam_\sim$ are finite, then $\lam_\sim$ is regular
because finite critical sets correspond to $\sim$-classes of
equivalence and hence either coincide or are disjoint. Therefore, by
Definitions~\ref{d:regula} and \ref{d:cripatt1} one can talk about
critical patterns of quadratically critical portrait in $\lam_\sim$ or
of invariant geodesic lamination $\lam_\sim$ with critical pattern.

Combining \Cref{l:fin-cri-set} with \Cref{t:noesli}, we obtain
\Cref{c:dend-sie-unli} (the last claim of \Cref{c:dend-sie-unli} is
left to the reader).

\begin{cor}\label{c:dend-sie-unli} If geodesic laminations
$\lam_{\sim_1}$ and $\lam_{\sim_2}$ with no Fatou gaps of degree
greater than one are intrinsically linked or essentially equal, then
$\sim_1=\sim_2=\sim$ and $\lam_{\sim_1}=\lam_{\sim_2}=\lam_\sim$ are
equal. If $\qcp_1$ and $\qcp_2$ are two quadratically critical
portraits of $\lam_{\sim_1}$ and $\lam_{\sim_2}$ that are linked or
essentially equal, then the critical patterns of $\qcp_1$ and $\qcp_2$
in $\lam_\sim$ coincide.
\end{cor}

Observe that this generalizes  results by Kiwi \cite{kiwi97}. In our
terms his results state that if two dendritic geodesic laminations are
essentially equal then they coincide. We weaken the assumptions here
and allow for \textbf{linked} geodesic laminations generated by
laminational equivalence relations from a \textbf{wider} class while
the conclusion remains the same.

The general case is more complicated. Consider two perfect non-empty
geodesic laminations $\lam_{\sim_1}$ and $\lam_{\sim_2}$ that are
intrinsically linked or essentially equal. By definition, there are
modifications $\lam^m_{\sim_1}$ of $\lam_{\sim_1}$ and
$\lam^m_{\sim_2}$ of $\lam_{\sim_2}$ that are quadratically critical
and linked. This means that critical quadrilaterals were inserted into
critical sets of both geodesic laminations forming two linked
quadratically critical{} portraits. However this may have resulted in a
significant growth of the corresponding geodesic lamination as together
with the inserted quadrilaterals we have to add their images, preimages
and their limits. Therefore in general we cannot conclude that
$\lam_{\sim_1}$ and $\lam_{\sim_2}$ are equal if they are intrinsically
linked or essentially equal.

Thus, when studying two invariant geodesic laminations $\lam_1$ and
$\lam_2$ with infinite gaps of degree greater than one, it may be
useful to consider a restricted family of their modified invariant
geodesic laminations $\lamm_1$ and $\lamm_2$, designed to increase the
basic invariant geodesic laminations $\lam_1$ and $\lam_2$ as little as
possible so that the fact that $\lamm_1$ and $\lamm_2$ are linked
implies more information about $\lam_1$ and $\lam_2$ themselves.
Accordingly, we would like to specify quadratically critical tuning of
invariant geodesic laminations.

\begin{dfn}[Admissible quadratically critical
modifications]\label{d:admi-mod} Suppose \newline that $\lam_\sim$ is
an invariant geodesic lamination generated by an invariant laminational
equivalence relation $\sim$ and $\lamm$ is its quadratically critical
modification. We say that $\lamm$ is an \emph{admissible quadratically
critical modification} of $\lam$ if the set $\lamm\sm \lam$ consists of
a countable family of leaves.
\end{dfn}

Then the following theorem easily follows.

\begin{thm}\label{t:admi-link}
Suppose that invariant geodesic laminations $\lam_{\sim_1}$ and
$\lam_{\sim_2}$ have admissible quadratically critical modifications
$\lamm_{\sim_1}$ of $\lam_{\sim_1}$ and $\lamm_{\sim_2}$ of
$\lam_{\sim_2}$ that are linked or essentially equal. Then
$\lam^p_{\sim_1}=\lam^p_{\sim_2}$. If either lamination has no eventual
preimages $U$ of periodic Siegel gaps with $\si_d|_U$ being $k$-to-$1$
with $k>1$, then their Siegel parts coincide too so that
$\lam^{pS}_{\sim_1}=\lam^{pS}_{\sim_2}$
\end{thm}

\begin{proof}
By \Cref{t:noesli} the perfect parts and the Siegel parts of the
admissible modified invariant geodesic laminations $\lamm_{\sim_1}$ and
$\lamm_{\sim_2}$ coincide. By definition, these perfect parts coincide
with perfect parts of original invariant geodesic laminations
$\lam_{\sim_1}$ and $\lam_{\sim_2}$. Clearly, this implies the first
claim of the theorem. Suppose now that either lamination does not have
eventual preimages $U$ of periodic Siegel gaps such that $\si_d|_U$ is
$k$-to-$1$ with $k>1$. Then Siegel parts of modified geodesic
laminations and of the original geodesic laminations coincide, which
implies the last claim of the theorem.
\end{proof}

\Cref{c:admi-link} easily follows from definitions and
\Cref{t:admi-link}.

\begin{cor}\label{c:admi-link}
In the situation of \Cref{t:admi-link}, two linked or essentially equal
quadratically critical portraits $\qcp_1$ and $\qcp_2$ of
$\lam_{\sim_1}$ and $\lam_{\sim_2}$, respectively, generate the same
critical pattern in the common perfect part of both geodesic
laminations. If either lamination does not have eventual preimages $U$
of periodic Siegel gaps such that $\si_d|_U$ is $k$-to-$1$ with $k>1$,
then $\qcp_1$ and $\qcp_2$ generate the same critical pattern in
$\lam^{pS}_{\sim_1}=\lam^{pS}_{\sim_2}$.
\end{cor}

The proof is left to the reader.

Observe that some assumption concerning eventual preimages of periodic
Siegel gaps is necessary for the conclusion of the theorem to hold.
Indeed, consider an invariant geodesic lamination $\lam_{\sim_1}$
generated by an invariant laminational equivalence relation $\sim_1$.
Assume that there exists an eventual preimage $U$ of a periodic Siegel
gap that maps onto its image in the $k$-to-$1$ fashion with $k>1$.
Consider a different laminational equivalence relation $\sim_2$ that
identifies $k$ (possibly degenerate) edges of $U$ with the same image,
and, accordingly, identifies preimages of these edges, which themselves
are edges of the same pullbacks of $U$. In terms of invariant geodesic
laminations, this means that a critical set (gap or leaf) that
coincides with the convex hull $A$ of $k$ newly identified edges of $U$
is inserted in $U$, and then this set is pulled back according to
Thurston's pullback construction.

Clearly, $\sim_1$ and $\sim_2$ can be supplied with two quadratically
critical portraits that are admissible for both and, in fact, coincide
themselves. Indeed, choose an appropriate collection of critical
quadrilaterals in $A$ and use it as a part of a quadratically critical
portrait; then define the remaining critical quadrilaterals so that
altogether we will get an admissible quadratically critical portrait
$Q$ for $\lam_{\sim_1}$; it follows that $Q$ serves as an admissible
quadratically critical portrait for $\sim_2$ as well. Thus, we must
make assumptions clarifying the way periodic Siegel gaps are pulled
back.

On the other hand, no assumption concerning periodic Fatou gaps of
perfect parts of $\lam_\sim$ is necessary. Indeed, suppose that $U$ is
a periodic Fatou gap of $\lam^p_\sim$. Then any eventual preimage-gaps
of $U$ cannot share edges or finite gaps separating them, unlike in the
Siegel case, because if they do, then this will give rise to isolated
leaves in $\lam^p_\sim$, a contradiction. Hence the ambiguity described
above for Siegel parts is impossible in the case of perfect parts of
invariant geodesic laminations.

\subsection{Counterexamples of type B}\label{ss:type-B}

There are examples showing that the assumptions of \Cref{t:admi-link}
in its current form cannot be relaxed. That is, in general, we cannot
make conclusions concerning the coincidence of the given invariant
geodesic laminations as a whole if they have linked or essentially
equal admissible modifications. Indeed, let us consider a class of
invariant geodesic laminations studied in \cite{bopt13a}. Every
geodesic lamination of this class has a unique invariant finite gap
$G$.  By \cite{kiw02}, there are at most two periodic orbits (of the
same period) forming the set of vertices of $G$, and if vertices of $G$
form two periodic orbits, points of these orbits alternate on $\uc$.
For each edge $\ell$ of $G$ denote by $H_G(\ell)$ the circle arc with
the same endpoints as $\ell$, separated from $G$ by $\ell$, and call it
the \emph{hole of $G$ behind $\ell$}\index{hole of a gap behind an
edge}.

Moreover, assume that at each edge $\ell$ of $G$ there exists a Fatou
gap, say, $U$, attached to $G$ (that is, sharing with $G$ a common edge
$\ell$) and having the maximal possible degree depending on its
location (i.e., if the map $\si_3$ is two-to-one, respectively,
three-to-one on $H_G(\ell)$, then the map $\si_3$ is two-to-one,
respectively, three-to-one on $U$). It is not difficult to explicitly
construct such Fatou gaps. Indeed, let $G$ have $m$ edges $\ell_0$,
$\dots$, $\ell_{m-1}$. For each $i$, let $\fg_i$ be the convex hull of
all points $x\in \ol{H_G(\ell_i)}$ with $\si_3^j(x)\in
\ol{H_G(\si_3^j(\ell_i))}$ for every $j\ge 0$. It is straightforward to
see that $\fg_i$ are infinite gaps such that $\fg_i$ maps to $\fg_j$ if
$\ell_j=\si_3(\ell_i)$. These gaps are called the {\em canonical Fatou
gaps attached to $G$}\index{canonical Fatou gap attached to an
invariant finite gap}.

It is shown in \cite{bopt13a} that, given a gap $G$, the corresponding
invariant geodesic lamination with the listed properties exists and is
unique. It is then called the \emph{canonical invariant geodesic
lamination of $G$}\index{canonical lamination of an invariant gap} and
is denoted by $\lam_G$. By \cite{bopt13a}, the invariant geodesic
lamination $\lam_G$ is generated by an invariant laminational
equivalence relation, which we will denote $\sim_G$. Observe that $G$
can also be an invariant leaf $\ol{0\frac12}$ in which case the
definitions are similar to the above.

Finite invariant gaps $G$ are classified in \cite{bopt13a} into several
categories called gaps of type A, B and D.
This classification mimics Milnor's classification of hyperbolic
components in slices of cubic polynomials and quadratic rational
functions \cite{miln93, M}. In the present paper we are interested in
gaps of type B (from ``Bi-transitive'') and their canonical invariant
geodesic laminations.

\begin{dfn}[Gaps of type B]\label{d:typeb}
Suppose that $G$ is a $\si_3$-invariant gap. As\-sume that its vertices
form one periodic orbit (so that the edges of $G$ form one periodic
orbit of edges too). Moreover, suppose that there is an edge of $G$,
denoted by $M_1$, that separates $G$ from $0$, and another edge of $G$,
denoted by $M_2$, that separates $G$ from $\frac12$. The edges $M_1$
and $M_2$ are said to be \emph{major edges (leaves)} or simply
\emph{majors} of $G$. Then $G$ is said to be an \emph{invariant finite
gap of type B}\index{invariant finite gap of type B}.
\end{dfn}

It is easy to see that major holes of an invariant gap $G$ of type B
are of length greater than $\frac13$ but less than $\frac23$. The next
example illustrates the definitions just given.

\begin{example}\label{fingap2}
Consider the finite gap $G$ with vertices $\frac7{26}$, $\frac{11}{26}$
and $\frac{21}{26}$. This is a gap of type B. The first major leaf
$M_1$ connects $\frac{21}{26}$ with $\frac{7}{26}$ and the second major
leaf $M_2$ connects $\frac{11}{26}$ with $\frac{21}{26}$. The edges of
$G$ form one periodic orbit to which both $M_1$ and $M_2$ belong. The
major hole $H_G(M_1)$ contains $0$ and the major hole $H_G(M_2)$
contains $\frac12$.
\end{example}

\begin{figure}[htp]
\includegraphics[width=6cm]{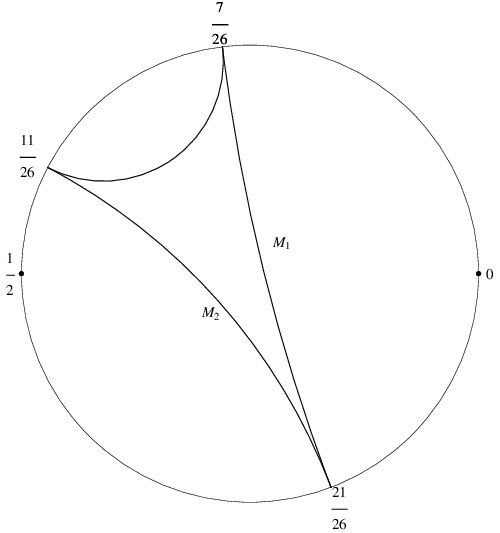}
\includegraphics[width=6cm]{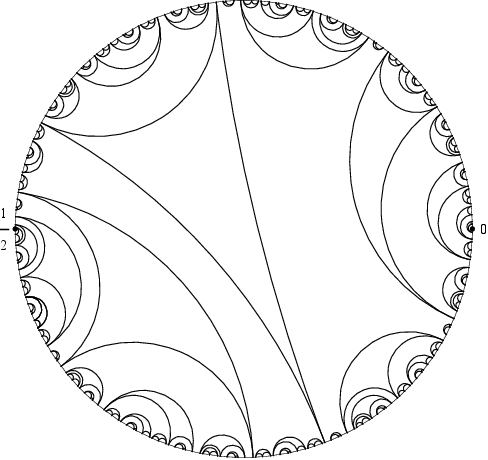}
\caption{The rotational gap described in Example \ref{fingap2} and
its canonical lamination.}
\end{figure}

Consider an invariant finite gap $G$ of type $B$. Denote by $T$ and $N$
its majors and by $U_T$ and $U_N$ the corresponding canonical Fatou
gaps of its canonical invariant geodesic lamination $\lam_G$. Let $G$
have $m$ edges $\ell_0$, $\dots$, $\ell_{m-1}$ ordered in the positive
direction around the circle. For each $0\le i\le m-1$, let $\fg_i$ be
the canonical Fatou gap attached to $G$ at $\ell_i$. Let $T=\ell_t$ and
$N=\ell_n$ with $0<t<n<m-1$ (we may always achieve this by renumbering
the edges of $G$). Then there are $n-t-1$ edges of $G$ in the positive
direction from $T$ and $N$ and $m-n-1+t$ edges of $G$ in the positive
direction from $N$ to $T$.

Let $T=\ol{ab}$ with $a<0<b$. Let $a', b'$ be the points on the
boundary of $U_T$ such that $\si_3(a')=\si_3(a)$ and
$\si_3(b')=\si_3(b)$. It is easy to see \cite{bopt13a} that we have
$a<b'<0<a'<b$. Choose two $\si_3^m$-fixed points on the boundary of
$U_T$ and denote them $x$ and $y$ so that $a<x<y<b$. Clearly, $x<0<y$,
and, moreover, $a<x<b'<0<a'<y<b'$. Then the orbit of the point $y$ has
exactly one point in every hole of $G$. If we connect them in the
positive order, then we obtain a new $\si_3$-invariant finite gap $H$.

It is easy to see that $H$ is of type B. Indeed, let $\hat y\in
H_G(\ell_{t-1})$ be a unique point from the orbit of $y$ in
$H_G(\ell_{t-1})$. Then $\ol{\hat y y}$ is an edge of $H$, which
separates $0$ from $H$ because $\hat y<0<y$. A similar edge of $H$ can
be found on the opposite side of it cutting $\frac12$ off $H$. By
definition this implies that $H$ is of type $B$. Clearly, the canonical
invariant geodesic laminations $\lam_G$ and $\lam_H$ are distinct.
However it is not difficult to show that they have admissible
quadratically critical modifications that are linked. Moreover those
modifications are very natural, if not the only natural, quadratically
critical modifications of the respective invariant geodesic
laminations.

More precisely, insert a critical quadrilateral $R_G=\ch(a, b', a', b)$
in $U_T$. Insert a similarly defined critical quadrilateral $L_G$ in
$U_N$. Using Thurston's pullback construction we can complete the
non-invariant geodesic lamination formed by $\lam_G$ together with
$R_G$ and $L_G$ to the invariant geodesic lamination $\lamm_G$. In the
same fashion we can modify the canonical invariant geodesic lamination
$\lam_H$ into an invariant geodesic lamination $\lamm_H$. We claim that
$\lamm_G$ and $\lamm_H$ are linked.

Indeed, to observe that we need to figure out where on $(\hat y, y)$
the other vertices of $R_H$ are located. Now, since the point
$\si_3(y)$ belongs to the arc $(\si_3(a), \si_3(b'))$, it follows that
the vertex $y'$ of $R_H$ with the same $\si_3$-image as $y$ must belong
to $(a, b')$. Similarly the vertex $\hat y'$ of $R_H$ with the same
image as $\hat y$ must belong to $(b', a')$. Therefore the
quadrilaterals $R_G$ and $R_H$ are strongly linked. In the same fashion
one can show that the quadrilaterals $L_G$ and $L_H$ are strongly
linked. Therefore, $\lamm_G$ and $\lamm_H$ are strongly linked whereas
they are certainly not equal.
 This shows that apparently there are no general claims analogous
to \Cref{t:admi-link} applicable to all invariant geodesic laminations
(with admissible linked or essentially equal modifications) rather than
only to their perfect or Siegel parts.

\subsection{Quadratically almost perfect-Siegel non-capture case}\label{ss:partic}

As we explained above, an important issue is that of assigning
admissible quadratically critical portraits to geodesic laminations
with periodic Fatou gaps of degree greater than one (as before we
consider \emph{non-capture} geodesic laminations). In what follows we
define \emph{legal quadrilaterals}; they give rise to quadratically
critical portraits with desired properties. A quadratically critical
pattern formed by legal quadrilaterals will be called a \emph{legal
quadratically critical pattern}. Let us emphasize that the choice of
legal quadratically critical patterns should be laminational in the
following sense: together with a legal quadratically critical pattern
we should be able to choose an invariant geodesic lamination that has
this quadratically critical pattern but also is in essence the same or
very close to the original geodesic lamination. Two laminational
equivalence relations will be called \emph{linked} if they have linked
legal quadratically critical patterns. We aim at defining the above
notions so that two laminational equivalence relations that are linked
will have to coincide. We achieve this goal in the current section in
particular case of so-called \emph{quadratically almost perfect-Siegel}
laminational equivalence relations.

Now we need to define the concepts listed above. In doing so we are
motivated by the case of $\si_2$, which we will now consider. To begin
with, choose a critical leaf $\di=\ol{0\frac12}$ with fixed endpoint
$0$. Then apply classic Thurston's step-by-step pullback construction
agreeing that, in case of ambiguity,  we  will choose \textbf{all}
possible consistent pullbacks of the existing leaves. Thus, at the
first step we will add to $\di$ the leaves, $\ol{0\frac14},$
$\ol{\frac14 \frac12},$ $\ol{\frac12 \frac34},$ $\ol{\frac34 0}$. This
creates a quadrilateral (actually a square) with vertices $0, \frac14,
\frac12$ and $\frac34$ and diagonal $\di$ so that the quadrilateral is
represented as a union of two triangles.

If we continue in the same fashion, we will add to these two triangles
four more triangles adjacent to the original two at their ``outer''
edges, i.e. at edges not equal to $\di$. Continuing in this fashion we
will in the end \textbf{tile} the closed disk $\cdisk$ into triangles,
each of which eventually maps to one of the original ``big'' triangles,
then collapses to $\di$ and then collapses further to the singleton
$\{0\}$. The leaves that we add form a null sequence and accumulate to
points of the unit circle $\uc$. The constructed invariant geodesic
lamination has countably many leaves so that its perfect part coincides
with the empty geodesic lamination. We will call this invariant
geodesic lamination \emph{basic quadratic geodesic lamination} and
denote it by $\lam_2^{bas}$. One can consider other versions of
Thurston's pullback construction starting with $\di$, but they will all
be subsets of $\lam_2^{bas}$ and will all have the empty geodesic
lamination as the perfect part.

On the other hand, it is well-known that with any other choice of a
critical leaf $\ell$ there exists a non-trivial invariant laminational
equivalence $\sim$ such that the corresponding invariant geodesic
lamination $\lam_\sim$ is compatible with $\ell$ in the following
sense: leaves of $\lam_\sim$ are not linked with $\ell$. In other
words, $\di$ is the unique critical leaf of $\si_2$ that is compatible
only with the trivial invariant laminational equivalence relation.

A similar construction can be implemented inside any $\si_2$-periodic
critical Fatou gap $U$ of period, say, $n$, however there will be an
important distinction. Indeed, it is well-known that $U$ has a refixed
edge $M_0=M(0)$ and all other edges of $U$ are appropriate pullbacks of
$M(0)$. To clarify the picture, we will denote those pullbacks of
$M(0)$ using the semiconjugacy between $\si_2^n|_{\ol{U}}$ and $\si_2$
that collapses all edges of $U$ to points of the unit circle so that
$M(\frac12)$ is the pullback of $M(0)$ since $\frac12$ is the $\si_2$
pullback of $0$. Insert in $U$ a critical quadrilateral $Q$ coinciding
with the convex hull of $M(0)$ and its sibling edge $M(\frac12)$ of $U$
(clearly, $M(\frac12)$ is the unique edge of $U$ distinct from $M(0)$
and such that $\si^n_2(M(\frac12))=\si^n_2(M(0))$.

Then there are two edges of $U$, namely, $M(\frac14)$ and $M(\frac34)$,
that map to $M(\frac12)$ under $\si_2^n$. We connect $M(\frac14)$ with
the closest edge of $Q$ to it  (evidently, this edge connects endpoints
of $M(0)$ and $M(\frac12)$) and thus construct a quadrilateral that
maps onto $Q$ under $\si_2^n$. In the same fashion we treat
$M(\frac34)$ and construct one more quadrilateral that maps onto $Q$
under $\si_2^n$. Then we continue to implement Thurston's pullback
construction and in the end construct a countable invariant geodesic
sublamination inside $U$. In fact, this sublamination is the preimage
of the invariant geodesic lamination constructed in the previous
paragraph under the semiconjugacy collapsing edges of $U$ to points of
the unit circle.

Suppose now that a $\si_d$-invariant geodesic lamination $\lam_\sim$
generated by a laminational equivalence relation $\sim$ is given.
Recall that a periodic Fatou gap $U$ of period $n$ of a laminational
equivalence relation $\sim$ is said to be \emph{quadratic}\index{Fatou
gap!quadratic} if $\si_d^n|_{\bd(U)\cap \uc}$ is two-to-one (except
perhaps for points of $\bd(U)\cap \uc$ that are images of
$\si_d^n$-critical edges of $\bd(U)$; the latter may have more than two
$\si_d^n$-preimages). We need a construction similar to the above in
the case of critical quadratic periodic Fatou gaps $U$ of period $n$.
The construction is inspired by the fact that $\si_d^n:\bd(U)\to
\bd(U)$ is monotonically semiconjugate to $\si_2$ and in a lot of cases
the semiconjugacy can be extended onto finite gaps attached to $U$.
First though we study the dynamics of periodic Fatou gaps of degree
greater than one.

In the following lemma, we talk about topological polynomials and their
bounded Fatou domains instead of talking about laminations and their
Fatou gaps. However, a translation from one language into the other is
straightforward.

\begin{lem}[Dynamics of periodic Fatou domains]\label{l:period-fatou}
Let $\Omega$ be a periodic bounded Fatou domain of a topological
polynomial $f$ of degree greater than one. Suppose that the period of
$\Omega$ is $n$. Then one of the following holds.

\begin{enumerate}

\item All the sets $f^i(\ol{\Omega})$, where $i=0$, $\dots$, $n-1$,
    are pairwise disjoint.

\item There exists $m<n$ such that $n=mk$ for some integer $k>1$, the
    set $Y=\bigcup^{k-1}_{l=0}f^{lm}(\ol{\Omega})$ is connected, and
    the sets $Y$, $f(Y)$, $\dots$, $f^{m-1}(Y)$ are pairwise
    disjoint. Moreover, the intersection $\bigcap^{k-1}_{l=0}
    f^{ml}(\ol{\Omega})$ is a singleton.
\end{enumerate}

\end{lem}

\begin{proof}
Consider the orbit of $\ol{\Omega}$, that is the union
$X=\bigcup^{n-1}_{i=0} f^i(\ol{\Omega})$. Clearly,
$X=\bigcup^{m-1}_{j=0}f^j(Y)$ where $Y$ is the component of $X$
containing $\Omega$, and $n=mk$ for some integer $k\ge 1$; it follows,
that $f^m(Y)=Y$. If $k=1$, then $Y=\ol{\Omega}$, which simply means
that the sets $\ol{\Omega}$, $f(\ol{\Omega})$, $\dots,$
$f^{n-1}(\ol{\Omega})$ are pairwise disjoint. This corresponds to case
(1) of the lemma.

Suppose that $k>1$. Then $Y=\bigcup^{k-1}_{l=0} f^{lm}(\ol{\Omega})$.
Now, it is well-known that $f^m|_Y$ must have a fixed point, say, $a$;
since by definition the sets $Y$,  $\dots$, $f^{m-1}(Y)$ are pairwise
disjoint, the point $a$ is of period $m$. Clearly, $a\in
\bigcap^{k-1}_{l=0} f^{lm}(\ol{\Omega})$. Geometrically this means that
$\ol{\Omega}$ ``rotates'' around $a$ under iterations of $f^m$ so that
after $k$ steps it maps back onto itself. Observe that sets
$f^{im}(\ol{\Omega})$ and $f^{jm}(\ol{\Omega})$ with $0\le i<j<k$
cannot intersect other than at $a$ because otherwise this will create
points of $\bd(\Omega)$ ``shielded' from infinity, which is impossible.
\end{proof}

Based upon \Cref{l:period-fatou}, we can give the following definition.

\begin{dfn}\label{d:fatou-types}
In case (1) of \Cref{l:period-fatou} we say that the Fatou domain $U$
(and the corresponding Fatou gap of the corresponding invariant
geodesic lamination/laminational equivalence relation) is of
\emph{non-rotational type}\index{Fatou domain/gap of non-rotational
type}. In case (2) of \Cref{l:period-fatou} we say that the Fatou
domain $U$ (and the corresponding Fatou gap of the corresponding
invariant geodesic lamination/laminational equivalence relation) is of
\emph{rotational type}\index{Fatou domain/gap of rotational type}.
\end{dfn}

Now let us discuss well-known facts concerning finite periodic gaps of
invariant geodesic laminations and stated here without proof. Let $U$
be a Fatou gap of an invariant geodesic lamination. Say that \emph{a
finite gap $G$ is attached to $U$ at a leaf $M$} if $U$ and $G$ share
an edge $M$\index{attached gaps}. Recall also that a gap (leaf) $G$ of
$\lam_\sim$ is said to be \emph{periodic (of period $s$)} if $s$ is the
least number such that $\si_d^s(G)=G$. The nature of $\si_d^s|_G$ may
be different though.

\begin{dfn}[Types of finite periodic gaps and leaves]\label{d:fin-gap-typ}
Let $G$ be a finite periodic gap of period $s$. Then $\si_d^s:G\to G$
may be the identity map. In this case we say that $G$ is a \emph{fixed
return} gap \index{gap!fixed return}. Otherwise $\si_d^s:G\to G$ is
``cyclic'', and we say that $G$ is a \emph{cyclic return
gap}\index{gap!cyclic return}. Similar analysis and terminology apply
to periodic leaves. If a periodic leaf $\ell$ of period $s$ is such
that $\si_d^s:\ell\to \ell$ is the identity map, then we say that
$\ell$ is a \emph{fixed return}\index{leaf!fixed return} leaf. If
$\si_d^s:\ell\to \ell$ flips $\ell$, then we say that $\ell$ is a
\emph{flip return leaf}\index{leaf!flip return}. If
$\si_d^i(\ell)\cap\ell\ne\0$ for some $0<i<s$, then $\ell$ is an edge
of a finite cyclic return gap $G$ and we say that $\ell$ is a
\emph{fixed return leaf of cyclic type}\index{leaf!fixed return, of
cyclic type}.
\end{dfn}

The analysis above shows that if a finite periodic gap $G$ is attached
to a periodic Fatou gap $U$ then the following cases are possible.

\begin{enumerate}

\item The gap $G$ is fixed return. In this case the edge of $U$, at
    which $G$ is attached to $U$, has period that is a multiple of
    the period of $U$.

\item The gap $G$ is cyclic return. Then such gap $G$ is unique for
    $U$ and $U$ is a Fatou gap of rotational type. Moreover, the edge
    of $U$ at which $G$ is attached to $U$, must be a fixed return
    leaf of cyclic type of the same period as $U$.

\end{enumerate}

We are ready to define legal critical quadrilaterals associated with a
critical quadratic $n$-periodic Fatou gap $U$ of a $\si_d$-invariant
laminational equivalence relation $\sim$. Recall that we also want to
define the geodesic lamination containing the critical quadrilaterals
in question in such a way that it is  not very different from the
original geodesic lamination containing $U$. Let $M$ be the refixed
edge of $U$, and let $M'$ be edge of $M$ such that $\si_d(M)=\si_d(M')$
($M'$ is the \emph{sibling} edge of $M$). Critical quadrilaterals $Q$
that we associate with $U$ will be either the convex hull of $M$ and
$M'$ (the construction of $Q$ is then analogous to the case of $\si_2$,
and $Q$ is called \emph{trivial}) or convex hulls of certain edges of a
finite gap $G$ attached to $U$ at $M$ and their sibling edges coming
from the sibling gap of $G$ attached to $U$ at $M'$ (then $Q$ is called
\emph{non-trivial}). In the latter case the construction of $Q$ and the
corresponding new geodesic lamination containing $Q$ must involve
erasing  the entire grand orbit of $M$.

Let us now proceed with the construction. First assume that there is no
finite gap attached to $U$ at $M$. In this case, we associate with $U$
only the critical quadrilateral obtained as the convex hull of $M$ and
its sibling edge $M'$. This is similar to the case of $\si_2$
considered above. Otherwise suppose that $G$ is a finite periodic gap
attached to $U$ at $M$. Then there is a sibling-gap $G'$ of $G$ that is
attached to $U$ at $M'$. Clearly, $\si_d^n$ maps $G'$ to $G$. Consider
two cases.

(1) If $G$ is a fixed return gap, we first erase the grand orbit of $M$
from $\lam$. Then any remaining edge of $G$ can be connected to its
sibling edge of $G'$ to create a critical quadrilateral $Q$. Finally,
$Q$ must be pulled back to create the corresponding geodesic
lamination. Then such critical quadrilateral $Q$ is said to be
\emph{legal} and the corresponding invariant geodesic lamination is
called a \emph{legal modification} of $\lam_\sim$. Observe that if we
simply erase the grand orbit of $M$ we get a new geodesic lamination
$\hlam$ that has a periodic Fatou gap containing $U$ and of the same
period and the same degree as $U$; basically, in the new gap finite
concatenations of leaves replace appropriate leaves from the grand
orbit of $M$. Clearly, $\hlam$ is proper and generates $\sim$ (and
$\lam_\sim$) in the usual way: two points are $\sim$-equivalent if and
only if they can be connected with finite chain of leaves of $\hlam$.
And in the sense of $\hlam$ we do literally the same as was done in the
$\si_2$-case: construct a critical quadrilateral based upon a refixed
edge of a gap and its sibling.

(2) Assume that the period of $G$ is $m<n$ and $n=mk$ for some $k>1$.
By \Cref{l:period-fatou}, the gap $G$ is attached to $U$ at the refixed
edge $M$ of $U$, the gap $G$ is of cyclic type, $\si_d^m$ acts on $G$
as ``rotation'', and only after $\si_d^m$ is $k$ times applied to $G$
will we have the identity map on $G$. Thus, each edge of $G$
``rotates'' under $\si_d^m$, and there are $k$ edges in its orbit under
this ``rotation''. Hence the number $kl$ of edges of $G$ is a multiple
of $k$. Consider now two separate cases: $l=1$ and $l>1$.

(a) Suppose that $l=1$. Then, as usual, we insert a critical
quadrilateral $Q$ based upon $M$ and $M'$, and pull $Q$ back to
construct the corresponding geodesic lamination. In particular, in this
case there is a unique critical quadrilateral $Q$ associated with $M$.

(b) Suppose that $l>1$. We can think of $\si_d^m$ and its action on $G$
as follows. Choose $l$ consecutive edges of $G$; they will all be in
different orbits, and under the action of $\si_d^m$ this segment of
$\bd(G)$ maps so that its images are pairwise disjoint (except for the
endpoints) segments of $\bd(G)$ until under $\si_d^n=(\si_d^m)^k$ is
maps back to itself as the identity map.

One can insert an $l+1$-gon into $G$ as follows. Choose a segment $I$
of $\bd(G)$ concatenating $l$ consecutive edges of $G$ so that one of
these edges is $M$. Take the convex hull $\ch(I)$ of $I$. It follows
that $\ch(I)$ is an $(l+1)$-gon, which ``rotates'' inside $G$ under the
action of $\si_d^m$ until it comes back to itself under $\si_d^n$, the
first return map being the identity. Observe that the choice of $I$ is
by no means unique. The orbit of $\ch(I)$ under $\si_d^m$ consists of
$k$ gaps with pairwise disjoint interior; these gaps are
``concatenated'' at their appropriate common vertices. The complement
in $G$ to the union of $\si_d^m$-images of $\ch(I)$ is another finite
gap $T$ with $k$ edges, which form one cycle under $\si_d^m$.

This construction is not unique as one can choose a segment $I$ in
several ways. In fact, it is easy to see that there are $l$ distinct
choices of a gap $T$ inside $G$, and, accordingly, $l$ distinct cycles
of sets like $\ch(I)$ ``rotating'' around $T$. In each case there is
exactly one segment $I$ of the boundary of $G$ that contains $M$. If we
now erase $M$ and its entire grand orbit, then we obtain an invariant
geodesic lamination similar to the one described above in case (1). The
gap $U$ will again be enlarged, and all leaves from the grand orbit of
$M$ will be replaced by finite concatenations of leaves (for example,
$M$ itself will be replaced by the remaining edges of $\ch(I)$, etc).
As before, the period of $U$ and the period of the newly constructed
gap containing $U$ are equal, and the same can be said about their
degrees.

Thus, we can erase the grand orbit of $M$ but on the other hand add the
gap $T$ as above and its grand orbit. This yields a new geodesic
lamination $\hlam$ similar to the geodesic lamination $\hlam$ from case
(1). Observe that in $\hlam$ $U$ is enlarged at the expense of $G$, and
$G$ is replaced by a smaller gap $T$. As before, $\hlam$ is a proper
geodesic lamination, which generates $\sim$ in the usual way; we simply
insert a critical quadrilateral $Q$ based upon an edge of $\ch(I)$ and
its sibling edge, and pull $Q$ back to construct the corresponding
geodesic lamination. This can be done in $2l-1$ ways.

Since in case (2)(b) we are replacing the gap $G$ by a smaller gap $T$,
then in all pullbacks of $G$ the corresponding pullbacks of $T$ will
have to appear. The original gap $U$ is enlarged by adding to it
$\ch(I)$ and all its pullbacks.
It follows that the new geodesic lamination (which is evidently proper
as any geodesic lamination remains proper after one erases some of its
leaves) generates the same laminational equivalence relation.

However, in case (1), the picture is more sensitive. In this case,
there is a situation in which erasing $M$ and its grand orbit leads to
a significant change in the geodesic lamination in question and
contradicts our desire to not change it too much. Indeed, suppose that
$G$ is a finite gap attached to $U$ at $M$, and there exists a critical
gap $H$ that eventually maps to $G$. Then there must exist several
Fatou gaps attached to $H$ at its edges that are pullbacks of $M$.
Erasing $M$ and its pullbacks will result into these Fatou gaps merging
into one Fatou gap of higher degree. Thus, the structure of the new
geodesic lamination with the critical quadrilateral $Q$ described above
will be very different from the original. Hence the construction above
is not applicable if there exist critical gaps that are preimages of
$G$. Therefore when defining legal modifications of periodic quadratic
Fatou gaps we always assume that \emph{no critical gap is mapped to a
fixed return gap attached to $U$ at its refixed edge $M$ or to $M$
itself}.

We are ready to define legal quadrilaterals and legal modification of
geodesic laminations. However first we need a useful general
definition.

\begin{dfn}[Geolaminational collections]\label{d:lam-col}
If $\mathcal Y=(Y_1, \dots, Y_k)$ is a collection of gaps or leaves and
there exists a $\si_d$-invariant geodesic lamination $\lam$ such that
$Y_1$, $\ldots$, $Y_k$ are gaps or leaves of $\lam$, then we will call
$\mathcal Y$
\emph{($\si_d$-)geolaminational}\index{collection!geolaminational}.
\end{dfn}

Recall that by a \emph{full}\index{full collection of critical
quadrilaterals} collection of critical quadrilaterals we mean a
collection such that on the boundaries of components of its complement
the map $\si_d$ is one-to-one except perhaps for boundary critical
chords (clearly, such a collection must consist of $d-1$ critical
quadrilaterals).

\begin{dfn}[Legal objects]\label{d:canon} Critical quadrilaterals
constructed in (1) and (2) for $\sim$ and $\lam_\sim$  are said to be
\emph{legal}\index{legal quadrilateral}. A critical quadrilateral is
also called \emph{legal} if it is contained in a finite critical set of
a geodesic lamination. An full ordered  geolaminational collection of
legal critical quadrilaterals of $\sim$ (and $\lam_\sim$) is called a
\emph{legal quadratically critical portrait} of $\sim$ (and
$\lam_\sim$). The corresponding geodesic pullback laminations are
called \emph{legal modification}\index{legal modification} of
$\lam_\sim$.

Recall that legally modifying a lamination is a two step process. At
the first step, we replaced all periodic quadratic Fatou gaps by
possibly larger gaps. A quadratic gap $U$ gets larger if a refixed edge
of it and all edges in its grand orbit are erased. In this case the
larger gap $\widetilde U$ is said to be a \emph{legal modification} of
$U$.
\end{dfn}

Let us again emphasize that  legal quadratically critical portraits of
geodesic laminations do not always exist. However they can definitely
be constructed if \emph{no critical gap is mapped to a fixed return gap
attached to $U$ at its refixed edge $M$ or to $M$ itself}.

If they do exist then, by definition,  the corresponding geodesic
lamination is a  quadratically critical geodesic lamination. It is easy
to see that legal modifications differ from the original geodesic
lamination $\lam_\sim$ in that they tune critical gaps of $\lam_\sim$
and gaps from their grand orbit, but otherwise $\lam_\sim$ remains the
same. Even though legal quadrilaterals and legal modifications of
geodesic laminations are not always well-defined, we can define the
class of laminational equivalence relations and their geodesic
laminations for which legal quadrilaterals and legal modifications are
not only well-defined but also have the following crucial property: for
them the fact that legal modifications are linked implies that the
geodesic laminations coincide.


\begin{dfn}\label{d:alm-per-lam} Let $\sim$ be an invariant non-capture
laminational equivalence relation such that all periodic Fatou gaps of
$\lam^{pS}_\sim$ of degree greater than one are quadratic. Then we say
that $\lam_\sim$ and $\sim$ are \emph{quadratically almost
perfect-Siegel non-capture} invariant geodesic lamination
\index{invariant geodesic lamination!quadratically almost
perfect-Siegel non-capture} and \emph{quadratically almost
perfect-Siegel non-capture} \index{laminational equivalence
relation!quadratically perfect-Siegel non-capture}  laminational
equivalence relation.
\end{dfn}

It is easy to see that $\sim$ is quadratically almost perfect-Siegel
non-capture if and only if all critical sets of $\lam^{pS}_\sim$ are
either finite sets or periodic quadratic Fatou gaps. The terminology is
a little awkward but explicit; indeed, in the above definition
$\lam_\sim$ is not itself perfect-Siegel as $\lam^{pS}_\sim$ does not
have to coincide with $\lam_\sim$ but the difference between the two is
of ``quadratic periodic nature'': $\lam^{pS}$ has no capture Fatou
domains and all periodic Fatou domains $U$ of $\lam^{pS}$ are
quadratic. Clearly, $U$ may contain leaves of the original geodesic
lamination $\lam_\sim$, but by definition there are no Siegel gaps of
$\lam_\sim$ in $U$, and no more than countably many leaves of
$\lam_\sim$ in $U$.

Observe that if an invariant laminational equivalence relation $\sim$
is quadratically almost perfect-Siegel non-capture then all its
periodic Fatou gaps are either Siegel or quadratic, and there are no
critical preperiodic Fatou gaps. However the opposite statement is not
true. Indeed, consider a cubic invariant geodesic lamination obtained
as follows. Let $\ol{0\frac12}=\di$ be the unique chord in $\disk$ with
$\si_3$-invariant endpoints. Let $\fg_a$ be the convex hull of all
points with orbits \textbf{a}bove $\di$ and $\fg_b$ be the convex hull
of all points with orbits \textbf{b}elow $\di$. Then $\di$ is a common
edge of both gaps. Using Thurston's pullback scheme one can construct a
unique cubic invariant geodesic lamination $\lam_{ab}$ that has both
gaps. Clearly, $\lam_{ab}$ has two invariant critical quadratic Fatou
gaps and no other critical sets. In particular, all its periodic Fatou
gaps are either Siegel or quadratic, and there are no critical
preperiodic Fatou gaps. However, the perfect part of $\lam_{ab}$ is the
empty geodesic lamination, which, evidently, has an invariant cubic gap
(the entire closed disk). Hence $\lam_{ab}$ is \textbf{not}
quadratically almost perfect-Siegel non-capture.

\emph{Let us fix a quadratically almost perfect-Siegel non-capture
geodesic lamination $\lam_\sim$.} Say that a gap $G$ is \emph{almost
attached to $U$}\index{almost attached gaps} if either $G$ is attached
to $U$, or $G$ is attached to a gap $H$ and $H$ is attached to $U$.

\begin{lem}\label{l:relief}
Suppose that $U$ is a quadratic periodic Fatou gap of $\lam_\sim$. Then
there are no critical sets of $\lam_\sim$ mapped to edges of $U$ or to
gaps attached to $U$; in particular, there are no critical gaps
attached to $U$ and $U$ has no  critical edges. Moreover, if $W$ is a
Fatou gap of $\approx_{\lam_\sim^{pS}}$ then no Fatou gap of
$\lam_\sim$ is almost attached to $W$. In particular, only finite gaps
$G$ of fixed return type can be attached to a Fatou gap $W$ of
$\approx_{\lam_\sim^{pS}}$ at edges of $W$, and all their other edges
are non-isolated in $\lam_\sim$ from outside of $G$.
\end{lem}

Observe that $W$ above must be a quadratic gap as follows from the
definitions and assumptions.

\begin{proof}
Let a critical set (gap or leaf) $H$ map to a leaf or to a gap $G$
attached to $U$. We may assume that $U$ is critical. If $H$ is attached
to $U$, then there must exist a Fatou gap $V$ attached to $H$ and such
that $\si_d(V)=\si_d(U)$. Thus, the Fatou gap $W$ of $\lam_\sim^{pS}$
containing $U\cup H\cup V$ maps forward under $\si_d$ in at least
three-to-one fashion, a contradiction. Hence $H$ is not attached to
$U$. Then there are at least two Fatou gaps attached to $H$ and mapped
to $U$ under the appropriate power of $\si_d$. Similar to the above,
consider the Fatou gap $W$ of $\lam_\sim^{pS}$ containing $H$ and these
two gaps. Clearly, $W$ is critical. If it contains $U$, then the degree
of $\si_d|_{\bd(W)}$ is at least three, a contradiction. If $W$ does
not contain $U$, then either $W$ is not periodic, or $W$ is a critical
periodic Fatou gap of $\lam_\sim^{pS}$ such that in its orbit there is
another critical gap, namely the one that contains $U$. In all these
cases we arrive at a contradiction.

Now, suppose that $W$ is a Fatou gap of $\approx_{\lam_\sim^{pS}}$.
Thus there are at most countably many leaves of $\lam_\sim$ inside $W$.
If there exists a Fatou gap $V$ of $\lam_\sim$ attached to $W$ at an
edge $G$, or a Fatou gap $V$ attached to a gap $G$ that is attached to
$W$, then we can unite $W$, $G$ and $V$ to create a ``non-dynamic'' gap
containing $W$, $G$ and $V$ and therefore containing at most countably
many leaves of $\lam_\sim$. This would show that $W$ cannot be a gap of
$\approx_{\lam_\sim^{pS}}$ 
a contradiction. The last claim of the lemma now easily follows.
\end{proof}

Let $U$ be a critical Fatou gap of $\approx_{\lam^{pS}_\sim}$ of degree
greater than one. By the assumptions, $U$ is periodic, say, of period
$n$. By \Cref{l:relief} all periodic gaps attached to $U$ are finite
fixed return gaps. Such periodic gaps $G$ give rise to the difference
between the geodesic lamination generated by $\approx_{\lam^{pS}_\sim}$
and the geodesic lamination $\lam^{pS}$. Indeed, a gap $G$ attached to
$U$ has an edge $\ell$ separating $U$ and $G$ that is not a part of
$\lam^{pS}$; the same holds for similar leaves and their pullbacks.
Otherwise the geodesic lamination generated by
$\approx_{\lam^{pS}_\sim}$ and $\lam^{pS}$ coincide.

\begin{cor}\label{c:legal-ok}
Legal modifications of $\lam_\sim$ are well-defined.
\end{cor}

This justifies the next definition.

\begin{dfn}\label{d:link-our-lams}
Suppose that $\sim_1$ and $\sim_2$ are quadratically almost
perfect-Siegel non-capture laminational equivalence relations such that
some legal modifications $\lam_{\sim_1}^{leg}$ and
$\lam_{\sim_2}^{leg}$ are linked or essentially equal. Then we say that
$\sim_1$ and $\sim_2$ are \emph{linked} or \emph{essentially equal}.
\end{dfn}

Now we can state the main theorem of this section.

\begin{thm}\label{t:alm-qua-lam-link} Suppose that $\sim_1$ and
$\sim_2$ are quadratically almost perfect-Siegel non-capture
laminational equivalence relations that are linked or essentially
equal. Then  $\sim_1=\sim_2$. Moreover, if $\qcp_1$ and $\qcp_2$ are
linked or essentially equal legal quadratically critical portraits of
$\lam_{\sim_1}$ and $\lam_{\sim_2}$ then critical patterns of $\qcp_1$
and $\qcp_2$ in $\lam_{\sim_1}=\lam_{\sim_2}$ coincide.
\end{thm}

\begin{proof} By \Cref{t:admi-link}, we have $\lam^{pS}_{\sim_1}=\lam^{pS}_{\sim_2}$.
Therefore,
$\approx_{\lam^{pS}_{\sim_1}}=\approx_{\lam^{pS}_{\sim_2}}=\approx$.
Consider a periodic gap $U$ of period $n$ of $\approx$. Collapse all
edges of $U$ by a map $\psi$ that semiconjugates $\si_d^n|_{\bd(U)}$
and $\si_2$. Consider the two induced under the action of $\psi$ by the
restrictions of $\sim_1$ and $\sim_2$ onto $U$ \textbf{quadratic}
laminational equivalence relations $\approx_1$ and $\approx_2$ and the
corresponding \textbf{quadratic} invariant geodesic laminations
$\lam_{\approx_1}$ and $\lam_{\approx_2}$. Since $\lam^{leg}_1$ and
$\lam^{leg}_2$ are linked or essentially equal, it follows that
$\lam_{\approx_1}$ and $\lam_{\approx_2}$ are linked or essentially
equal. By \cite{thu85}, we have $\lam_{\approx_1}=\lam_{\approx_2}$ and
hence the restrictions of $\sim_1$ and $\sim_2$ on $U$ coincide.
Applying this argument to all periodic gaps of $\lam^{pS}_{\sim_1}=
\lam^{pS}_{\sim_2}$, we conclude that $\sim_1=\sim_2$ as desired. The
last claim of the lemma follows from definitions, so we leave it to the
reader.
\end{proof}

We can also establish a simple but useful version of
\Cref{t:alm-qua-lam-link}. To this end we need
Definition~\ref{d:link-patt}. Recall that invariant geodesic
laminations are called \emph{regular}\index{invariant geodesic
lamination!regular} if their critical sets are pairwise disjoint except
for the case when a critical leaf is a boundary edge of an all-critical
set.

\begin{dfn}\label{d:link-patt}
Suppose that $(\lam_{\sim_1}, \mathcal Z_1)$ and $(\lam_{\sim_2},
\mathcal Z_2)$ are regular invariant geodesic laminations with critical
patterns. Then we say that $(\lam_{\sim_1}, \mathcal Z_1)$ and
$(\lam_{\sim_2}, \mathcal Z_2)$ are \emph{linked} or \emph{essentially
equal} if there are two linked or essentially equal quadratically
critical portraits $\qcp_1$ and $\qcp_2$ inserted, respectively, in
sets from $\mathcal Z_1$ and $\mathcal Z_2$ (by definition then it
follows that $\lam_{\sim_1}$ and $\lam_{\sim_2}$ are intrinsically
linked or essentially equal).
\end{dfn}

\Cref{c:alm-qua-lam-link} now easily follows.

\begin{cor}\label{c:alm-qua-lam-link}
If $(\lam_{\sim_1}, \mathcal Z_1)$ and $(\lam_{\sim_2}, \mathcal Z_2)$
are quadratically almost per\-fect-Siegel non-capture invariant
geodesic laminations with critical patterns that are linked or
essentially equal, then $(\lam_{\sim_1}, \mathcal Z_1)= (\lam_{\sim_2},
\mathcal Z_2)$.
\end{cor}

\chapter{Applications: spaces of topological polynomials}\label{l:appli}

In this section we apply the tools developed above.

\section{The local structure of the space of all simple dendritic
polynomials}\label{ss:loc-dendr}

The results obtained in this subsection were described in the
Introduction in Subsection~\ref{ss:main-app}. Recall that a
\emph{$($critically$)$ marked polynomial}\index{polynomial!critically
marked} $P$ is a polynomial of degree $d$ equipped with an ordered
$(d-1)$-tuple $C(P)$ of critical points of $P$ such that the number of
entries of every critical point in $C(P)$ reflects its multiplicity,
i.e., equals the multiplicity minus one. In what follows when talking
about a marked polynomial we use the notation $(P, C(P))$ with
$C(P)=(c_1, \dots, c_{d-1})$ being critical points of $P$. The space of
all marked polynomials $(P, C(P))$ is endowed with the natural product
topology.

If $P$ is dendritic, then  by \Cref{t:kiwi-dendr} (due to Kiwi
\cite{kiwi97}) there exists an invariant laminational equivalence
relation $\sim_P$ such that the filled Julia set $J(P)$ of the
polynomial $P|_{J(P)}$ is monotonically semiconjugate by a map $\psi_P$
to the associated topological polynomial $f_{\sim_P}:J_{\sim_P}\to
J_{\sim_P}$ induced by $\si_d$ on the topological Julia set
$J_{\sim_P}=\uc/\sim_P$; let $\vp_P:\uc\to \uc/\sim_P$ be the
corresponding quotient map . For every point $z\in J(P)$ we set
$G_z=\vp_P^{-1}(\psi_P(z))$; the set $G_z$ is a laminational
counterpart of the point $z$.

A \emph{simple dendritic}\index{polynomial!simple dendritic} polynomial
is defined as a dendritic polynomial $P$ with only simple critical
points and the following property: every pair of distinct critical
points of $P$ can be separated by a pair of (pre)periodic external rays
together with their common landing point. In more combinatorial terms,
a dendritic polynomial $P$ is simple dendritic if there are $d-1$
distinct (and hence disjoint) critical sets $G_{c_1}$, $\dots$,
$G_{c_{d-1}}$ of $\lam_P$. The equivalence of the two definitions
follows from \Cref{l:condense} and \Cref{t:kiwi-dendr}. It follows that
if $(P, C(P))$ is a simple marked dendritic polynomial, then all points
$c_1$, $\dots$, $c_{d-1}$ in $C(P)$ must be distinct. Denote by
$\cmd_d$  the family of all simple (critically) marked dendritic
polynomials of degree $d$.

\begin{lem}\label{l:simpled}
The family $\cmd_d$ of all simple marked dendritic polynomials is an
open subset of the space of all marked dendritic polynomials.
\end{lem}

\begin{proof}
Suppose that $(P, C(P))$ is a simple marked dendritic polynomial. We
need to show that all marked dendritic polynomials in a sufficiently
small neighborhood $\mathcal U$ of $(P, C(P))$ are simple. Consider a
dendritic topological Julia set $J_{\sim_P}$. By definition all $d-1$
critical points of the topological polynomial $f_{\sim_P}$ are
distinct. By definition, there is a collection of (pre)periodic
external rays of $P$ such that the union $\Gamma_P$ of these rays and
their landing points divides the plane into finitely many pieces, and
each piece contains no more than one critical point of $P$. We may
assume that the landing points are all (pre)periodic but not
(pre)critical.

By Lemma \ref{l:gm}, if an open neighborhood $\mathcal U$ of $(P,
C(P))$ is sufficiently small, then, for any marked polynomial $(Q,
C(Q))\in \mathcal U$, there is a union $\Gamma_Q$ of (pre)periodic
external rays and their landing points that is close to $\Gamma_P$ and
has the following property: the external rays in $\Gamma_Q$ have the
same arguments as the external rays in $\Gamma_P$, and pairs of rays in
$\Gamma_Q$ land together if and only if the corresponding pairs of rays
in $\Gamma_P$ land together. Since $C(Q)$ is also close to $C(P)$, it
follows that any two elements of $C(Q)$ are separated by $\Gamma_Q$,
hence $Q$ is simple dendritic.
\end{proof}

\begin{dfn}[Local parameterization of dendritic
polynomials]\label{d:loc-para} Define \newline the following map
$\Psi_d$ from $\cmd_d$ to the space 
$2^{\cdisk^{d-1}}$ of compact subsets of $\cdisk^{d-1}$:

$$\Psi_d(P)=G_{P(c_1)}\times G_{P(c_2)}\times \dots \times
G_{P(c_{d-1})}$$

\noindent and call the sets $\Psi_d(P)$ \emph{postcritical tags of
critically marked dendritic polynomials}\index{tags of critically
marked dendritic polynomials!postcritical}.
\end{dfn}

We show that if we fix a simple critically marked dendritic polynomial
$P$ and a sufficiently small  neighborhood $\mathcal U$ of $P$ in
$\cmd_d$, then the space of all corresponding tags has nice properties.
However first we need to introduce a few new notions and quote a useful
topological result.

\begin{dfn}\label{d:usc} A collection $\mD=\{D_\alpha\}$ of compact
and disjoint subsets of a metric space $X$ is \emph{upper
semicontinuous $($USC$)$}\index{upper semicontinuous!collection} if,
for every $D_\alpha$ and every open set $U\supset D_\alpha$, there
exists an open set $V$ containing $D_\alpha$ so that for each
$D_\beta\in \mD$, if $D_\beta\cap V\ne\0$, then $D_\beta\subset U$. A
decomposition of a metric space is said to be \emph{upper
semicontinuous $($USC$)$}\index{upper semicontinuous!decomposition} if
the corresponding collection of sets is upper semicontinuous.
\end{dfn}

Upper semicontinuous decompositions of separable metric spaces are
studied in \cite[p. 13]{dave86}.

\begin{thm}[\cite{dave86}]\label{t:dav} If $\mD$ is an
upper se\-mi\-con\-ti\-nuous decomposition of a separable metric space
$X$, then the quotient space $X/\mD$ is also a separable metric space.
\end{thm}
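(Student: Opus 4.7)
The plan is to realize this as an instance of the standard Moore--Daverman machinery for quotients by upper semicontinuous decompositions, and then invoke Urysohn's metrization theorem. I would proceed in four steps.

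First, I would establish that the quotient map $q:X\to X/\mD$ is a closed continuous surjection with compact fibers. Continuity is automatic from the definition of the quotient topology; the fibers are compact by the hypothesis that each $D_\alpha$ is compact. To see that $q$ is closed, take a closed set $C\subset X$ and consider its saturation $C^*=\bigcup\{D_\alpha\in\mD:D_\alpha\cap C\ne\emptyset\}$. If $x\notin C^*$, then the decomposition element $D_\alpha$ through $x$ is disjoint from the closed set $C$, so $X\setminus C$ is an open set containing $D_\alpha$. By upper semicontinuity there is an open $V\ni D_\alpha$ so that every $D_\beta$ meeting $V$ is contained in $X\setminus C$; in particular $V\subset X\setminus C^*$. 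Hence $C^*$ is closed and $q(C)$ is closed in $X/\mD$.

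Second, I would verify that $X/\mD$ is Hausdorff and regular. For Hausdorffness, given two distinct elements $D_1,D_2\in\mD$, use normality of the metric space $X$ to find disjoint open neighborhoods $U_1\supset D_1$, $U_2\supset D_2$; then apply upper semicontinuity to shrink each $U_i$ to a saturated open neighborhood $V_i$ of $D_i$ with $V_i\subset U_i$, so that $q(V_1),q(V_2)$ separate the two points of $X/\mD$. Regularity is analogous: given $D_\alpha$ and a closed saturated set $F$ not meeting $D_\alpha$, separate them by disjoint open sets in $X$ (using normality) and then saturate again via upper semicontinuity.

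Third, I would show $X/\mD$ is second countable. Since $X$ is separable metric, it admits a countable base $\{B_n\}$. For each finite union $W=B_{n_1}\cup\cdots\cup B_{n_k}$, let $W^\circ$ be the union of all $D_\alpha\subset W$; by upper semicontinuity $W^\circ$ is open and saturated, and the collection of the $q(W^\circ)$ over all finite subsets of $\{B_n\}$ is countable. One checks it is a base: any open $q(U)$ comes from a saturated open $U\subset X$, and any $D_\alpha\subset U$, being compact, is covered by finitely many $B_n\subset U$, so $D_\alpha\subset W^\circ\subset U$ for some such finite union $W$.

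Fourth, having verified that $X/\mD$ is regular, Hausdorff, and second countable, I would invoke Urysohn's metrization theorem to conclude that $X/\mD$ is metrizable, and separability then follows from second countability. The main obstacle, and essentially the only nontrivial step, is the second-countability argument in the third step: one has to use compactness of each $D_\alpha$ to pass from a cover by base elements to a single finite union, which is exactly where the hypothesis that decomposition elements are compact (rather than merely closed) is indispensable.
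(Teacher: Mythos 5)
The paper does not prove this statement at all: it is quoted from Daverman's book \cite{dave86} and used as a black box, so there is no in-paper argument to compare against. Your proof is a correct and complete rendition of the standard argument (closedness of the quotient map via saturation, regularity and Hausdorffness via normality of $X$ plus upper semicontinuity, second countability via compactness of the decomposition elements, and Urysohn metrization), and it correctly identifies the one place where compactness of the $D_\alpha$ is genuinely needed.
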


In the above situation we call $X/\mD$ the space \emph{generated by
$\mD$} and denote by $\pi_\mD:X\to X/\mD$ the corresponding quotient
map. In what follows we will use a well-known fact given below without
a proof. Recall that by \Cref{d:uppers} a map $F:A\to 2^B$ from a
topological space $A$ to the space of all compact subsets of a
compactum $B$ is \emph{upper semicontinuous}\index{upper
semicontinuous!map} if for every $x\in A$ and every neighborhood
$\mathcal U$ of $F(x)$ there exists open neighborhood $V$ of $x$ such
that $y\in V$ implies $F(y)\subset \mathcal U$.

\begin{lem}\label{l:through}
Let $F:A\to 2^B$ be an upper semicontinuous map from a topological
space $A$ to the space $2^B$ of all compact subsets of a compactum $B$.
Suppose that for any two points $x, y\in A$ either $F(x)=F(y)$, or
$F(x)\cap F(y)=\0$. Then the partition $\mD$ of the set $\bigcup_{x\in
A} F(x)$ into sets $F(x)$ is upper semicontinuous and the map
$\pi_{\mD}\circ F:A\to X/\mD$ is continuous.
\end{lem}

The next theorem is the main theorem of this subsection. It combines
the Theorems on Local Charts for Dendritic Polynomials and Local
Pinched Polydisk Model for Dendritic Polynomials.


\begin{thm}\label{t:local-chart} For a simple critically
marked dendritic polynomial $(P,$ $C(P))\in \cmd_d$ of degree $d$ there
exists a neighborhood $\mathcal U$ of $(P, C(P))$ such that for any two
critically marked dendritic  polynomials $(Q, C(Q)), (R,$ $C(R))\in
\mathcal U$ either $\Psi_d(Q)$ and $\Psi_d(R)$ are disjoint or
$\Psi_d(Q)=\Psi_d(R)$.

Moreover, the map $\Psi_d$ from $\mathcal U$ to the space
$2^{\cdisk^{d-1}}$ of all compact and connected subsets of the polydisk
$\cdisk^{d-1}$ is upper semicontinuous, the partition $\mD$ of the set
$\bigcup_{T\in \mathcal U}\Psi_d(T)$ into subsets $\Psi_d(T), T\in
\mathcal U,$ is upper semicontinuous, and the map $\pi_\mD\circ
\Psi_d:\mathcal U\to \bigcup_{T\in \mathcal U}\Psi_d(T)/\mD$ is
continuous.
\end{thm}

\begin{proof}
Recall that the map $\hPsi_d$ was introduced in
Subsection~\ref{ss:fulden}; this map associates to $(P, C(P))$ the
dendritic invariant geodesic lamination with critical pattern
$(\lam_{\sim_P}, \mathcal Z(P, C(P)))$ where $\mathcal Z(P,
C(P))=(G_{c_1},$ $G_{c_2},$ $\dots,$ $G_{c_{d-1}})$. By \Cref{c:crista}
and by definition the map $\hPsi_d$ is upper semicontinuous. Since by
definition

$$\Psi_d(P, C(P))=\si_d(G_{c_1})\times \dots \si_d(G_{c_{d-1}})$$

\noindent it follows that $\Psi_d$ is upper semicontinuous too. By
\Cref{l:through} to prove the theorem it remains to prove the first
claim of the theorem, that is to prove that there exists a neighborhood
$\mathcal U$ of $(P, C(P))$ such that for any two critically marked
dendritic polynomials $(Q, C(Q)), (R, C(R))\in \mathcal U$ either
$\Psi_d(Q)$ and $\Psi_d(R)$ are disjoint or $\Psi_d(Q)=\Psi_d(R).$ By
\Cref{l:simpled} we may assume that $\mathcal U$ is chosen sufficiently
small so that any marked dendritic polynomial in $\mathcal U$ is
simple. Moreover, since $\Psi$ is upper semi-continuous, we may assume
that for each critical set $C_i\in C(P)$ and each $Q\in\mathcal U$
there exists an open set $W_i$ so that $\si_d|_{W_i}$ is two-to-one and
the corresponding critical set $D_i\in C(Q)$ is contained in $W_i$.

By way of contradiction assume that two simple critically marked
dendritic polynomials $(Q, C(Q)), (R, C(R))\in \mathcal U$ have
non-disjoint sets \linebreak $\Psi_d(Q, C(Q))$ and $\Psi_d(R, C(R))$.
Let us show that if $Q$ and $R$ are sufficiently close to $P$ then this
actually implies that $\Psi_d(Q)=\Psi_d(R)$. Choose a critical set $C$
of $\sim_Q$. Then $\si_d$ maps $C$ forward in a two-to-one fashion. The
vertices of $C$ can be divided between two segments, each of which
contains, say, $m$ vertices of $C$. We can order vertices of $C$ on the
circle so that their collection is the set
$x_1<\dots<x_m<y_1<\dots<y_m$ where the segments mentioned above are
$[x_1,x_m]$ and $[y_1,y_m]$. Thus, for any given $j$ with $1\le j\le
m$, we have that $\si_d(x_j)=\si_d(y_j)=z_j$. Then the $\si_d$-image
$\si_d(C)$ of $C$ coincides with the convex hull of the points
$z_1<z_2<\dots<z_m$.

Now, by the assumption $\Psi_d(Q, C(Q))$ and $\Psi_d(R, C(R))$ are
non-disjoint. Choose a critical set $D$ of $\lam_{\sim_Q}$ and a
critical set $E$ of $\lam_{\sim_R}$ that have the same position in
$\mathcal Z(Q,C(Q))$, respectively, in $\mathcal Z(R,C(R))$, as $C$
does in $\mathcal Z(P,C(P))$. Since $\Psi_d(Q, C(Q))$ and $\Psi_d(R,
C(R))$ are non-disjoint, we have $\si_d(D)\cap \si_d(E)\ne \0$. By the
assumptions on $\mathcal U$ there exists a tight neighborhood $W$ of
$C$ such that $D\cup E\subset W$ and  $W$  maps onto its image exactly
two-to-one.

The fact that $\si_d(D)\cap \si_d(E)\ne \0$ implies that there is an
edge $\ell_D$ of $\si_d(D)$ and an edge $\ell_E$ of $\si_d(E)$ such
that $\ell_D\cap \ell_E\ne \0$. Thus, either $\ell_D$ and $\ell_E$
share an endpoint, or they are linked. Since $D\cup E\subset W$, it
follows that there exists a unique critical quadrilateral $Q_D\subset
W$ that maps two-to-one onto $\ell_D$. Clearly, $Q_D\subset D$.
Similarly, there exists a unique critical quadrilateral $Q_E\subset W$
that maps two-to-one onto $\ell_E$. Moreover, $Q_E\subset E$. If
$\ell_D$ and $\ell_E$ share an endpoint, then the full preimage of this
endpoint inside $W$ is a critical diagonal shared by $Q_D$ and $Q_E$.
If $\ell_D$ and $\ell_E$ are linked, then it easily follows that $Q_D$
and $Q_E$ are strongly linked.

This argument can be repeated for all critical sets of $\lam_{\sim_P}$.
Therefore, we see that if $\Psi_d(Q, C(Q))$ and $\Psi_d(R, C(R))$ are
non-disjoint, then the critical sets from $\mathcal Z(Q, C(Q))$ and
$\mathcal Z(R, C(R))$ that occupy the same position in the respective
critical patterns contain critical quadrilaterals that are strongly
linked or share a critical chord. By definition this implies that
$(\lam_{\sim_Q}, \mathcal Z(Q, C(Q)))$ and $(\lam_{\sim_R}, \mathcal
Z(R, C(R)))$ are linked or essentially equal. By
\Cref{c:alm-qua-lam-link} $(\lam_{\sim_Q}, \mathcal Z(Q, C(Q))) =
(\lam_{\sim_R}, \mathcal Z(R, C(R)))$
\end{proof}

\section{Two-dimensional spaces of $\si_d$-invariant geodesic
laminations}\label{ss:d-2}

The second application of our tools extends the results of
\cite{bopt15a} where we studied the space $\prnp_3(\ol{ab})$ of all
cubic invariant geodesic laminations generated by cubic invariant
laminational equivalence relations $\sim$ compatible with fixed
critical leaf $D=\ol{ab}$ with non-periodic endpoints; in other words,
we consider all laminational equivalence relations $\sim$ with $a\sim
b$. The main result of \cite{bopt15a} is that this family of cubic
invariant geodesic laminations is itself a lamination. This result
resembles a laminational description of the combinatorial Mandelbrot
set. First we study Thurston's invariant geodesic pullback laminations.

\subsection{Invariant geodesic pullback laminations}

We use \cite{bmov13} where \emph{proper} invariant geodesic laminations
were introduced (see the necessary definitions and claims in
Subsection~\ref{ss:oun} of the present paper).

\begin{lem}\label{l:per-crit-appr}
Suppose that $\lam$ is an invariant geodesic lamination. Then the
following claims hold.

\begin{enumerate}

\item There is no critical leaf $\ell=\ol{ab}\in \lam$ with a
    periodic endpoint $a$ that is approximated by leaves of $\lam$
    disjoint from $a$.

\item There is no critical wedge $W$ of $\lam$ with a periodic vertex
    $v$ such that both sides of $W$ are approximated by leaves of
    $\lam$ disjoint from $v$.

\end{enumerate}

\end{lem}

\begin{proof}
(1) If a critical leaf $\ell=\ol{ab}\in \lam$ with $a$
    $n$-periodic is approximated by leaves of $\lam$ disjoint from $a$, then
    the fact that $a$ repels close by points under $\si_d^n$ implies
    that leaves approximating $\ell$ and disjoint from $a$ will have
    $\si_d^n$-images crossing these leaves, a contradiction.

(2) Let a critical wedge $W$ consist of leaves $\ol{vu}$ and $\ol{vt}$
where $v$ is $n$-periodic. We may assume that $v<u<t<v$ and that
$\si_d^n(v)=v<t\le \si_d^n(u)=\si_d^n(t)\le v$. Then leaves
approximating 
$\ol{vt}$ and disjoint from $v$ will have $\si_d^n$-images crossing
these leaves, a contradiction.
\end{proof}

We will need the following definition and notation.

\begin{dfn}[Admissible critical collection]\label{d:admi-col}
Let $\mathcal C=\{\oc_1=\ol{a_1b_1},$ $\dots,$
$\oc_{d-1}=\ol{a_{d-1}b_{d-1}}\}$ be a full collection of $d-1$
critical chords. If each chord $\oc_1, \dots, \oc_r$ has a periodic
endpoint while $\oc_{r+1}, \dots, \oc_{d-1}$ have non-periodic
endpoints then we call $\mathcal C$ an \emph{($r$-)admissible critical
collection}\index{admissible critical collection}. Also, the closure of
a component of $\cdisk\sm \bigcup_{i=1}^{d-1}\oc_i$ is called a
\emph{$\mathcal C$-domain}\index{$\mathcal C$-domain}.
\end{dfn}

Let us fix an $r$-admissible critical collection $\mathcal C$ and use
the notation from \Cref{d:admi-col}. We want to associate to $\mathcal
C$ a laminational equivalence relation. To this end we mimic Thurston's
pullback construction \footnote{We are indebted to Gao Yan for drawing
our attention to an inaccuracy in the original construction of a
pullback lamination.} (Proposition II.4.5 \cite{thu85}) and define
pullback (geodesic) laminations generated by $\mathcal C$.

By \cite{bmov13} geodesic laminations can be easily associated to
laminational equivalence relations if they are proper (see
Subsection~\ref{ss:oun}). Therefore, taking into account the definition
and properties of proper laminations, it is natural to pullback only
leaves $\oc_{r+1}, \dots, \oc_{d-1}$ that have non-periodic endpoints.
It is natural to expect that this will result in a proper lamination
and thus will lead to a laminational equivalence relation.

However this construction may involve ambiguities. Consider one such
possibility. Assume that on the $n$-th step a finite forward invariant
(under $\si_d$) lamination $\lam_n$ is obtained. Let a critical chain
of leaves $\ell_1=\ol{a_1a_2}, \ell_2=\ol{a_2a_3}, \dots,
\ell_k=\ol{a_ka_{k+1}}$ be a part of the boundary of a $\mathcal
C$-component $U$, and that $\si_d(a_1)=x$ is an endpoint of several
leaves $\ol{xy_1}, \dots, \ol{xy_s}$ of $\lam_n$. 
For simplicity assume that points $y_1, \dots, y_s$ have unique
$\si_d$-preimages $z_1,$ $\dots,$ $z_s\in \partial U$. Then when on the
next step we pull back the leaves $\ol{xy_1}, \dots, \ol{xy_s}$ into
$U$ the point $x$ can be pulled to any of the points $a_1, \dots,
a_{k+1}$ which creates an ambiguous situation (despite the fact that
points $y_1, \dots, y_s$ can be pulled in $U$ uniquely only to points
$z_1, \dots, z_s\in \partial U$). In other words, a number of points
(namely, $k+1$ points $a_1, \dots, a_{k+1}$) can be connected with
$z_1, \dots, z_s$, and this creates ambiguity. This ambiguity surfaces
even if $k=1$ (the boundary critical chain of $U$ consists of one leaf)
as even in that case the two endpoints of this critical leaf can be
connected to $z_1, \dots, z_s$ in various ways. We will have to take
care of this ambiguity as we define pullback laminations generated by
$\mathcal C$.

Let us now give precise definitions. Suppose that on some step $n$ a
collection $\lam_n$ of pullback leaves is constructed. On step $n+1$
each leaf $\ell=\ol{xy}\in \lam_n$ can be pulled back into $\mathcal
C$-domains $B$. The boundary of $B$ maps forward covering $\uc$ in the
one-to-one fashion except for critical edges of $B$, which map to one
point each. Then the pullback of $\ell$ in $B$ is well-defined and
unique in all cases except for the following.

Let us call maximal concatenations of (critical) edges of $B$
\emph{critical chains}\index{critical chain}. Endpoints of critical
leaves in a critical chain $Z$ are called
\emph{vertices}\index{critical chain! vertices of} of $Z$. If
$\ell=\ol{xy}$ has exactly one endpoint (say, $x$) that is the
immediate image of a critical chain $T$ while $y$ pulls back to just
one point $w\in B$, then the leaf $\ell$ can pull back to various
leaves connecting $w$ to vertices of $T$. On the other hand, if both
endpoints $x$ and $y$ pull back to critical chains of $B$, then
$\ol{xy}$ can pull back to various leaves connecting vertices of the
first chain to vertices of the second chain. In what follows we call
the convex hull of the union of all pullbacks of a leaf $\ell=\ol{xy}$
into a $\mathcal C$-domain $U$ a \emph{$\mathcal C$-maximal} pullback
of $\ell$; clearly, this is the convex hull of the full pullbacks of
$x$ and $y$ to the boundary of $U$.

Thus, the ambiguity stems from the fact that critical chains can map to
the endpoints of leaves of $\lam_n$. We could resolve it by postulating
our choices. However we prefer a different approach. Namely, we show
that there \emph{exists} a well-defined way of constructing the
pullback $\lam_{n+1}$ of $\lam_n$ so that $\lam_{n+1}$ is sibling
forward invariant. In other words, we prove the \emph{existence} of
pullback laminations. Then we consider \emph{any} sequence
$\lam_0\subset \lam_1\subset \dots$ of finite invariant pullback
laminations with $\lam_0=\{\oc_{r+1}, \dots, \oc_{d-1}\}$ and show that
their limits generate the same equivalence relation.

Recall that by a \emph{forward invariant lamination} we mean a
collection of leaves satisfying Definition~\ref{d:sibli} (see remark
right after this definition), in particular it has to satisfy our
\emph{sibling} condition: for each non-critical leaf $\ell$ of the
collection there must exist $d-1$ other leaves of this collection so
that all $d$ leaves in question are \emph{pairwise disjoint} and have
the same image as $\ell$.

\begin{dfn}\label{d:pullam} Suppose that there exists a sequence of
finite forward invariant laminations $\lam_0=\{\oc_{r+1}, \dots,
\oc_{d-1}\}, \lam_1, \dots$ such that $\si_d(\lam_{n+1})=\lam_n$ for
every $n>0$. Then we say that $\lam_0, \lam_1, \dots$ is a sequence of
\emph{finite pullback laminations generated by $\mathcal C$}.
\end{dfn}

Lemma~\ref{l:ineach} follows from definitions.

\begin{lem}\label{l:ineach} Suppose that $\ell_1, \dots, \ell_d$ are
arbitrary pairwise disjoint leaves that do not cross leaves from
$\mathcal C$ and have the same image. Then each $\mathcal C$-domain
contains exactly one leaf $\ell_i$.
\end{lem}

Observe that the leaves $\ell_1, \dots, \ell_d$ are not assumed to be
members of a forward invariant pullback lamination generated by
$\mathcal C$.

\begin{proof}
Set $\mathcal T=\{\ell_i, i=1, \dots, d\}$. Let
$\si_d(\ell_1)=\ol{xy}$; for each $i$ let $\ell_i=\ol{x_iy_i}$ with
$\si_d(x_i)=x, \si_d(y_i)=y$. Call $x_i$'s \emph{$x$-points} and
$y_i$'s \emph{$y$-points}. By the assumptions all preimages of $x$ and
all preimages of $y$ form the set of all endpoints of leaves from
$\mathcal T$.

Let $U$ be a $\mathcal C$-domain and show that it contains a leaf from
$\mathcal T$. Since all preimages of $x$ and $y$ are endpoints of
leaves from $\mathcal T$, the claim is immediate if an $x$-point or a
$y$-point belongs to $\partial U$ but is not an endpoint of a critical
leaf from $\mathcal C$. Hence we may assume that there are two critical
chains in $\partial U$ mapping to $x$ and $y$ respectively. Let
$X=\hell_1\cup \dots \hell_r$ be the critical chain in $\partial U$
that maps to $x$. Let $\hell_j=\ol{a_ja_{j+1}}, j=1, \dots, r$. Assume
that $a_1<a_2<\dots<a_{r+1}$ and $(a_1, a_{r+1})$ is the circle arc
which intersects  $\partial U$ in exactly $\{a_2,\dots, a_{r}\}$.

Clearly, $a_j$'s are $x$-points. Since the number of $x$-points in the
closed circle arc $[a_1, a_2]$ is by one greater than the number of
$y$-points in that arc, then at least one leaf from $\mathcal T$ has an
$x$-endpoint inside $[a_1, a_2]$ and the other endpoint outside $[a_1,
a_2]$. If this leaf is inside $U$, we are done. Otherwise the only
possibility is that its endpoints are $a_2$ and some $y$-point inside
$[a_2, a_3]$. In that case we can repeat the argument and continue it
until we find the desired leaf.
\end{proof}

Next we show that Definition~\ref{d:pullam} is not vacuous.

\begin{lem}\label{l:pullam} Sequences of finite pullback laminations generated by $\mathcal C$ exist.
\end{lem}

\begin{proof}
As in Thurston's pullback construction, we define finite forward
invariant laminations $\lam_n, n=0, 1, \dots$ step-by-step. We always
include all points of the unit circle in them, so below we will only
describe their non-degenerate leaves. Set $\lam_0=\{\oc_{r+1}, \dots,
\oc_{d-1}\}$. Clearly, $\lam_0$ is a finite forward invariant
lamination. Assume now that $\lam_n$
is constructed and describe how $\lam_{n+1}$ is constructed. 
Observe that there are $d$ $\mathcal C$-domains $U$, and on the
boundary $\partial U$ of each such $U$ the map $\si_d$ is one-to-one
except for critical chains in $\partial U$ that collapse to points.

Following Thurston, we pullback into $U$ every leaf $\ell=\ol{xy}$ of
$\lam_n$. If neither endpoint of $\ell$ is the image of a critical
chain from $\partial U$ then such a pullback is unique. However one or
both endpoints of $\ell$ may be images of critical chains in $\partial
U$. Thus, to have a well-defined pullback of $\ell$ to $U$ we need a
more elaborate algorithm. Here is how we want to do it.

Choose the positive (counterclockwise) direction on $\partial U$; given
an arc $T\subset \partial U$, we call this direction on $T$ (on the
entire $\partial U$) the \emph{$U$-direction}. This induces specific
direction on each critical chain $T$ from $\partial U$ and on each
critical leaf from $\partial U$. Observe that positive direction on a
critical leaf depends on the choice of $U$. Indeed, each critical leaf
$\oc=\ol{ab}\in \mathcal C$ is an edge of two $\mathcal C$-components,
say, $U$ and $V$; then if viewed as an edge of $U$ it will have, say,
initial endpoint $a$ and terminal endpoint $b$ while if viewed as an
edge of $V$ it will then have initial endpoint $b$ and terminal
endpoint $a$.

Clearly, $\partial U$ maps onto $\uc$ in a monotone fashion, with
critical chains in $\partial U$ being exactly the non-degenerate fibers
of $\si_d|_{\partial U}$. Given a point $x\in \uc$, denote by $I_x(U)$
the arc-preimage of $x$ in $\partial U$, denote by $i_x(U)$ the
\emph{initial} point of $I_x(U)$ and by $t_x(U)$ the \emph{terminal}
point of $I_x(U)$ understood in terms of the $U$-direction on $\partial
U$. Observe that the arcs $I_x(U)$ are in fact either points or
critical chains on the boundary of $U$.

\smallskip

\noindent \textbf{Claim.} \emph{Let $U$ and $V$ be two distinct
$\mathcal C$-domains. Then for every point $x\in \uc$ we have
$t_x(U)\ne t_x(V), i_x(U)\ne i_x(V)$.}

\smallskip

\begin{proof}[Proof of the Claim] We may assume that $Z=\ol{U}\cap
\ol{V}\ne \0$. Then either $Z$ is a critical leaf shared by the
boundaries of $U$ and $V$, or $Z=\{z\}$ is a point of the circle which
is a common point of two critical leaves $\ell_U\subset \partial U$ and
$\ell_V\subset \partial V$. In the former case the $U$-direction on $Z$
is opposite to the $V$-direction on $Z$, hence $t_x(U)\ne t_x(V)$ as
desired.

Now, suppose that $Z=\{z\}$ is just a point. Then $z$ is a common
vertex of a critical chain $I_x(U)$ and of a critical chain $I_x(V)$.
Suppose that $z=t_x(U)=t_x(V)$. Then there are distinct critical leaves
$\ol{az}\subset I_x(U), \ol{bz}\subset I_x(V)$, and we may assume,
without losing generality, that $a<b<z$. However, since the
$V$-direction on $\ol{bz}$ must be from $b$ to $z$ it follows from the
definitions that $V$ must be located between $\ol{az}$ and $\ol{bz}$
which, because of the existence of the leaf $\ol{az}$, means that the
critical chain $I_x(V)$ cannot terminate at $z$ and must include at
least one more critical edge $\ol{zc}$ of $V$ growing out of $z$ with
$a\le c<b$. This shows that $z$ is \emph{not} the terminal point of
$I_x(V)$, a contradiction. Similar arguments show that $i_x(U)\ne
i_x(V)$.
\end{proof}

Now, for a leaf $\ell=\ol{xy}\in \cdisk$, we postulate that as the
endpoints of the pullback leaf of $\ell$ we choose the points $t_x(U)$
and $t_y(U)$. We claim that this yields a sequence of finite forward
invariant laminations $\lam_n$. Moreover, we will show inductively (it
follows immediately from the construction) that for each point $x\in
\uc$ and each leaf $\ell$ of one of our laminations with endpoint $x$
all leaves of our laminations contained in $U$ and mapped to leaves
with endpoint $x$ have as the corresponding endpoint $t_x(U)\in
\partial U\cap \uc$.

Evidently, the base of induction holds. Suppose that $\lam_n$ satisfies
all the declared conditions and consider $\lam_{n+1}\sm \lam_n$. Let us
show that $\lam_{n+1}$ is sibling invariant. To this end we need to
verify that a non-critical leaf $\ell\in \lam_{n+1}\sm \lam_n$ has
$d-1$ sibling leaves, and all these $d$ leaves are pairwise disjoint.
We may assume that $\ell\in \lam_{n+1}\sm \lam_n$. Hence $\ell$ was
added on the last step in the construction. By construction this
implies that each $\mathcal C$-domain contains exactly one preimage of
$\si_d(\ell)=\ol{xy}$. Moreover, there are no other preimages of
$\ol{xy}$ in $\lam_{n+1}$ and by the Claim all these preimages of
$\ol{xy}$ are pairwise disjoint. This implies the desired.
\end{proof}

The next definition is a step towards constructing a fully invariant
(not just forward invariant) lamination generated by $\mathcal C$.

\begin{dfn}\label{d:pul-prelam}
Consider a sequence $\lam_0=\{\oc_{r+1}, \dots, \oc_{d-1}\}, \lam_1,
\lam_2, \dots$ of finite pullback laminations generated by $\mathcal
C$. Call the union of all leaves from all laminations $\lam_n, n=0, 1,
\dots$ a \emph{pullback prelamination generated by $\mathcal C$}.
\end{dfn}

By Lemma~\ref{l:pullam}, the family of such pullback prelaminations is
non-empty. Since in Definition~\ref{d:pul-prelam} we mean \emph{any}
sequence of pullback laminations, not just a particular sequence
constructed in Lemma \ref{l:pullam}, there may exist several pullback
prelaminations generated by $\mathcal C$.

\begin{dfn}\label{d:pullam1} The closure of a pullback prelamination generated by
$\mathcal C$ is called a \emph{pullback lamination generated by
$\mathcal C$} \index{pullback geodesic lamination generated by a full
critical collection}
\end{dfn}

Observe that by Definition~\ref{d:pullam1}, the pullbacks of leaves
from $\mathcal C$ are dense in any pullback lamination $\lam$ generated
by $\mathcal C$ while no leaf of $\lam$ crosses a leaf from $\mathcal
C$.

\begin{thm}\label{t:pullam} Any pullback lamination generated by
$\mathcal C$ is invariant.
\end{thm}

\begin{proof} The claim follows from Corollary 3.20
\cite{bmov13}.
\end{proof}

By \cite{bmov13}  a \emph{proper} invariant geodesic lamination $\lam$
defines a laminational equivalence relation $\approx_\lam$ such that
$a\approx_\lam b$ if and only if there exists a finite chain of leaves
of $\lam$ connecting $a$ and $b$. We will show that all pullback
laminations  are proper and that they all generate \emph{the same
laminational equivalence}.

Recall that in the pullback construction we only pullback the leaves
$\oc_{r+1},$ $\dots,$ $\oc_{d-1}$ (by definition these are exactly the
leaves from our critical collection which do not have periodic
endpoints).

\begin{lem}\label{l:pull}
Let $\mathcal C=(\ol c_1,\dots,\ol c_{d-1})$ be an $r$-admissible
critical
collection. Let $\lam(\mathcal C)$ be a 
geodesic lamination generated by $\mathcal C$. Then:

\begin{enumerate}

\item $\lam(\mathcal C)$ is proper and generates a laminational
    equivalence relation $\approx_{\lam(\mathcal C)}$ (in particular,
    $a_j\approx_{\lam(\mathcal C)} b_j$ for each $j, r+1\le j\le
    d-1$);

\item if $\ell$ is a critical leaf of $\lam(\mathcal C)$, then its
    endpoints must be non-periodic vertices of one of the critical
    chains from $\mathcal C$ (in particular, if all critical chains
    are just leaves - e.g., if all leaves in $\mathcal C$ are
    pairwise disjoint - then $\ell$ must be $\oc_j$ for some $j,
    r+1\le j\le d-1$);

\item if $\hlam$ is another pullback lamination generated by
    $\mathcal C$ then it is proper and
    $\approx_{\hlam}=\approx_{\lam(\mathcal C)}$ (and so the
    laminational equivalence relation $\approx_{\lam(\mathcal C)}$
    depends only on $\mathcal C$ and is from now on denoted by
    $\approx_\mathcal C$);

\item if $\hlam$ is an invariant geodesic lamination with leaves
    $\oc_{r+1}, \dots, \oc_{d-1}$ whose pullbacks do not cross leaves
    from $\mathcal C$, then $\hlam$ contains all limit leaves of
    $\lam(\mathcal C)$ and at least one pullback of each $\oc_i,
    r+1\le i\le d-1$ inside each $\mathcal C$-maximal pullback of
    $\oc_i$;

\item if $\sim$ is a laminational equivalence relation such that
    $a_j\sim b_j, r+1\le j\le d-1$ and no leaf of $\lam_\sim$ crosses
    $\oc_i, 1\le i\le r$, then for any two points $a, b$ such that
    $a\approx_{\mathcal C} b$ we have $a\sim b$.

\end{enumerate}

\end{lem}

\begin{proof}
(1) The fact that $\lam(\mathcal C)$ is proper follows from the way we
define it and \Cref{l:per-crit-appr}.

(2) Since a critical leaf $\ell$ of $\lam(\mathcal C)$ must not cross
leaves from $\mathcal C$, it must be contained in a $\mathcal
C$-domain. This implies that its endpoints must be vertices of one of
the critical chains from $\mathcal C$ because distinct critical chains
from the boundary of the same $\mathcal C$-domain have distinct images.
Using Lemma~\ref{l:per-crit-appr}, the particular cases listed in the
rest of the claim now easily follow.


(3) By definition, $\lam(\mathcal C)$ is the closure of an invariant
prelamination that is the union of leaves from a sequence of finite
pullback laminations $\lam_0\subset \lam_1\subset \dots$ generated by
$\mathcal C$. For each $n$ define the equivalence relation $\approx_n$
on $\uc$ as follows: two points $x, y\in \uc$ are
$\approx_n$-equivalent if there exists a finite chain of concatenated
leaves of $\lam_n$ such that $x$ and $y$ are the endpoints of the first
and the last leaf in the chain respectively. In cases like that we will
simply say that the chain of leaves in question \emph{connects} $x$ and
$y$.

Suppose that $\ell_1\cup \ell_2\cup \dots \ell_k$ is a chain of
concatenated leaves of $\lam_n$. Let $x$ be the initial endpoint of
$\ell_1$; let $y$ be the terminal point of $\ell_k$. Choose a $\mathcal
C$-domain $U$ and points $x'', y''\in \partial U$ such that
$\si_d(x'')=x, \si_d(y'')=y$. Then, by Lemma~\ref{l:ineach}, there
exists a chain of leaves $\ell'_1\cup \ell'_2\cup \dots \cup \ell'_k$
of $\lam_{n+1}$ such that $\ell'_i\subset U, \si_d(\ell'_i)=\ell_i$ and
for each $i, 1\le i\le k-1$ the terminal point of $\ell'_i$ and the
initial endpoint of $\ell'_{i+1}$ are connected with a chain of
critical edges of $U$; moreover, if $x'$ is the initial endpoint of
$\ell'$ and $y'$ is the terminal point of $\ell'_k$ then there exists
also a chain of critical edges of $U$ connecting $x''$ and $x'$ and a
chain of critical ledges of $U$ connecting $y'$ and $y''$. This amounts
to the following claim: \emph{if $x$ and $y$ are connected with a chain
of leaves of $\lam_n$ and $x'', y''\in \partial U$ are such that
$\si_d(x'')=x, \si_d(y'')=y$ then there exists a chain of leaves of
$\lam_{n+1}$ contained in $U$ and connecting $x''$ and $y''$.}

Now, let $\hlam$ be another pullback lamination generated by $\mathcal
C$; denote by $\hlam_n$ the finite laminations whose sequence gives
rise to $\hlam$. For each $n$ the finite pullback lamination $\hlam_n$
generates the equivalence relation $\happrox_n$ in the same fashion as
above for $\approx_n$. We claim that $\approx_n=\happrox_n$. Indeed, it
is clear that $\approx_0=\happrox_0$ (after all,
$\lam_0=\hlam_0=\{\oc_{r+1}, \dots, \oc_{d-1}\}$). Assume that
$\approx_n=\happrox_n$ and prove that $\approx_{n+1}=\happrox_{n+1}$.
It suffices to show that if $x \approx_{n+1} y$, then $x
\happrox_{n+1}\, y$.  By definition, to this end it is enough to show
that if $\ell=\ol{xy}$ is a leaf of $\lam_{n+1}$ then there exists a
chain of leaves of $\hlam_{n+1}$ connecting $x$ and $y$.

First, suppose that $\ell$ is critical. Then by definition $\ell\in
\lam_0=\hlam_0$ and we are done. Second, suppose that $\ell$ is not
critical. Then $\ell\subset U$ for some $\mathcal C$-domain $U$, and
$\si_d(\ell)$ is a leaf of $\lam_n$. By induction there exists a chain
of leaves of $\hlam_n$ connecting $\si_d(x)$ and $\si_d(y)$. By the
above claim there exists a chain of leaves of $\hlam_{n+1}$ connecting
$x$ and $y$ (and contained in $U$). Hence $x \happrox_{n+1} y$ as
desired. By induction this implies that $\approx_n=\happrox_n$ for
every $n$. By definition of the laminational equivalence relation
determined by a proper invariant lamination it then follows that
$\approx_\hlam=\approx_{\lam(\mathcal C)}$ as desired.

(4) By 
definition of invariant lamination both $\hlam$ and $\lam(\mathcal C)$
have leaves contained in each $\approx_\mathcal C$-class. Therefore any
limit leaf of $\lam(\mathcal C)$ is a limit leaf of $\hlam$ and vice
versa; moreover, any such limit leaf is the limit of a sequence of
convex hulls of $\approx_\mathcal C$-classes. The fact that $\hlam$
contains at least one pullback of each $\oc_i, r+1\le i\le d-1$ inside
each $\mathcal C$-maximal pullback of $\oc_i$ is immediate.

(5) The leaves $\oc_j$ with $r+1\le j\le d-1$ either are leaves of
$\lam_\sim$ or are contained in finite gaps of $\lam_\sim$. Add them to
$\lam_\sim$ and then pull them back as leaves of $\lam_\sim$ or inside
pullbacks of gaps of $\lam_\sim$; do it according to
Definition~\ref{d:pullam}.
In this way, we can get an invariant pullback prelamination generated
by $\mathcal C$. When we close it, by (4) the new leaves added all
belong to $\lam_\sim$. Thus, $\lam(\mathcal C)$ consists of either
leaves of $\lam_\sim$ or leaves contained inside finite gaps of
$\lam_\sim$. By definition of $\approx_{\mathcal C}$, this implies the
desired.
\end{proof}

The following definition is central for this section.

\begin{dfn}[Pullback laminational equivalence relations]\label{d:pull}
The laminational equivalence relation $\approx_{\mathcal C}$ is called
the \emph{laminational equivalence relation generated by $\mathcal
C$}\index{laminational equivalence relation!generated by a full
critical collection}.
\end{dfn}

In the next lemma, we study possible differences between certain
invariant geodesic pullback laminations.

\begin{lem}\label{l:big-deal}
In the notation of Lemma~\ref{l:pull} the following holds.

\begin{enumerate}

\item There may exist leaves of $\lam(\mathcal C)$ which intersect
    the interiors of finite gaps of $\lam_{\approx_{\mathcal C}}$;
    all such leaves are pullbacks of the leaves $\oc_{r+1}, \dots,
    \oc_{d-1}$. This is the only situation when a leaf of
    $\lam(\mathcal C)$ does not belong to $\lam_{\approx_{\mathcal
    C}}$.

\item Suppose that a leaf $\ell$ of $\lam_{\approx_{\mathcal C}}$ is
    a common edge of an infinite Fatou gap and a finite gap. Then
    either $\ell$ is a pullback of a leaf $\oc_j$ with $r+1\le j\le
    d-1$, or $\ell$ does not belong to $\lam(\mathcal C)$. The latter
    is the only situation when a leaf from $\lam_{\approx_{\mathcal
    C}}$ does not belong to $\lam(\mathcal C)$.

\item A leaf $\ell$ of $\lam(\mathcal C)$ that is not a pullback of
    one of the leaves $\oc_i$ with $r+1\le i\le d-1$ (in particular,
    this holds if $\ell$ is periodic) is non-isolated in
    $\lam(\mathcal C)$, non-isolated in $\lam_{\approx_{\mathcal
    C}}$, and belongs to $\lam_{\approx_{\mathcal C}}$.
\end{enumerate}

\end{lem}

\begin{proof}
(1) If a leaf $\ell$ of $\lam(\mathcal C)$ is not a leaf of
$\lam_{\approx_{\mathcal C}}$, then there must exist a finite gap of
$\lam_{\approx_{\mathcal C}}$ containing $\ell$. Moreover, since in
that case $\ell$ is isolated in $\lam(\mathcal C)$, it itself must be a
pullback of a leaf $\oc_j$ with $r+1\le j\le d-1$.

(2) Suppose that a leaf $\ell=\ol{ab}$ of $\lam_{\approx_{\mathcal C}}$
is not a leaf of $\lam(\mathcal C)$. By definition there exists a
finite chain of leaves of $\lam(\mathcal C)$ connecting $a$ and $b$.
Hence $\ell$ is an edge of a finite gap $G$ of $\lam_{\approx_{\mathcal
C}}$. If $\ell$ is not isolated in $\lam(\mathcal C)$ from the outside
of $G$, then it itself must belong to $\lam(\mathcal C)$, a
contradiction. If there is a finite gap of $\lam(\mathcal C)$ outside
of $G$ that shares $\ell$ with $G$, then this gap and $G$ must be
united into one bigger gap of $\lam_{\approx_{\mathcal C}}$, a
contradiction to the definition of $\approx_{\mathcal C}$. Hence there
is a Fatou gap $U$ of $\approx_{\mathcal C}$ such that $\ell$ is an
edge of $U$. Since $\ell$ is not a leaf of $\lam(\mathcal C)$, it
cannot be a pullback of a leaf $\oc_j, r+1\le d-1$. On the other hand,
if a leaf $\ell$ of $\lam_{\approx_{\mathcal C}}$ is a common edge of
an infinite Fatou gap and a finite gap, then, in case this leaf is not
a pullback of a leaf $\oc_j, r+1\le d-1$, it follows from definitions
that it cannot be a leaf of $\lam(\mathcal C)$ as desired.

(3) If a leaf $\hell$ does not belong to the union of the grand orbits
of leaves $\oc_i$ with $r+1\le i\le d-1$, then by definition it is
approximated by pullbacks of these leaves disjoint from $\hell$ (as the
endpoints of leaves $\oc_i$ with $r+1\le i\le d-1$ are non-periodic,
only finitely many their pullbacks share an endpoint). By
\Cref{l:pull}, each leaf $\oc_i, r+1\le i\le d-1$ is contained in the
convex hull $G$ of a $\approx_{\mathcal C}$-class where $G$ is a leaf
or a finite gap. Hence $\hell$ is a limit leaf of leaves of
$\lam_{\approx_{\mathcal C}}$. This implies that $\hell$ is a leaf of
$\lam_{\approx_{\mathcal C}}$.
\end{proof}

Let us study periodic Fatou gaps of $\lam_{\approx_{\mathcal C}}$ and
$\lam(\mathcal C)$.

\begin{lem}\label{l:gener-equiv}
Let $U$ be a periodic Fatou gap of $\approx_{\mathcal C}$ of degree
$s>1$. Then all finite periodic gaps $G$ attached to $U$ at edges
$\ell$ of $U$ are fixed return so that $U$ is of non-rotational type;
moreover, all other edges of gaps $G$ which are attached to $U$ are
non-isolated in $\lam(\mathcal C)$. Each critical chord $\oc_i$ with
$i\le r$ is contained in a periodic critical Fatou gap $W$ of
$\lam(\mathcal C)$ of degree greater than one. Finally, there are no
preperiodic critical Fatou gaps of $\approx_{\mathcal C}$ (or of
$\lam(\mathcal C)$).
\end{lem}

\begin{proof}
Let $G$ be a finite periodic gap attached to $U$ at an edge $M$. Then
$M$ is isolated in $\lam_{\approx_{\mathcal C}}$ and periodic. This
implies that $M$ is not a leaf of $\lam(\mathcal C)$. All other edges
of $G$ must be non-isolated in $\lam_{\approx_{\mathcal C}}$ because
otherwise these edges are not leaves of $\lam(\mathcal C)$. This shows
that vertices of $G$ are not connected with finite chains of leaves of
$\lam(\mathcal C)$, and $G$ is not a gap of $\lam_{\approx_{\mathcal
C}}$, a contradiction. Since all other edges $\ell\ne M$ of $G$ are
non-isolated in $\lam_{\approx_{\mathcal C}}$, then, clearly, $G$ must
be fixed return. Similarly, $U$ cannot have periodic edges of flip
type. This implies that $U$ is of non-rotational type.

By \Cref{l:pull}, the chord $\oc_i$ (with $i\le r$) is not a leaf of
$\lam(\mathcal C)$. Hence $\oc_i$ is contained in a critical gap of
$\lam_{\approx_{\mathcal C}}$. Since critical gaps of laminational
equivalence relations can only have periodic vertices if they are
infinite, it follows that $\oc_i$ is contained is a critical periodic
Fatou gap $W$. Suppose that $W$ is not periodic. Let $\oc_i$ have a
periodic endpoint $a$ of period $m$. Since $W$ is not periodic then $a$
cannot be non-isolated on $\bd(W)$ from either side. Hence $a$ is an
edge of a periodic leaf which implies that $W$ is actually periodic.
Thus,  $\oc_i$ is contained in a periodic critical Fatou gap $W$ of
$\lam(\mathcal C)$ of degree greater than one. The last claim is left
to the reader.
\end{proof}

The following is a corollary of our results.

\begin{cor}\label{c:finite}
Let $\mathcal C$ be a $0$-admissible critical collection of chords.
Then $\lam_{\approx_{\mathcal C}}\subset \lam(\mathcal C)$, all
critical sets of $\lam_{\approx_{\mathcal C}}$ are finite, all its
infinite gaps (if any) are periodic Siegel gaps and their degree one
preimages. If $\ell=\ol{ab}$ is a chord such that all its forward
images do not cross leaves from $\mathcal C$ and one another then $a
\approx_{\mathcal C} b$.
\end{cor}

\begin{proof}
The proof of all the claims but the last one is left to the reader.
Now, let $\ell=\ol{ab}$ be a chord such that all its forward images do
not cross leaves from $\mathcal C$. Let us show that then $a
\approx_{\mathcal C} b$. Observe that if $\ell$ is non-disjoint from a
leaf $\hell$ of $\lam(\mathcal C)$ then the corresponding images
$\si_d^m(\ell)$ and $\si_d^m(\hell)$ are also non-disjoint (this is
because both leaves do not have images crossing critical leaves from
$\mathcal C$). Now, suppose that $\ell$ crosses infinitely many
pullbacks of critical leaves from $\mathcal C$. Since by the assumption
images of $\ell$ do not cross leaves from $\mathcal C$ this would imply
that for each $\si_d^N$-pullback of a leaf of $\mathcal C$ crossing
$\ell$ the leaf $\si_d^N(\ell)$ has a common endpoint with a leaf from
$\mathcal C$. Since we assume the existence of infinitely many
pullbacks of critical leaves from $\mathcal C$ crossing $\ell$, then
there are some leaves from $\mathcal C$ that have periodic endpoints, a
contradiction.

Thus, there are only finitely many pullbacks of critical leaves from
$\mathcal C$ that cross $\ell$. This implies that in fact there is a
finite string of gaps of $\approx_{\mathcal C}$, say, $G_1, G_2, \dots,
G_k$ such that $a$ is a vertex of $G_1$, $b$ is a vertex of $G_k$, and
each two gaps $G_i$ and $G_{i+1}$ share a common edge that crosses
$\ell$. This implies that at least one of these gaps is infinite and
hence is a preimage of a periodic Siegel gap of $\approx_{\mathcal C}$
in which $\ell$ connects two edges. Hence a forward image $\ell'$ of
$\ell$ connects edges of a periodic Siegel gap $U$. Since the canonical
map that collapses edges of gaps semiconjugates the first return map
and an irrational rotation, it follows that the image of $\ell'$ under
this semiconjugacy will cross one of its eventual images, a
contradiction. Hence the situation described above is impossible and $a
\approx_{\mathcal C} b$ as desired.
\end{proof}

\begin{dfn}[Tuning of laminations]\label{d;tune}
Suppose that $\lam$ is an invariant geodesic lamination. Suppose that
there exists a periodic Fatou gap $U$ of $\lam$ of degree greater than
one. Finally, suppose that there exists an invariant geodesic
lamination $\hlam\supset \lam$ such that $\hlam\sm \lam$ consists of
leaves contained in the grand orbit of $U$. Then we say that $\hlam$
\emph{tunes $\lam$ (in the grand orbit of $U$}). If $\lam$ generates a
laminational equivalence relation $\approx_\lam$ while $\hlam$
generates a laminational equivalence relation $\approx_\hlam$, then we
say that $\approx_\hlam$ \emph{tunes $\approx_\lam$}.
\end{dfn}

\Cref{c:finite-1} deals with more specific pullback geodesic
laminations.

\begin{cor}\label{c:finite-1}
Let $\mathcal C$  be an admissible critical collection with $r=1$; let
$\oc_1=\ol{a_1b_1}$ where $a_1$ is of period $n$. %
Then the following holds.

\begin{enumerate}

\item The chord $\oc_1$ is a subset of a periodic quadratic critical
    Fatou gap $V$ of $\lam(\mathcal C)$ of period $m$ such that
    $n=mk$ is a multiple of $m$; the gap $V$ contains a periodic
    quadratic critical Fatou gap $U$ of $\approx_{\mathcal C}$ of
    period $m$.

\item Except for $U$, all critical sets of $\approx_{\mathcal C}$ are
    finite.  All infinite gaps of $\approx_{\mathcal C}$ are either
    in the grand orbit of  $U$ or in the grand orbits of (possibly
    existing) periodic Siegel gaps.
\end{enumerate}

\noindent Moreover, there exists a laminational equivalence relation
$\sim$ that tunes $\approx_{\mathcal C}$ inside the grand orbit of\,
$U$, has a critical quadratic periodic Fatou gap $T\subset U$, and is
such that $a_1$ is a refixed vertex of $T$ or of a finite periodic
gap\, $G$ attached to $T$ at the refixed edge of\, $T$.
\end{cor}

Observe that the gap $V$ may contain finite concatenations of edges.
The gap $U\subset V$ is obtained from $V$ by replacing every such
maximal concatenation of edges with a single leaf (an edge of the
convex hull of the concatenation).

\begin{proof} (1) This claim follows immediately from \Cref{l:gener-equiv}.

(2) This claim follows from \Cref{l:pull}.

To prove the last claim, consider $\si_d^m|_V$. Since the point $a_1$
is an $n$-periodic vertex of $V$, then $a_1$ is of period $k$ under the
action of $\si_d^m$. Apply the standard monotone semiconjugacy $\psi$
between $\si_d^m|_V$ and $\si_2$ (the map $\psi$ simply collapses edges
of $V$ to points of $\uc$). It follows that $\psi(a_1)=x$ is a
$k$-periodic point of $\si_2$. Now, it is well-known that there exists
a $\si_2$-invariant laminational equivalence relation $\simeq$ such
that there exists a critical periodic Fatou gap $W_2$ of $\simeq$ and
$x$ is an endpoint of the major edge of $W_2$ (in particular, $x$ is a
refixed vertex of $W_2$).

Using $\psi$, we can lift the equivalence relation $\simeq$ to the
entire $V$. This gives rise to a $\si_d^m$-equivalence relation
$\sim^V$ on $V$ and to the corresponding $\si_d^m$-invariant geodesic
sublamination $\lam^V_\sim$ of $V$. Note that we write $\lam^V_\sim$
instead of $\lam_{\sim^V}$. We can then pull this back under
the action of $\si_d$ 
to obtain a laminational equivalence relation $\sim$ on the entire
circle. Points that are $\approx_{\mathcal C}$-equivalent will be
declared $\sim$-equivalent while otherwise two points are declared
$\sim$-equivalent if the chord connecting them lies in a pullback of a
finite gap of $\lam^V_\sim$. This defines a $\si_d$-invariant
laminational equivalence relation $\sim$ on the entire circle.
Basically, we tune $V$ and then pull back this tuning. The equivalence
relation $\sim$ is closed because pullbacks of leaves of $\lam^V_\sim$
can only accumulate on leaves of $\lam^V_\sim$ or in the boundary of
the grand orbit of $V$.

Let us show that $\sim$ satisfies all the necessary conditions.
Clearly, all of them are automatically satisfied except for the claims
concerning $a_1$. Now, the construction implies that the
$\psi$-preimage of the gap $W_2$ is a stand-alone Fatou gap $H\subset
V$ of period $n$. Observe that $H$ is not necessarily a gap of $\sim$
because some vertices of $W_2$ may have non-degenerate
$\psi$-preimages. However $H$ contains a gap $T$ of $\sim$, and one can
obtain $T$ by completing convex hulls of finite chains of edges on the
boundary of $H$ with extra edges. The relation of $H$ and $T\subset H$
is like the relation of $V$ and $U\subset V$. Now, if the major $M_2$
of $W_2$ has endpoints with degenerate $\psi$-preimages then
$\psi^{-1}(M_2)=M$ is just a leaf of $\sim$ and $a_1$ is an endpoint of
$M$ as desired. If, however, one or both endpoints of $M_2$ have
non-degenerate $\psi$-preimages, then the $\psi$-preimage of $M_2$ is
the desired finite gap $G$.
\end{proof}

\begin{thm}\label{t:finite}
Let $\qcp=\{C_1, \oc_2=\ol{a_2b_2}$, $\ldots$,
$\oc_{d-1}=\ol{a_{d-1}b_{d-1}}\}$ be a geolaminational quadratically
critical portrait such that $C_1$ is a critical quadrilateral and
$\oc_2$, $\ldots$, $\oc_{d-1}$ are critical leaves with non-periodic
endpoints. Then there exists a non-empty laminational equivalence
relation $\sim$ such that $a_2\sim b_2$, $\dots$, $a_{d-1}\sim b_{d-1}$
while $C_1$ is such that either {\rm (1)} all its vertices are
$\sim$-equivalent and non-periodic, or {\rm (2)} $C_1$ has a periodic
edge, and if $\ell'$, $\ell''$ are diagonals of $C_1$ and $\mathcal
T'=(\ell',$ $\oc_2,$ $\dots,$ $\oc_{d-1})$, $\mathcal T''=(\ell'',$
$\oc_2,$ $\dots,$ $\oc_{d-1})$, then $\lam(\mathcal T')=\lam(\mathcal
T'')=\lam$, $\approx_{\mathcal T'}=\approx_{\mathcal T''},$ and $C_1$
is contained in a critical quadratic periodic Fatou gap of $\lam$.
\end{thm}

\begin{proof}
If $C_1$ has a diagonal $\ell$ with non-periodic endpoints, then we add
$\ell$ to $\oc_2, \dots, \oc_{d-1}$ to form a collection $\mathcal C$.
By \Cref{c:finite}, there exists a laminational equivalence relation
$\approx_{\mathcal C}$ such that $\ell$, $\oc_2$, $\dots$, $\oc_{d-1}$
connect pairs of $\approx_{\mathcal C}$-equi\-va\-lent points.
Moreover, by the assumptions and by \Cref{c:finite}, all vertices of
$C_1$ are $\approx_{\mathcal C}$-equivalent and non-periodic, as
desired. Otherwise we may assume that there is an edge $\ol{ab}$ of
$C_1$ such that $a$ and $b$ are periodic. Choose a diagonal $\ell'$ of
$C_1$, say, the one that contains $a$. Suppose that $a$ is of period
$n'$. Set $\mathcal T'=\{\ell', \oc_2, \dots, \oc_{d-1}\}$. By
\Cref{l:pull} there exist an invariant geodesic pullback lamination
$\lam(\mathcal T')$ and the associated laminational equivalence
relation $\approx_{\lam_{\mathcal T'}}$. Moreover, by
\Cref{c:finite-1}, the leaf $\ell'$ is contained in a periodic critical
Fatou gap $V'$ of $\lam(\mathcal T')$. On the other hand, $V'$ contains
a Fatou gap $U'$ of $\approx_{\mathcal T'}$. We may assume that the
period of $U'$ and $V'$ is $m'$ while $n'=m'k'$.

Let us discuss the location of $b$ with respect to this picture; we may
assume that $b\ne a$ and that $C_1$ is non-degenerate. We claim that
$b$ is a vertex of $V'$. Indeed, suppose otherwise. Then either
$\ol{ab}$ intersects $V'$ at only one point $a$, or $\ol{ab}$ crosses
an edge of $V'$. Since all edges of $V'$ are either pullbacks of leaves
$\oc_i$ with $2\le i\le d-1$, or limits of such pullbacks, it follows
that in either case there exists a pullback $N$ of one of the leaves
$\oc_2=\ol{a_2b_2}$, $\dots$, $\oc_{d-1}=\ol{a_{d-1}b_{d-1}}$ that
crosses $\ol{ab}$. Let us show that this leads to a contradiction.

Observe that Thurston's pullback construction implies the existence of
a pullback geodesic lamination $\lam'$ that contains \textbf{all}
leaves from $\mathcal T'$. This geodesic lamination strictly contains
$\lam(\mathcal T')$ because (1) it must contain all pullbacks of leaves
$\oc_2$, $\ldots$, $\oc_{d-1}$ that generate $\lam(\mathcal T')$, and
(2) by \Cref{l:per-crit-appr}, the leaf $\ell'$ is not a limit leaf of
$\lam'$. By definition, $\lam^{\qcp}$ with quadratically critical
portrait $\qcp$ and $\lam'$ with quadratically critical portrait
$\mathcal T'$ are essentially equal. Since the leaves $\ol{ab}$ and $N$
are linked, they will either (1) stay linked under any iteration of
$\si_d$, or (2) there will exist the minimal $i+1$ such that
$\si_d^{i+1}(\ol{ab})$ and $\si_d^{i+1}(N)$ are not linked while
$\si_d^i(\ol{ab})$ and $\si_d^i(N)$ are linked.

Now, in case (1), we will eventually obtain that the image of $N$ that
coincides with $\oc_j$ for some $2\le j\le d-1$ is linked with an image
of $\ol{ab}$. This contradicts the fact that $\qcp$ is geolaminational.
Consider case (2). Then $\si_d^i(N)$ cannot be critical again because
$\qcp$ is geolaminational. Hence $\si_d^i(N)$ is precritical. This
implies that $\si_d^{i+1}(N)$ is non-degenerate, (pre)critical, and has
a periodic endpoint. Applying a suitable iteration of $\si_d$, we will
observe that a certain image of $N$ is a leaf $\oc_j$ with $2\le j\le
d-1$ with a periodic endpoint, a contradiction. Thus, $b$ is a vertex
of $V'$, which implies that $C_1\subset V'$.

Clearly, the same construction can be implemented for $\mathcal T''$
based upon the other diagonal of $C_1$ passing through $b$. We may
assume that the period of $b$ is $n''$. It leads to a invariant
geodesic pullback lamination $\lam(\mathcal T'')$ and the associated
laminational equivalence relation $\approx_{\lam_{\mathcal T''}}$.
Moreover, $\ell''$ is contained in a periodic critical Fatou gap $U''$
of $\approx_{\mathcal T''}$, which is contained in the corresponding
Fatou gap $V''$ of $\lam(\mathcal T'')$. We may assume that the period
of $U''$ and $V''$ is $m''$ while $n''=m''k''$. As before for $\mathcal
T'$ we will also have that $C_1\subset V''$.

We need to show that $\approx_{\mathcal T'}=\approx_{\mathcal T''}$. To
this end, observe that, since $C_1\subset V'$, all pullbacks of leaves
$\oc_2$, $\dots$, $\oc_{d-1}$ chosen for $\approx_{\mathcal T'}$ can be
described as pullbacks outside of $C_1$, and the same can be said about
pullbacks of $\oc_2$, $\dots$, $\oc_{d-1}$ chosen for
$\approx_{\mathcal T''}$. Therefore, these pullbacks coincide. Since
they are dense in $\lam(\mathcal T')$ and in $\lam(\mathcal T'')$, we
have $\lam(\mathcal T')=\lam(\mathcal T'')$, which implies the other
claims in the end of the lemma.
\end{proof}

\subsection{The space of $\si_d$-invariant geodesic laminations
compatible with a given generic collection of $d-2$ critical chords}

We will now describe the results of \cite{bopt15} omitting technical
details. Consider cubic geodesic laminations $\lam$ with a critical
leaf $D=\ol{ab}$ whose endpoints are non-periodic. Without loss of
generality, we may assume that $(a, b)$ is a positively oriented circle
arc of length $\frac13$. Then there are several possibilities
concerning critical sets of $\lam$. First, $\lam$ can have a finite
critical set $C\ne D$ contained in the convex hull of the circle arc
$[b, a]$. By properties of invariant geodesic laminations, $C$ is a gap
or a leaf, on which $\si_3$ acts two-to-one (unless $D$ is an edge of
$C$ and so the point $\si_3(D)$ has all three of its preimages in $C$).
Thus, if $C$ is finite, then there are two cases. First, $C$ can be a
$2n+1$-gon with $D$ being one of its edges such that one can break down
all its remaining edges into pairs of ``sibling edges'' (one can say
that the ``sibling edge'' of $D$ is the vertex of $C$ not belonging to
$D$ and with the same image as $D$). Second, $C$ can be a $2m$-gon such
that $D$ is not an edge of $C$; in this case $\si_3|_C$ is two-to-one.

Now, $C$ could also be an infinite gap. Then it may be a periodic Fatou
gap of period $k$ and degree $2$ (in this case $D$ may well be an edge
of $C$). Otherwise $C$ may be preperiodic; then it cannot be eventually
mapped onto a periodic gap of degree greater than one because we deal
with the cubic case. Hence there must exist a periodic Siegel gap $U$
with $D$ being one of its edges and an infinite gap $C$ such that
$\si_3|_{\bd(C)}$ is two-to-one and $C$ eventually maps onto $U$. In
other words, an invariant geodesic lamination with leaf $D$ and of
capture type must have a periodic Siegel gap $U$ and a critical gap $C$
that eventually maps onto $U$.

Now, let $\prnp_3(D)$ be the family of all cubic geodesic laminations
with a critical leaf $D$ with non-periodic endpoints \emph{except for
geodesic laminations of capture type}. If $\lam\in \prnp_3(D)$, then a
quadratically critical portrait $\qcp=(Q, D)$ is said to be
\emph{privileged for $\lam$} if $Q\subset C$, where $C\ne D$ is defined
above, and the additional requirement mentioned below is fulfilled. By
the above analysis, either $C$ is finite, or $C$ is a periodic Fatou
gap of degree two and period $k$. In the former case, the critical
quadrilateral $Q\subset C$ can be arbitrary. In the latter case, we
require that $Q$ be a collapsing quadrilateral that is the convex hull
of a (possibly degenerate) edge $\ell$ of $C$ of period $k$ and its
sibling edge $\ell^*$ of $C$.

In \cite{bopt15} we show that for each $\lam\in\prnp_3(D)$ there are
only finitely many privileged quadratically critical portraits. Let
$\Ss_D$ denote the collection of all privileged for $\lam$
quadratically critical portraits $(Q, D)$ with $D$ as the second
element. To each such quadratically critical portrait $(Q, D)$ we
associate its \emph{minor} (a chord or a point) $\si_3(Q)\subset
\cdisk$. For each such chord we identify its endpoints, extend this
identification by transitivity and define the corresponding equivalence
relation $\simeq_D$ on $\uc$. The main result of \cite{bopt15} is that
$\simeq_D$ is itself a laminational equivalence (non-invariant!) whose
quotient is a parameterization of $\prnp_3(D)$.

The tools used in \cite{bopt15} are based upon accordions and smart
criticality rather than upon Thurston's tools \cite{thu85}. Indeed, the
main technical lemma used in \cite{thu85} is the Central Strip Lemma
showing how ``long'' (based upon circle arcs longer than $\frac13$)
leaves of invariant geodesic laminations may enter the central strips
between themselves and their siblings. The lemma has a multitude of
consequences, including the fact that there are no wandering (i.e.
non-preperiodic and non-precritical) triangles of quadratic invariant
geodesic laminations, and the construction of $\qml$. However, the
Central Strip Lemma fails already in the cubic case (in particular,
there are wandering triangles of cubic invariant geodesic laminations
\cite{bo08}). This shows the necessity of using new techniques in
\cite{bopt15}.

In order to generalize the results of \cite{bopt15} to the degree $d$
case, we introduce appropriate spaces of laminations analogous to
$\prnp_3(D)$; these spaces depend not on one critical leaf but on a
suitable \emph{collection} of critical leaves.

\begin{dfn}\label{d:d-space}
Fix a collection $\mathcal Y$ of $d-2$ pairwise disjoint critical
chords with non-(pre)periodic endpoints and pairwise disjoint forward
orbits. Define a space $\mathbb L(\mathcal Y)$ of \emph{invariant
geodesic laminations} as follows: $\lam\in \mathbb L(\mathcal Y)$ if
$\lam$ is generated by a \emph{laminational equivalence relation}
$\sim$ such that the endpoints of each critical chord from $\mathcal Y$
are $\sim$-equivalent, and $\lam$ has no gaps of Siegel capture type.
\end{dfn}

Let $\mathcal Y^+$ be the union of all critical chords from $\mathcal
Y$. It is easy to see that there is a unique component $A(\mathcal
Y)=A$ of $\cdisk\sm \mathcal Y^+$ such that $\si_d|_{\bd(A)}$ is
two-to-one except for its critical boundary edges (this map is
one-to-one in the same sense on all other components of $\cdisk\sm
\mathcal Y^+$). Indeed, $d-2$ critical chords of $\mathcal Y$ split the
disc into $d-2$ connected sets each of which has the boundary whose
intersection with the circle maps onto the entire circle in almost
one-to-one fashion (except for the endpoints of boundary edges that are
critical chords). Hence the length of each such intersection is
$\frac{a}{d}$ for some $a>0$. Clearly, this implies that $d-3$ of them
has the boundary whose intersection with the circle is of length
$\frac{1}{d}$ while one of them has the boundary whose intersection
with the circle is of length $\frac{2}{d}$. This is exactly the desired
component $A$.

Denote by $\oy_1$, $\ldots$, $\oy_k$ all critical chords from $\mathcal
Y$ contained in the boundary of $A$ (clearly, $1\le k\le d-2$).
Consider $\oy_1$; there exists exactly one point $a\in \bd(A)\sm \oy_1$
with $\si_d(a)=\si_d(\oy_1)$ (a chord $\oy_t\subset \bd(A)$ with
$\si_d(\oy_1)=\si_d(\oy_t)$ would contradict the assumption of pairwise
disjointness of forward orbits of critical chords from $\mathcal Y$)
while other points of $\bd(A)$ have images disjoint from
$\si_d(\oy_1)$. The same holds for other boundary critical chords of
$A$. For any other component $T$ of $\cdisk\sm \mathcal Y^+$ it is easy
to see that except for the collapsing of the boundary chords of $T$ all
other points of $\bd(T)$ map forward in the one-to-one fashion.

\begin{cor}\label{l:ld-nonempty}
The family 
$\mathbb L(\mathcal Y)$ is non-empty.
\end{cor}

\begin{proof}
Insert a critical chord $\oc$ in $A$ so that both endpoints of $\oc$
are non-periodic, and $\oc$ is disjoint from all chords from $\mathcal
Y$. Then \Cref{c:finite} implies the existence of the desired invariant
laminational equivalence $\sim$ and the geodesic lamination generated
by $\sim$ .
\end{proof}

Let us study the critical sets of geodesic laminations from $\mathbb
L(\mathcal Y)$.

\begin{lem}\label{l:good-class}
Let $X$ be a critical set of $\lam_\sim\in \mathbb L(\mathcal Y)$. If
$X$ is infinite, then $X\subset \ol{A}$ is a periodic quadratic Fatou
gap and all other critical sets of $\lam$ are finite and
non-preperiodic. If $X$ is finite and preperiodic, then $X\subset A$
and all critical sets of $\lam$ are finite.
\end{lem}

\begin{proof}
By definition, the only possibly existing infinite critical set $X$ of
a geodesic lamination $\lam\in \mathbb L(\mathcal Y)$ is a critical
Fatou gap contained in $A$. Clearly, $X$ cannot be a preperiodic gap
that maps onto a periodic Fatou gap of degree greater than one because
then there will be at least two infinite critical sets of $\lam$
(indeed, the orbit of a periodic Fatou gap must contain a critical
Fatou gap \cite{bl02}). On the other hand, by definition of the family
of laminations $\mathbb L(\mathcal Y)$, the set $X$ cannot be a
pullback of a periodic Siegel gap. Hence, the only possibility is that
$X$ is a periodic quadratic Fatou gap contained in $A$ as desired. The
rest of the lemma is immediate.
\end{proof}

Let us define tags for geodesic laminations from $\mathbb L(\mathcal
Y)$. Our approach is different from Thurston's: instead of considering
\emph{minor leaves}, or \emph{minors}, of geodesic laminations we work
with their \emph{minor sets} basically defined as the images of
critical sets.

\begin{lem}\label{l:crit-set}
If $\lam_\sim\in \mathbb L(\mathcal Y)$, then there is a unique
critical set $C_\sim$ of $\lam_\sim$ containing a critical chord $\oc$,
where $\oc\subset A(\mathcal Y)$ except for the endpoints. Any infinite
gap non-disjoint from $A$ is contained in $\ol{A}$. Finally, if $x\in
\si_d(C_\sim)\cap \uc$ then the entire set $\si_d^{-1}(x)\cap \ol{A}$
is contained in $C_\sim$.
\end{lem}

\begin{proof}
Clearly, at least one critical set $C$ of $\lam_\sim$ contains a
critical chord $\oc$ contained in $A(\mathcal Y)$ except for the
endpoints. Let us show that this set $C$ is unique. Indeed, it is easy
to see that any two critical chords contained in $\ol{A}$ and
non-disjoint from $A$ are linked. Therefore, two \emph{distinct
critical} sets $C_1$ and $C_2$ of $\lam_\sim$ with the properties from
the lemma cannot exist. By \Cref{l:good-class}, if $C_\sim$ is infinite
then $C_\sim$ is a periodic quadratic Fatou gap.

Let $U$ be an infinite gap non-disjoint from $A$. Since all boundary
chords of $A$ are contained in finite gaps of $\lam_\sim$ or are
themselves leaves of $\lam_\sim$, it follows that $U\subset \ol{A}$. In
particular, this holds for $C_\sim$ if it is an infinite gap.

Recall that  $\oy_1$, $\ldots$, $\oy_k$ are all critical chords from
$\mathcal Y$ contained in the boundary of $A$. We claim that for each
$\oy_j, 1\le j\le k$ either $C_\sim$ is disjoint from $\oy_j$ or
$\oy_j\subset C_\sim$. This is clear if $C_\sim$ is a finite gap or
leaf. Let $C_\sim$ be a periodic quadratic gap. If $\oy_1\cap
C_\sim=\{z\}$ is a singleton, then the convex hull $H$ of the
$\sim$-class of $z$ contains $\oy_1$ and the edge $\ell$ of $\C_\sim$
with endpoint $z$. Since $z$ is not (pre)periodic by the assumptions,
then $H$ cannot be (pre)periodic. By \Cref{l:cripe} this implies that
$\ell$ must be (pre)critical. Thus, $z$ is an endpoint of $\oy_1$ that
eventually maps to an endpoint of a critical leaf of $\lam_\sim$, i.e.
an endpoint of a leaf $\oy_s$, a contradiction with our assumptions.
Hence for each $\oy_j$ either $C_\sim$ is disjoint from $\oy_j$ or
$\oy_j\subset C_\sim$.

The fact that $C_\sim$ is critical implies that $\si_d|_{C_\sim\cap
\ol{A}}$ is in fact the composition of the map that collapses all
boundary chords of $C_\sim\cap \ol{A}$ to points and then an exactly
two-to-one map. Let $x\in \si_d(C_\sim)\cap \uc$. If $x$ is not the
image of one of the boundary chords of $A$, then it has exactly two
preimages in $\ol{A}$, and both must belong to $C_\sim$. Otherwise, set
$x=\si_d(\oy_i)$, where $1\le i\le k$. Then, by the above, there must
still exist one more point in $C_\sim\cap A$ that maps to $x$. This
proves that $\si_d^{-1}(x)\cap \ol{A}$ is contained in $C_\sim$, as
desired.
\end{proof}

Observe that, while the set $C_\sim$ is typically contained in
$\ol{A}$, some parts of it may ``stick out'' of $\ol{A}$. For example,
it may happen that $\oy_1$ is a diagonal of an all critical
quadrilateral that has one vertex in $A$ and the other one in a
component of $\cdisk\sm \mathcal Y^+$ adjacent to $A$ at $\oy_1$;
clearly, the same can be said about $\oy_2, \dots, \oy_k$. In fact, any
critical set $C_\sim$ not contained in $\ol{A}$ is finite and must
contain $\oy_i$ as a chord for some $i$. The set $C_\sim$ is important
in defining \emph{minor sets} of geodesic laminations from $\mathbb
L(\mathcal Y)$. Observe that any geodesic lamination from $\mathbb
L(\mathcal Y)$ admits legal modifications. Indeed, recall that legal
modifications are well-defined if \emph{no critical gap is mapped to a
fixed return gap attached to a periodic critical Fatou $U$ at its
refixed edge $M$ or to $M$ itself}. However, by \Cref{l:good-class} in
case $U$ exists there are no (pre)periodic critical sets, and the
desired follows.

\begin{dfn}[Minor sets of laminations from $\mathbb
L(\mathcal Y)$]\label{d:minor-set} For $\lam_\sim\in \mathbb L(\mathcal
Y)$ we define the \emph{minor set $m_\sim$} of $\lam_\sim$ as follows.

\begin{enumerate}

\item If $C_\sim$ is finite, set $m_\sim=\si_d(C_\sim)$.

\item Suppose that $C_\sim$ is a quadratic periodic Fatou gap of
    period $n$. Then there is either one or several legal
    modifications $U$ of $C_\sim$ associated with the corresponding
    legal modifications of $\lam_\sim$ and corresponding legal
    quadrilaterals $Q$. In this case $m_\sim$ is defined as the
    convex hull of the union of $\si_d$-images of all these legal
    critical quadrilaterals.
\end{enumerate}

\end{dfn}

Let us discuss the minor sets from \Cref{d:minor-set}(2). Suppose that
$C_\sim$ is a periodic critical quadratic Fatou gap. The easiest case
is when there are no finite gaps attached to $C_\sim$ at the refixed
edge $M$ of $C_\sim$. In that case the corresponding legal critical
quadrilateral is the convex hull of $M$ and its sibling edge $M'$ of
$C_\sim$ so that $m_\sim=\si_d(M)$. Another simple case is when a fixed
return gap $G$ is attached to $C_\sim$ at its refixed edge $M$. In that
case a unique legally modified gap 
is obtained by erasing $M$ and its grand orbit from $C_\sim$ (thus, $M$
and its pullbacks on the boundary of $C_\sim$ are replaced by
concatenations of the remaining edges of $G$ or appropriate preimages
of $G$). The minor set in this case is the $\si_d$-image of $G$.

A more involved is the case when there exists a finite gap $G$ of
rotational type attached to $C_\sim$ at $M$. Then there are several
images of $M$ that are edges of $G$. Denote the two of them closest to
$M$ in $\bd(G)$ by $L$ and $R$. Then the minor set $m_\sim$ coincides
with the convex hull of the image of the segment of the boundary of $G$
containing $M$ and stretching from $L$ to $R$ (not including either $L$
or $R$). Observe that, unlike in the original paper by Thurston
\cite{thu85} or in \cite{bopt15a}, the minor set $m_\sim$ \emph{is not}
a gap or a leaf of the corresponding invariant geodesic lamination
$\lam_\sim$.

Our aim is to show that, as in the quadratic case with Thurston's
quadratic minor lamination $\qml$, the family of minor sets of
invariant geodesic laminations from $\mathbb L(\mathcal Y)$ can be
viewed as the family of classes of equivalence of a laminational
(non-invariant!) equivalence relation $\sim_{\mathcal Y}$ such that the
quotient space $\uc/\sim_{\mathcal Y}$ of $\uc$ can be viewed as a
parameter model of $\mathbb L(\mathcal Y)$. Since we deal here with
minor sets and the critical Fatou gaps involved are all quadratic, we
will denote the geodesic (non-invariant!) lamination associated with
$\sim_{\mathcal Y}$ by $\qml_{\mathcal Y}$. If we create the
corresponding model in the plane, then we will have to ``pinch'' the
unit disk, which would yield the associated quotient space of not only
the unit circle but of the whole unit disk. This gives the ``pinched
disk'' model, which will be denoted by $\Mc(\mathcal Y)$. The boundary
of $\Mc(\mathcal Y)$ coincides with $\uc/\sim_{\mathcal Y}$.

We will also interpret bounded interior components of $\Mc(\mathcal Y)$
from the standpoint of dynamics. The description below is given without
proofs.

A bounded connected component of the interior of $\Mc(\mathcal Y)$ can
be of two types: \emph{quadratic} type and \emph{Siegel capture} type.
Components of quadratic type are similar to hyperbolic domains of the
combinatorial Mandelbrot set $\Mc_2$. Let $\Uc$ be a component of
quadratic type. It can be associated with an invariant geodesic
lamination $\lam_\sim$ with a periodic critical Fatou gap
$C_\sim\subset A(\mathcal Y)$ of period $n$ such that
$\si_d^n|_{C_\sim}$ is two-to-one. The association between $\Uc$ and
$\lam_\sim$ is uniquely defined by the properties described below.
Consider the legal modification $U$ of $C_\sim$. There is a continuous
monotone map $\psi:\bd(U)\to\uc$ that collapses all edges of $U$ and
semi-conjugates the restriction of $\si_d^n$ to $\bd(U)$ with $\si_2$.
Let $Q$ be a critical quadrilateral in $U$ such that $\psi(U)$ is a
critical quadrilateral (possibly degenerate), whose $\si_2$-image lies
in a minor set representing a boundary point of the combinatorial main
cardioid. Then $\si_d(Q)$ lies in a minor set corresponding to a
boundary point of $\Uc$. Conversely, any minor set corresponding to a
boundary point of $\Uc$ includes $\si_d(Q)$ for some critical
quadrilateral $Q\subset U$ such that $\si_2(\psi(U))$ lies in a minor
set representing a boundary point of the combinatorial main cardioid.
The lamination $\lam_\sim$ itself can be viewed as representing a
topological polynomial with an attracting periodic point inside the
Fatou component corresponding to $C_\sim$. Alternatively, we can think
of the corresponding topological polynomial as a topological polynomial
with a parabolic periodic point.

Components of Siegel capture type are very different from those that
appear in $\Mc_2$. Each such component is associated with an invariant
geodesic lamination of Siegel capture type (such invariant geodesic
laminations are excluded from $\mathbb L(\mathcal Y)$). Let $\Uc$ be a
component of Siegel capture type and $\lam_\sim$ the associated
lamination. The association between $\Uc$ and $\lam_\sim$ is uniquely
defined by the properties described below. There is an infinite
critical set $C_\sim$ of $\lam_\sim$ of Siegel capture type. Insert a
finite critical set (a leaf or a quadrilateral) into $C_\sim$. In the
new geodesic lamination instead of one gap $C_\sim$ we will have two
adjacent gaps separated by a common finite critical set (e.g., a common
edge). All such invariant geodesic laminations obtained by inserting
various critical sets in $C_\sim$ give rise to finite minor sets, which
form the boundary of $\si_d(C_\sim)$. These are precisely minor sets
corresponding to points in the boundary of $\Uc$. Thus, the boundary of
$\Uc$ can be naturally identified with $\si_d(C_\sim)$.

Recall that, given an invariant geodesic lamination $\lam$ with finite
critical sets $C_1$, $\ldots$, $C_r$, we call a full quadratically
critical portrait \emph{legal} for $\lam$ if it is geolaminational and
its critical quadrilaterals are contained in the critical sets of
$\lam$. This holds automatically if the critical quadrilaterals have
pairwise disjoint interiors and share opposite sibling edges with
critical sets of $\lam$.

\begin{lem}\label{l:link1}
Let $\lam_{\sim_1}$ and $\lam_{\sim_2}$ be two invariant geodesic
laminations from $\mathbb L(\mathcal Y)$. Suppose that their legal
modifications $\lam^{leg}_{\sim_1}$ and $\lam^{leg}_{\sim_2}$ have
legal quadratically critical portraits $\mathcal T_1=(Q_1, \mathcal Y)$
and $\mathcal T_2=(Q_2, \mathcal Y)$ respectively such that $Q_1$ and
$Q_2$ are strongly linked. Then $\sim_1=\sim_2$.
\end{lem}

Observe that if $Q_1$ and $Q_2$ share a diagonal then we can consider
the common diagonal as a (degenerate) critical quadrilateral that is
strongly linked with itself so that the conclusions of the lemma hold
in this case too.

\begin{proof}
Suppose first that at least one diagonal of a quadrilateral $Q_1$ or
$Q_2$ (say, a diagonal of $Q_1$) has non-periodic endpoints. Then by
construction $\sim_1$ has only finite critical sets. By
\Cref{l:per-sie-lam} then $\approx_{\sim_1}^{pS}=\sim_1$. On the other
hand, by \Cref{t:admi-link} the perfect-Siegel parts of $\lam_{\sim_1}$
and $\lam_{\sim_2}$ are equal. This implies that $\lam_{\sim_2}$ has
only finite critical sets, and $\sim_1=\sim_2$.

We may now assume that both $Q_1$ and $Q_2$ have periodic edges. By
\Cref{t:finite}, quadratically critical portraits $\mathcal T_1$ and
$\mathcal T_2$ give rise to invariant geodesic pullback laminations
$\lam(\mathcal T_1)$ and $\lam(\mathcal T_2)$. Let us show that they
coincide and $\approx_{\mathcal T_1}=\approx_{\mathcal T_2}$. Indeed,
by definition, they are quadratically critical and linked (because
$Q_1$ and $Q_2$ are linked). Consider a pullback $\oy$ of a leaf
$\oy_i$ with $2\le i\le d-1$ that belongs to $\lam(\mathcal T_1)$ and
prove that it is not linked with any edge of $Q_2$. Indeed, suppose
otherwise. Then $\si_d(\oy)$ and $\si_d(Q_2)$ are linked as well;
observe that $\si_d(Q_2)$ is a periodic leaf of $\lam(\mathcal T_2)$.
Recall that spikes of sets from $\mathcal T_2$ are sets $\oy_i$ with
$2\le i\le d-1$ and two diagonals of $Q_2$. By the assumptions, there
are no chains of spikes such that one endpoint of a chain is periodic
and the other one is not. Hence no two images $\si_d^q(\oy)$ and
$\si_d^q(Q_2)$ can have endpoint that coincide with distinct endpoints
of a chain of spikes. By \Cref{l:accorder} then $\si_d^q(\oy)$ and
$\si_d^q(Q_2)$ are linked for every $q$, a contradiction because for
some $q$ we have $\si_d^q(\oy)=\oy_i$, and $\si_d^q(Q_2)$ cannot be
linked with $\oy_i$.

Thus, pullbacks of the leaves $\oy_i$ with $2\le i\le d-1$ that belong
to $\lam(\mathcal T_1)$ are not linked with an edge of $Q_2$. By
definition this implies that they actually belong to $\lam(\mathcal
T_2)$. Similarly, the pullbacks of leaves $\oy_i$ with $2\le i\le d-1$
that belong to $\lam(\mathcal T_2)$ also belong to $\lam(\mathcal
T_1)$. Therefore, by definition, it follows that $\lam(\mathcal
T_2)=\lam(\mathcal T_1)$. This implies that there exists a periodic
critical quadratic Fatou gap $U$ of period, say, $m$, that is common
for both laminations and contains both $Q_1$ and $Q_2$. This allows us
to use the standard semiconjugacy $\psi$ of $\si_d^m:U\to U$ and
$\si_2:\uc\to \uc$. Then $\psi(Q_1)$ is either a diameter in $\cdisk$,
or a $\si_2$-critical quadrilateral with one edge being a major of a
periodic critical Fatou gap of $\si_2$. A similar observation applies
to $\psi(Q_2)$. The fact that $Q_1$ and $Q_2$ are strongly linked
implies then that the corresponding two $\si_2$-invariant geodesic
laminations coincide, and therefore $\sim_1=\sim_2$ as desired.
\end{proof}

We are ready to prove \Cref{t:para-lam}.

\begin{thm}[Parameter laminational equivalence
$\sim_{\mathcal Y}$]\label{t:para-lam} Minor sets of invariant geodesic
laminations from $\mathbb L(\mathcal Y)$ are classes of equivalence of
a non-invariant laminational equivalence relation $\sim_{\mathcal Y}$.
The corresponding ``pinched'' disk model $\Mc(\mathcal Y)$ contains
infinitely many pairwise disjoint copies of the combinatorial quadratic
Mandelbrot set $\ol\disk/\qml$. Components of the interior of
$\Mc(\mathcal Y)$ are either hyperbolic domains inside copies of
$\ol\disk/\qml$ or parameter components of Siegel capture type.
\end{thm}

\begin{proof}
Suppose that two geodesic laminations $\lam_\sim, \lam_\approx$ belong
to $\mathbb L(\mathcal Y)$ and have non-disjoint minor sets $m_\sim$
and $m_\approx$. Consider cases. Recall that by $A(\mathcal Y)=A$ we
denote the unique component of $\cdisk\sm \mathcal Y^+$ such that
$\si_d|_{\bd(A)}$ is two-to-one except for its critical boundary edges.

First, it may happen that there exists a common vertex $x$ of $m_\sim$
and $m_\approx$. Then by \Cref{l:crit-set} the entire set
$\si_d^{-1}(x)\cap \ol{A}$ is contained in $C_\sim\cap C_\approx$. We
can choose one critical chord $\oc\subset \si_d^{-1}(x)\cap \ol{A}$
that intersects $A$ and add $\oc$ to $\mathcal Y$ to form a new
augmented quadratically critical portrait $\mathcal Y'$. It follows
that the invariant geodesic laminations $\lam_\sim$ and $\lam_\approx$
are essentially equal. By \Cref{l:link1} this implies that
$\sim=\approx$ (the remark made right after the statement of
\Cref{l:link1} and before its proof shows that \Cref{l:link1} applies
in the case when $\lam_\sim$ and $\lam_\approx$ are essentially equal).

Second, it may happen that $m_\sim$ and $m_\approx$ do not have a
common vertex. Then there must exist an edge $\ell_\sim$ of $m_\sim$
and an edge $\ell_\approx$ of $m_\approx$ that cross. Moreover, those
edges can be chosen from legal modifications of $\lam_\sim$ and
$\lam_\approx$ as images of their critical quadrilaterals. In other
words, either edge pulls back to the corresponding collapsing
quadrilateral ($Q_\sim\subset C_\sim$ and, respectively,
$Q_\approx\subset C_\approx$) so that $Q_\sim$ and $Q_\approx$ are
strongly linked. Then, by \Cref{l:link1}, we have $\sim=\approx$. We
conclude that in any case minor sets of distinct invariant geodesic
laminations from $\mathbb L(\mathcal Y)$ are pairwise disjoint.

Let us show that the family of minor sets of invariant geodesic
laminations from $\mathbb L(\mathcal Y)$ is upper semicontinuous.
Consider a sequence of minor sets $m_1$, $m_2$, $\ldots$ of invariant
geodesic laminations $\lam_1$, $\lam_2$, $\ldots$ generated by
invariant laminational equivalence relations $\sim_1$, $\sim_2$,
$\ldots$. We may assume that the minor sets $m_i$ converge in the
Hausdorff sense, all these equivalence relations and invariant geodesic
laminations are distinct, and, by the above, all the minor sets $m_1$,
$m_2$, $\ldots$ are pairwise disjoint.

Then the limit of the sets $m_i$ is either a point or a leaf $X$. We
need to show that in fact $X$ is a subset of the minor set of an
invariant geodesic lamination from $\mathbb L(\mathcal Y)$ generated by
the appropriate laminational equivalence relation, say, $\simeq$. To
this end, we refine (if necessary) the sequence of geodesic laminations
$\lam_1$, $\lam_2$, $\ldots$ so that (by \cite{bmov13}) invariant
geodesic laminations $\lam_i$ will converge to some invariant geodesic
lamination $\lam$ in the Hausdorff sense. We have to find the desired
laminational equivalence relation $\simeq$ using the existence of
$\lam$ and its properties as a tool.

Indeed, pull back $X$ to the component $A(\mathcal Y)$ of $\cdisk\sm
\mathcal Y^+$ on which the map $\si_d$ is two-to-one. This will yield a
(generalized) critical quadrilateral, say, $Q$, such that the
quadratically critical portrait $\mathcal T=(Q, \mathcal Y)$ is
geolaminational (because $\mathcal T$ can be viewed as a quadratically
critical portrait of $\lam$). Hence, by \Cref{t:finite}, there exists
an invariant geodesic lamination $\lam_\simeq$ from $\mathbb L(\mathcal
Y)$ such that $Q$ is a subset of the corresponding critical set of a
legal modification of $\lam_\simeq$. By definition, this implies that
$X$  is contained in the corresponding minor set $m_\simeq$ as desired.

The remaining claims concerning copies of the quadratic Mandelbrot set
are rather standard and easily follow from the existence of invariant
geodesic laminations $\lam_\sim$ in $\mathbb L(\mathcal Y)$ such that
$\lam_\sim$ has a periodic critical Fatou gap of degree greater than
one (the latter in turn follows from \Cref{t:finite}). Finally, the
existence of component of $\Mc(\mathcal Y)$ associated with invariant
geodesic laminations of Siegel capture follows from the analysis given
right before \Cref{l:link1}. \end{proof}


\backmatter


\bibliographystyle{amsalpha}

\printindex

\end{document}